\newcommand{\bpr}{\begin{trivlist} \item[]{\bf Proof. }}
\newcommand{\epr}{\hspace*{\fill} $\qed$\end{trivlist}}
\newcommand{\be}{\begin{eqnarray}}
\newcommand{\ee}{\end{eqnarray}}
\newcommand{\ba}{\begin{align}}
\newcommand{\ea}{\end{align}}
\newcommand{\bi}{\begin{itemize}}
\newcommand{\ei}{\end{itemize}}
\newcommand{\secref}[1]{Section~\ref{sec:#1}}
\newcommand{\seclab}[1]{\label{sec:#1}}
\newcommand{\eqlab}[1]{\label{eq:#1}}
\renewcommand{\eqref}[1]{(\ref{eq:#1})}
\newcommand{\figref}[1]{Fig.~\ref{fig:#1}}
\newcommand{\figlab}[1]{\label{fig:#1}}
\newcommand{\propref}[1]{Proposition~\ref{proposition:#1}}
\newcommand{\proplab}[1]{\label{proposition:#1}}
\newcommand{\corref}[1]{Corollary~\ref{corollary:#1}}
\newcommand{\corlab}[1]{\label{corollary:#1}}
\newcommand{\lemmaref}[1]{Lemma~\ref{lemma:#1}}
\newcommand{\lemmalab}[1]{\label{lemma:#1}}
\newcommand{\remref}[1]{Remark~\ref{remark:#1}}
\newcommand{\remlab}[1]{\label{remark:#1}}
\newcommand{\thmref}[1]{Theorem~\ref{theorem:#1}}
\newcommand{\thmlab}[1]{\label{theorem:#1}}
\newcommand{\tablab}[1]{\label{tab:#1}}
\newcommand{\tabref}[1]{Table~\ref{tab:#1}}
\newcommand{\appref}[1]{Appendix~\ref{app:#1}}
\newcommand{\applab}[1]{\label{app:#1}}
\newtheorem{theorem}{Theorem}[section]
\newtheorem{proposition}[theorem]{Proposition}
\newtheorem{corollary}[theorem]{Corollary}
\newtheorem{lemma}[theorem]{Lemma}
\newtheorem{remark}[theorem]{Remark}
\numberwithin{equation}{section}
\begin{document}

% \title{The folded node: The case of the missing pitchfork bifurcation of secondary canards}
 \title[Unbounded time-reversible connection problems in $\mathbb R^3$]{On the pitchfork bifurcation of the folded node and other unbounded time-reversible connection problems in $\mathbb R^3$}

\author {K. Uldall Kristiansen} 
%\date {}
\date\today
\maketitle

\vspace* {-2em}
\begin{center}
\begin{tabular}{c}
Department of Applied Mathematics and Computer Science, \\
Technical University of Denmark, \\
2800 Kgs. Lyngby, \\
DK
\end{tabular}
\end{center}
 \begin{abstract}
 %STEFEN LIEBSCHER
%  to do
% TO DO: COMPARE INTEGRALS WITH MITRY! 
% 
In this paper, we revisit the folded node and the bifurcations of secondary canards at resonances $\mu\in \mathbb N$. In particular, we prove for the first time that pitchfork bifurcations occur at all even values of $\mu$. Our approach relies on a time-reversible version of the Melnikov approach in \cite{wechselberger2002a}, used in \cite{wechselberger_existence_2005} to prove the transcritical bifurcations for all odd values of $\mu$. It is known that the secondary canards produced by the transcritical and the pitchfork bifurcations only reach the Fenichel slow manifolds on one side of each transcritical bifurcation for all $0<\epsilon\ll 1$. In this paper, we provide a new geometric explanation for this fact, relying on the symmetry of the normal form and a separate blowup of the fold lines. We also show that our approach for evaluating the Melnikov integrals of the folded node -- based upon local characterization of the invariant manifolds by higher order variational equations and reducing these to an inhomogeneous Weber equation -- applies to general, quadratic, time-reversible, unbounded connection problems in $\mathbb R^3$. We conclude the paper by using our approach to present a new proof of the bifurcation of periodic orbits from infinity in the Falkner-Skan equation and the Nos\'e equations. 
% ff
%which together with the work of M. Wechselberger, see \cite{wechselberger_existence_2005}, completes the study 
% In this paper we prove the existence of pitchfork bifurcations at any $\mu=2k$, $k\in \mathbb N$ in the bifurcations of secondary canards in the folded node normal form.
 \end{abstract}
 
 \tableofcontents
\section{Introduction}
In slow-fast systems with one fast variable and two slow ones, the folded node $p$ is a singularity of the slow flow on the fold line of a critical manifold $C$. See an illustration in \figref{foldedC}. Upon desingularization $p$ corresponds to a stable node with eigenvalues $\lambda_s<\lambda_w<0$, and its strong stable manifold $\upsilon$, tangent to the eigenvector associated with $\lambda_s$, produces a funnel region on the critical manifold, where orbits approach the singularity tangent to the weak eigendirection (associated with the eigenvalue $\lambda_w$). Due to the contraction within the funnel region, the folded node -- upon composition with a global return mapping -- provides a mechanism for producing attracting limit cycles $\Gamma_\epsilon$, see \cite{brons-krupa-wechselberger2006:mixed-mode-oscil}. In fact, a blowup of the folded node reveals one orbit $\gamma$, along which extended versions $W^{cu}$ and $W^{cs}$ of the attracting and repelling critical manifolds, respectively, twist or rotate; the number of rotations being described by $\mu:=\lambda_s/\lambda_w>1$. The twisting is such that these manifolds intersect transversally whenever $\mu\notin \mathbb N$. As a consequence, for these values of $\mu$, there exists a `weak canard' connecting extended versions of the Fenichel slow manifolds. This orbit acts, due to the twisting of $W^{cu}$ along $\gamma$, as a `center of rotation' and trajectories on either side will therefore experience small oscillations before they leave a neighborhood of $p$ by following its unstable set; in \figref{foldedC} this unstable set coincides with the positive $z$-axis. Consequently, the limit cycles $\Gamma_\epsilon$ will be of mixed-mode type where small oscillations are followed by larger ones. Such oscillations appear in many applications, perhaps most notably in chemical reaction dynamics, and the folded node has therefore gained glory as a (relatively) simple mathematical model of this phenomenon, see e.g. the review article \cite{desroches2012a}. %These small oscillations can be explained upon a blowup of the folded node, which -- for $\epsilon=0$ -- reveals one orbit $\gamma$, along which extended versions $W^{cu}$ and $W^{cs}$ of the attracting and repelling critical manifold twist or rotate. It is pricesely this twisting of the invariant manifolds that give rise to the small oscillations, the number of which is described by $\mu:=\lambda_s/\lambda_w$, of `typical' trajectories passing close to $p$ before they leave along the unstable set of $p$; in \figref{foldedC} this unstable set coincides with the positive $z$-axis.  In fact, the manifolds $W^{cu}$ and $W^{cs}$ intersect transversally along $\gamma$ for any $\mu:=\lambda_s/\lambda_w\notin \mathbb N$ and, consequetly, see e.g. \cite{szmolyan_canards_2001},  there exists a `weak canard' in these cases for all $0<\epsilon\ll 1$ connecting (extended versions of) the attracting slow manifold with the repelling one. 

%The variational equations along this canard are given in terms of a Weber equation, whose solutions explain why typical initial conditions within the funnel region of $C$ for $0<\epsilon\ll 1$ twist along the weak canard, and produce small oscillations, before they leave by following the unstable set of $p$. 

%These small oscillations appear due to a twisting of the slow manifold along a weak canard, which connects the attracting branch with the repelling one. 
 %REDO THIS? WHY START WITH THIS FORM? EXPLAIN IN WORDS INSTEAD. 
Bifurcations of the weak canard occurs whenever $\mu\in \mathbb N$; in this case, for $\epsilon=0$, the twisting of $W^{cu}$ and $W^{cu}$ is such that these manifolds intersect tangentially along $\gamma$. These bifurcations were described for $\epsilon=0$ by the reference \cite{wechselberger_existence_2005}, working on the `normal form'
   \begin{align}\eqlab{fnnfeps20a}
 \begin{split}
 \dot x &=\frac12 \mu y-(\mu+1)z,\\
 \dot y &=1 ,\\
 \dot z &=x+z^2,
 \end{split}
\end{align}
and using the Melnikov approach developed in \cite{wechselberger2002a}, following \cite{vander}.
The system \eqref{fnnfeps20a} is related to the blowup of the folded node $p$ for $\epsilon=0$. In particular for \eqref{fnnfeps20a},
% obtained from a blowup of the folded node singularity for $\epsilon=0$,
% the author 
% 
% upon performing the desingularization through division of the right hand side by $r_2$ and subsequently setting $r_2=\epsilon=0$. In \eqref{fnnfeps20a}, we have also dropped the subscripts on $(x_2,y_2,z_2)$. Two explicit algebraic solutions are known for this unperturbed system. One corresponds to the `strong canard'
% \begin{align*}
%  \upsilon:\,\,(x,y,z) = \left(-\frac{\mu}{4}t^2+\frac{\mu}{2},t,\frac{\mu}{2} t\right)
% \end{align*}
% while 
$\gamma$ takes the following form
\begin{align}
 \gamma:\,(x,y,z) = \left(-\frac{1}{4}t^2+\frac12,t,\frac12 t\right),\eqlab{gammaherea}
\end{align}
% and the extended slow manifolds become center-stable $W^{cs}(\mu)$ and center-unstable  $W^{cu}(\mu)$ manifolds of \eqref{fnnfeps20a} for $z\rightarrow \pm \infty$.
   For each odd $n$, it was shown that there is a transcritical bifurcation of `canards' connecting $W^{cu}(\mu)$ and $W^{cs}(\mu)$. For all $0<\epsilon\ll 1$, this bifurcation produces additional transversal intersections of the slow manifold. The resulting new canards -- the `secondary canards' -- produce bands on the attracting slow manifold where different number of small oscillations occur, see \cite{brons-krupa-wechselberger2006:mixed-mode-oscil,desroches2012a,wechselberger_existence_2005}.  For any even $n$, it was conjectured that a pitchfork bifurcation occurs. This was supported by numerical computations. Furthermore, in \cite[App. A]{mitry2017a} a way was found to compute a `third order' Melnikov integral using Mathematica for all even $n$ and explicit computations demonstrated that the integral was nonzero for even values of $n$ up to $20$. Following the work of \cite{wechselberger_existence_2005} this also shows that a pitchfork bifurcation occurs, at least for these values.

In this paper, we prove the pitchfork bifurcation for every even $n$ by evaluating the third order Melnikov integral analytically. Our approach is based upon a time-reversible version of the Melnikov theory of \cite{wechselberger2002a}. However, the most important insight of this paper is to characterize the manifolds $W^{cs}(\mu)$ and $W^{cu}(\mu)$ locally by solutions to higher order variational equations; this is in contrast to \cite{wechselberger_existence_2005} which uses an integral representation of these manifolds. We show that this approach, relying on reducing these variational equations to an inhomogeneous Weber equation, extends to a very general class of time-reversible, quadratic, unbounded connection problems in $\mathbb R^3$. Regardless, for the folded node the `time-reversible approach' also allows us to provide a more detailed blowup picture of the folded node, including a rigorous description of the additional transverse intersections of $W^{cs}$ and $W^{cu}$ that arise due to the pitchfork bifurcation. 

The bifurcations of canards for \eqref{fnnfeps20a}, is closely related to bifurcations of periodic orbits from heteroclinic cycles at infinity. 
% The theory of these global bifurcations is perhaps less developed and rely on techniques (such as Poincar\'e compactification) that do not easily, or at least directly, fit into standard bifurcation theory. 
The Falkner-Skan equation 
% \begin{align*}
  \begin{align}
 x''' + x'' x + \mu (1-x'^2)=0,\eqlab{falkerskan0}
\end{align}
% \end{align*}
and the Nos\'e equation:
% The Nos\'e equations:
\begin{equation}\eqlab{nose}
\begin{aligned}
 \dot x &=-y-xz,\\
 \dot y&=x,\\
 \dot z&=\mu (1-x^2),
\end{aligned}
\end{equation}
are well-known examples of systems (without equilibria) possessing such bifurcations, see e.g. \cite{swinnerton-dyer2008a}, and \cite{llibre2007a} for other examples. %The analysis of these bifurcations in \cite{swinnerton-dyer2008a}
The Falkner-Skan equation \eqref{falkerskan0} initially appeared in the study of boundary layers in fluid dynamics, see \cite{falkner1931a}. In this context, the physical relevant parameter regime is $\mu \in (0,2)$. However, the equation has subsequently been studied by other authors \cite{swinnertondyer1995a,sparrow2002a,swinnerton-dyer2008a} for all $\mu>0$ on the basis of the rich dynamics it possesses (including chaotic dynamics and a novel bifurcation of periodic orbits from infinity). On the other hand, the Nos\'e equations \eqref{nose} model the interaction of a particle with a heat bath \cite{nose1984a}. The system also has interesting dynamics without any equilibrium and possesses many similar properties to \eqref{fnnfeps20a} and \eqref{falkerskan0}. Nevertheless, the description of the bifurcating periodic orbits in both of these systems, is -- as noted by \cite{swinnerton-dyer2008a} -- long and cumbersome, and to a large extend, independent of standard methods of dynamical systems theory. Therefore, although the folded node will be our primary focus, a subsequent aim of the paper, is to apply the Melnikov theory, and our classification of $W^{cs}$ and $W^{cu}$ through solutions of an inhomogeneous Weber equation, to these bifurcations and present a simpler description of the emergence of periodic orbits, based on normal form theory and invariant manifolds and therefore more in tune with dynamical systems theory.

\subsection{The folded node: Further background}

% In \cite{szmolyan_canards_2001}, it is shown that for $0<\epsilon\ll 1$ there exists center manifold extensions of the Fenichel slow manifolds $S_{a,\epsilon}$ and $S_{r,\epsilon}$ up to $\sqrt{\epsilon}$-close to the singularity, whose extensions by the forward flow intersect transversally along a `strong canard' and also along a `weak canard' provided $\mu:=\lambda_s/\lambda_w\notin N$, where $\lambda_s<\lambda_w<0$ are the eigenvalues of the linearization about the folded node.  %Bifurcations of the weak canard occurs for $\epsilon=0$ at $\mu\in \mathbb N$. 
% Normal form. New blowup. Analysis. Scaling chart. side charts. Symmetry. Bifurcations.  
Following \cite[Proposition 2.1]{wechselberger_existence_2005}, any folded node can be brought into the `normal form':
\begin{align}\eqlab{fnnfeps}
\begin{split}
 \dot x &=\epsilon \left(\frac12 \mu y-(\mu+1)z+\mathcal O(x,\epsilon,(y+z)^2)\right),\\
 \dot y &=\epsilon ,\\
 \dot z &=x+z^2+\mathcal O(xz^2,z^3,xyz)+\epsilon\mathcal O(x,y,z,\epsilon),
 \end{split}
\end{align}
by only using scalings, translations and a regular time transformation. Here $\mu:=\lambda_s/\lambda_w> 1$, and the critical manifold $C$ is approximately given by the parabolic cylinder $x=-z^2$, $z<0$ ($C_a$) being stable  and $z>0$ ($C_r$) being unstable.  See  \figref{foldedC}. Here $C_a$ is in blue, $C_r$ is in red, whereas the degenerate line $F:\,x=z=0$, being the fold line, is in green. For \eqref{fnnfeps}, the folded node $p$ (pink), on $F$, is at the origin. Furthermore, if we for simplicity ignore the $\mathcal O$-terms in \eqref{fnnfeps}, then the reduced problem on $C$ is given by
\begin{align*}
 y'&=1,\\
 2z z' &= -\frac12 \mu y + (\mu +1)z.
\end{align*}
Consider $C_a$ where $z<0$. Then multiplication of the right hand side by $-2z$ gives the topologically equivalent system
\begin{align}\eqlab{yzDS}
\begin{split}
 y' &=-2z,\\
 z' &= \frac12 \mu y-(\mu+1)z,
 \end{split}
\end{align}
on $C_a$, see \cite{wechselberger_existence_2005}.
The point $(y,z)=(0,0)$ is then a stable node of these equations with eigenvalues $-1$ and $-\mu$ and associated eigenvectors: 
\begin{align}
 (2,1)^T,\eqlab{weakEigenvector}
\end{align}
and $(2,\mu)^T$, respectively. See illustration of the reduced flow in \figref{reducedC}. Notice that the orbits on $C_r$, where $z>0$, are also orbits of \eqref{yzDS}, but their directions have to be reversed.

\begin{figure}[h!]
\begin{center}
{\includegraphics[width=.795\textwidth]{./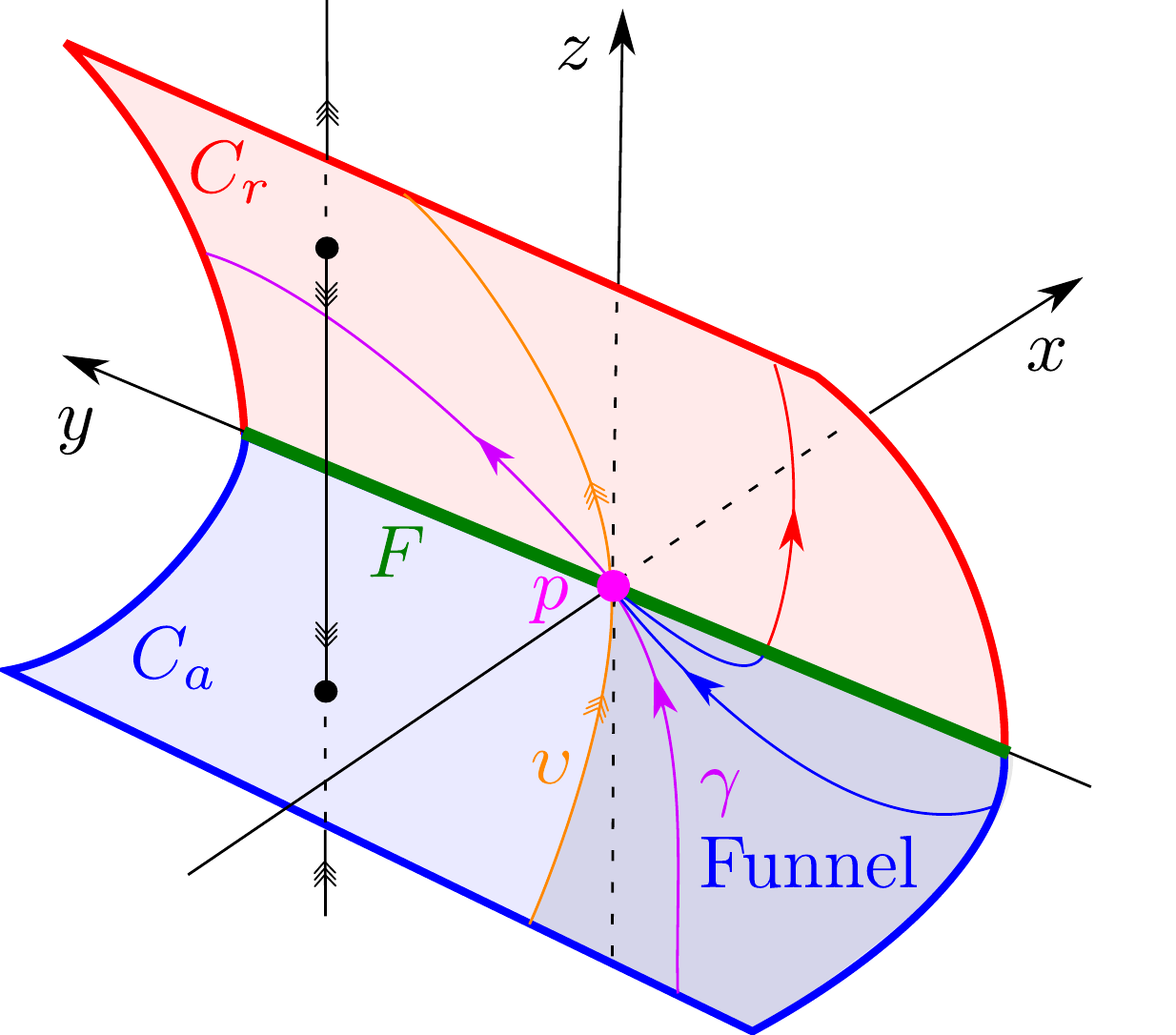}}
% \subfigure[]{\includegraphics[width=.495\textwidth]{layerReducedBlowup.pdf}}
% \input{posusts}
\end{center}
\caption{The folded node singularity $p$. Upon desingularization of the reduced problem, the folded node singularity becomes a stable node. The strong eigenvector associated with the node, gives rise to a strong stable manifold $\upsilon$ (orange) that forms a boundary of a funnel region (shaded), bounded on the other side by $F$, where trajectories approach the folded node $p$ (in finite time before desingularization), tangent to a weak eigendirection. For the system \eqref{fnnfeps} without the $\mathcal O$-terms, the weak eigenvector also produces an invariant space and an orbit $\gamma$, which we show in purple.  }
\figlab{foldedC}
\end{figure}

\begin{figure}[h!]
\begin{center}
{\includegraphics[width=.795\textwidth]{./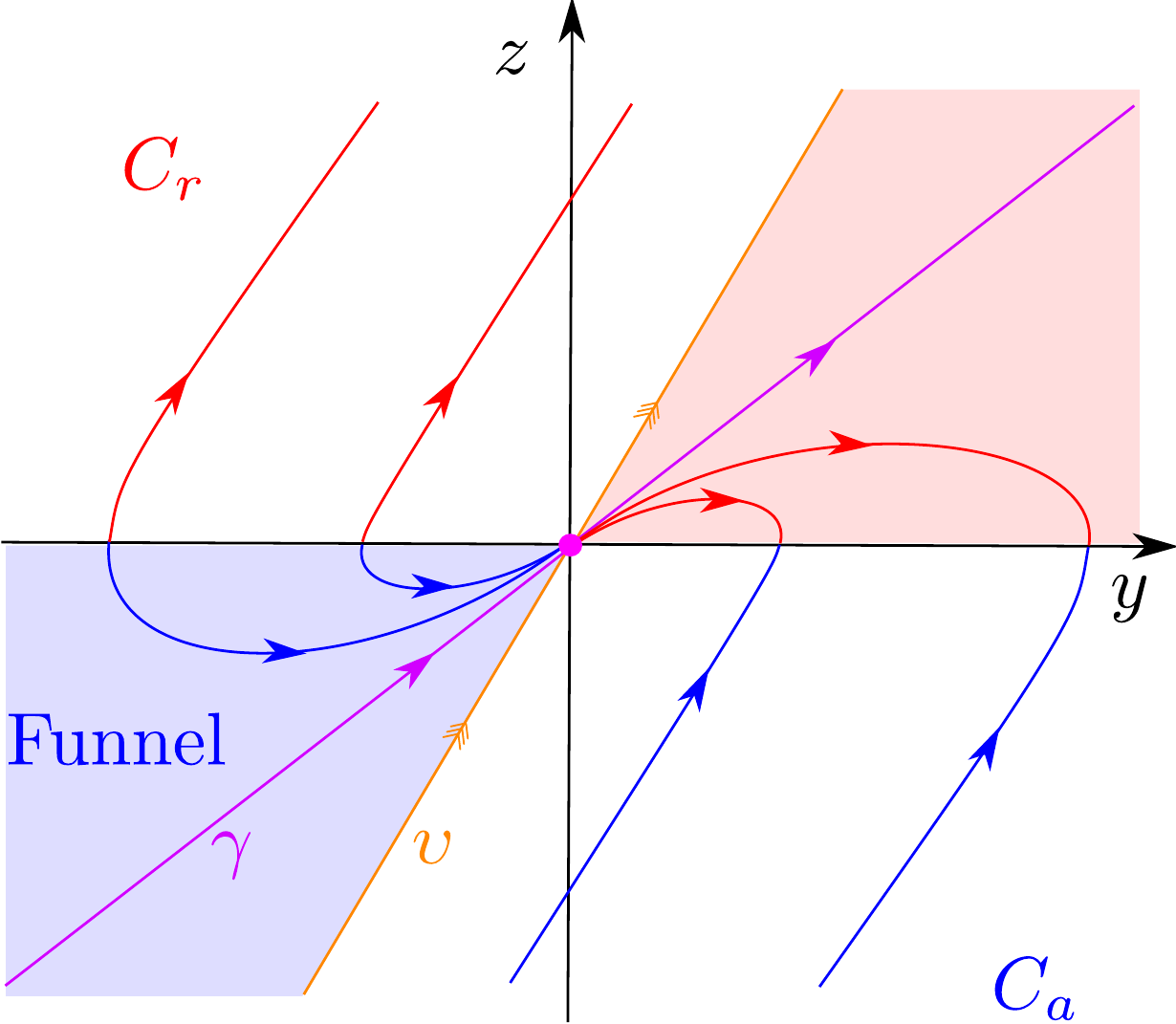}}
% \subfigure[]{\includegraphics[width=.495\textwidth]{layerReducedBlowup.pdf}}
% \input{posusts}
\end{center}
\caption{The reduced flow on $C$, recall \figref{foldedC}, projected onto the $(y,z)$-plane. The strong canard $\upsilon$ is shown in purple, whereas the weak canard $\gamma$, obtained from \eqref{fnnfeps} upon ignoring the $\mathcal O$-terms, is shown in orange. }
\figlab{reducedC}
\end{figure}
\subsubsection*{Blowup analysis}
Compact submanifolds (with boundaries) $S_a$ and $S_r$ of $C_a$ and $C_r$, respectively, bounded away from the fold line, perturb by Fenichel's theory to attracting and repelling slow manifolds $S_{a,\epsilon}$ and $S_{r,\epsilon}$ for all $0<\epsilon\ll 1$, see \cite{fen1,fen2,fen3,jones_1995}. We will refer to these manifolds as `Fenichel's (slow) manifolds'. They are nonunique but $\mathcal O(e^{-c/\epsilon})$-close. Extended versions of these invariant manifolds up close to the folded node $p$ is obtained in \cite{szmolyan_canards_2001} by blowing up the point $(x,y,z)=0$ for $\epsilon=0$. In further details, the authors apply the following blowup transformation $\mathcal B:[0,r_0)\times S^3\rightarrow \mathbb R^4$, given by
\begin{align}
r\in [0,r_0),\,(\bar x,\bar y,\bar z,\bar \epsilon)\in S^3 \mapsto \begin{cases}
                                                x&=r^2\bar x,\\
                                                y&=r\bar y,\\
                                                z&=r\bar z,\\
                                                \epsilon &=r^2\bar \epsilon,
                                               \end{cases}\eqlab{blowup}
% 
%  x=r^2 \bar x,\,y=r\bar y,\,z=r\bar z,\,\epsilon=r^2\bar \epsilon,
\end{align}
to the extended system $(\mbox{\eqref{fnnfeps}},\dot \epsilon=0)$. For this extended system, $x=y=z=\epsilon=0$ is fully nonhyperbolic -- its linearization having only zero eigenvalues -- but upon blowup \eqref{blowup}, we gain hyperbolicity of $r=0$ after desingularization through division of the resulting right hand sides by $r$. 
In particular, setting $\bar x=-1$ in \eqref{blowup} produces the following local form of \eqref{blowup}
\begin{align}
 (r_1,y_1,z_1,\epsilon_1) \mapsto \begin{cases}
                                                x&=-r_1^2,\\
                                                y&=r_1y_1,\\
                                                z&=r_1z_1,\\
                                                \epsilon &=r_1^2\epsilon_1.
                                               \end{cases}\eqlab{barXNeg1}
\end{align}
The local coordinates $(r_1,y_1,z_1,\epsilon_1)$ provide a coordinate chart `$\bar x=-1$', covering $[0,r_0)\times S_{\bar x<0}^3$ where $S_{\bar x<0}^3:=S^3 \cap \{\bar x<0\}$. Here $r_1=0$ corresponds to $r=0$. In this chart, 
one gains hyperbolicity of $C_a$ and $C_r$ for $r=0$ upon division of the right hand side by $r_1$. By center manifold theory, this then enables an extension of the Fenichel slow manifolds $S_{a,\epsilon}$ and $S_{r,\epsilon}$ as the $\mathcal B$-image of $\epsilon=$const. sections of three-dimensional invariant manifolds $M_{a}$ and $M_{r}$, respectively, for all $0<\epsilon\ll 1$. Following \cite{krupa2010a}, we shall abbreviate these extended manifolds in the $(x,y,z)$-space by $S_{a,\sqrt{\epsilon}}$ and $S_{r,\sqrt{\epsilon}}$, respectively; see \cite{szmolyan_canards_2001} for further details. 
\begin{remark}\remlab{martinrem}
It is important to highlight that, due to the contraction towards the weak canard, the forward (backward) flow of the Fenichel manifold $S_{a,\epsilon}$ ($S_{r,\epsilon}$, respectively) is only a subset of $S_{a,\sqrt{\epsilon}}$ ($S_{r,\sqrt{\epsilon}}$). Therefore when we intersect $S_{a,\sqrt{\epsilon}}$ and $S_{r,\sqrt{\epsilon}}$, extended by the forward and backward flow, it does not follow directly that the Fenichel manifolds $S_{a,\epsilon}$ and $S_{r,\epsilon}$ also intersect.  
% 
% due to the contraction towards the weak canard. %True `maximal' canards are intersections of $S_{a,\epsilon}$ 
\end{remark}

Notice that the blowup approach, following \eqref{barXNeg1} and the conservation of $\epsilon$, provide control of $S_{a,\sqrt{\epsilon}}$ and $S_{r,\sqrt{\epsilon}}$ up to $\mathcal O(\sqrt{\epsilon})$-close to the folded node $p$, justifying the use of the subscripts. To describe these manifolds beyond this, we have to look at the (scaling) chart obtained by setting $\bar \epsilon=1$. This produces the following local blowup transformation
\begin{align}
% \begin{align}
 (r_2,x_2,y_2,z_2) \mapsto \begin{cases}
                                                x&=r_2^2x_2,\\
                                                y&=r_2y_2,\\
                                                z&=r_2z_2,\\
                                                \epsilon &=r_2^2.
                                               \end{cases}\eqlab{scaledK2}
% \end{align}
%  x = r_2^2 x_{2},\,y = r_2 y_{2},\,z = r_2 z_{2},\,\epsilon = r_2^2,
\end{align}
using the chart-specified coordinates $(x_{2},y_{2},z_{2},r_2)$. The corresponding coordinate chart `$\bar \epsilon=1$'  covers $[0,r_0)\times S_{\bar \epsilon>0}^3$ where $S_{\bar \epsilon>0}^3:=S^3 \cap \{\bar \epsilon>0\}$. By inserting \eqref{scaledK2} into \eqref{fnnfeps}, dividing the right hand side by $r_2$ and subsequently setting $r_2=\sqrt{\epsilon}=0$, we obtain \eqref{fnnfeps20a}, repeated here for convenience:
%  By working on the `normal form'
  \begin{align}\eqlab{fnnfeps20}
 \begin{split}
 \dot x &=\frac12 \mu y-(\mu+1)z,\\
 \dot y &=1 ,\\
 \dot z &=x+z^2.
 \end{split}
\end{align}
% obtained from a blowup of the folded node singularity for $\epsilon=0$,
% the author 
% . 
In \eqref{fnnfeps20}, we have also dropped the subscripts on $(x_2,y_2,z_2)$. Two explicit algebraic solutions are known for this unperturbed system, one:
\begin{align*}
 \upsilon:\,\,(x,y,z) = \left(-\frac{\mu^2}{4}t^2+\frac{\mu}{2},t,\frac{\mu}{2} t\right)
\end{align*}
 corresponding to the `strong canard', 
while $\gamma$ in \eqref{gammaherea}, repeated here for convenience:
\begin{align}
 \gamma:\,(x,y,z) = \left(-\frac{1}{4}t^2+\frac12,t,\frac12 t\right),\eqlab{gammahere}
\end{align}
corresponds to the `weak canard', which we will focus on in this paper. 
\begin{remark}
Notice that the projection of \eqref{gammahere} onto the $(y,z)$-plane coincides with the span of the weak eigenvector \eqref{weakEigenvector}, explaining the use of `weak' in `weak canard'. Also, the orbit \eqref{gammahere} is unique as an orbit on the blowup sphere with these properties. This is obviously in contrast with reduced flow on $C_a$ where all trajectories within the funnel is assumption to the weak canard.

On a related issue, notice we abuse notation slightly: Most often, $\gamma$ will refer to \eqref{gammahere} as an orbit of  \eqref{fnnfeps20}. But by the coordinate chart `$\bar \epsilon=1$', this orbit also becomes a heteroclinic connection on $r=0,\,S^3_{\bar{\epsilon}\ge 0}$, connecting partially hyperbolic points on the equator $\bar \epsilon=0$ for the blowup system. We will use the same symbol for this orbit. At the same time, in \figref{reducedC} we also use the symbol $\gamma$ to highlight the weak eigendirection of the folded node as an attracting node of the desingularized reduced problem on $C_a$. A similar misuse of notation occurs for $\upsilon$. 
 \end{remark}

Restricting the center manifolds $M_{a}$ and $M_{r}$, obtained in the chart $\bar x=-1$, to $r=0$ we obtain, when writing the result in the chart $\bar \epsilon=1$, center-stable $W^{cs}(\mu)$ and center-unstable manifolds $W^{cu}(\mu)$ of \eqref{fnnfeps20} and $z\rightarrow \pm \infty$, respectively, consisting of solutions that grow algebraically as $t\rightarrow \pm \infty$, respectively. Following \cite{szmolyan_canards_2001}, a simple calculation shows that $W^{cs}(\mu)$ takes the local form:
% \begin{lemma}
\begin{align}
 W_{loc}^{cs}(\mu): x=-z^2+\frac12 (\mu+1) -\frac14 \mu yz^{-1} +z^{-2} m(yz^{-1},z^{-2}),\eqlab{WCSLOC}
\end{align}
for all $z$ sufficiently large and some smooth $m:I\times [0,\delta]\rightarrow \mathbb R$ for an appropriate interval $I\subset \mathbb R$ and $\delta>0$ sufficiently small. Due to the invariance of $\upsilon$ and $\gamma$, $m$ also satisfies $m(2,z^{-2})=m(2/\mu,z^{-2})=0$. 
%(See \appref{centermanifold} for details on the specific form in \eqref{WCSLOC}.)
By using the time-reversible symmetry $(x,y,z,t)\mapsto (x,-y,-z,-t)$ of \eqref{fnnfeps20}, a simple calculation
shows that the manifold $W^{cu}(\mu)$ takes an identical form, with the expression in \eqref{WCSLOC} 
% \begin{align}
%  W_{loc}^{cu}(\mu): x=-z^2+\frac12 (\mu+1) -\frac14 \mu yz^{-1} -z^{-2} m(yz^{-1},z^{-2}),\eqlab{WCULOC}
% % %  -x-2z^2+1=-z^2+\frac12 (\mu+1) +\frac14 \mu (y-4z)z^{-1} +z^{-2} m((y-4z)z^{-1},z^{-2}),
% \end{align}
now valid for all $z$ sufficiently negative. We illustrate the results of the blowup analysis in \figref{blowup0}. See figure caption for further description.
% x-> -x-2z^2+1. 
% y-> y-4z.
% z-> -z,
% -x-2z^2+1-+z^2-1/2 = -x-z^2+1/2 = -x_1.
% x_2 = y-4z +2z = y-2z.
% -x_3 = -2z2.
% \begin{align}
%  W_{loc}^{cs}(\mu): x=-z^2+\frac{1}{2z} \left((\mu+1)z-\frac12 \mu y +\frac{1}{z^2} (2z-y)(2z-\mu y) m(yz^{-1},z^{-2})\right),\eqlab{WCSLOC}
% \end{align}
% for $z\gg 1$ and some smooth $m:I\times [0,\delta]\rightarrow \mathbb R$ for an appropriate interval $I\subset \mathbb R$ and $\delta>0$ sufficiently small. \eqref{WCSLOC} contains $\gamma$ and the strong canard $\upsilon$.
% \end{lemma}
% See \appref{centermanifold} for details on the specific form in \eqref{WCSLOC}. A similar expression holds for $W_{loc}^{cu}(\mu)$ valid for $z\ll -1$.
% \begin{proof}
% 
% 
% \end{poof}
% \note{Working in the $\bar z=1$ coordinates
Guiding these manifolds along $\upsilon$ and $\gamma$ one obtains the global manifolds $W^{cs}(\mu)$ and $W^{cu}(\mu)$.  In particular, by considering the variational equations of \eqref{fnnfeps20} along $\gamma$ % where
% \begin{align*}
%  A(t) = \begin{pmatrix}
%          
%         \end{pmatrix}
% 
% \end{align*}
the following was shown in \cite{szmolyan_canards_2001}.
\begin{lemma}\lemmalab{lemmabif}
$W^{cs}(\mu)$ and $W^{cu}(\mu)$ intersect transversally along $\gamma$ if and only if $\mu \notin \mathbb N$. 
\end{lemma}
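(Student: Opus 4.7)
The plan is to identify the tangent planes of $W^{cs}(\mu)$ and $W^{cu}(\mu)$ along $\gamma$ with one-dimensional families of solutions of a scalar Hermite equation, and then appeal to the classical dichotomy for that equation.

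First, I would linearize \eqref{fnnfeps20} along $\gamma$ from \eqref{gammahere}. Writing a variation as $(\xi_1,\xi_2,\xi_3)$, the $y$-equation forces $\xi_2\equiv c$ constant. The vector $\dot\gamma$ corresponds to $c=1$ and always lies in $T_\gamma W^{cs}(\mu)\cap T_\gamma W^{cu}(\mu)$, so transversality is decided by variations with $c=0$. For those, $\dot\xi_1=-(\mu+1)\xi_3$ and $\dot\xi_3=\xi_1+t\xi_3$, and eliminating $\xi_1$ yields the homogeneous Hermite equation
\[
\ddot\xi_3 - t\,\dot\xi_3 + \mu\,\xi_3 = 0.
\]

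Second, I would use the local description \eqref{WCSLOC} to translate tangency into an asymptotic condition. Since a point of $W^{cs}(\mu)$ corresponds to a solution of \eqref{fnnfeps20} with at most polynomial growth as $t\to+\infty$, the tangent direction to $W^{cs}(\mu)$ along $\gamma$ transverse to $\dot\gamma$ is represented by the unique (up to scaling) polynomially bounded solution $\xi^{cs}$ of the variational equation as $t\to+\infty$; equivalently, its third component $\xi^{cs}_3$ is the unique polynomially bounded solution of the Hermite equation above at $+\infty$. Applying the time-reversible symmetry $(x,y,z,t)\mapsto(x,-y,-z,-t)$, which fixes $\gamma$ and maps $W^{cs}(\mu)$ to $W^{cu}(\mu)$, then shows that $T_\gamma W^{cu}(\mu)$ is spanned by $\dot\gamma$ and a solution whose third component $\xi^{cu}_3(t)=\xi^{cs}_3(-t)$ is polynomially bounded at $-\infty$.

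Because the variational equation is linear, $\xi^{cs}_3$ and $\xi^{cu}_3$ are linearly independent at one point of $\gamma$ if and only if they are linearly independent at every point. Therefore, transversality of $W^{cs}(\mu)$ and $W^{cu}(\mu)$ along $\gamma$ is equivalent to the non-existence of a nonzero solution of $\ddot u - t\,\dot u + \mu u = 0$ that is polynomially bounded at both $\pm\infty$. Classical Hermite/Weber theory says such a solution exists iff $\mu=n\in\mathbb N$, in which case it is a scalar multiple of the Hermite polynomial $H_n$; for $\mu\notin\mathbb N$ each independent solution grows like $e^{t^2/2}$ at one of the two infinities. This yields the lemma.

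The main obstacle is the second step: rigorously extracting the polynomial-growth asymptotic for the variation from the local form \eqref{WCSLOC}, and confirming that the $\mathcal O(z^{-2})$ remainder in \eqref{WCSLOC} (in particular the unknown function $m$) does not disturb the one-dimensional subspace of Hermite solutions singled out in this way. The remaining asymptotic analysis of Hermite's equation, and of the corresponding parabolic cylinder functions, is standard.
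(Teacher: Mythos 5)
Your proposal is correct and follows essentially the same route as the paper (which cites \cite{szmolyan_canards_2001} for this lemma but reproduces the mechanism in \lemmaref{eqvH4}~(\ref{3iv}), \thmref{recipe}~(2) and \secref{foldednode}): reduce the variational equation along $\gamma$ to a Weber equation and invoke the classical fact that a solution polynomially bounded at both $\pm\infty$ exists precisely at non-negative integer index. The only cosmetic difference is that you work in the unrectified coordinates of \eqref{fnnfeps20}, obtaining $L_\mu\xi_3=0$ with polynomial solution $H_\mu(t/\sqrt{2})$, whereas the paper first rectifies $\gamma$ via \eqref{rectify} and lands on $L_{\mu-1}z_1=0$; the technical step you flag (identifying $T_\gamma W^{cs}$ with the algebraically bounded solutions of the variational equation) is exactly the content of \lemmaref{eqvH4}~(\ref{3iv}), quoted there from \cite{szmolyan_canards_2001}.
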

 By regular perturbation theory, the extension of the slow manifolds $S_{a,\sqrt{\epsilon}}$ and $S_{r,\sqrt{\epsilon}}$ by the flow are therefore smoothly $\mathcal O(r_2=\sqrt{\epsilon})$-close to $W^{cu}(\mu)$ and $W^{cs}(\mu)$, respectively, in compact subsets of the chart $\bar \epsilon=1$. Hence, as a consequence of \lemmaref{lemmabif}, for every $\mu \notin \mathbb N$ there exist a transverse intersection of $S_{a,\sqrt{\epsilon}}$ and $S_{r,\sqrt{\epsilon}}$ which is $\mathcal O(\sqrt{\epsilon})$-close to $\gamma$ in fixed compact subsets of the scaling chart. In general, recall \remref{martinrem}, it seems that there do not exist any results on how far this perturbed `weak canard' extends and whether `it' (being nonunique) actually reaches the true Fenichel slow manifolds $S_{a,\epsilon}$ and $S_{r,\epsilon}$.  
 The situation is different for $\upsilon$. First and foremost, $W^{cs}(\mu)$ and $W^{cu}(\mu)$ always intersect transversally along this orbit. Consequently, $\upsilon$ always perturbs as a `strong maximal canard' for all $0<\epsilon\ll 1$, and this perturbed version always reaches the Fenichel manifolds. This latter property is a consequence of the repelling nature of $\upsilon$, `forcing' $S_{a,\sqrt{\epsilon}}$ ($S_{r,\sqrt{\epsilon}}$) and the forward (backward) flow $S_{a,\epsilon}$ ($S_{r,\epsilon}$, respectively) to coincide near this object. 
 \begin{figure}[h!]
\begin{center}
{\includegraphics[width=.795\textwidth]{./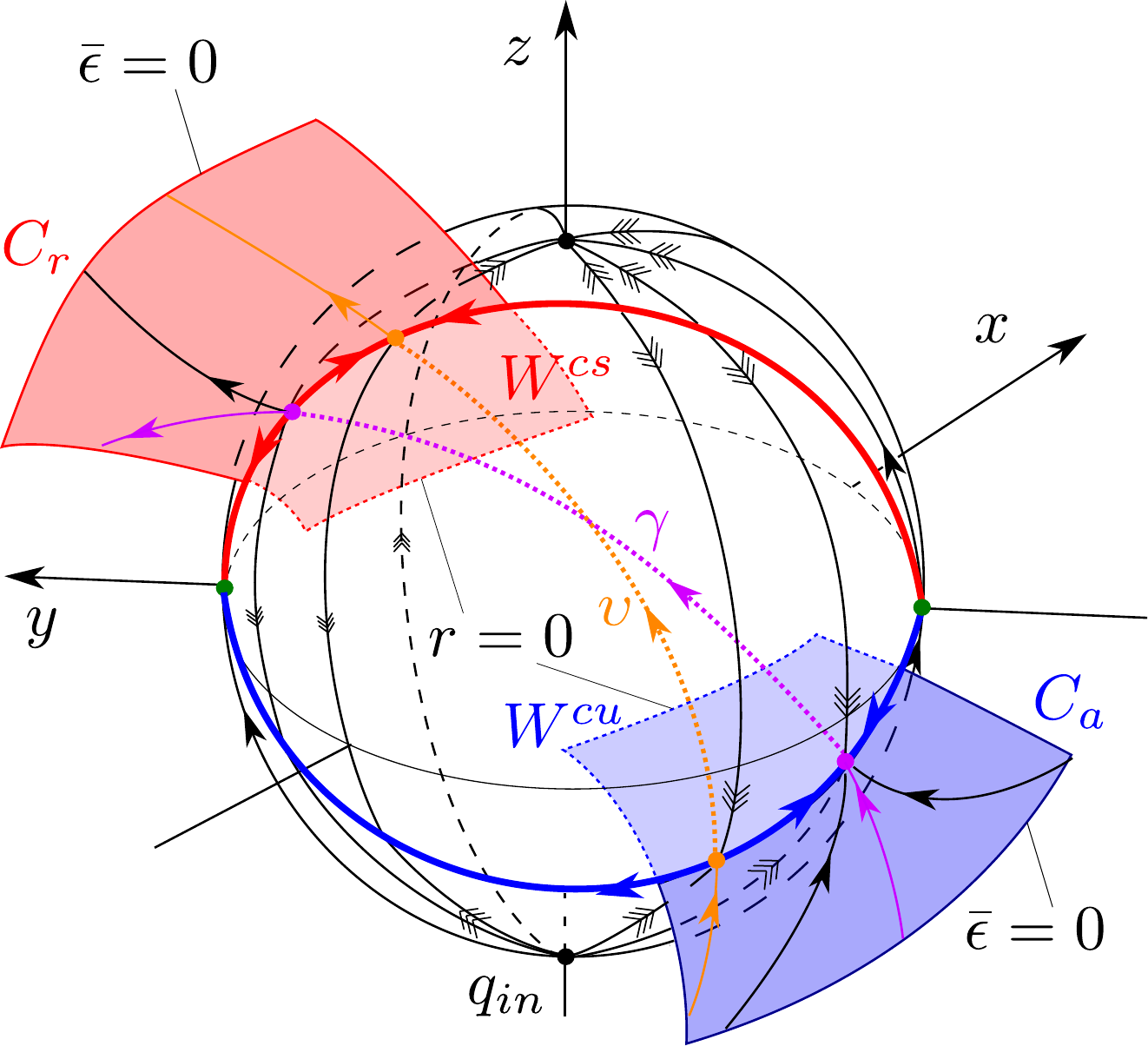}}
% \subfigure[]{\includegraphics[width=.495\textwidth]{layerReducedBlowup.pdf}}
% \input{posusts}
\end{center}
\caption{Illustration of the blowup of $p$ for $\epsilon=0$ to a hemisphere $S^3_{\bar \epsilon\ge 0}:=S^3\cap \{\bar \epsilon\ge 0\}$. In this figure, we represent $r=0$, $S^r_{\bar \epsilon\ge 0}$ -- by projection -- as a solid `ball' in the $(\bar x,\bar y,\bar z)$-space, emphasizing those objects that are inside by using dotted lines. The $S^2$ sphere, being the boundary of the ball, corresponds to $r=\bar \epsilon=0$, whereas everything inside of the ball corresponds to $r=0,\bar \epsilon>0$. Outside of the ball, we represent $r>0,\,\bar \epsilon=0$, highlighting, in particular, the critical manifolds $C_a$ and $C_r$ and their reduced flow. Through the blowup we gain hyperbolicity of $C_a$ and $C_r$ for $r=0$ (indicated by triple-headed arrows) along the lines (in blue and red, respectively) of partially hyperbolic equilibria. By center manifold theory, these lines produce two three-dimensional manifolds, $M_{a}$ and $M_r$ (not shown), having submanifolds within $r=0$, denoted by $W^{cu}$ and $W^{cs}$. These local two-dimensional manifolds are shown in lighter blue and red, respectively, since they extend inside the sphere. Also, within the sphere $r=0,\,\bar \epsilon>0$ we illustrate the orbits $\upsilon$ (orange) and $\gamma$ (purple), the `singular canards', connecting partially hyperbolic points within $r=\bar \epsilon=0$ on $W^{cu}$ and $W^{cs}$, respectively. The transversality of $W^{cu}$ and $W^{cs}$ along $\upsilon$, and along $\gamma$ for any $\mu\notin \mathbb N$, produce, transverse intersections of $S_{a,\sqrt{\epsilon}}$ and $S_{r,\sqrt{\epsilon}}$, since these objects, obtained as $\epsilon=\text{const}.$ sections of $M_{a}$ and $M_r$, respectively,  are smoothly $\mathcal O(r_2=\sqrt{\epsilon})$-close on $\bar \epsilon>0$ to $W^{cu}$ and $W^{cs}$, respectively. }%These perturbed intersections are the `strong' and `weak' canards, respectively, where only the former is known to produce an actual intersection of the Fenichel slow manifolds, recall \remref{martinrem}. }
\figlab{blowup0}
\end{figure}
% 
% CONSEQUENCES: CANARDS! refer to REMARK 4.4 below
\subsection{Main result}
Using a Melnikov approach, it was shown in \cite[Theorem 3.1]{wechselberger_existence_2005} that a transcritical bifurcation of the intersection of $W^{cs}(\mu)$ and $W^{cu}(\mu)$  occurs along $\gamma$ for any odd $\mu=2k-1$, $k\in \mathbb N$. %See also \thmref{main0} below. 
As a result, additional (secondary) `canards', connecting $S_{a,\sqrt{\epsilon}}$ with $S_{r,\sqrt{\epsilon}}$, exist near $\mu = 2k-1$, for all $0<\epsilon\ll 1$ by regular perturbation theory. In this paper, we prove the existence of a pitchfork bifurcation for $\mu=2k$. 
% NOT NEEDED HERE? 
%It is \eqref{fnnf} that we will study in the following. 
% \end{remark}
% 
We then have the following complete result regarding the bifurcations of `canards' for \eqref{fnnfeps20}:
\begin{theorem}\thmlab{main1}
%  \begin{theorem}\thmlab{main1}
Consider any $n\in \mathbb N$ and let $k\in \mathbb N$ be so that 
\begin{align*}
 n = \begin{cases}
      2k-1 & n=\textnormal{odd}\\
      2k & n=\textnormal{even}
     \end{cases}.
\end{align*}
Set $\mu=n+\alpha$ and let 
\begin{align}D(v,\alpha)=0,\eqlab{bifeqn}\end{align} be the bifurcation equation (to be defined formally below in \eqref{MelnikovD} locally near $(v,\alpha)=(0,0)$) where each solution $(v,\alpha)$ corresponds to an intersection of $W^{cs}(\mu)$ and $W^{cu}(\mu)$. In particular, $D(0,\alpha)=0$ for all $\alpha$ due to the existence of the connection $\gamma$. Then 
\begin{enumerate}
 \item \label{nodd} For $n=\textnormal{odd}$, \eqref{bifeqn} is locally equivalent with the transcritical bifurcation:
 \begin{align}
  \tilde v(\tilde \alpha+(-1)^k \tilde v)=0.\eqlab{nftranscrit}
 \end{align}
\item \label{neven} For $n=\textnormal{even}$, \eqref{bifeqn} is locally equivalent with the pitchfork bifurcation:
\begin{align*}
  \tilde v (\tilde \alpha +\tilde v^2) = 0.
 \end{align*}
\end{enumerate}
In each case, the local conjugacy $\phi:(v,\alpha)\mapsto (\tilde v,\tilde \alpha)$ satisfies $\phi(0,0)=(0,0)$ and
\begin{align}
 D\phi(0,0) = \textnormal{diag}\,(d_1(n),d_2(n)) \quad \mbox{with $d_i(n)>0$ for every $k$.}\eqlab{conjugacy}
\end{align}

% For $n=2k-1$ the 
% 
% 
% For $n=2k$, the bifurcation equation $D(\rho,\alpha)=0$ is locally equivalent with the pitchfork normal form:
 
% \end{theorem}
\end{theorem}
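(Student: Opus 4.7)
The plan is to construct a Melnikov-type bifurcation equation adapted to the time-reversible symmetry $R:(x,y,z,t)\mapsto(x,-y,-z,-t)$ of \eqref{fnnfeps20} and analyze its Taylor expansion at $(v,\alpha)=(0,0)$. Take the section $\Pi:=\{y=0\}$, which is $R$-invariant and contains $\gamma(0)=(1/2,0,0)$; the fixed set of $R$ inside $\Pi$ is the $x$-axis. Since $R$ interchanges $W^{cs}(\mu)$ and $W^{cu}(\mu)$, a point of $W^{cu}(\mu)\cap\Pi$ lies on $W^{cs}(\mu)$ iff it lies on this $x$-axis. I would parametrize $W^{cu}(\mu)\cap\Pi$ smoothly by $v\in(-\delta,\delta)$ with $v=0$ at $\gamma(0)$, and define $D(v,\alpha)$ to be the $z$-coordinate of the resulting point. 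Persistence of $\gamma\subset W^{cs}(\mu)\cap W^{cu}(\mu)$ yields $D(0,\alpha)\equiv 0$, so $D(v,\alpha)=v\,E(v,\alpha)$, and the problem reduces to the Taylor expansion of $E$ at the origin.

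To access the coefficients of $E$ I would characterize $W^{cu}(\mu)$ locally near $\gamma$ by higher-order variational equations: write points of $W^{cu}(\mu)$ as $\gamma(t)+v\,\xi_1(t)+v^2\,\xi_2(t)+\cdots$ with the asymptotic matching at $t\to-\infty$ imposed by the local form \eqref{WCSLOC}. Using $z(t)=t/2$ from \eqref{gammahere} and eliminating $\xi_x$, each $\xi_j$-equation reduces to a scalar Hermite-type equation, equivalent after a Gaussian substitution to an inhomogeneous Weber equation with parameter depending on $\mu$. At $\mu=n\in\mathbb N$ the homogeneous equation admits a polynomial Hermite solution $H_n$ that is the unique solution bounded at both $\pm\infty$; this is precisely the tangency mechanism of \lemmaref{lemmabif} and it forces $E(0,0)=0$. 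The $\xi_j$-equations for $j\ge 2$ are inhomogeneous with polynomial sources built from $\gamma$ and the previous $\xi_i$, and can be solved explicitly by variation of parameters against the two linearly independent parabolic cylinder solutions.

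Substituting these explicit expressions into the definition of $D$ converts the coefficients of $E$ at the origin into finite integrals of polynomial sources against the Hermite eigenfunction. A direct computation using Hermite orthogonality shows $\partial_\alpha E(0,0)\ne 0$ for every $n$. The second-order coefficient $\partial_v E(0,0)$ is again an integral against $H_n$; by the parity of $H_n$ under $t\mapsto -t$, combined with the parity of the source coming from $\xi_1$, this integral is nonzero with sign $(-1)^k$ for odd $n=2k-1$ but vanishes identically for even $n=2k$. The odd case therefore reproduces the transcritical result of \cite{wechselberger_existence_2005}; the even case forces one to compute the third-order Melnikov coefficient $\partial_v^2 E(0,0)$, which is assembled from $\xi_2$.

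The main obstacle, and the genuinely new contribution beyond \cite{wechselberger_existence_2005,mitry2017a}, is the closed-form evaluation of $\partial_v^2 E(0,0)$ for every even $n$. I would solve the $\xi_2$-Weber equation explicitly, substitute into the integral, and reduce the result using differentiation in the Weber index together with the three-term recurrences of Hermite polynomials to a finite combinatorial sum whose nonvanishing can be verified for all $k\in\mathbb N$. Once the nonvanishing of $\partial_\alpha E(0,0)$ and, in each case, of the appropriate $v$-derivative of $E$ has been established (with signs matching the stated normal forms), the classical singularity-theory normal forms for the transcritical and pitchfork bifurcations yield the asserted equivalences, and the diagonal rescaling in \eqref{conjugacy} absorbs the two nonzero coefficients into the positive constants $d_1(n),d_2(n)$.
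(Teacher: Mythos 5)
Your computational machinery (characterizing the manifolds by higher-order variational equations, reducing to an inhomogeneous Weber equation, and evaluating the integrals via Hermite recurrences and orthogonality) is exactly the engine the paper uses, but the bifurcation function you set up at the outset is the wrong object, and the error is fatal precisely in the even case that is the new content of the theorem. Your claim that a point of $W^{cu}(\mu)\cap\Pi$ lies on $W^{cs}(\mu)$ \emph{iff} it lies on the fixed set of $R$ holds in only one direction: a fixed point of $R$ on $W^{cu}$ does lie on $W^{cs}=RW^{cu}$, but an intersection point need not be fixed. For $n$ even the two new branches produced by the pitchfork are a pair of \emph{asymmetric} orbits exchanged by $\sigma$ (see the remark after \corref{symmetryLemma} and \propref{secondarcanards2k}); such an orbit meets $\mathrm{Fix}(R)$ nowhere (if it did, uniqueness of solutions would force it to be symmetric), so its crossing of $\Pi=\{y=0\}$ has $z\neq 0$ and is invisible to your $D$. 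Compounding this, for $n$ even the space $V$ is the $y$-direction (pull \eqref{Wscap} back through \eqref{rectify}), so $T_{\gamma(0)}W^{cu}(0)$ is the $(y,z)$-plane and the curve $W^{cu}(\mu)\cap\{y=0\}$ is \emph{tangent to the $z$-axis} at $\gamma(0)$; the $z$-coordinate along it is then a local diffeomorphism of your parameter $v$, so your $D(v,\alpha)\approx cv$ with $c\neq 0$, your $E(0,0)\neq 0$, and no pitchfork can be read off. The paper avoids both problems by taking the section $\Sigma$ inside $V\oplus W$, writing $W^{cs}_0$ and $W^{cu}_0$ as graphs $w=h_{cs}(v,\alpha)$, $w=h_{cu}(v,\alpha)$ over $V$, and setting $D=h_{cu}-h_{cs}$ as in \eqref{MelnikovD}; reversibility then enters through the identity $D(v,\alpha)=\sigma_w h_{cs}(\sigma_v v,\alpha)-h_{cs}(v,\alpha)$ of \lemmaref{MelnikovDNew}, which for $n$ even makes $D$ odd in $v$ (killing all even $v$-derivatives structurally) while still registering the asymmetric intersections.

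A second, independent gap: you assert that the third-order coefficient is ``a finite combinatorial sum whose nonvanishing can be verified for all $k\in\mathbb N$.'' That is the hard step, not a verification. The closed-form sum \eqref{Dv3Final} has $k$ positive and $k$ negative terms, and the paper must prove the pairing estimate \eqref{ckprop}, namely $|c_{k(k-l)}/c_{k(k+l-1)}|>2^{2l-1}$, to show that the positive terms dominate. Without an argument of this kind the proof of the pitchfork remains incomplete even once the Melnikov function is set up correctly.
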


\thmref{main1} item (\ref{nodd}) is covered by \cite{wechselberger_existence_2005}. In particular, it is shown (see \cite[Propositions 3.2 \& 3.3]{wechselberger_existence_2005}) that 
\begin{align*}
 \textnormal{sign}\,\frac{\partial^2 D}{\partial v^2}(0,0) &= \textnormal{sign}\,(-1)^k,\\
 \frac{\partial^2 D}{\partial v\partial \alpha}(0,0) &> 0,
\end{align*}
which produces \eqref{conjugacy} by singularity theory \cite{golubitsky1988a}. We will therefore only prove \thmref{main1} item (\ref{neven}) in the following. Notice, however, that in \cite{wechselberger_existence_2005}, the Melnikov function is defined for all $r_2=\sqrt{\epsilon}$ sufficiently small, measuring the intersection of $S_{a,\sqrt{\epsilon}}$ and $S_{r,\sqrt{\epsilon}}$ directly (rather than measuring the $\epsilon=0$ objects $W^{cu}(\mu)$ and $W^{cs}(\mu)$). Nevertheless, seeing that the bifurcations in \thmref{main1} are for the $r_2=\sqrt{\epsilon}=0$ system, we will in this paper just focus on $\epsilon=0$ (and will only describe the perturbation of transverse intersection points into $0<\epsilon\ll 1$, see e.g. \remref{secondarycanards1} and \remref{remsecondarcanards2k}).

\subsection{Overview}
The remainder of the paper is organized as follows: In the next \secref{melnikov}, we review the Melnikov theory in \cite{wechselberger_existence_2005} in further details and extend this approach to time-reversible systems, see also \cite{knobloch1997a}. The result is collected in \thmref{Melnikov}. This is relevant for \eqref{fnnfeps20},
since this equation is time-reversible with respect the following symmetry 
\begin{align}
\sigma=\textnormal{diag}\,(1,-1,-1):\quad \mbox{If $(x,y,z)(t)$ is a solution of \eqref{fnnfeps20} then so is $\sigma (x,y,z)(-t)$.} \eqlab{sigmafnnf}
\end{align} 
This reduces the proof of our main result, \thmref{main1} item (\ref{neven}) on the pitchfork bifurcation, to evaluating two integrals; one of which is already covered by \cite{wechselberger_existence_2005}, while the other one is the `third order' Melnikov integral mentioned above. We evaluate these integrals by characterizing the manifolds $W^{cu}(\mu)$ and $W^{cs}(\mu)$ locally through solutions of `higher order variational equations' rather than, how it is done in \cite{mitry2017a}, their (implicit) formulation through integral equations. We describe our approach further in \secref{recipe} and how these variational equations can be solved upon reduction to an inhomogeneous Weber equation. In this section, we also present a general class of systems, that include the folded node normal form, the Falkner-Skan equation and the Nos\'e equations, for which we can show, see \thmref{recipe}, that our method produces closed form expressions of the appropriate Melnikov integrals. These results rely on properties of Hermite polynomials $H_n$, $n\in \mathbb N_0$. All information on these orthogonal polynomials that is relevant to the present manuscript is available in \appref{Hermite}. 

The proof of \thmref{main1} is presented in \secref{foldednode} below, see \lemmaref{main1} proven towards the end of the section. We show that our expressions agree with the computations in \cite[App. A]{mitry2017a} in \appref{compare}. Next, in \secref{blowup} we will use the time-reversible setting and the blowup approach to show that the additional intersections produced by the pitchfork bifurcation do not reach the actual Fenichel slow manifolds for any $0<\epsilon\ll 1$, see \propref{secondarcanards2k} and \remref{remsecondarcanards2k}. They do therefore not produce `true' canards. This is in contrast to the case $\mu=2k-1$ where it is known that true `secondary' canards are produced for every $\mu>2k-1$ and $0<\epsilon\ll 1$. We also provide a new geometric explanation for this property in \secref{blowup}, see \propref{gammasctranscrit}. Although these statements about canards are probably known to most experts in the field, we believe that we present the first rigorous proofs of these facts.  
% \subsection{Further motivation}
In our final \secref{more}, we consider some other equations: the two-fold, the Falkner-Skan equation \eqref{falkerskan0}, and the Nos\'e equations \eqref{nose}, for which our time-reversible Melnikov approach in \secref{melnikov} is also applicable. In particular, for the Falkner-Skan and for the Nos\'e equations we provide a new geometric proof of the emergence of symmetric periodic orbits from infinity in these systems. For the Nos\'e equations, we show that for $\mu>1$ periodic orbits only emerge for $\mu\in \mathbb N$, a result that escaped \cite{swinnerton-dyer2008a}. %In particular, we produce the 
We conclude the paper in \secref{conclusion}.
% \subsection{Blowup analysis}
\section{A Melnikov theory for time-reversible systems}\seclab{melnikov}

The reference \cite{wechselberger2002a} describes a Melnikov theory for connection problems of nonhyperbolic points at infinity. In this section, we will review this approach in the context of time-reversible systems. 
% \begin{align*}
%  
% \end{align*}
For simplicity, we restrict to $\mathbb R^3$ and consider a general smooth ODE
\begin{align}
 \dot x &=f(x,\alpha),\eqlab{xf}
\end{align}
for $x=x(t)\in \mathbb R^3$, depending on a parameter $\alpha\in \mathbb R$,
and assume the following:
\begin{itemize}
\item[(H1)] There exists a time-reversible symmetry
\begin{align}
 (x,t)\mapsto (\sigma x,-t),\eqlab{Tsym}
\end{align}
with $\sigma\in \mathbb R^{3\times 3}$ being an involution: $\sigma^2=\textnormal{id}$, $\textnormal{id}
=\text{diag}(1,1,1)$ being the identity matrix in $\mathbb R^{3\times 3}$, such that $$f(\sigma x,\alpha) = -\sigma f(x,\alpha),$$ for all $x$ and all $\alpha$. 
\end{itemize}
Therefore: 
\begin{align*}
\mbox{If $x(t)$ is solution of \eqref{xf} then so is $\sigma x(-t)$.}
\end{align*}As is standard, we say that an orbit $x$ with parametrization $x(t)$, which is a fix-point of the symmetry: $x(t) = \sigma x(-t)$ for all $t$, is `symmetric'. On the other hand, in general two orbits $x_1\ne x_2$, for which $x_2(t)=\sigma x_1(-t)$, is said to be `symmetrically related'.
% Clearly, the symmetry \eqref{Tsym} -- generating a group consisting of two elements -- is isomorphic to the cyclic group $\mathbb Z_2$. 

% \begin{remark}
%  For \eqref{fnnf}, the symmetry is $\sigma=\textnormal{diag}\,(1,-1,-1)$ and the special solution $\gamma$ in \eqref{gammaSol} is clearly symmetric with respect to this $\sigma$.
% \end{remark}
Furthermore, we say that a solution $x(t)$ of \eqref{xf} has algebraic growth for $t\rightarrow \infty$ if there exists a $\nu>0$ such that $\sup_{t\ge 0}\vert x(t)\vert (\vert t\vert +1)^{-\nu}<\infty$ for $\nu>0$ large enough. Specifically, we define the Banach space
\begin{align*}
C_{b,+}(\nu): = \{x\in C([0,\infty),\mathbb R^3)\vert \sup_{t\ge 0}\{\vert x(t)\vert(\vert t\vert +1)^{-\nu} <\infty\},
\end{align*}
 for $\nu>0$ fixed, see \cite{wechselberger2002a}. Similarly, a solution $x(t)$ of \eqref{xf} has algebraic growth for $t\rightarrow - \infty$ if $\sup_{t\le 0}\vert x(t)\vert (\vert t\vert +1)^{-\nu}<\infty$ for $\nu>0$ large enough. Accordingly, we define 
 \begin{align*}
 C_{b,-} (\nu):= \{x\in C((-\infty,0],\mathbb R^3)\vert \sup_{t\le 0}\{\vert x(t)\vert(\vert t\vert+1)^{-\nu} <\infty\}.
% \end{align*}
\end{align*}
We will suppress $\nu$ in $C_{b,+}(\nu)$ and $C_{b,-}(\nu)$ whenever it is convenient to do so.

Next, we assume
\begin{itemize}
 \item[(H2)] For $\alpha=0$ there exists a symmetric solution $\gamma$ with parametrization $\gamma(t)$, $t\in \mathbb R$, of at most algebraic growth for $t\rightarrow \pm \infty$, i.e. $\gamma(t)\in C_{b,+}(\nu)$ with $\nu>0$ large enough. Without loss of generality we suppose that 
 \begin{align*}
  \gamma(0)=0.
 \end{align*}

 \item[(H3)] There exists a three-dimensional smooth invariant manifold $W^{cs}$ in the extended system
  \begin{align}\eqlab{xfext}
  \begin{split}
   \dot x &=f(x,\alpha),\\
   \dot \alpha &=0.
   \end{split}
  \end{align}
Here $W^{cs}$ denotes the center-stable manifold consisting of all solution curves $(x(t),\alpha)$ of \eqref{xf} near (and including) $(x,\alpha)=(\gamma(t),0)$ (in a sense specified below) for which $x(t)\in C_{b,+}(\nu)$, for $\nu>0$ large enough.
\end{itemize}
$W^{cs}$ is foliated by two-dimensional invariant manifolds $W^{cs}(\alpha)$ of \eqref{xf} for fixed values of $\alpha$, sufficiently small.

By (H1) and (H3), there exists a center-unstable manifold 
\begin{align}\eqlab{Wcu}
W^{cu}(\alpha):=\sigma W^{cs}(\alpha):= \{\sigma q\vert q\in W^{cs}(\alpha)\},\end{align} consisting of all solution curves $(z(t),\alpha)$ of \eqref{xf} near (and including) $(x,\alpha)=(\gamma(t),0)$ for which $x(t)\in C_{b,-}(\nu)$.
\begin{itemize}
 \item[(H4)] Let $U:=\textnormal{span}(\dot \gamma(0))$. Then for $\alpha=0$ there exists a one-dimensional linear space $V$ such that
 $$T_{\gamma(0)} W^{cs}(0)\cap T_{\gamma(0)} W^{cu}(0) =U \oplus  V,$$
 is a two-dimensional subspace.
\end{itemize}
(H4) implies that the manifolds $W^{cs}(0)$ and $W^{cu}(0)$ intersect tangentially along $\gamma$ for $\alpha=0$. In fact, seeing that $W^{cu}=\sigma W^{cs}$ we have
\begin{lemma}\lemmalab{eqvH4}
 The following statements are equivalent:
 \begin{enumerate}
  \item (H4) holds.
  \item \label{3ii} $T_{\gamma(0)} W^{cs}(0)=T_{\gamma(0)} W^{cu}(0)$ and the intersection of $W^{cs}(0)$ and $W^{cu}$ along $\gamma$ is tangential.
  \item \label{3iii} $T_{\gamma(0)} W^{cs}(0)$ is an invariant subspace for $\sigma$: $x\in T_{\gamma(0)} W^{cs}(0)$ $\Longrightarrow$ $\sigma x\in T_{\gamma(0)} W^{cs}(0)$. 
  \item \label{3iv} The variational equation along $\gamma$ for $\alpha=0$:
\begin{align}
\dot z &=A(t) z,\eqlab{vargen}
\end{align}
where $A(t)=D_xf(\gamma(t),0)$, has two linearly independent solutions $z_1(t)=\dot \gamma(t)$ and $z_2(t)$ for which $z_i\in C_{b,+}\cap C_{b,-}$. 
 \end{enumerate}
\end{lemma}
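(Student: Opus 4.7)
The plan is to establish the four equivalences by first recording the basic dimensional facts, then using the symmetry to relate the two tangent spaces, and finally identifying tangent vectors at $\gamma(0)$ with initial data of variational solutions having prescribed growth behavior. Since $W^{cs}(0)$ is two-dimensional, so is $T_{\gamma(0)}W^{cs}(0)$, and the same holds for $W^{cu}(0)$. Because $\gamma(0)=0$ and $\sigma\cdot 0=0$, the relation $W^{cu}(0)=\sigma W^{cs}(0)$ from \eqref{Wcu} differentiates to
\begin{align*}
 T_{\gamma(0)}W^{cu}(0)=\sigma\, T_{\gamma(0)}W^{cs}(0).
\end{align*}
Moreover, $\dot\gamma(0)$ lies in both tangent spaces (since $\gamma\subset W^{cs}(0)\cap W^{cu}(0)$), so the intersection is at least one-dimensional.

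For (1)$\Leftrightarrow$(\ref{3ii}): as the two tangent spaces are both two-dimensional, a two-dimensional intersection (i.e. the content of (H4) with $V\not\subset U$) forces the tangent spaces to coincide at $\gamma(0)$; conversely equality clearly gives a two-dimensional intersection. The upgrade from equality at $\gamma(0)$ to tangential intersection along all of $\gamma$ is immediate because both $W^{cs}(0)$ and $W^{cu}(0)$ are invariant under the flow of \eqref{xf}: the push-forward by the time-$t$ flow maps $T_{\gamma(0)}W^{cs}(0)$ onto $T_{\gamma(t)}W^{cs}(0)$ and similarly for $W^{cu}(0)$, so equality propagates.

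For (1)$\Leftrightarrow$(\ref{3iii}): by the displayed identity, $T_{\gamma(0)}W^{cs}(0)=T_{\gamma(0)}W^{cu}(0)$ is the same as $T_{\gamma(0)}W^{cs}(0)=\sigma T_{\gamma(0)}W^{cs}(0)$, which is precisely $\sigma$-invariance. Combined with the previous step this gives (\ref{3iii}).

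For (\ref{3ii})$\Leftrightarrow$(\ref{3iv}): tangent vectors to $W^{cs}(0)$ at $\gamma(0)$ are in bijection, via the time-$0$ evaluation, with solutions $z(t)$ of the variational equation \eqref{vargen} that remain of at most algebraic growth as $t\to +\infty$, i.e. with elements of the two-dimensional space
\begin{align*}
 \mathcal S^+:=\{z:\dot z=A(t)z,\; z\in C_{b,+}\},
\end{align*}
and analogously tangent vectors to $W^{cu}(0)$ correspond to the two-dimensional space $\mathcal S^-$ of variational solutions in $C_{b,-}$. Equality of the tangent spaces therefore amounts to $\mathcal S^+=\mathcal S^-$, i.e.\ to the existence of a two-dimensional space of variational solutions in $C_{b,+}\cap C_{b,-}$. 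Since $z_1(t)=\dot\gamma(t)$ is always such a solution (differentiating \eqref{xf} along $\gamma$ together with the algebraic growth of $\gamma$ from (H2) and smoothness of $f$), this is equivalent to the existence of a second linearly independent solution $z_2\in C_{b,+}\cap C_{b,-}$, which is (\ref{3iv}). The main thing to verify carefully is this identification of $T_{\gamma(0)}W^{cs}(0)$ with $\mathcal S^+$ (and similarly for $W^{cu}(0)$); this is the content of the $C_{b,\pm}$-based construction of $W^{cs/cu}$ developed in \cite{wechselberger2002a}, and is the only nontrivial input beyond linear algebra and invariance under the flow.
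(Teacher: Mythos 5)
Your proof is correct and follows essentially the same route as the paper: (1)$\Leftrightarrow$(2) by dimension counting and flow invariance, (2)$\Leftrightarrow$(3) via $T_{\gamma(0)}W^{cu}(0)=\sigma T_{\gamma(0)}W^{cs}(0)$ (using $\gamma(0)=0$), and the equivalence with (4) by identifying tangent vectors with variational solutions of algebraic growth, which the paper likewise defers to the standard result in \cite{szmolyan_canards_2001}/\cite{wechselberger2002a}. You merely spell out some steps the paper calls trivial.
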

\begin{proof}
 %NEED TO WORK ON THIS. 
 (1) $\Leftrightarrow$ (2) is trivial, seeing that $W^{cs}(0)$ and $W^{cu}(0)$ are two-dimensional manifolds. (\ref{3iii}) $\Leftrightarrow$ (\ref{3ii}) follows from the following computation:
$T_{\gamma(0)} W^{cu}(0) = T_{\gamma(0)} \sigma W^{cs}(0) = T_{\gamma(0)} W^{cs}(0)$ by (H1), recall \eqref{Wcu}. %Finally, (\ref{3ii}) $\Rightarrow$ (\ref{3iii}) follows the same computation.
Finally, (1) $\Leftrightarrow$  (\ref{3iv}) is standard, see \cite[Proposition 4.4]{szmolyan_canards_2001}. Indeed, variations along the two-dimensional space $T_{\gamma(0)} W^{cs}(0)\cap T_{\gamma(0)} W^{cu}(0)$ correspond to algebraic growth as $t\rightarrow \pm \infty$. 
% Finally, we prove (\ref{3iv}) $\Rightarrow$ (1), (1) $\Rightarrow$ (\ref{3iv}) being obvious. Firstly, for (1) $\Rightarrow$ (\ref{3iv}), 
\end{proof}
% 
% 
% , the condition (H4) is equivalent with $T_\gamma(0) W^{cs}(0)$ being an invariant subspace of $\sigma$. 

% 

Next, following \cite{wechselberger_existence_2005} let 
\begin{align}
W = T_{\gamma(0)} W^{cs}(0)^\perp.\eqlab{WDefinition} %\left(T_{\gamma(0)} W^{cu}(0)+T_{\gamma(0)}\sigma W^{cs}(0)\right)^\perp.
\end{align}
Then $\mathbb R^3 = U\oplus V\oplus W$. Let $e_u$, $e_v$ and $e_w$ be unit vectors spanning $U$, $V$ and $W$, respectively, and denote the coordinates of any $x\in \mathbb R^3$ with respect to this basis $\{e_u,e_v,e_w\}$ by $(u,v,w)$. 
% Then the set $(u,v,w)$ is a basis of $\mathbb R^3$ 
 Fix $r>0$ small and let $B_r$ be the ball of radius $r$ centered at $\gamma(0)$. We then define a local section $\Sigma$ transverse to $\gamma$ at $\gamma(0)=0$ by
 \begin{align*}
  \Sigma = \{V\oplus W\} \cap B_r.
 \end{align*}
 Notice that $\Sigma$ -- in the $(u,v,w)$-coordinates -- is contained within the $(v,w)$-plane.
 % We will prove this simple result in \secref{proofVW}. 
Next, we write $x=z+\gamma(t)$ following \cite{wechselberger2002a} such that 
\begin{align}
 \dot z &=A(t)z+g(t,z,\alpha),\eqlab{zg}
\end{align}
where $g(t,z,\alpha) = f(\gamma(t)+z,\alpha)-f(\gamma(t),0)-A(t)z$. Also $g(t,0,0)=0,D_z g(t,0,0)=0$ and notice that \eqref{vargen} is the variational equation along $\gamma(t)$. Furthermore:
\begin{lemma}\lemmalab{AProperty}
\begin{align*}
 \sigma A(-t) &= -A(t) \sigma,\\
 \sigma g(-t,z,\alpha) & = -g(t,\sigma z,\alpha),
\end{align*}
for all $t,z,\alpha$.
\end{lemma}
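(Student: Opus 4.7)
The plan is to derive both identities directly from the equivariance condition $f(\sigma x,\alpha)=-\sigma f(x,\alpha)$ together with the symmetry $\gamma(t)=\sigma\gamma(-t)$ of the reference orbit (which is the content of (H2)). Everything here is chain-rule and algebra; I do not anticipate any substantive obstacle, but since (H2) only asserts symmetry without fixing the parametrization, I will want to note at the outset that $\gamma(0)=0$ and $f(0,0)=\sigma f(0,0)\cdot(-1)$ is consistent, and that the symmetric parametrization $\gamma(-t)=\sigma\gamma(t)$ is forced by $\sigma^2=\mathrm{id}$.

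For the first identity, the plan is to differentiate the equivariance relation $f(\sigma x,\alpha)=-\sigma f(x,\alpha)$ with respect to $x$, which yields
\begin{align*}
 D_xf(\sigma x,\alpha)\,\sigma = -\sigma\, D_x f(x,\alpha).
\end{align*}
Evaluating this at $x=\gamma(-t)$ (so that $\sigma x=\gamma(t)$) and at $\alpha=0$ gives $A(t)\sigma=-\sigma A(-t)$, i.e.\ $\sigma A(-t)=-A(t)\sigma$ after moving $\sigma$ across using $\sigma^2=\mathrm{id}$.

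For the second identity, I would plug the definition of $g$ directly in and process each term using (H1). Specifically, from $f(\sigma y,\alpha)=-\sigma f(y,\alpha)$ one gets $\sigma f(y,\alpha)=-f(\sigma y,\alpha)$ after applying $\sigma$ and using $\sigma^2=\mathrm{id}$. Then
\begin{align*}
 \sigma g(-t,z,\alpha) &= \sigma f(\gamma(-t)+z,\alpha)-\sigma f(\gamma(-t),0)-\sigma A(-t)z\\
  &= -f(\sigma\gamma(-t)+\sigma z,\alpha)+f(\sigma\gamma(-t),0)+A(t)\sigma z\\
  &= -f(\gamma(t)+\sigma z,\alpha)+f(\gamma(t),0)+A(t)\sigma z\\
  &= -g(t,\sigma z,\alpha),
\end{align*}
where in the second line I use the first identity already established to handle the linear term, in the third line I invoke the symmetry of $\gamma$, and in the last line I recognize the bracket as $g(t,\sigma z,\alpha)$.

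The only care required is bookkeeping of signs and of the arguments $t$ versus $-t$, which all collapse once the symmetry of $\gamma$ is used to convert $\sigma\gamma(-t)$ into $\gamma(t)$. No regularity beyond smoothness of $f$ is needed, and no properties of $W^{cs}(0),\,W^{cu}(0)$ enter—the lemma is purely about the equivariance inherited by the linearization and the remainder at a symmetric orbit.
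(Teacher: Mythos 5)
Your proof is correct and is exactly the computation the paper has in mind; the paper's own proof simply states ``Follows directly from the time-reversible symmetry of $f$'' and omits the details you have written out. The chain-rule step for the first identity and the term-by-term processing of $g$ using $\sigma f(y,\alpha)=-f(\sigma y,\alpha)$ and $\sigma\gamma(-t)=\gamma(t)$ are the intended argument (note only that the symmetric parametrization of $\gamma$ is already built into the paper's definition of a symmetric orbit in (H2), so it need not be ``forced'').
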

 \begin{proof}
 Follows directly from the time-reversible symmetry of $f$, recall (H1).
 \end{proof}

Let $\Phi(t,s)$ be the state-transition matrix of \eqref{vargen}. Then by (H3) and (H4) there exists a continuous projection $P:[0,\infty) \rightarrow\mathbb R^3$ such that
%  Furthermore, by assumption (H4), 
 \begin{align}
  \textnormal{Range}\,P(0) = U\oplus V,\quad
  \textnormal{ker}\, P(0) &=W,\nonumber
 \end{align}
 and
\begin{align*}
 P(t) \Phi(t,s) = \Phi(t,s) P(s),
\end{align*}
for all $t,s\ge 0$. Furthermore, if $Q(s)=I-P(s)$ then 
\begin{align}
  \textnormal{ker}\, Q(0) = U\oplus V,\quad  \textnormal{Range}\,Q(0) = W.\eqlab{rangeQ}
\end{align}
and it follows that
\begin{align}\eqlab{PhiProps}
\begin{split}
 \Vert \Phi(t,s) P(s)\Vert &\le K(t-s+1)^\theta,\\
 \Vert \Phi(s,t) Q(t)\Vert &\le K e^{-\eta(t-s)},
 \end{split}
\end{align}
for some $K\ge 1$, $\theta,\eta\ge 0$ and all $0\le s \le t$, see e.g. \cite{wechselberger2002a,vander}. By assumption (H1), \lemmaref{AProperty} and \eqref{PhiProps} we also have:
\begin{lemma}\lemmalab{psi}
$\Phi$ is symmetric in the following sense:
\begin{align*}
 \sigma \Phi(t,s) = \Phi(-t,-s) \sigma.
\end{align*}
Also, $t\mapsto \sigma P(-t)\sigma^{-1}$ and $t\mapsto \sigma Q(-t)\sigma^{-1}$ are continuous projection operators such that 
\begin{align*}
 \Vert \Phi(t,s) \sigma P(-s)\sigma^{-1}\Vert& = \Vert \sigma \Phi(-t,-s) P(-s)\sigma^{-1}\Vert \le K (s-t+1)^\theta,\\
 \Vert \Phi(s,t) \sigma Q(-t)\sigma^{-1}\Vert &= \Vert \sigma \Phi(-s,-t) Q(-t)\sigma^{-1}\Vert \le K e^{-\eta(s-t)}.
\end{align*}
for all $t\le s\le 0$.  
\end{lemma}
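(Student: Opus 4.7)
The plan is to derive everything directly from the reversibility identity $\sigma A(-t) = -A(t)\sigma$ established in \lemmaref{AProperty}, together with $\sigma^2 = \textnormal{id}$ from (H1). The argument is essentially a bookkeeping exercise in the symmetry, and I do not expect any genuine obstacle.

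\textbf{Symmetry of $\Phi$.} I would introduce $\Psi(t,s) := \sigma\Phi(-t,-s)\sigma^{-1}$ and verify by direct differentiation that $\Psi(\cdot,s)$ satisfies the same linear ODE and the same initial condition as $\Phi(\cdot,s)$. Explicitly, using $\partial_t \Phi(-t,-s) = -A(-t)\Phi(-t,-s)$ and Lemma~\ref{lemma:AProperty},
\begin{align*}
\partial_t \Psi(t,s) = -\sigma A(-t)\Phi(-t,-s)\sigma^{-1} = A(t)\sigma\Phi(-t,-s)\sigma^{-1} = A(t)\Psi(t,s),
\end{align*}
while $\Psi(s,s) = \sigma\Phi(-s,-s)\sigma^{-1} = \textnormal{id}$. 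Uniqueness for linear ODEs then gives $\Psi(t,s) = \Phi(t,s)$, i.e.\ $\Phi(t,s)\sigma = \sigma\Phi(-t,-s)$, and a relabelling $t\mapsto -t$, $s\mapsto -s$ produces the stated identity $\sigma\Phi(t,s) = \Phi(-t,-s)\sigma$.

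\textbf{Conjugated projections and the two estimates.} Since conjugation by $\sigma$ preserves idempotency and continuity, $(\sigma P(-t)\sigma^{-1})^2 = \sigma P(-t)^2\sigma^{-1} = \sigma P(-t)\sigma^{-1}$, and similarly for $Q$; so both are continuous projection operators on $(-\infty,0]$. Using the symmetry of $\Phi$ just established, I would then rewrite
\begin{align*}
\Phi(t,s)\sigma P(-s)\sigma^{-1} = \sigma\Phi(-t,-s)\sigma^{-1}\sigma P(-s)\sigma^{-1} = \sigma\Phi(-t,-s)P(-s)\sigma^{-1},
\end{align*}
which is the first claimed equality. For the bound, the condition $t\le s\le 0$ translates into $0 \le -s \le -t$, so \eqref{PhiProps} applied at time arguments $(-t,-s)$ yields $\Vert\Phi(-t,-s)P(-s)\Vert \le K(-t-(-s)+1)^\theta = K(s-t+1)^\theta$. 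The $Q$-estimate is entirely parallel via the second line of \eqref{PhiProps}.

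The only minor subtlety is that absorbing the outer $\sigma$ and $\sigma^{-1}$ factors into the bound uses $\Vert\sigma\Vert\Vert\sigma^{-1}\Vert = 1$, which is immediate for $\sigma = \textnormal{diag}(\pm 1,\pm 1,\pm 1)$ in the Euclidean norm; more generally one simply redefines $K$ to absorb the constant $\Vert\sigma\Vert\Vert\sigma^{-1}\Vert$.
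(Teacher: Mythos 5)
Your proof is correct and is exactly the ``straightforward calculation'' that the paper omits: the symmetry of $\Phi$ via uniqueness for the linear ODE using \lemmaref{AProperty}, conjugation preserving idempotency, and the estimates obtained from \eqref{PhiProps} after the sign flip $t\le s\le 0 \Leftrightarrow 0\le -s\le -t$. No gaps; the remark about absorbing $\Vert\sigma\Vert\Vert\sigma^{-1}\Vert$ into $K$ is the right way to handle the only loose end.
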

\begin{proof}
 Straightforward calculation.
%  would contradict that there is only one solution of \eqref{vargen} that decays exponentially in forward and backward time. 
\end{proof}

Consider the adjoint equation of \eqref{vargen}:
 \begin{align}
  \dot \psi + A(t)^T \psi = 0,\eqlab{adjvargen}
 \end{align}
 and notice that 
 \begin{align}
 (\psi,t)\mapsto (\sigma^T \psi,-t),\eqlab{adjvargensym}
 \end{align}is a time-reversible symmetry for \eqref{adjvargen} by (H1). 
  Then 
 \begin{lemma}\lemmalab{wvector}
 Let $\psi_*(t)$ be a solution of \eqref{adjvargen}. Then $\psi_*(t)$ decays exponentially for $t\rightarrow \pm \infty$ if and only if $\phi_*(0)\in W$. 
%   There exists a nonzero unit vector $e_w$ such that the solution $\psi_*(t)$ -- of \eqref{adjvargen} with $\psi_*(0)=e_w$ -- decays exponentially in forward and backward in time. The vector $e_w$ is unique up to reflection and $W=\textnormal{span}\,e_w$.
 \end{lemma}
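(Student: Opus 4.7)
The plan is to combine three ingredients: the Wronskian-type identity between the variational equation \eqref{vargen} and its adjoint \eqref{adjvargen}, the dichotomy estimate \eqref{PhiProps}, and the time-reversible symmetry \eqref{adjvargensym} of \eqref{adjvargen}. The first step is the algebraic observation that
\begin{align*}
 W = (U\oplus V)^\perp = (\textnormal{Range}\,P(0))^\perp = \textnormal{Range}\,Q(0)^T,
\end{align*}
which identifies the geometrically defined $W$ with the ``stable subspace'' at $t=0$ of the adjoint's dichotomy.

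For the ``if'' direction, I would assume $\psi_*(0)\in W$, so $\psi_*(0)=Q(0)^T\psi_*(0)$, and write
\begin{align*}
 \psi_*(t) = \Phi(0,t)^T\psi_*(0) = \bigl(Q(0)\Phi(0,t)\bigr)^T\psi_*(0) = \bigl(\Phi(0,t)Q(t)\bigr)^T\psi_*(0),
\end{align*}
using that $Q$ commutes with $\Phi$. The exponential bound in \eqref{PhiProps} (applied with $s=0$) then gives $|\psi_*(t)|\le Ke^{-\eta t}|\psi_*(0)|$ for $t\ge 0$. For the backward direction I would use the symmetry: by \eqref{adjvargensym}, $\tilde\psi(t):=\sigma^T\psi_*(-t)$ also solves \eqref{adjvargen}, and $W$ is $\sigma^T$-invariant, since $T_{\gamma(0)}W^{cs}(0)=U\oplus V$ is $\sigma$-invariant by \lemmaref{eqvH4} (via the pairing $\langle\sigma^T w,v\rangle=\langle w,\sigma v\rangle$). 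Hence $\tilde\psi(0)\in W$, and applying the forward-decay statement to $\tilde\psi$ yields backward decay of $\psi_*$.

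For the ``only if'' direction the tool is the Wronskian identity $\tfrac{d}{dt}(\psi^T z)=-\psi^T Az+\psi^T Az=0$, which makes $\psi_*(t)^T z(t)$ constant along any solution $z$ of \eqref{vargen}. Given $z_0\in U\oplus V=T_{\gamma(0)}W^{cs}(0)$, \lemmaref{eqvH4} produces a solution $z(t)=\Phi(t,0)z_0$ lying in $C_{b,+}\cap C_{b,-}$, i.e.\ of at most algebraic growth. Exponential decay of $\psi_*$ forces $\psi_*(t)^T z(t)\to 0$ as $t\to+\infty$, so the conserved quantity vanishes: $\psi_*(0)^T z_0=0$. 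Since $z_0\in U\oplus V$ is arbitrary, $\psi_*(0)\in(U\oplus V)^\perp=W$.

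The main obstacle is largely bookkeeping under the transpose: verifying the commutation relations for $Q$ and $\Phi$ survive transposition and correctly matching $W$ with $\textnormal{Range}\,Q(0)^T$. Beyond that, the subtle point is backward-time decay, which the symmetry argument reduces to the forward case and thus avoids the need for a separate backward-time dichotomy (although \lemmaref{psi} could be invoked as an alternative).
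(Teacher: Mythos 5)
Your proposal is correct. Note that the paper offers no argument of its own here -- it simply declares the lemma ``standard'' and points to \cite{wechselberger2002a} -- so there is nothing to compare line by line; what you have written is essentially the standard dichotomy-plus-adjoint argument that the citation stands in for. Your identification $W=(U\oplus V)^\perp=\textnormal{Range}\,Q(0)^T$ is sound (indeed, since $\ker P(0)=W=(\textnormal{Range}\,P(0))^\perp$, the projection $P(0)$ is orthogonal, so $Q(0)^T=Q(0)$ and this matches \eqref{rangeQ} directly), the transposed commutation relation and the exponential bound from \eqref{PhiProps} give forward decay, and the conserved pairing $\psi_*^Tz$ against the algebraically growing solutions of \eqref{vargen} supplied by \lemmaref{eqvH4} gives the converse. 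The one point worth being explicit about is the potential circularity in the backward-time step: the paper proves the $\sigma^T$-invariance of $W$ in \lemmaref{VWinvariant} \emph{using} \lemmaref{wvector}, so you cannot quote that lemma; but your independent derivation of the invariance from the $\sigma$-invariance of $U\oplus V$ (\lemmaref{eqvH4}, via $\langle\sigma^Tw,v\rangle=\langle w,\sigma v\rangle$) breaks the circle, and is in fact the cleaner order of logic. Using reversibility to reduce backward decay to the forward estimate, rather than invoking a separate backward dichotomy as in the non-reversible setting of \cite{wechselberger2002a} (or \lemmaref{psi}), is a legitimate economy. Two cosmetic remarks: the lemma's ``$\phi_*(0)$'' is a typo for ``$\psi_*(0)$'', which you silently correct, and the decay conclusion of course requires $\eta>0$ in \eqref{PhiProps}, which the paper states only as $\eta\ge 0$ but clearly intends to be strict.
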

 \begin{proof}
  Standard, see \cite{wechselberger2002a}. %MORE HERE! Symmetry etc!? ALSO EXPRESSION NEEDED FOR PROOF. 
 \end{proof}
In the following, we fix a specific $\psi_*(t)$ by setting $\psi_*(0)=e_w$. Since $\Phi^T(s,t)=\Phi^{-T}(t,s)$ is a state-transition matrix of \eqref{adjvargen}, we can then write $\psi_*(t)$ as
 \begin{align}
 \psi_*(t)=\Phi^T(0,t)e_w.\eqlab{psiStarHere}
 \end{align}
  We now have the following important result.
 \begin{lemma}\lemmalab{VWinvariant}
  $V$ and $W$ are one-dimensional invariant subspaces of  $\sigma$ and $\sigma^T$, respectively. Hence; there exists $\sigma_i\in \{\pm 1\}$, $i=v,w$, such that $$\sigma\vert_V=\sigma_v \textnormal{id}, \quad \sigma^T\vert_{W}=\sigma_w \textnormal{id},$$ where $\sigma_i =\pm 1$ for $i=v,w$.
 \end{lemma}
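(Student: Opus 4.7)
The plan is to reduce everything to the single observation that the two-dimensional subspace $T_{\gamma(0)} W^{cs}(0)=U\oplus V$ (they are equal by \lemmaref{eqvH4}(\ref{3ii})) is $\sigma$-invariant, and then invoke the involutivity $\sigma^2=\textnormal{id}$ to diagonalize.

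First, I would note that $U$ is itself a $\sigma$-eigenline. Since $\gamma$ is symmetric, $\gamma(t)=\sigma\gamma(-t)$, and differentiating at $t=0$ gives $\dot\gamma(0)=-\sigma\dot\gamma(0)$, so $\sigma\vert_U=-\textnormal{id}$. Next, $T_{\gamma(0)} W^{cs}(0)$ is $\sigma$-invariant by \lemmaref{eqvH4}(\ref{3iii}), so $U\oplus V$ is $\sigma$-invariant. The restriction $\sigma\vert_{U\oplus V}$ squares to the identity on a two-dimensional real vector space, hence is diagonalizable with eigenvalues in $\{\pm 1\}$. Since $U$ already is the $-1$-eigenspace (or a subspace of it), $U\oplus V$ admits a one-dimensional $\sigma$-invariant complement $V^\star$ to $U$ (either the $+1$-eigenspace, or if $\sigma\vert_{U\oplus V}=-\textnormal{id}$, any complement). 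The hypothesis (H4) determines $V$ only up to its role as a complement of $U$ in $U\oplus V$, so without loss of generality one takes $V=V^\star$, producing $\sigma\vert_V=\sigma_v\textnormal{id}$ with $\sigma_v\in\{\pm 1\}$.

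For $W=(U\oplus V)^\perp$ the argument is dual: since $U\oplus V$ is $\sigma$-invariant, its orthogonal complement is $\sigma^T$-invariant, because for any $w\in W$ and $x\in U\oplus V$,
\begin{equation*}
\langle \sigma^T w, x\rangle = \langle w,\sigma x\rangle = 0,
\end{equation*}
so $\sigma^T w\in W$. Then $(\sigma^T)^2=(\sigma^2)^T=\textnormal{id}$ forces $\sigma^T\vert_W$ to be an involution on a one-dimensional space, which must equal $\sigma_w\textnormal{id}$ for some $\sigma_w\in\{\pm 1\}$.

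The only point where I expect any subtlety is the freedom in the choice of $V$ in hypothesis (H4); one must acknowledge that $V$ is not uniquely determined by the intersection $U\oplus V$ and that the lemma is making the implicit (and harmless) choice of $V$ as the $\sigma$-invariant complement of $U$. Once this is noted the proof is a one-line computation in each case, and it will be important further on that the signs $\sigma_v,\sigma_w$ are genuine invariants of the configuration — they will enter the reversibility conditions on the Melnikov integrand.
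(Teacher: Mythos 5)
Your proof is correct, and the half of it concerning $V$ is essentially the paper's argument: $\sigma\vert_U=-\textnormal{id}$ from differentiating the symmetry relation for $\gamma$, then $\sigma$-invariance of $U\oplus V$ (\lemmaref{eqvH4}) together with $\sigma^2=\textnormal{id}$ forces an eigenspace splitting. You are also right to flag that (H4) fixes $V$ only as a complement of $U$, so the lemma implicitly selects the $\sigma$-invariant complement; the paper passes over this with ``follows from a straightforward calculation,'' and in the concrete applications the explicitly computed $V$ happens to be the invariant one. Where you genuinely diverge is the $W$ part. The paper argues dynamically: the distinguished adjoint solution $\psi_*$ decays exponentially as $t\to\pm\infty$, the time-reversal symmetry $(\psi,t)\mapsto(\sigma^T\psi,-t)$ of the adjoint equation produces another such solution, and \lemmaref{wvector} then places $\sigma^T\psi_*(0)$ back in the one-dimensional space $W$. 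You instead observe that $W=(U\oplus V)^\perp$ and that the orthogonal complement of a $\sigma$-invariant subspace is automatically $\sigma^T$-invariant, via $\langle\sigma^T w,x\rangle=\langle w,\sigma x\rangle=0$. Your route is shorter and purely linear-algebraic; the paper's has the advantage of exhibiting $\sigma^T\psi_*(-t)$ as a decaying adjoint solution, which is exactly the mechanism exploited later when the symmetry is pushed through the Melnikov integrand. Both are valid proofs of the stated lemma.
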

 \begin{proof}
 First, regarding the $\sigma^T$-invariance of $W$: The solution $\psi_*(t)$ of \eqref{adjvargensym} is exponentially decaying for $t\rightarrow \pm \infty$. Clearly, the symmetrically related solution $\sigma^T\psi_*(-t)$ satisfies the same properties, and hence $\sigma^T\psi_*(0)\in W$ by \lemmaref{wvector}; the $\sigma^T$-invariance of $W$ therefore follows. Next, since $\gamma(t)$ is symmetric it follows by differentiation with respect to $t=0$ that $\sigma\vert_{U}=-\textnormal{id}$. But then since $U\oplus V$ is invariant with respect to $\sigma$, recall \lemmaref{eqvH4} item (\ref{3iii}), and $\sigma^2=\textnormal{id}$, the statement about $\sigma\vert_{V}$ also follows from a straightforward calculation. %   
%   Finally, $\sigma_
% To show \eqref{sigmavsigmaw}, we notice that if $\sigma_v\sigma_w=1$ then 
% \begin{align*}
%  \sigma = \text{diag}(-1,1,1),
% \end{align*}
% or 
% \begin{align*}
%  \sigma = \text{diag}(-1,-1,-1),
% \end{align*}
% with respect to the $(u,v,w)$-coordinates. WHAT???
 \end{proof}
 In fact, in the $(u,v,w)$-coordinates 
 \begin{align}
 \sigma = \text{diag}\,(-1,\sigma_v,\sigma_w).\eqlab{sigmauvw}
 \end{align}

%  $W^{cs}(\mu)$ is classified as solution curves $z_*(t)$ where  
Next, for \eqref{zg}, it can by variation of constants -- following \cite{wechselberger2002a} -- be shown that $z(t)\in C_{b,+}$, with $z(0) \in \Sigma$, if and only if there exists a $v \in V$ such that 
\begin{align}
 z(t) =  \Phi(t,0)v +\int_0^t P(t) \Phi(t,s) g(s,z(s),\alpha)ds + \int_{\infty}^t Q(t) \Phi(t,s)g(s,z(s),\alpha)ds.\eqlab{ztVOC}
\end{align}
This enables an analytic characterization of the (nonunique) invariant manifold $W^{cs}(\mu)$, which is essential for the Melnikov approach, as follows.
Let the mapping $z\mapsto T(z)$ be defined on $C_{b,+}$ so that $T(z)(t)$ is the right hand side of \eqref{ztVOC} and consider a sufficiently small neighborhood $N$ of $(v,\alpha)=(0,0)$. Then, upon possible modification (or cut-off) of $f$ (and therefore of $g$ in \eqref{zg}), as in center manifold theory \cite{car1}, we obtain for each $(v,\alpha)\in N$, a unique fix-point $z_*(v,\alpha)$ of $T$: $T(z_*)=z_*$, see \cite{wechselberger2002a}. Henceforth we will assume that such a modification of $f$ (and therefore of $g$) has been made. It is also standard, see also \cite{wechselberger2002a}, to show that $z_*$ is smooth, with each partial derivative belonging to $C_{b,+}(\nu)$ for $\nu$ large enough. In this way, 
\begin{align*}
 W^{cs}(\alpha)  = \{z_*(v,\alpha)(t)\vert (v,\alpha)\in N,\,t\in \mathbb R\}.
\end{align*}
\begin{remark}
 In our examples, including \eqref{fnnfeps20}, the invariant manifolds $W^{cs}(\mu)$ will be obtained, not as fix-points of \eqref{ztVOC}, but as center manifolds upon appropriate Poincar\'e compactification. For the analysis of the implications of the bifurcations of $\gamma$, it will be important to study the reduced problem on such center manifolds. For the Falkner-Skan equation and the Nos\'e equations the `selection' of these nonunique manifolds will be crucial to our analysis. 
\end{remark}

% 
% 
%  In general, $W^{cs}$ can be obtained from a `local version' $W^{cs}_{loc}$ obtained as a center manifold on the Poincar\'e sphere (upon appropriate desingularization). $W^{cs}$ is therefore in general non-unique. However, as a center manifold a unique version of $W^{cs}$ can be selected upon appropriate cut-off of $f$ \cite{car1}. We shall therefore suppose that such a cut-off has been made and continue to denote the right hand side by $f$ (and the nonlinearity in \eqref{zg} by $g$). For \eqref{zg}, we then classify 
\subsection{The Melnikov function}
For $\alpha$ sufficiently small, we write $W_0^{cs}(\alpha)=W^{cs}(\alpha)\cap \Sigma$ and $W_0^{cu}(\alpha)=W^{cu}(\alpha)\cap \Sigma$ locally within the $(v,w)$-plane as smooth graphs
\begin{align}
 w = h_{cs}(v,\alpha),\eqlab{graphh}
\end{align}
and 
\begin{align}
 w = h_{cu}(v,\alpha),\eqlab{graphhu}
\end{align}respectively, over $v$. %Here we are permitting a slight abuse of notation: In \eqref{graphh} and \eqref{graphhu} $(v,w)\in \mathbb R^2$ are coordinates with respect to the fixed orthonormal basis $(e_v,e_w)$ of $V\oplus W$ provided by the unit vectors $e_v$ and $e_w$ spanning $V$ and $W$.
Following \cite{wechselberger2002a}, we then define the Melnikov function as
\begin{align}
 D(v,\alpha) &= h_{cu}(v,\alpha)-h_{cs}(v,\alpha).\eqlab{MelnikovD}%\sigma_w h(\sigma_v v,\alpha) - h(v,\alpha).
%  &=\langle \sigma^T w,h(\sigma \rho v,\alpha)\rangle -\langle w,h(\rho v,\alpha)\rangle.
\end{align}
Clearly, a root of $D$ corresponds to an intersection of $W_0^{cs}(\alpha)$ and $W_0^{cu}(\alpha)$ and, hence, to an intersection of $W^{cs}(\alpha)$ and $W^{cu}(\alpha)$.  Furthermore, the intersection is transverse if and only if the root is simple. 
% % Then by \eqref{hhere}
% % \begin{align*}
% %  h(v,\alpha) = \langle e_w,-Q(0) \int_0^\infty \Phi(0,s) g(s,z_*(v,\alpha)(s),\alpha)ds\rangle.
% % \end{align*}
% % Here $
% % Notice that by \lemmaref{wvector}, $\textnormal{span}\,w$ is an invariant subspace of $\sigma^T$. Hence; $\sigma^T\vert_{\textnormal{span}\,w}=\chi \textnormal{id}$ where $\chi = \pm 1$ and consequently
% %  \begin{align*}
% %   D(\rho,\alpha) &=\langle w,-h(\rho v,\alpha)+\chi  h(\sigma \rho v,\alpha)\rangle.
% %  \end{align*}
% % using that $\sigma w = \sigma \psi_*(0) = \psi_*(0) = w$.
% % HOW? 
% % \begin{align*}
% %  \langle \sigma w,\sigma h(\rho v,\alpha)
% % \end{align*}
% 
%  Then by \cite[Theorem1]{wechselberger2002a} we have
% In the following we will prove the following result:
We now prove the following.
\begin{lemma}\lemmalab{MelnikovDNew}
% \begin{align}
%  
% \end{align}
\begin{align}
 D(v,\alpha) &= \sigma_w h_{cs}(\sigma_v v,\alpha) - h_{cs}(v,\alpha).\eqlab{MelnikovDNew}
 \end{align}
% %  &=\langle \sigma^T w,h_{cs}(\sigma \rho v,\alpha)\rangle -\langle w,h_{cs}(\rho v,\alpha)\rangle.
\end{lemma}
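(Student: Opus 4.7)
The plan is to exploit the relation $W^{cu}(\alpha) = \sigma W^{cs}(\alpha)$ from \eqref{Wcu} together with the explicit block form of $\sigma$ in $(u,v,w)$-coordinates given by \eqref{sigmauvw}. Because $\sigma$ is diagonal in this basis with entries $(-1,\sigma_v,\sigma_w)$, the section $\Sigma \subset \{u=0\}$ is invariant under $\sigma$, and therefore the image under $\sigma$ of $W^{cs}(\alpha)\cap \Sigma$ is exactly $W^{cu}(\alpha)\cap \Sigma$. This reduces the identity to a purely algebraic manipulation of the graph $w=h_{cs}(v,\alpha)$.

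Concretely, first I would pick an arbitrary point $(0,v,w)\in W_0^{cs}(\alpha)$, so that $w=h_{cs}(v,\alpha)$. By \eqref{sigmauvw}, its image under $\sigma$ is $(0,\sigma_v v,\sigma_w w)$, which lies in $W_0^{cu}(\alpha)$ by \eqref{Wcu}. Since this image is still in $\Sigma$, it satisfies the graph equation \eqref{graphhu}, yielding
\begin{align*}
 \sigma_w h_{cs}(v,\alpha) = h_{cu}(\sigma_v v,\alpha).
\end{align*}
Replacing $v$ by $\sigma_v v$ and using $\sigma_v^2=1$ (from $\sigma^2=\textnormal{id}$) gives
\begin{align*}
 h_{cu}(v,\alpha) = \sigma_w h_{cs}(\sigma_v v,\alpha),
\end{align*}
and substituting this into the definition \eqref{MelnikovD} of $D$ produces the claimed formula.

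The argument is essentially one short calculation; there is no real obstacle, only the bookkeeping of which coordinates are flipped by $\sigma$. The only subtlety worth flagging is that one must use the full statement of \lemmaref{VWinvariant}, namely that $V$ (and not merely $U\oplus V$) is $\sigma$-invariant and $W$ is $\sigma^T$-invariant; the latter, together with $\sigma^T\vert_W = \sigma_w\,\textnormal{id}$, is what lets us conclude that $\sigma\vert_W=\sigma_w\,\textnormal{id}$ as well, so that the $w$-component of $\sigma(0,v,w)$ really is $\sigma_w w$. Once the diagonal form \eqref{sigmauvw} is in place, the rest is immediate.
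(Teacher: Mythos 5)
Your proposal is correct and follows essentially the same route as the paper: the paper's proof also applies $\sigma$ to the point $(v,h_{cs}(v,\alpha))\in W_0^{cs}(\alpha)$ (phrased via the solution $z_*(v,\alpha)$ and its time-reversed image $\sigma z_*(v,\alpha)(-\cdot)\in W^{cu}(\alpha)$), uses \eqref{sigmauvw} to read off the image as $(\sigma_v v,\sigma_w h_{cs}(v,\alpha))$, and concludes $h_{cu}(\sigma_v v,\alpha)=\sigma_w h_{cs}(v,\alpha)$ before substituting $\sigma_v v$ for $v$. Your purely set-level phrasing via $W^{cu}(\alpha)=\sigma W^{cs}(\alpha)$ is just a leaner statement of the same argument.
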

\begin{proof}
Following \eqref{MelnikovD}, we simply have to show that 
\begin{align}
 h_{cu}(v,\alpha) = \sigma_w h_{cs}(\sigma_v v,\alpha),\eqlab{hcuExpr}
\end{align}
for all $v$ and $\alpha$. We will show this using the integral representation \eqref{ztVOC} as follows.
Let $z_*(v,\alpha)\in C_{b,+}$ be the fix-point of the mapping $T$, with $T(z)(t)$ being defined as the right hand side of \eqref{ztVOC}, so that $z_*(v,\alpha)(t)\in W^{cs}(\alpha)$ for all $t$ and $z_*(v,\alpha)(0)=(v,h_{cs}(v,\alpha))$ within $\Sigma$ in the $(v,w)$-coordinates. Then $\sigma z_*(v,\alpha)(-\cdot)\in C_{b-}$ so that $\sigma z_*(v,\alpha)(-t)\in W^{cu}$ with $$\sigma z_*(v,\alpha)(0)=(\sigma_v v,\sigma_w h_{cs}(v,\alpha)),$$ 
writing the right hand side in the $(v,w)$-coordinates, recall \eqref{sigmauvw}.
Since $W^{cu}=\sigma W^{cs}$, we conclude from \eqref{graphhu}$_{v=\sigma_v v}$ that 
\begin{align*}
 h_{cu}(\sigma_v v,\alpha) = \sigma_w  h_{cs}(v,\alpha).
\end{align*}
This shows \eqref{hcuExpr}, seeing that $\sigma_v^2=1$.
\end{proof}

Now, we are ready to present the following, final result on the Melnikov integral, which is a translation of \cite[Theorem 1]{wechselberger2002a} to the time-reversible setting in $\mathbb R^3$.
\begin{theorem}\thmlab{Melnikov}
Let $t\mapsto z_*(v,\alpha)(t) \in C_{b,+}$ be the solution of \eqref{zg} with initial conditions 
\begin{align}
z_*(v,\alpha)(0) = (v,h_{cs}(v,\alpha)),\eqlab{zStarInit}
\end{align}
% within $\Sigma:\,u=0$
with respect to the $(v,w)$-coordinates, on $W^{cs}_0(\alpha)$ within $\Sigma\subset \{u=0\}$. Then
% Let $g(t,z,\alpha) = f(\gamma(t)+z,\alpha)-f(\gamma(t),0)-D f(\gamma(t),0) z$. 
% The Melnikov function in \cite[Theorem 1]{wechselberger2002a} defined by
% 
% satisfies
% Let $\chi=\pm 1$ be so that 
% \begin{align*}
%  \sigma^T w = \chi w.
% \end{align*}
% Then 
\begin{align}
 D(v,\alpha) = \int_0^\infty \langle \psi_*(s), g(s,z_*(v,\alpha)(s),\alpha)-\sigma_w g(s,z_*(\sigma_v v,\alpha)(s),\alpha)\rangle ds. \eqlab{DvalphaThm}
\end{align}
% in our present symmetric setting. 
\end{theorem}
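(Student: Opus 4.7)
The plan is to derive an integral representation for $h_{cs}(v,\alpha)$ from the fix-point characterization \eqref{ztVOC} of $z_*(v,\alpha)$, and then to assemble $D(v,\alpha)$ by invoking \lemmaref{MelnikovDNew}, which already encodes the time-reversible symmetry. Most of the work is thus translating the $t=0$ value of the integral equation into a scalar quantity via the adjoint solution $\psi_*$; the symmetric combination is handed to us for free.

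Concretely, I would first evaluate \eqref{ztVOC} at $t=0$. The $P$-integral vanishes (empty range), $\Phi(0,0)=\textnormal{id}$, and so
\begin{align*}
 z_*(v,\alpha)(0) = v\, e_v - \int_0^\infty Q(0)\Phi(0,s)\,g(s,z_*(v,\alpha)(s),\alpha)\,ds.
\end{align*}
By \eqref{WDefinition}, $W=(U\oplus V)^\perp$; together with \eqref{rangeQ} this identifies $Q(0)$ with the \emph{orthogonal} projection onto $W=\textnormal{span}(e_w)$, so $\langle e_w, Q(0)x\rangle = \langle e_w, x\rangle$ for every $x\in\mathbb R^3$. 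Taking the inner product of the displayed equation with $e_w$, and using $\langle e_w, e_v\rangle=0$, the $v$-term drops out and I obtain, after moving $\Phi(0,s)$ to the other slot of the inner product and using \eqref{psiStarHere},
\begin{align*}
 h_{cs}(v,\alpha) = \langle e_w, z_*(v,\alpha)(0)\rangle = -\int_0^\infty \langle \psi_*(s), g(s,z_*(v,\alpha)(s),\alpha)\rangle\, ds.
\end{align*}

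Plugging this into \eqref{MelnikovDNew}, i.e. $D(v,\alpha)=\sigma_w h_{cs}(\sigma_v v,\alpha)-h_{cs}(v,\alpha)$, and merging the two integrals produces \eqref{DvalphaThm} immediately. No further symmetry gymnastics are required at this stage because \lemmaref{MelnikovDNew} has already reduced the problem to a single unknown function $h_{cs}$.

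The main technical point is convergence of the improper integral. By \lemmaref{wvector}, $\psi_*(s)$ decays exponentially as $s\to +\infty$, while $z_*(v,\alpha)\in C_{b,+}(\nu)$ grows at most polynomially and $g$ is quadratic in $(z,\alpha)$ near the origin; the integrand is therefore $O(e^{-\eta s}(1+s)^{2\nu})$ and integrable. A secondary subtle point to check is the interpretation of $Q(0)$ as an orthogonal projection, which would fail if $W$ were defined merely as a complement rather than as the orthogonal complement of $T_{\gamma(0)}W^{cs}(0)$; this is exactly the purpose of the definition \eqref{WDefinition}, so no additional argument is needed beyond flagging its use.
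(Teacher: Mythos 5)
Your proposal is correct and follows essentially the same route as the paper's proof: evaluate the fix-point equation \eqref{ztVOC} at $t=0$, project onto $W$ via $e_w$ using \eqref{rangeQ} and \eqref{psiStarHere} to get $h_{cs}(v,\alpha)=-\int_0^\infty\langle\psi_*(s),g(s,z_*(v,\alpha)(s),\alpha)\rangle\,ds$, then conclude by \lemmaref{MelnikovDNew}. The extra remarks on the orthogonality of $Q(0)$ and on convergence of the improper integral are correct and harmless refinements of the same argument.
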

% \subsection{Proof of \thmref{Melnikov}}
% The proof \thmref{Melnikov}
\begin{proof}
%  For a sufficiently small neighborhood of $(v,\alpha)=(0,0)$, it is standard to show that the mapping , has the locally unique fix-point
%  
%  
The result follows from \cite[Theorem 1]{wechselberger2002a}, upon setting  $h_{cs}e_w=h_+$ and $\sigma^T h_{cs}e_w = h_-$. In further details, we simply set
$t=0$ in \eqref{ztVOC}$_{z=z_*(v,\alpha)}$:
\begin{align}
 z_*(v,\alpha)(0) = v - Q(0) \int_0^\infty  \Phi(t,s)g(s,z_*(v,\alpha)(s),\alpha)ds.\eqlab{zStar0}
\end{align}
The last term on the right hand side -- by \eqref{rangeQ} -- belongs to $W$ for all $\vert v\vert,\vert \alpha\vert \le \delta$; whence, 
\begin{align}
 h_{cs}(v,\alpha)=\langle e_w,-Q(0) \int_0^\infty \Phi(0,s) g(s,z_*(v,\alpha)(s),\alpha)ds\rangle.
\end{align}
Then upon using \eqref{psiStarHere}, we obtain the desired form
% \begin{lemma}\lemmalab{hhere}
\begin{align*}
 h_{cs}(v,\alpha)& = -\int_0^\infty \langle \psi_*(s),g(s,z_*(v,\alpha)(s),\alpha)\rangle ds. %\eqlab{hher2e}
%  \langle w,\sigma h_{cs}(\sigma \rho v,\alpha)\rangle &= \int_{-\infty}^0 \langle \psi_*(s), g(s,\sigma z_*(\sigma \rho v,\alpha)(-s),\alpha)\rangle ds\\
%  &=\int_{-\infty}^0 \langle \psi_*(s),g(s,\sigma z_*(\sigma \rho v,\alpha)(-s),\alpha)\rangle ds\\
%  &=\int_{-\infty}^0 \langle \psi_*(s), -\sigma g(-s,z_*(\sigma \rho v,\alpha)(-s),\alpha)\rangle ds\\
%  &=\int_{0}^\infty \langle \sigma^T\psi_*(-s), - g(s,z_*(\sigma \rho v,\alpha)(s),\alpha)\rangle ds\\
%  &=\int_{0}^\infty \langle \chi  \psi_*(-s), - g(s,z_*(\sigma \rho v,\alpha)(s),\alpha)\rangle ds\\
%  &=-\int_{0}^\infty \langle  \psi_*(s), \chi g(s,z_*(\sigma \rho v,\alpha)(s),\alpha)\rangle ds,
\end{align*}
The result then follows from \eqref{MelnikovDNew}.
% See further details \cite[Theorem 1]{wechselberger2002a}.
% \end{lemma}
\end{proof}

\subsection{A recipe for computing appropriate Melnikov integrals}\seclab{recipe}
We can use \eqref{DvalphaThm} to describe the bifurcations of heteroclinic connections of $W^{cs}(\alpha)$ and $W^{cs}(\alpha)$, provided that we can determine the partial derivatives of $D$. We now describe a set of assumptions, covering all of the cases we study below, where we can describe a specific procedure for doing this. We start with the following.
\begin{itemize}
\item[(H5)] Suppose that $\gamma=(0,0,t)$ for all $t\in \mathbb R$ and all $\alpha$.
\end{itemize}
Upon a linear change of coordinates, we may also suppose without loss of generality that $\sigma$ is diagonal, recall \eqref{sigmauvw}. 
\begin{itemize}
\item[(H6)] Suppose that $\sigma=\text{diag}(1,-1,-1)$.
\end{itemize}
Notice that $\gamma$ is symmetric with respect to this $\sigma$. 
\begin{itemize}
\item[(H7)] Suppose that $f(\cdot,\alpha)$ is quadratic.
\end{itemize}
We can then show that all relevant Melnikov integrals can be evaluated in closed form.
\begin{theorem}\thmlab{recipe}
 Suppose (H3) and (H5)-(H7). 
 \begin{enumerate}
 \item Then $f$, upon scaling $x_3$ and $t$ if necessary, takes the following form
 \begin{align}
  f(x,\alpha) = \begin{pmatrix}
          x_2 a_2+x_1 x_2 a_{12} + x_1x_3\\
          x_1b_1+x_1^2 b_{11}+x_2^2 b_{22} \\
          1+x_1c_1+ x_1^2 c_{11}+x_2^2c_{22}+x_2x_3 c_{23} 
         \end{pmatrix}.\eqlab{fGeneral}
 \end{align}
 with each coefficient $a_2,a_{12},b_1,b_{11},b_{22},c_1,c_{11},c_{22},c_{23}$ depending smoothly upon the parameter $\alpha$. 
%  with $a_{13}>0$.
% 
\item 
 Furthermore, let
 \begin{align*}
  \beta := -(a_{2}b_1+1).
 \end{align*}
Then (H4) is satisfied if and only if $\beta \in \mathbb N_0$. 
\item 
Next, let $\alpha=0$ be so that $\beta\in \mathbb N_0$ and consider $D(v,\alpha)$ as in \eqref{DvalphaThm}. Then $D(0,\alpha)=0$ for all $\alpha$ and we have the following:
\begin{enumerate}
\item Suppose $\beta=\text{odd}$. Then $\sigma_v=-1$, $\sigma_w=1$ and $v\mapsto D(v,\alpha)$, see \eqref{DvalphaThm}, is an even function, so that $\frac{\partial^{2j} D}{\partial v^{2j}}(0,0)=0$ for all $j\in \mathbb N_0$. Furthermore, the following partial derivatives of $D$ can be evaluated in closed form:
\begin{align*}
 \frac{\partial^2 D}{\partial v\partial \alpha}(0,0),\quad \frac{\partial^3 D}{\partial v^3}(0,0).
\end{align*}
If these quantities are nonzero, then the bifurcation equation $D(v,\alpha)=0$ is locally equivalent with the pitchfork normal form. 
\item Suppose $\beta=\text{even}$. Then $\sigma_v=1$, $\sigma_w=-1$. In this case, the following partial derivatives can be evaluated in closed form
\begin{align*}
 \frac{\partial^2 D}{\partial v\partial \alpha}(0,0),\quad \frac{\partial^2 D}{\partial v^2}(0,0).
\end{align*}
If these quantities are nonzero, then the bifurcation equation $D(v,\alpha)=0$ is locally equivalent with the transcritical normal form. 
\end{enumerate}
% 
% In this case, all partial derivatives of $D(v,\alpha)$ at $v=\alpha=0$ can be written in closed form.
\end{enumerate}
\end{theorem}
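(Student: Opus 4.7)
My argument has three stages matching the three items of the theorem. \emph{Stage 1 (normal form).} The symmetry $\sigma=\textnormal{diag}(1,-1,-1)$ from (H6) forces $f_1$ to be odd and $f_2,f_3$ to be even in the pair $(x_2,x_3)$; together with (H7) only finitely many monomials remain. Imposing (H5), i.e.\ $f((0,0,t),\alpha)\equiv(0,0,1)$, then eliminates the constant in $f_2$, the coefficient of $x_3$ in $f_1$, and the coefficients of $x_3^2$ in $f_2,f_3$, while fixing the constant in $f_3$ to a nonzero value. Rescaling $x_3$ and $t$ normalizes that constant and the coefficient of $x_1x_3$ in $f_1$ to $1$, delivering \eqref{fGeneral}.

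\emph{Stage 2 (the resonance condition).} From \eqref{fGeneral} one reads off
\[
 A(t)=D_xf((0,0,t),0)=\begin{pmatrix} t & a_2 & 0\\ b_1 & 0 & 0\\ c_1 & c_{23}t & 0\end{pmatrix},
\]
so the variational equation \eqref{vargen} decouples; eliminating $z_1=\dot z_2/b_1$ yields the probabilist's Hermite equation $\ddot z_2-t\dot z_2-a_2b_1\,z_2=0$ with index $-a_2b_1=\beta+1$, and $z_3$ is then a quadrature. A polynomially bounded solution linearly independent of $\dot\gamma=(0,0,1)$ therefore exists iff $\beta+1\in\mathbb N$, i.e.\ $\beta\in\mathbb N_0$; otherwise both parabolic-cylinder solutions grow like $e^{t^2/2}$. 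With \lemmaref{eqvH4} this yields (H4) $\Leftrightarrow \beta\in\mathbb N_0$.

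\emph{Stage 3 (bifurcation).} Assuming $\beta\in\mathbb N_0$, the distinguished solution has $z_2(t)=H_{\beta+1}(t)$, $z_1(t)=H_{\beta+1}'(t)/b_1$, and $z_3$ a quadrature. A direct parity check shows this solution is $\sigma$-symmetric iff $\beta$ is even, giving $\sigma_v=1$ (resp.\ $-1$) for $\beta$ even (resp.\ odd); since $\sigma_u=-1$ and $\det\sigma=1$, \eqref{sigmauvw} then forces $\sigma_w=-\sigma_v$, matching the claimed sign assignments. Because $\gamma$ solves \eqref{xf} for every $\alpha$, $g(\cdot,0,\alpha)\equiv 0$, and hence $z_*(0,\alpha)\equiv 0$ and $D(0,\alpha)\equiv 0$. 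For $\beta$ odd, $(\sigma_v,\sigma_w)=(-1,1)$ turns the integrand of \eqref{DvalphaThm} into an odd function of $v$, killing every even $v$-derivative at $v=0$; for $\beta$ even the two terms collapse to twice a single contribution. The remaining derivatives I obtain by differentiating \eqref{DvalphaThm} under the integral: since $f$ is quadratic, each $\partial_v^k z_*(0,0)$ satisfies an inhomogeneous variational equation whose forcing is polynomial in the lower-order variations, so its $(z_1,z_2)$-block is an inhomogeneous Hermite/Weber equation, which I solve by variation of parameters against the two Hermite solutions using \appref{Hermite}, after which $z_3$ is one further quadrature. The Melnikov integrand thereby reduces to a polynomial multiplied by $\psi_*(s)$ (itself built from Hermite data), and the integral evaluates to a finite sum of Hermite moments; provided $\partial_v\partial_\alpha D(0,0)$ together with $\partial_v^2D(0,0)$ ($\beta$ even) or $\partial_v^3D(0,0)$ ($\beta$ odd) are nonzero, singularity theory \cite{golubitsky1988a} delivers the claimed local equivalence with the transcritical, resp.\ pitchfork, normal form.

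The main obstacle is the closed-form evaluation of $\partial_v^3 D(0,0)$ in the pitchfork case: this requires first solving an inhomogeneous Weber equation whose right-hand side is itself a Hermite polynomial, and then integrating a cubic Hermite expression against $\psi_*$ --- precisely where the variational-equation method of the present paper pays off relative to the integral-representation approach of \cite{wechselberger_existence_2005}.
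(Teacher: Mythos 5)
Your overall strategy is the same as the paper's (normal form from the symmetry, reduction of the variational equation to a Weber operator, parity of the Hermite solutions to fix $\sigma_v,\sigma_w$ and the parity of $D$, higher-order variational equations for the derivatives), and some of your shortcuts are genuinely nice --- in particular deducing $\sigma_w=-\sigma_v$ from $\det\sigma=1$ and \eqref{sigmauvw} instead of computing $\psi_*(0)$ separately. But there are two concrete gaps. First, in Stage 1 you never use (H3). The symmetry (H6), quadraticity (H7) and $f((0,0,t),\alpha)\equiv(0,0,1)$ from (H5) still permit a term $b_{23}x_2x_3$ in the second component and say nothing about the coefficient $a_{13}$ of $x_1x_3$ in the first component beyond its existence; the paper derives $b_{23}=0$ and $a_{13}>0$ from (H3) via Poincar\'e compactification and center manifold theory. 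This is not cosmetic: a surviving $b_{23}x_2x_3$ puts a $t b_{23}$ entry into $A(t)$ that couples $z_3$ back into the $(z_1,z_2)$ block and destroys the reduction to a planar Weber equation, and the rescaling by $\sqrt{a_{13}}$ (equivalently, getting the operator $\ddot q - t\dot q+\beta q$ rather than $\ddot q + t\dot q+\beta q$) requires the sign $a_{13}>0$. Without these two facts \eqref{fGeneral} and hence your Stage 2 do not follow.

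Second, and more seriously for item (3), your closed-form evaluation is not actually established. You propose to solve the inhomogeneous Weber equation for $\partial_v^2 z_*(0,0)$ ``by variation of parameters against the two Hermite solutions,'' but $L_\beta$ has only \emph{one} polynomial solution, $H_\beta(\cdot/\sqrt2)$; the second fundamental solution grows like $e^{t^2/2}$ and variation of parameters returns non-elementary integrals, not a closed form. The mechanism the theorem rests on is different: rewrite the quadratic forcing as a finite Hermite sum via the product formula \eqref{productRuleHnm}, observe by a parity count that (for $\beta$ odd) only \emph{even}-degree Hermite polynomials occur, so that $H_\beta\in\textnormal{ker}\,L_\beta$ is absent from the forcing, and then invert $L_\beta$ termwise using $L_\beta H_l=(\beta-l)H_l$ from \lemmaref{LmHmHl}; only then do the Melnikov integrals collapse to the triple-product moments \eqref{intHnHmHl}. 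You flag this step yourself as ``the main obstacle'' but do not resolve it, and the non-resonance of the forcing with $\textnormal{ker}\,L_\beta$ is precisely the point that must be checked (see \remref{procedure}, where the paper notes the method breaks down if $H_\beta$ appears in the right-hand side). As a minor further point, your elimination $z_1=\dot z_2/b_1$ in Stage 2 presupposes $b_1\neq0$; the paper instead derives the second-order equation for $z_1$ directly, which avoids that division.
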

\begin{proof}
Regarding (1): The general form in \eqref{fGeneral} is a simple consequence of $f(0,0,t,\alpha)=(0,0,1)^T$ by (H5), $\sigma f(x,\alpha) + f(\sigma x,\alpha)=0$ for all $x$ and all $\alpha$ by (H6). Furthermore, we conclude, using a Poincar\'e compactification and center manifold theory, that (H3) implies that the term $x_1x_3a_{13}$ in the first component of $f$ satisfies $a_{13}>0$ and subsequently that there is no term $x_2x_3 b_{23}$ in the second component of $f$. Seeing that $a_{13}>0$, we finally obtain \eqref{fGeneral} by scaling $x_3$ and $t$ as follows: $\tilde x_3 = \sqrt{a_{13}}x_3$, $\tilde t=\sqrt{a_{13}} t$.

Next regarding (2): %We show that (H4) $\Leftrightarrow  \beta \in \mathbb N_0$. 
% \begin{lemma}
%  end{lemma}
% \begin{proof}
%  Follows from a simple calculation 
% \end{proof}
% Seeing that $a_{13}>0$, we let $\tilde x_3 = \sqrt{a_{13}}x_3$, $\tilde t=\sqrt{a_{13}} t$. This produces \eqref{fGeneral} with $a_{13}=1$ upon redefining the other coefficients and dropping the tildes. We therefore consider 
% 
The general form \eqref{fGeneral} produces 
\begin{align}
 A(t) = \begin{pmatrix}
         t  & a_2 & 0\\
         b_{1} & 0 & 0\\
         c_{1} & t c_{23}&0
        \end{pmatrix}.\eqlab{AGen}
\end{align}
% We suppose that $a_{13}\ne 0$. In this case it is, upon proper scaling: $\tilde x_3=x_3a_{13}$ , without loss of generality to take $a_{13}=1$. 
% 
% 
% 
%  This specific form of $A$, together with the time-reversible symmetry $\sigma$ in (H6), recall \lemmaref{AProperty}, reduces $B$ and $C$ to the following form:
%  \begin{align*}
%   B= \begin{pmatrix}
%       0 & b_{12} & 0\\
%       b_{21} & 0 & 0\\
%       b_{31} & 0 & 0
%      \end{pmatrix},\quad C = \begin{pmatrix}
%       c_{11} & 0 & 0 \\
%       0 & c_{22} & 0\\
%       0 & c_{32} & 0
%      \end{pmatrix}.
%  \end{align*}
%  This follows from a simple computation. Here we have also use the fact that $\dot \gamma(t)=(0,0,1)$ is a solution of the variational equation to conclude that $b_{13}=c_{23}=c_{33}=0$. 
%   Finally, we suppose that
% \begin{itemize}
% \item[(H8)] Suppose that $c_{22}=0,c_{32}=0$ and $c_{11}= 1$.
% \end{itemize}
% Notice that if $c_{11}\ne 0$ then upon approprate scaling it is without loss of generality to take $c_{11}=1$. 
% 
% We then have the following.
% \begin{lemma}
% Let
% \begin{align*}
% \beta :=-\left(a_{2}b_{1} + 1\right).
% \end{align*}
But then, upon differentiating the first equation for $z_1$ in the variational equation \eqref{vargen} one more time with respect to $t$, we can write the equation for $z_1$ as a Weber equation:
 \begin{align}
  L_\beta z_1 = 0 ,\eqlab{weber}
 \end{align}
 where the second order operator $L_\beta$ is defined by the general expression:
 \begin{align}
  L_\beta q := \ddot q-t\dot q+\beta q,\eqlab{Lmexpr}
 \end{align}
 for any $q\in C^2$. From $z_1$, $z_2$ and $z_3$ can be determined by successive integration of
 \begin{equation}\eqlab{z2z3red}
 \begin{aligned}
  \dot z_2 &=b_{1}z_1,\\
  \dot z_3 &=c_{1}z_1+tc_{23} z_2.
 \end{aligned}
 \end{equation}
%  In the following let $H_n$ for any $n\in \mathbb N_0$ denote the $n$th degree Hermite polynomial. We collect all properties about these orthogonal polynomials that are relevant to the present paper in \appref{Hermite}. 
We have.
 \begin{lemma}\lemmalab{LmHmHl}
Let $H_n$, $n\in \mathbb N_0$, denote the $n$th degree Hermite polynomial. Then for all non-negative integers $m$ and $l$, the following holds 
\begin{align}
L_m H_l(t/\sqrt{2}) = (m-l) H_l(t/\sqrt{2}).\eqlab{rangeLm}
\end{align}
In particular, 
\begin{align}
H_m(\cdot/\sqrt{2})\in \textnormal{ker}\,L_m.\eqlab{KerLm}
\end{align}
\end{lemma}
\begin{proof}
 Follows from \eqref{prop1} and \eqref{prop2} in \appref{Hermite}.
\end{proof}
 Therefore for $\beta\in \mathbb N_0$ there exists by \eqref{KerLm} an algebraic solution $x(t)=H_\beta(t/\sqrt{2})$ of \eqref{weber}. Inserting this solution into \eqref{z2z3red} produces, upon using \eqref{prop1} and \eqref{prop2} in \appref{Hermite}, an algebraic solution of \eqref{vargen}:
 \begin{align}
 z(t)= \begin{pmatrix}
 H_\beta(t/\sqrt{2})\\
 \frac{b_{1}}{2(\beta+1)} H_{\beta+1}(t/\sqrt{2})\\
 \frac{c_{1}}{2(\beta+1)} H_{\beta+1}(t/\sqrt{2})+\frac{b_1c_{23}}{2(\beta+1)} \left(\frac{1}{2(\beta+3)}H_{\beta+3}(t/\sqrt{2})+H_{\beta+1}(t/\sqrt{2})\right)
 \end{pmatrix}.\eqlab{algsol}
\end{align} 
 $\beta \in \mathbb N_0\Rightarrow (H4)$ is then a consequence of \lemmaref{eqvH4}, see item \ref{3iv}. On the other hand, if $\beta\notin \mathbb N_0$ then there are no algebraic solutions of \eqref{weber}, see e.g. \cite{AbramowitzStegun1964},  and therefore (H4) does not hold, see again \lemmaref{eqvH4} item \ref{3iv}. This completes the proof of (2). 
% In particular, (H4) holds if and only if $\beta \in \mathbb N_0$. In the affirmative case, there exists an algebraic solution of \eqref{vargen} of the following form:
% \end{lemma}
% \begin{proof}
%  The proof of the first part is a simple computation. Also $(H4) \Leftrightarrow \beta \in \mathbb N_0$ is also standard, see e.g. \cite{AbramowitzStegun1964}. In particular, 
% \end{proof}
% Secondly, 
% \begin{lemma}

%  Simple calculation. 

To finish the proof, we just need to verify the claims about $\sigma_v$, $\sigma_w$ and the partial derivatives of $D$ at $v=\alpha=0$ in item (3). For this we first determine $\psi_*$. Suppose that $\beta\in \mathbb N_0$. Then a simple computation shows that
the adjoint equation can be written as a second order equation for $z_1$:
\begin{align*}
 \ddot z_1 &=-tz_1 -(\beta +2)z_1.
\end{align*}
Substituting $z_1=e^{-t/2}\tilde z_1$ gives 
\begin{align*}
L_{\beta+1}z_1 =0,
% \ddot z_1 &=tz_1-(\beta+1)z_1,
\end{align*}
recall \eqref{Lmexpr},
upon dropping the tilde. Using \eqref{prop1}, we obtain the following expression for $\psi_*$: 
 \begin{align}
  \psi_*(t) = e^{-t^2/2} c \begin{pmatrix}
               H_{\beta+1}(t/\sqrt{2}) \\
               -\frac{a_{2}}{\sqrt{2}} H_{\beta}(t/\sqrt{2})\\
               0
              \end{pmatrix},\eqlab{psiStarGen}
 \end{align}
for some constant $c$, ensuring that $\psi_*(0)=e_w$ has length $1$. The statements regarding $\sigma_v$ and $\sigma_w$ are then simple consequences of \eqref{algsol} and \eqref{psiStarGen}, recall \lemmaref{VWinvariant}. %For this we have used that 
% \end{lemma}
% \begin{proof} This system has an algebraic solution $z_1=H_{\beta+1}(t/\sqrt{2})$, recall \eqref{KerLm}, which produces the first component of $\psi_*$. 

Regarding the partial derivatives of $D$, we focus on $\beta=\text{odd}$ and the closed form expression for $\frac{\partial^3 D}{\partial v^3}(0,0)$ in (3a). Both $\frac{\partial^2 D}{\partial v\partial \alpha}(0,0)$ and $\beta=\text{even}$ in (3b) are similar, but simpler, and therefore left out.  Notice also that the statements about the local equivalence with the pitchfork and the transcritical normal form follows from singularity theory, see e.g. \cite{golubitsky1988a}. %First we notice that all $\frac{\partial^j D}{\partial \alpha^j}(0,0)=0$ by (H5). Subsequently, we focus on verifying the claim for $\frac{\partial^j D}{\partial v^j}(0,0)$. The mixed partial derivatives can be handled similarly and is therefore left out.

Let
\begin{align}
z'=(z'_1,z'_2,z'_3)^T:=\frac{\partial z_*}{\partial v}(0,0),\quad z''=(z''_1,z''_2,z''_3)=\frac{\partial^2 z_*}{\partial v^2}(0,0).\eqlab{zp}
\end{align}
Notice, by linearization of \eqref{ztVOC}$_{z=z_*(v,\alpha)}$ using $z_*(0,0)=0$, it follows that $z'$ is determined by an appropriate normalization of \eqref{algsol}. 
 Then by \eqref{psiStarGen} we conclude that $\frac{\partial^3 D}{\partial v^3}(0,0)$ is a linear combination of terms of the following form
\begin{align}
 \int_0^\infty e^{-t^2/2} H_\beta(t/\sqrt{2}) z_m'(t) z_n''(t) dt, \eqlab{Dv3Gen}
\end{align}
for $n,m=1,2,3$. Obviously, this linear combination can be stated explicitly in terms of \eqref{fGeneral}. However, the details are not important here. Next, suppose that $z''$ is a finite sum of Hermite polynomials:
\begin{align}
 z''= \sum_{i\in I} v_i H_{i}(t/\sqrt{2}),\eqlab{zppGen}
\end{align}
with $I\subset \mathbb N_0$ being a finite index set and $v_i\in \mathbb R^3$, for any $i\in I$. Then upon inserting \eqref{zppGen} into \eqref{Dv3Gen} we obtain a linear combination of terms of the following form
\begin{align}
 \int_0^\infty e^{-t^2/2} H_\beta(t/\sqrt{2})H_{i}(t/\sqrt{2})H_j(t/\sqrt{2})dt,\eqlab{D3Terms}
\end{align}
with $i,j\in \mathbb N_0$ and $\beta\in \mathbb N_0$. Again, the details are not important. However, each term of the form \eqref{D3Terms} can be determined in closed form using \eqref{productRuleHnm} and the statement of the theorem therefore follows once we have shown \eqref{zppGen}. 
% We therefore proceed to prove \eqref{zppGen}. 
For this we insert $z=z_*(v,\alpha)$ into \eqref{zg} and differentiate the resulting equation twice with respect to $v$. We then evaluate the resulting expression at $(v,\alpha)=(0,0)$ and again use that $z_*(0,0)=0$. This gives a linear inhomogeneous equation for $z''$ of the following form
\begin{align*}
 \dot z'' = A(t)z'' + \begin{pmatrix}
                       2(a_{12}z_1'z_2'+a_{13} z_1'z_3')\\
                       2(b_{11} (z_1')^2+b_{22} (z_2')^2)\\
                       2(c_{11}(z_1')^2+c_{22} (z_2')^2+c_{23}z_2'z_3')                       
                      \end{pmatrix},
\end{align*}
By \eqref{AGen} we can therefore write the equation for $z_1''$ as a second order equation and obtain \textit{an inhomogeneous Weber equation}:
\begin{align}
 L_\beta z_1'' = 2\frac{d}{dt} \left(a_{12}z_1'z_2'+a_{13} z_1'z_3'\right)+2a_2 \left(b_{11}(z_1')^2 +b_{22} (z_2')^2\right).\eqlab{z1pp}
\end{align}
Notice that by \eqref{algsol} the right hand side of \eqref{z1pp} is a sum of products of Hermite polynomials. The product rule in \eqref{productRuleHnm} in \appref{Hermite} allows us to write this sum of products as a sum of Hermite polynomials only. A simple calculation, using \eqref{prop2}, then shows that this sum only consists of Hermite polynomials of even degree. We can then solve \eqref{z1pp} using \lemmaref{LmHmHl}. In particular,
% We have.
by linearity and the fact that $m=\beta=\text{odd}$ and $l=\text{even}$, it follows from \eqref{rangeLm} that there exists an algebraic solution of \eqref{z1pp} of the form 
\begin{align*}
 z_1''(t)=\sum_{j\in J} d_j H_{2j}(t/\sqrt{2}),
\end{align*}
for a finite index set $J\subset \mathbb N_0$ and $d_j\in \mathbb R$, $j\in J$.
This is $z_1''$, since (a) it has the desired algebraic growth and (b) $z_1''(0)=0$. The latter property (b) is a consequence of $H_{2i}(0)=0$ for each $i\in \mathbb N_0$. 
Upon integrating the equations for $z_2''$ and $z_3''$ we obtain a solution of the form
\eqref{zppGen}. This completes the proof of \thmref{recipe}.%Inserting this into \eqref{Dv3Gen} gives a finite sum of terms of the form

% \begin{align*}
%  z'' = \sum_{v
% \end{align*}
\end{proof}
\begin{remark}\remlab{procedure}
 The general procedure for evaluating $\frac{\partial^3 D}{\partial v^3}(0,0)$, described in the proof above, can essentially be summarized as follows:
\begin{itemize}
\item \textbf{Step (a)}. Insert $z_*(v,\alpha)$ into \eqref{zg} and differentiate the resulting equation twice with respect to $v$. This characterizes $z''$ as a solution of a higher order variational equation.
\item \textbf{Step (b)}. This equation can by (H5)-(H7) be reduced to an inhomogeneous Weber equation, see \eqref{z1pp}, with right hand side as a finite sum of products of Hermite polynomials. 
\item \textbf{Step (c)}. We can then use \eqref{productRuleHnm} in \appref{Hermite} to reduce these products to sums of Hermite polynomials and solve the resulting equation for $z''$ using \lemmaref{LmHmHl}. 
\item \textbf{Step (d)}. Finally, we insert the resulting expression for $z''$ in step (c) into $\frac{\partial^3 D}{\partial v^3}(0,0)$ producing a sum of integrals of the form \eqref{D3Terms}. Finally, these integrals can be evaluated using \eqref{intHnHmHl}.
\end{itemize}
In principle, this procedure can be extended to cases where $f$ is a polynomial of higher degree. It is still possible to reduce the equation for $z''$ to \textnormal{an inhomogeneous Weber equation} with a right hand side consisting of a sum of Hermite polynomials. However, I have not managed to find an appropriate general setting for this, where one can show that this right hand side does not involve $H_\beta(t/\sqrt{2})$. These terms belong to the kernel of $L_\beta$, recall \eqref{KerLm}, and we can therefore not apply \lemmaref{LmHmHl}, as described in step (c), to inhomogeneous terms of this form. There are similar issues related to formalising the procedure iteratively to obtain closed form expressions for higher order derivatives of $z_*$ and $D$. 
\end{remark}

\begin{remark}\remlab{mitryrem}
 In \cite{mitry2017a}, in the case of the folded node normal form, a more implicit, integral representation of $z''$ is obtained by differentiating the fix-point equation $z_*=T(z_*)$, with $T$ defined by the right hand side of \eqref{ztVOC}. Inserting this expression into the expression for $\frac{\partial^3 D}{\partial v^3}(0,0)$ gives a double integral, which (presumable) can be evaluated in Mathematica. The reference \cite{mitry2017a} computes all values up to $n=20$ for the folded node. (
 We compare these values with our closed form expressions in \appref{compare}.) However, it is unclear if it is possible to evaluate such double integral directly. %This is why I believe that it is more useful, as highlighted for general systems, to represent $z''$ through higher order variational equations and use properties of the Hermite pol
 \end{remark}

% \fbox{MAYBE JUST FOCUS ON $n=2k$!}
%For simplicity in the following, we drop the subscript on $\gamma_w$ in the following. 
% To prove the existence of the pitchfork bifurcation, we 

\section{Application of \thmref{Melnikov} to the folded node: Proof of \thmref{main1}}\seclab{foldednode}
% By \lemmalab{lemmabif} we set $\mu=n+\alpha$ and adapt the Melnikov approach in \cite{wechselberger2002a,wechselberger_existence_2005}, following \cite{mynew}, to make use of the time-reversible symmetry of \eqref{fnnf}: If $x(t)$ is a solution then so is $\sigma x(-t)$ where 
% \begin{align*}
%  \sigma = \textnormal{diag}\,(1,-1,-1).
% \end{align*}
% The orbit $\gamma$ in \eqref{gammaSol} is a fix-point of this symmetry, and we will therefore fix copies of $W^{cs}$ and $W^{cu}$ so that they these manifolds are symmetric: $$W^{cu}=\sigma W^{cs}:= \{\sigma x\vert x\in W^{cs}\}.$$ This (obviously) simplies the Melnikov computations. 

In this section, we now prove \thmref{main1}. For this, we follow the recipe in \secref{recipe}, summarized in \remref{procedure}. But first we follow \cite{wechselberger_existence_2005} and rectify $\gamma$, recall \eqref{gammahere}, to the $x_3$-axis by introducing
\begin{align}\eqlab{rectify}
\begin{split}
 x_1 &:= x+z^2-\frac12,\\
 x_2 &:=y-2z,\\
 x_3 &:=2z,
 \end{split}
\end{align}
so that 
% \begin{align*}
 \begin{align}
 \gamma:\, x(t) = \left(0,0,t\right),\, t\in \mathbb R,\eqlab{gammaSol}
%  \gamma_s:\, (x(t),y(t),z(t)) = \left(-\frac{\mu}{4} t^2+\frac{\mu}{2},t,\frac{\mu}{2} t\right).
\end{align}
% \end{align*}
using -- for simplicity -- the same symbol for the same object in the new variables. Notice that \cite{mitry2017a} rectifies $\gamma$ in a slightly different way, see \appref{compare}.
% see \eqref{gammaSol}.
Inserting \eqref{rectify} into \eqref{fnnfeps20}
produces
 \begin{align}\eqlab{fnnf}
 \begin{split}
\dot x_1 &=\frac12 \mu x_2+x_1x_3,\\
\dot x_2&=-2x_1,\\
\dot x_3 &=2x_1+1,
\end{split}
\end{align}
which we, see also \cite[Eq. (2.18)]{wechselberger_existence_2005}, will study in the following. 
The system \eqref{fnnf} is time-reversible with respect to the same symmetry as in \eqref{sigmafnnf}. It is easy to see that \eqref{fnnf} satisfies the assumptions (H5)-(H7) and \thmref{recipe} applies. In particular, we have
\begin{align*}
 \beta=\mu-1.
\end{align*}
We will therefore apply the procedure used in the proof of this result, specifically see \thmref{recipe} item (3a) and \remref{procedure}, to \eqref{fnnf}. This will enable a proof of \thmref{main1}.

In the following, let $$\mu=n+\alpha.$$ Then by \lemmaref{lemmabif} and the analysis in \cite{wechselberger_existence_2005} the assumptions (H1)-(H4) are satisfied. At this stage, we keep $n\in \mathbb N$ general. For $n=\textnormal{odd}$ the results in the following are therefore covered by \cite{wechselberger_existence_2005}; the only exception is that we exploit the symmetry to simplify some of the expressions. %This is particularly useful for the pitchfork bifurcation for $n=\text{even}$, which is our primary focus. %Also the time-reversible setting is crucial for our description of the `secondary canards' in \secref{blowup}.
% \begin{remark}
%  
% \end{remark}

Writing $x=\gamma(t)+z$ gives
\begin{align}\eqlab{veqns}
\begin{split}
 \dot z_1 &= t z_1+ \frac{n}{2} z_2 +g(z,\alpha),\\
 \dot z_2 &=-2z_1,\\
 \dot z_3 &=2z_1,
 \end{split}
\end{align}
where
\begin{align}
 g(z,\alpha) = \frac12 \alpha z_2+z_1z_3.\eqlab{gFunc}
\end{align}
In comparison with \eqref{zg}, `$g$' for \eqref{veqns} is really $(g(z,\alpha),0,0)$, but it is useful to allow for a slight abuse of notation and let $g$ here refer to the first nontrivial coordinate function only. 
% Note that in comparison with \eqref{} we are permitting a slight abuse of notation, letting $g$ in \eqref{veqns} be the first coordinate function. 
% System \eqref{veqns} possesses a time-reversible symmetry:
% \begin{align*}
%  (z,t)\mapsto (\sigma z,-t),
% \end{align*}
% where
% \begin{align*}
%  \sigma = \textnormal{diag}\,(1,-1,-1). 
% \end{align*}
% Setting $g=0$ in these expressions, gives the variational equation
% The variational equation is
% \begin{align}
%  \dot z &= A(t) z,\eqlab{vvar}
% \end{align}
% with
% \begin{align*}
%  A(t) = \begin{pmatrix}
%          t & \frac{n}{2} & 0\\
%          -2 & 0 &0\\
%          2 &0 &0
%         \end{pmatrix}.
% \end{align*}
Setting $g=0$ (ignoring the nonlinear terms) in \eqref{veqns} produces the variational equations about $\gamma$
\begin{align}
 \dot z = A(t) z\quad \mbox{with}\quad 
 A(t) = \begin{pmatrix}
         t & \frac{n}{2} & 0\\
         -2 & 0 &0\\
         2 &0 &0
        \end{pmatrix}.\eqlab{vvar}
\end{align}
%with $\mu=n$ in $A(t)$.
% The variational equation about $\gamma$ is given in \eqref{vvar}.
% Notice that by the time-reversible symmetry of \eqref{fnnf} and the symmetry of $\gamma$, we have
% \begin{align*}
%  \sigma^{-1} A(t)\sigma = -A(-t).
% \end{align*}
% 
By differentiating the first equation for $z_1$ in \eqref{vvar} with respect to $t$ , we obtain a Weber equation for $z_1$:
\begin{align*}
L_{n-1}z_1=0,
%  \ddot z_1 &= t z_1 -(n-1) z_1,
\end{align*}
recall \eqref{Lmexpr}. For $n\in \mathbb N$, it has an algebraic solution:
\begin{align}
 z_1 = H_{n-1}(t/\sqrt{2}).\eqlab{z1Algebraic}
\end{align}
%where $H_m$ for any $m\in \mathbb N_0$ is the $m$th-degere Hermite polynomial. 
%The existence of the algebraic solution \eqref{z1Algebraic}, gives the nontransversality of $W^{cs}(\mu)$ and ¤$W^{cu}(\mu)$ for $\mu \in \mathbb N$, recall \lemmaref{lemmabif} and \lemmaref{eqvH4} item (\ref{3iv}). 
Inserting \eqref{z1Algebraic} into the remaining equations for $z_2$ and $z_3$, we obtain the following state-transition matrix $\Phi(t,s)$ of \eqref{vvar}:
\begin{align}
 \Phi(t,0) &= \begin{pmatrix}
              \frac{1}{H_{n-1}(0)}H_{n-1}(t/\sqrt{2}) & * & 0\\
              -\frac{\sqrt{2} }{n H_{n-1}(0)}H_{n}(t/\sqrt{2}) & * & 0\\
              \frac{\sqrt{2} }{n H_{n-1}(0)}H_{n}(t/\sqrt{2}) & * & 1
             \end{pmatrix},\quad n=\textnormal{odd}, \eqlab{Psi2k_1}\\
             \Phi(t,0)& = \begin{pmatrix}
              * & -\frac{n }{\sqrt{2} H_{n}(0)}H_{n-1}(t/\sqrt{2}) & 0\\
             * & \frac{1}{H_{n}(0)}H_{n}(t/\sqrt{2})& 0\\
              * & 1-\frac{1}{H_{n}(0)}H_{n}(t/\sqrt{2}) &  1
             \end{pmatrix},\quad n=\textnormal{even},\eqlab{Psi2k}
\end{align}
see also \cite[Eqns. (3.14)-(3.15)]{wechselberger_existence_2005}. Following the notation in \cite{wechselberger_existence_2005}, the asterisks denote a separate linearly independent solution that we do not specify and which will play no role in the following. %To describe the bifurcations of canards the author of \cite{wechselberger_existence_2005} fixes a local section $\Sigma$ transverse to $\gamma(0)$ within the $x_1x_2$-plane. 
% Define the following unit vectors
% \begin{align*}
%  v &= \left\{\begin{array}{cc}
%              (1,0,0)^T & n=\textnormal{odd}\\
%              (0,1,0)^T & n=\textnormal{even}
%             \end{array}\right., \\
%        w &= \left\{\begin{array}{cc}
%              (0,1,0)^T & n=\textnormal{odd}\\
%              (1,0,0)^T & n=\textnormal{even}
%             \end{array}\right.,                  
% \end{align*}
Setting $t=0$ in the expressions for $\Phi$ above, it follows that the $z_1z_2$-plane has the following decomposition:
\begin{align*}
 V \oplus W,
\end{align*}
where
\begin{align}
 V& = \textnormal{span}\, e_v,\quad \left\{\begin{array}{cc}
             e_v = (1,0,0)^T  & n=\textnormal{odd}\\
            e_v = (0,1,0)^T & n=\textnormal{even}
            \end{array}\right., \nonumber\\
W&= \textnormal{span}\, e_w,\quad \left\{\begin{array}{cc}
             e_w=(0,1,0)^T & n=\textnormal{odd}\\
             e_w=  (1,0,0)^T & n=\textnormal{even}
            \end{array}\right.,\eqlab{Wscap}
\end{align}
recall (H4) and \eqref{WDefinition}. Also $U=\textnormal{span}(0,0,1)^T$ for all $n\in \mathbb N$. Therefore by \eqref{MelnikovDNew}:
\begin{proposition}\proplab{Dgeneral}
For \eqref{fnnf}, 
\begin{align*}
\sigma_v &= \begin{cases} 
%              \left\{\begin{array}{cc }
         1 &n=\textnormal{odd}\\
         -1 &n=\textnormal{even}
%         \end{array}\right.
           \end{cases},\\
\sigma_w &= \begin{cases} 
%              \left\{\begin{array}{cc }
         -1 &n=\textnormal{odd}\\
         1 &n=\textnormal{even}
%         \end{array}\right.
           \end{cases},
\end{align*}
and
% and hence
\begin{align}
 D(v,\alpha) = \left\{\begin{array}{cc }
         -2h_{cs}(v,\alpha) &n=\textnormal{odd}\\
         h_{cs}(-v,\alpha)-h_{cs}(v,\alpha)&n=\textnormal{even}
        \end{array}\right..\eqlab{DvalphaH}
\end{align}In particular,
\begin{enumerate}
\item \label{Dalpha} $D(0,\alpha)=0$ for all $\alpha$ and any $n$. 
 \item \label{Dodd} For $n=\textnormal{even}$, $v\mapsto D(v,\alpha)$ is an odd function for every $\alpha$. 
\end{enumerate}

\end{proposition}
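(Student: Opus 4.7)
The plan is to deduce everything from \lemmaref{MelnikovDNew} together with the explicit data on $V$ and $W$ already assembled in \eqref{Wscap}. First I would verify that the rectifying change of variables \eqref{rectify} leaves the involution $\sigma=\textnormal{diag}(1,-1,-1)$ of \eqref{sigmafnnf} unchanged (a coordinatewise check: $x+z^2-\tfrac12$ is even in $(y,z)$, while $y-2z$ and $2z$ are odd), so that in the new coordinates \eqref{fnnf} still satisfies the hypotheses (H1) and (H6), and \lemmaref{VWinvariant} applies directly. Once this is done, the signs $\sigma_v,\sigma_w$ are read off from the basis vectors $e_v,e_w$ in \eqref{Wscap}: $\sigma e_v$ and $\sigma^T e_w$ are computed coordinatewise, and the parity of $n$ determines which of $\pm 1$ appears, matching exactly the values stated in the proposition.

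Next I would substitute these signs into the identity $D(v,\alpha)=\sigma_w h_{cs}(\sigma_v v,\alpha)-h_{cs}(v,\alpha)$ from \lemmaref{MelnikovDNew}. For $n$ odd this collapses to $-2 h_{cs}(v,\alpha)$, and for $n$ even it becomes $h_{cs}(-v,\alpha)-h_{cs}(v,\alpha)$; these are precisely the two cases of \eqref{DvalphaH}. Item (\ref{Dodd}) is then immediate, since $v\mapsto h_{cs}(-v,\alpha)-h_{cs}(v,\alpha)$ is odd by construction.

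The remaining task is (\ref{Dalpha}): $D(0,\alpha)=0$ for every $\alpha$ and every $n$. For $n$ even this is already visible in \eqref{DvalphaH}, so only the odd case needs work, which amounts to showing $h_{cs}(0,\alpha)=0$. The key observation is that, after rectification, $\gamma(t)=(0,0,t)$ is an orbit of \eqref{fnnf} for \emph{every} value of $\mu=n+\alpha$; equivalently, the nonlinearity in \eqref{gFunc} satisfies $g(0,\alpha)=0$ identically, so $z\equiv 0$ solves \eqref{veqns} for every $\alpha$. Uniqueness of the fix-point $z_*$ of the integral equation \eqref{ztVOC} then forces $z_*(0,\alpha)\equiv 0$, and evaluating at $t=0$ in the $(v,w)$-coordinates gives $(0,h_{cs}(0,\alpha))=(0,0)$, hence $h_{cs}(0,\alpha)=0$.

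The whole argument is essentially bookkeeping, and the only point worth watching is the final one: the $\alpha$-independence of $\gamma$, reflecting the persistence of the weak canard at every parameter value, is what makes $h_{cs}(0,\cdot)$ vanish \emph{identically} in $\alpha$ rather than only at $\alpha=0$. Beyond this, no analytic difficulty arises; all three items follow in a few lines once \lemmaref{MelnikovDNew} and \eqref{Wscap} are invoked.
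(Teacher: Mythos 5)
Your proposal is correct and follows essentially the same route as the paper: the signs $\sigma_v,\sigma_w$ are read off from \eqref{Wscap} via \lemmaref{VWinvariant}, the formula \eqref{DvalphaH} is then a direct substitution into \lemmaref{MelnikovDNew}, and $D(0,\alpha)=0$ comes from the $\alpha$-independence of $\gamma$ (i.e.\ $g(0,\alpha)=0$ in \eqref{gFunc}, forcing $z_*(0,\alpha)\equiv 0$ and hence $h_{cs}(0,\alpha)=0$). The only difference is that you spell out the bookkeeping (in particular the check that \eqref{rectify} preserves $\sigma=\textnormal{diag}(1,-1,-1)$) which the paper leaves implicit.
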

\begin{proof}
 Follows from the definition of $\sigma_i$, $i=v,w$ in \lemmaref{VWinvariant} and from \eqref{MelnikovDNew}, see also \thmref{recipe}. %Property (\ref{Dalpha}) holds since $\gamma$ is a solution for every $\alpha$ whereas (\ref{Dodd}) follows immediately from \eqref{DvalphaH}$_{n=\textnormal{even}}$. 
\end{proof}
As a corollary, we have the following.
\begin{corollary}\corlab{symmetryLemma}
 Consider $n=\textnormal{odd}$. Then solutions of $D(v,\alpha)=0$ bifurcating from $v=\alpha=0$ correspond
to symmetric solutions of \eqref{veqns}, i.e. they are fix-points of the time-reversible symmetry $\sigma$.
% Consider $n=\textnormal{even}$. Then solutions of $D(v,\alpha)=0$ bifurcating from $v=\alpha=0$ c symmetricaly related: If $
% to symmetric solutions of \eqref{veqns}, i.e. they are fix-points of the time-reversible symmetry $\sigma$.
 \end{corollary}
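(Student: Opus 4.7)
The strategy is to combine the closed-form expression $D(v,\alpha)=-2\,h_{cs}(v,\alpha)$ from \propref{Dgeneral} (which is available because $\sigma_v=1$, $\sigma_w=-1$ for $n=\text{odd}$) with the fact that the point $z_*(v,\alpha)(0)$ lies on $\Sigma\subset\{u=0\}$, so that in the $(u,v,w)$-coordinates its initial condition reads
\begin{align*}
 z_*(v,\alpha)(0)=(0,v,h_{cs}(v,\alpha)).
\end{align*}
A root $(v,\alpha)$ of the bifurcation equation $D(v,\alpha)=0$ therefore forces $h_{cs}(v,\alpha)=0$, which in turn means that the initial condition reduces to $(0,v,0)$. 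Since $\sigma=\textnormal{diag}(-1,\sigma_v,\sigma_w)=\textnormal{diag}(-1,1,-1)$ in the $(u,v,w)$-coordinates (recall \eqref{sigmauvw} and \lemmaref{VWinvariant}), the vector $(0,v,0)$ is a fixed point of $\sigma$.

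Now I would invoke uniqueness of solutions to \eqref{zg}. By \lemmaref{AProperty} and the time-reversibility (H1), if $t\mapsto z_*(v,\alpha)(t)$ is a solution of \eqref{zg}, then so is $t\mapsto \sigma z_*(v,\alpha)(-t)$. At $t=0$ both curves have the value $(0,v,0)$, so uniqueness yields $\sigma z_*(v,\alpha)(-t)=z_*(v,\alpha)(t)$ for all admissible $t$. Adding $\gamma(t)$ (which is itself symmetric by (H2), or directly from $\gamma(t)=(0,0,t)$ and $\sigma=\textnormal{diag}(1,-1,-1)$ on the original coordinates of \eqref{fnnf}), the full trajectory $x(t)=\gamma(t)+z_*(v,\alpha)(t)$ satisfies $\sigma x(-t)=x(t)$, i.e.\ it is a symmetric solution of \eqref{veqns}. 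Conversely, any symmetric solution near $\gamma$ must have its initial point on $\Sigma$ in $\textnormal{Fix}(\sigma)\cap\Sigma=V$, i.e.\ $w=0$, which (by the graph representation \eqref{graphh}) forces $h_{cs}(v,\alpha)=0$ and hence $D(v,\alpha)=0$.

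I do not anticipate a real obstacle here: both directions are consequences of the graph representation of $W^{cs}_0(\alpha)$ on $\Sigma$, the explicit values of $\sigma_v,\sigma_w$ determined in \propref{Dgeneral}, and standard ODE uniqueness applied to the $\sigma$-related pair of solutions. The only small point to be careful about is the change of basis between the original $(x_1,x_2,x_3)$-coordinates of \eqref{fnnf} and the adapted $(u,v,w)$-coordinates on $\Sigma$, but since $e_v=(1,0,0)^T$ and $e_w=(0,1,0)^T$ for $n=\text{odd}$ (see \eqref{Wscap}), the involution acts diagonally in both frames and the identification $\textnormal{Fix}(\sigma)\cap\Sigma=V$ is immediate.
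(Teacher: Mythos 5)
Your proposal is correct and follows essentially the same route as the paper: from $D(v,\alpha)=-2h_{cs}(v,\alpha)$ for $n$ odd, a root forces $w=h_{cs}(v,\alpha)=0$, so the initial point lies in $V=\textnormal{Fix}(\sigma)\cap\Sigma$ (i.e. $x_2=x_3=0$), and the orbit through a fixed point of the reversing involution is symmetric. The paper states this tersely; you merely fill in the standard uniqueness argument for reversible systems and add the (correct, if not strictly required) converse direction.
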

\begin{proof}
 Follows from \eqref{DvalphaH}$_{n=\textnormal{odd}}$ and the fact that any solution of $D(v,\alpha)$ in this case lies within $w=0$, corresponding to $x_2=x_3=0$, being the fix-point set of the symmetry $\sigma$. \end{proof}
In contrary, when $n=\textnormal{even}$ bifurcating solutions come in pairs (as a pitchfork bifurcation) that are related by the symmetry. 

Now, finally by \thmref{Melnikov} we have.% By \cite[Theorem 1]{wechselberger2002a} we have
\begin{lemma}\lemmalab{hallo}
Let $z_*(v,\alpha)(\cdot)$ be the solution with $z_*(v,\alpha)(0)=(v,h_{cs}(v,\alpha))\in W_0^{cs}(\alpha)\subset \Sigma$ in the $(v,w)$-coordinates. Then $z_*(v,\alpha)\in C_{b,+}$ and
% Furthermore, let 
% \begin{align*}
%  \psi(t) 
% \end{align*}
% and 
\begin{align}
 D(v,\alpha)& = \int_0^\infty e^{-t^2/2} \times \nonumber\\
 &\left\{\begin{array}{cc} 
                                        \frac{2\sqrt{2} }{n H_{n-1}(0)}H_{n}(t/\sqrt{2})  g(z_*(v,\alpha)(t),\alpha) & n=\textnormal{odd}\\
                                        \frac{1}{H_{n}(0)}H_{n}(t/\sqrt{2})  (g(z_*(v,\alpha)(t),\alpha)-g(z_*(-v,\alpha)(t),\alpha)& n=\textnormal{even}
                      \end{array}\right. dt.\eqlab{Dformula}
\end{align}

\end{lemma}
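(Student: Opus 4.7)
The plan is to specialize \thmref{Melnikov} to \eqref{veqns} by computing the adjoint solution $\psi_*$ explicitly and then inserting the parity-dependent values of $(\sigma_v,\sigma_w)$ from \propref{Dgeneral}. Since the nonlinearity in \eqref{veqns} is concentrated in the first coordinate (the perturbation vector has the form $(g(z,\alpha),0,0)^T$ with $g$ as in \eqref{gFunc}), the inner product in \eqref{DvalphaThm} collapses to $\langle \psi_*(s),\,\cdot\,\rangle = (\psi_*(s))_1\,g(\cdot)$, so only the first component of $\psi_*$ is needed.

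First I would solve the adjoint variational equation $\dot\psi+A(t)^T\psi=0$ for $A(t)$ in \eqref{vvar}. The third scalar equation forces $\psi_{*,3}\equiv 0$ (it is constant and must decay), and the remaining $(\psi_{*,1},\psi_{*,2})$-system decouples from the third component; differentiating the scalar equation for $\psi_{*,1}$ in $t$ and substituting $\psi_{*,1}=e^{-t^2/2}\tilde\psi_1$ reduces it to the Weber equation $L_n\tilde\psi_1=0$, so by \lemmaref{LmHmHl} the algebraic solution is $\tilde\psi_1=H_n(t/\sqrt{2})$. The second component is obtained by integrating $\dot\psi_{*,2}=-\tfrac{n}{2}\psi_{*,1}$ using the identity
\begin{align*}
 \tfrac{d}{dt}\bigl(e^{-t^2/2}H_{n-1}(t/\sqrt{2})\bigr)=-\tfrac{1}{\sqrt{2}}e^{-t^2/2}H_n(t/\sqrt{2}),
\end{align*}
which follows directly from \eqref{prop1} in \appref{Hermite}; the integration constant is pinned to zero by the decay requirement at $\pm\infty$. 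The outcome is $\psi_*(t)=c\,e^{-t^2/2}\bigl(H_n(t/\sqrt{2}),\,\kappa H_{n-1}(t/\sqrt{2}),\,0\bigr)^T$ for a single free scalar $c$, where $\kappa=n/\sqrt{2}$ plays no further role.

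Next I would fix $c$ by imposing $\psi_*(0)=e_w$ in each parity case using \eqref{Wscap}. The key observation is the parity-based vanishing of Hermite polynomials at the origin. For $n$ odd, $e_w=(0,1,0)^T$: the first-component condition $cH_n(0)=0$ is automatic since $H_n(0)=0$, and $c$ is then determined through the nonzero value $H_{n-1}(0)$, giving the prefactor $\tfrac{\sqrt{2}}{nH_{n-1}(0)}$. For $n$ even, $e_w=(1,0,0)^T$: the second-component condition vanishes automatically via $H_{n-1}(0)=0$, and $c=1/H_n(0)$. Finally, substituting $(\sigma_v,\sigma_w)=(1,-1)$ (odd) and $(-1,1)$ (even) from \propref{Dgeneral} into \eqref{DvalphaThm} collapses the bracket $g-\sigma_w g$ to $2g(s,z_*(v,\alpha)(s),\alpha)$ in the odd case (producing the doubled prefactor $\tfrac{2\sqrt{2}}{nH_{n-1}(0)}$) and to $g(z_*(v,\alpha))-g(z_*(-v,\alpha))$ in the even case, yielding \eqref{Dformula}. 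The only nuance, hardly an obstacle, is checking that the normalization $\psi_*(0)=e_w$ is solvable for the single scalar $c$; this is precisely what the parity vanishing of $H_n(0)$ and $H_{n-1}(0)$ guarantees, in consistency with the parity-dependent basis choice in \eqref{Wscap}.
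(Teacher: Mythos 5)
Your proposal is correct and follows essentially the same route as the paper: apply \thmref{Melnikov} with the explicit exponentially decaying adjoint solution $\psi_*(t)=c\,e^{-t^2/2}(H_n(t/\sqrt{2}),\tfrac{n}{\sqrt{2}}H_{n-1}(t/\sqrt{2}),0)^T$ normalized by $\psi_*(0)=e_w$ via the parity vanishing of $H_n(0)$ and $H_{n-1}(0)$, and then insert $(\sigma_v,\sigma_w)$ from \propref{Dgeneral}. The only cosmetic difference is that you derive $\psi_*$ from scratch by reducing the adjoint system to $L_n\tilde\psi_1=0$, whereas the paper simply quotes the formula from \cite[Eq.~(3.12)]{wechselberger_existence_2005}; your computation is consistent with that formula.
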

\begin{proof}
 We have in these expressions used that 
 \begin{align*}
  \psi_*(t) &= e^{-t^2/2}\begin{pmatrix}
             \frac{\sqrt{2} }{nH_{n-1}(0)}H_{n}(t/\sqrt{2})\\
             \frac{1}{H_{n-1}(0)}H_{n-1}(t/\sqrt{2})\\
             0
            \end{pmatrix},\quad \text{for $n$ odd},\\
  \psi_*(t) &= e^{-t^2/2}\begin{pmatrix}
             \frac{1}{H_n(0)}H_{n}(t/\sqrt{2})\\
             \frac{n }{\sqrt{2}H_n(0)}H_{n-1}(t/\sqrt{2})\\
             0
            \end{pmatrix},\quad \text{for $n$ even},
 \end{align*}
 see \cite[Eq. (3.12)]{wechselberger_existence_2005}, is the solution of the adjoint equation \eqref{adjvargen} with $\psi_*(0)=e_w$ which decays exponentially for $t\rightarrow \pm \infty$, recall \lemmaref{wvector}. 
\end{proof}
% 
% Upon taking partial derivatives of $D$, it was in \cite{wechselberger_existence_2005} shown that
% In \cite{wechselberger_existence_2005},
% The Melnikov approach was in \cite{wechselberger_existence_2005} used to show the following for $n=\textnormal{odd}$.
% \begin{theorem}\thmlab{main0}\cite[Theorem 3.1]{wechselberger_existence_2005}
% For $n=2k-1$ with $k\in \mathbb N$ the bifurcation equation $D(v,\alpha)=0$ is locally equivalent with the transcritical bifurcation
%  \begin{align*}
%   v((-1)^{k} \alpha +v) = 0.
%  \end{align*}
% \end{theorem}

We are now ready to prove \thmref{main1} item (\ref{neven}).
\begin{proof}[Proof of \thmref{main1} item (\ref{neven})]

We now write $n=2k$, $k\in \mathbb N$. By \propref{Dgeneral}, items (\ref{Dalpha}) and (\ref{Dodd}), it follows that
\begin{align}
 D(0,\alpha) = \frac{\partial^{2i} D}{\partial v^{2i}}(0,\alpha) = 0,\eqlab{Dsimple}
\end{align}
for all $\alpha$ and all $i\in \mathbb N$.
% 
% 
% prove the following
Next, we have the following lemma.
\begin{lemma}\lemmalab{main1}
For $n=2k$ with $k\in \mathbb N$ the following expressions hold:
\begin{align}
\frac{\partial^2 D}{\partial v\partial \alpha}(0,0)  &=\frac{\sqrt{\pi}(2k)!!}{\sqrt{2}(2k-1)!!},\eqlab{DvalphaFinal}\\
 \frac{\partial^3 D}{\partial v^3}(0,0) &={3\sqrt{2\pi} (2k+1) (2k)!!^4} \nonumber\\
 &\times \sum_{j=0}^{2k-1} \frac{(4k-1-2j)!}{(2k-1-2j)j! (j+1)! (2k-1-j)!^2(2k-j)!^2}.\eqlab{Dv3Final}
\end{align}
Let $c_{jk}$ be the elements of the sum in \eqref{Dv3Final}:
\begin{align*}
 c_{kj} :=  \frac{(4k-1-2j)!}{(2k-1-2j)j! (j+1)! (2k-1-j)!^2(2k-j)!^2},
\end{align*}
for $j=0,\ldots,2k-1$. 
Then for every $k\in\mathbb N$
\begin{align*}
 \left\{\begin{array}{ccc}
                 c_{kj}  >0 & \textnormal{for} &j=0,\ldots,k-1,\\
                 c_{kj}  <0 & \textnormal{for}&j=k,\ldots,2k-1,
                 \end{array} \right.
\end{align*}
and
\begin{align}
 \left|\frac{c_{k(k-l)}}{c_{k(k+l-1)}}\right|>2^{2l-1}\ge 2 \eqlab{ckprop},
\end{align}
for all $l=1,\ldots,k$. 
\end{lemma}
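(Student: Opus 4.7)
The plan is to apply the recipe of \thmref{recipe} and \remref{procedure} to the rectified folded-node system \eqref{fnnf} with $n=2k$. Combining \propref{Dgeneral} with \eqref{Dformula} and the vanishing identities \eqref{Dsimple} reduces the statement to (i) evaluating $\frac{\partial^2 D}{\partial v \partial\alpha}(0,0)$ and $\frac{\partial^3 D}{\partial v^3}(0,0)$ in closed form, and then (ii) performing the sign and ratio analysis for the coefficients $c_{kj}$ that emerge from the resulting expression.

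For the mixed derivative, differentiating \eqref{Dformula} in $(v,\alpha)$ and using $z_*(0,\alpha)=0$ together with $g(z,\alpha)=\tfrac12\alpha z_2+z_1 z_3$, only the $\tfrac12 \alpha z_2$ piece of $g$ contributes at leading order, yielding
\[
\frac{\partial^2 D}{\partial v \partial\alpha}(0,0)=\int_0^\infty e^{-t^2/2}\,\frac{H_n(t/\sqrt{2})}{H_n(0)}\,z'_2(t)\,dt,
\]
where the linear variation $z'=\partial_v z_*|_{(0,0)}$ is read off from the middle column of $\Phi(t,0)$ in \eqref{Psi2k}, so that $z'_2(t)=H_n(t/\sqrt{2})/H_n(0)$. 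The integrand is thus proportional to $H_n(t/\sqrt{2})^2$, hence even, and Hermite orthogonality on $(-\infty,\infty)$ together with $H_{2k}(0)=(-1)^k(2k)!/k!$ and the identity $(2k)!=(2k)!!(2k-1)!!$ immediately gives \eqref{DvalphaFinal}.

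For $\frac{\partial^3 D}{\partial v^3}(0,0)$ I follow the four steps of \remref{procedure}. Since $z_*(0,\alpha)=0$ and $g$ is quadratic, expanding $\tilde g:=g(z_*(v,\alpha),\alpha)$ to third order in $v$ at $\alpha=0$ reduces $\partial_v^3\tilde g(0,0)$ to $3(z''_1 z'_3+z'_1 z''_3)$, giving
\[
\frac{\partial^3 D}{\partial v^3}(0,0)=\frac{6}{H_n(0)}\int_0^\infty e^{-t^2/2}\,H_n(t/\sqrt{2})\bigl(z''_1 z'_3+z'_1 z''_3\bigr)\,dt.
\]
Differentiating \eqref{veqns} twice in $v$ at $(v,\alpha)=(0,0)$ produces, exactly as in the proof of \thmref{recipe}, the inhomogeneous Weber equation $L_{n-1}z''_1=2(z'_1 z'_3)'$ together with $\dot z''_2=-2 z''_1$, $\dot z''_3=2 z''_1$, subject to $z''_2(0)=z''_3(0)=0$ coming from $z_*(v,\alpha)(0)\in\Sigma=\{x_3=0\}$ with $V$ the $x_2$-axis for $n=2k$. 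Using the explicit form of $z'$ from \eqref{Psi2k}, the product $z'_1 z'_3$, and hence its $t$-derivative, expands via \eqref{productRuleHnm} into a finite sum of even-degree Hermite polynomials $H_{2j}(t/\sqrt{2})$ with $0\le j\le n-1$; since $n-1$ is odd and the algebraic null space of $L_{n-1}$ is spanned by $H_{n-1}(\cdot/\sqrt{2})$, no resonance arises and \lemmaref{LmHmHl} inverts $L_{n-1}$ termwise, producing a closed-form expression for $z''_1$. Integrating then produces $z''_2$ and $z''_3$ as sums of odd-degree Hermite polynomials, with $z''_2(0)=z''_3(0)=0$ automatic from $H_{\text{odd}}(0)=0$. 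Substituting $(z',z'')$ into the integral, the integrand is even on $\mathbb{R}$, so $\int_0^\infty=\tfrac12\int_{-\infty}^\infty$, and each triple product $H_n H_a H_b$ evaluates in closed form by the Feldheim--Watson identity \eqref{intHnHmHl}; after collecting contributions by the index $j$ of the surviving even-degree piece of $z''_1$, the result rearranges into \eqref{Dv3Final}.

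The sign pattern of $c_{kj}$ is then immediate: every factorial in the definition is a positive integer for $0\le j\le 2k-1$, so $\mathrm{sign}(c_{kj})=\mathrm{sign}(1/(2k-1-2j))$, which is $+$ for $j\le k-1$ and $-$ for $j\ge k$. For the ratio inequality \eqref{ckprop}, the quotient $|c_{k(k-l)}/c_{k(k+l-1)}|$ telescopes, after the large factorials cancel, into an expression that I would bound below by $2^{2l-1}$ via an elementary, Stirling-free estimate; the base case $l=1$ yields $|c_{k(k-1)}/c_{kk}|=(4k+2)/(k+1)>2$, and the step from $l$ to $l+1$ introduces an additional ratio whose lower bound by $4$ I expect to follow by direct comparison of consecutive factorials. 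The main technical obstacle of the argument is the bookkeeping in the third paragraph: turning the double sum that arises from combining \eqref{productRuleHnm} with \eqref{intHnHmHl} into the clean single sum \eqref{Dv3Final} requires careful reindexing and cancellation, which I would cross-check for small $k$ against the numerical values of \cite{mitry2017a} tabulated in \appref{compare}.
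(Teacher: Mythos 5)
Your plan is essentially the paper's own proof: both follow the recipe of \remref{procedure}, read off $z'$ from the relevant column of $\Phi(t,0)$ in \eqref{Psi2k}, reduce the second variation to the inhomogeneous Weber equation $L_{2k-1}z_1''=2\frac{d}{dt}(z_1'z_3')$, solve it termwise via \lemmaref{LmHmHl} after expanding the inhomogeneity in Hermite polynomials with \eqref{productRuleHnm}, and finish with \eqref{intHnHmHl}; your treatment of \eqref{DvalphaFinal} and of the sign pattern of the $c_{kj}$ is correct and identical to the paper's. The one substantive simplification you miss is the paper's integration-by-parts step: since $z_1^{(i)}=\tfrac12\dot z_3^{(i)}$ for \eqref{veqns}, one has $z_1''z_3'+z_1'z_3''=\tfrac12\frac{d}{dt}(z_3'z_3'')$, and integrating by parts against $e^{-t^2/2}H_{2k}(t/\sqrt2)$ (using $z_3'(0)=0$ and \eqref{prop1}) converts the weight into $H_{2k+1}(t/\sqrt2)$ and leaves a \emph{single} product $z_3'z_3''$ under the integral; this is precisely what makes \eqref{Dv3Final} drop out as one clean sum over $j$. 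Without it you face four families of triple products, and the ``careful reindexing and cancellation'' you defer to a numerical cross-check is exactly the part of the computation the lemma asserts, so as written the closed form \eqref{Dv3Final} is not actually derived. (A minor slip: $z_1'z_3'$ itself is a sum of \emph{odd}-degree Hermite polynomials -- see \lemmaref{z1z3p} -- and only its $t$-derivative is a sum of even-degree ones; your non-resonance conclusion is unaffected.)

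For \eqref{ckprop} your inductive scheme is sound and essentially equivalent to the paper's direct argument: the base case $|c_{k(k-1)}/c_{kk}|=(4k+2)/(k+1)>2$ is correct, and the step ratio from $l$ to $l+1$ equals $4\bigl(2-\tfrac{1}{k+l+1}\bigr)\bigl(2-\tfrac{1}{k+1-l}\bigr)/4>4$ for $l+1\le k$, which closes the induction. The paper instead expands the binomials once and factors out $2^{2l-1}$ from a product of terms each exceeding $1$; either route works, but you should write out the step ratio rather than leaving it as an expectation.
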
 
We turn to the proof of \lemmaref{main1} once we have shown that \lemmaref{main1} implies \thmref{main1}. 
For this, we first estimate the negative terms (where $j=k,\ldots,2k-1$) of the sum in \eqref{Dv3Final} using \eqref{ckprop} to obtain the following positive lower bound,
\begin{align}
 \frac{\partial^3 D}{\partial v^3} (0,0) > {3\sqrt{2\pi} (2k+1) (2k)!!^4}\sum_{j=0}^{k-1} \frac12 c_{kj},\eqlab{Dv3Bound}
\end{align}
of $\frac{\partial^3 D}{\partial v^3}(0,0)$, with the right hand side being the sum of only positive terms. Consequently, the expressions \eqref{Dsimple}, \eqref{DvalphaFinal}, \eqref{Dv3Final} -- together with singularity theory \cite{golubitsky1988a} -- proves our main result \thmref{main1} item (\ref{neven}) on the pitchfork bifurcation.

% From now on we focus on the even case $n=2k$. Then from \eqref{Dformula}, we see that $v\mapsto D(v,\alpha)$ is an odd function for all $\alpha$. It therefore imediately follows that 
% \begin{align*}
%  D''_{v^2}(0,0) = 0.
% \end{align*}
% To prove \thmref{pitchfork} we only have to compute $\frac{\partial^3 D}{\partial v^3}(0,0)$. 
% 
% \cite{wechselberger_existence_2005}, we define the following spaces
% \begin{align*}
%  U &= \textnormal{span}\,(0,0,1)^T,\\
%  V &= \left\{\begin{array}
%               
%              \end{array}
% \end{align*}

% Define $u$, $v$ and $w$. Melnikov: Integral. Even odd in $\rho$. 
% \begin{align*}
%  \dot z' &= A(t) z',
% \end{align*}
% and 
% \begin{align*}
%  z'(t) &=  \begin{pmatrix}
%               H_{n-1}(t/\sqrt{2})/ H_{n-1}(0)\\
%               -\sqrt{2} H_{n}(t/\sqrt{2})/ (n H_{n-1}(0))\\
%               \sqrt{2} H_{n}(t/\sqrt{2})/ (n H_{n-1}(0))
%              \end{pmatrix},\quad n=\textnormal{odd},\\
%              z'(t) &= \begin{pmatrix}
%               -n H_{n-1}(t/\sqrt{2})/ (\sqrt{2} H_{n-1}(0))\\
%              H_{n}(t/\sqrt{2})/ H_{n}(0)\\
%               1-H_{n}(t/\sqrt{2})/ H_{n}(0)
%              \end{pmatrix},\quad n=\textnormal{even},
% \end{align*}

% Lemma on first, second order partial derivatives. Compute derivatives. 

\end{proof}
\begin{proof}[Proof of \lemmaref{main1}]
%  We follow the procedure in \remref{procedure}.
% \end{proof}
Let $z_*(v,\alpha)$ be as described. Recall, that it has algebraic growth as $t\rightarrow \infty$, and that $z_*(0,\alpha)=0$ for all $\alpha$ since $\gamma$ is a solution for all $\alpha$. Furthermore, by differentiating \eqref{veqns} with respect to $v$ and setting $v=\alpha=0$, we obtain the following equation
\begin{align*}
 \dot z'  = A(t)z',
\end{align*}
with $z'=\frac{\partial z_*}{\partial v}(0,0)$, recall \eqref{zp}.
% \begin{align}
%  z' =(z'_1,z'_2,z'_3)^T:= \frac{\partial}{\partial v}z_*(0,0).\eqlab{zp}
% \end{align}
Here $A(t)$ is given in \eqref{vvar} with $n=2k$.
Consequently, by \eqref{Psi2k} we have
\begin{align}
 z'(t) = \begin{pmatrix}
       -\frac{\sqrt{2} k }{H_{2k}(0)}H_{2k-1}(t/\sqrt{2})\\
             \frac{1}{H_{2k}(0)}H_{2k}(t/\sqrt{2}) \\
              1-\frac{1}{H_{2k}(0)}H_{2k}(t/\sqrt{2})
      \end{pmatrix},\eqlab{zprime}
\end{align}
see also \cite{wechselberger_existence_2005}.
% We now follow \remref{procedure} for the second partial derivative
Let $z''(t)=\frac{\partial^2 z_*}{\partial v^2}(0,0)$, recall \eqref{zp},
% \begin{align}
%  z''(t)=(z_1''(t),z_2''(t),z_3''(t))^T:= \frac{\partial^2}{\partial v^2}z_*(0,0)(t)\in C_{b,+},\eqlab{zpp1}
% \end{align} 
denote the second partial derivative of $z_*$. We now follow the steps in \remref{procedure}.

\textbf{Step (a)}. 
By differentiating \eqref{veqns} once more with respect to $v$ and setting $v=\alpha=0$ we obtain a `higher order variational equation'
\begin{align}
 \dot z'' = A(t)z'' + \begin{pmatrix}
                       z_1'z_3'\\
                       0\\
                       0
                      \end{pmatrix}.\eqlab{zpp}
\end{align}
% using \eqref{zprime}, 
%where
% 
% 
% and show that $D'''_{\rho^3}(0,0)$ is positive for all $k$. To do so, we first realise the following:
We have the following.
\begin{lemma}

\begin{align}
\frac{\partial^2 D}{\partial v\partial \alpha}(0,0)&=\frac{1}{H_{2k}(0)^2} \int_0^\infty e^{-t^2/2} H_{2k}(t/\sqrt{2})^2 dt \eqlab{Dvalpha}\\
 \frac{\partial^3 D}{\partial v^3}(0,0) &= \frac{3}{\sqrt{2}H_{2k}(0)}\int_{0}^\infty e^{-t^2/2} H_{2k+1}(t/\sqrt{2}) z_3'(t) z_3''(t) dt,\eqlab{Dv3}
\end{align}
 where $z_3'$ and $z_3''$ in \eqref{Dv3} are defined by \eqref{zp}, respectively.
%  \begin{align*}
%   z^{(i)}(t) = (z^{(i)}_1,z^{(i)}_2,z^{(i)}_3):=\frac{\partial^i}{\partial v^i}z_*(0,0)(t),
%  \end{align*}
% for $i=1,2$, respectively.
\end{lemma}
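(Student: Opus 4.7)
The plan is to differentiate the Melnikov formula \eqref{Dformula} for $n=2k$ directly, exploiting that $v\mapsto D(v,\alpha)$ is odd by \propref{Dgeneral} item (\ref{Dodd}) and that $g(z,\alpha)=\tfrac12\alpha z_2+z_1z_3$ is very simple. Abbreviate $K(t):=\frac{1}{H_{2k}(0)}e^{-t^2/2}H_{2k}(t/\sqrt{2})$ and $G(v,\alpha,t):=g(z_*(v,\alpha)(t),\alpha)$, so that $D(v,\alpha)=\int_0^\infty K(t)[G(v,\alpha,t)-G(-v,\alpha,t)]\,dt$. Because $z_*(0,\alpha)\equiv 0$ (since $\gamma$ is a solution for every $\alpha$), the gradient of $g$ in $z$ at the origin is $g_z(0,\alpha)=(0,\tfrac12\alpha,0)$, hence $G_v(0,\alpha,t)=\tfrac12\alpha\,\partial_v z_{*,2}(0,\alpha)(t)$.

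For \eqref{Dvalpha}: differentiating once in $v$ at $v=0$ gives $\partial_v D(0,\alpha)=2\int_0^\infty K(t)G_v(0,\alpha,t)\,dt=\alpha\int_0^\infty K(t)\partial_v z_{*,2}(0,\alpha)(t)\,dt$, and a further differentiation in $\alpha$ at $\alpha=0$ collapses this to $\partial_v\partial_\alpha D(0,0)=\int_0^\infty K(t)z_2'(t)\,dt$. Substituting the second row of \eqref{zprime} produces \eqref{Dvalpha}.

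For \eqref{Dv3}: iterating the parity argument yields $\partial_v^3 D(0,0)=2\int_0^\infty K(t)G_{vvv}(0,0,t)\,dt$. Since $g$ is quadratic ($g_{zzz}\equiv 0$) and $g_z(0,0)=0$, only the Hessian cross-term survives in the chain-rule expansion, giving $G_{vvv}(0,0,t)=3\,g_{zz}(z'(t),z''(t))$. The only non-vanishing entries of $g_{zz}$ are $\partial_{z_1 z_3}g=\partial_{z_3 z_1}g=1$, so $G_{vvv}(0,0,t)=3(z_1'z_3''+z_3'z_1'')$. The linearized systems \eqref{veqns} and \eqref{zpp} yield $\dot z_3'=2z_1'$ and $\dot z_3''=2z_1''$, hence $z_1'z_3''+z_3'z_1''=\tfrac12\tfrac{d}{dt}(z_3'z_3'')$. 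Integration by parts against $K(t)$ is legitimate: the boundary term at $t=0$ vanishes because $z_3'(0)=0$ (which follows from \eqref{zprime} and also from $\Sigma\subset\{z_3=0\}$, since for $n$ even $e_u=(0,0,1)$), and the boundary term at infinity vanishes from the Gaussian factor in $K$ against the algebraic growth of $z'$ and $z''$, leaving $\partial_v^3 D(0,0)=-3\int_0^\infty K'(t)z_3'(t)z_3''(t)\,dt$.

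The remaining step, and the main obstacle because it requires a careful Hermite manipulation, is to identify $K'(t)$ as a scalar multiple of $e^{-t^2/2}H_{2k+1}(t/\sqrt{2})$. Writing $K'(t)=\frac{e^{-t^2/2}}{H_{2k}(0)}\bigl(-tH_{2k}(t/\sqrt{2})+\tfrac{1}{\sqrt{2}}H_{2k}'(t/\sqrt{2})\bigr)$ and applying the physicist's Hermite identities $H_n'(x)=2nH_{n-1}(x)$ and $2xH_n(x)=H_{n+1}(x)+2nH_{n-1}(x)$ from \appref{Hermite}, the two terms collapse to $-\tfrac{1}{\sqrt{2}}H_{2k+1}(t/\sqrt{2})$, so $K'(t)=-\tfrac{1}{\sqrt{2}\,H_{2k}(0)}e^{-t^2/2}H_{2k+1}(t/\sqrt{2})$. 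Substituting this back into the integral produces exactly \eqref{Dv3}.
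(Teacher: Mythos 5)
Your proposal is correct and follows essentially the same route as the paper: differentiate the Melnikov formula \eqref{Dformula}$_{n=2k}$ directly, use $z_*(0,0)=0$ and the quadratic form of $g$ to reduce $G_{vvv}$ to $3(z_1'z_3''+z_1''z_3')$, rewrite this as $\tfrac{3}{2}\tfrac{d}{dt}(z_3'z_3'')$ via $\dot z_3^{(i)}=2z_1^{(i)}$, and integrate by parts using $z_3'(0)=0$ and \eqref{prop1}. The only difference is that you spell out the computation of the derivative of $e^{-t^2/2}H_{2k}(t/\sqrt{2})$ explicitly, which the paper leaves implicit.
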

\begin{proof}
We use \eqref{Dformula} with $n=2k$:
\begin{align*}
 D(v,\alpha) = \frac{1}{H_{2k}(0)}\int_0^\infty e^{-t^2/2}H_{2k}(t/\sqrt{2})  (g(z_*(v,\alpha)(t),\alpha)-g(z_*(-v,\alpha)(t),\alpha)dt,
\end{align*}
recall \eqref{gFunc}. To obtain \eqref{Dvalpha} we differentiate this expression partially with respect to $v$ and $\alpha$. This gives
\begin{align*}
 \frac{\partial^2 D}{\partial v\partial \alpha}(0,0)&= \frac{1}{H_{2k}(0)}\int_0^\infty e^{-t^2/2} {H_{2k}(t/\sqrt{2})} z_2'(t) dt\\
 &=\frac{1}{H_{2k}(0)^2} \int_0^\infty e^{-t^2/2} H_{2k}(t/\sqrt{2})^2 dt,
%  &=
\end{align*}
by \eqref{zprime} 
upon setting $v=\alpha=0$.

For \eqref{Dv3}, we also perform a direct calculation to obtain
 \begin{align*}
\frac{\partial^3 D}{\partial v^3}(0,0) = \frac{6}{H_{2k}(0)} \int_{0}^\infty e^{-t^2/2} H_{2k}(t/\sqrt{2}) \left(z_1'(t)z_3''(t)+z_1''(t)z_3'(t)\right)dt.
 \end{align*}
Following \eqref{veqns}, 
\begin{align*}
 z_1^{(i)} = \frac12 \dot z_3^{(i)},
\end{align*}
for $i=1,2$, and hence 
 \begin{align*}
\frac{\partial^3 D}{\partial v^3}(0,0) = \frac{3}{H_{2k}(0)} \int_{0}^\infty e^{-t^2/2} H_{2k}(t/\sqrt{2}) \frac{d}{dt}\left(z_3'(t)z_3''(t)\right)dt.
 \end{align*}
 By integration by parts, using $z_3'(0)=0$ and \eqref{prop1} in \appref{Hermite}, we then obtain the result. 
\end{proof}
Using that $H_{2k}$ is an even function, the formula in \eqref{HOrtho} in \appref{Hermite} then produces the desired expression \eqref{DvalphaFinal} for $\frac{\partial^2 D}{\partial v\partial \alpha}(0,0)$ in \lemmaref{main1}. 
% \note{The integral $\int_{-\infty}^\infty$ is $\sqrt{2\pi} 2^{2k} (2k)!$ but we only need half of it $\int_{0}^\infty = \sqrt{2\pi} 2^{2k-1} (2k)!$. Now we divide this by $H_{2k}(0)^2= 2^{2k} (2k-1)!^2$. In total: $\sqrt{2\pi}k/(2k-1)!$. }

To prove the remaining expression \eqref{Dv3Final} in \lemmaref{main1} for $\frac{\partial^3 D}{\partial v^3}(0,0)$, we determine $z_3''$,  which is the only remaining unknown in the expression \eqref{Dv3}.  

\textbf{Step (b)}. We do so by first writing \eqref{zpp} as \textit{an inhomogeneous Weber equation} for $z_1''$: 
% From this we obtain the second order equation
\begin{align}
 L_{2k-1}z_1'' = 2\frac{d}{dt}\left(z_1'z_3'\right),\eqlab{v1pp}
\end{align}
% recall that $n=2k$.  
% Using \eqref{prop1} and \eqref{prop2} in combination, gives
% \begin{align*}
%  2n H_{n-1}(s) = H_{n+1}(s) - 2 s H_{n}(s),
% \end{align*}
% which inserted into \eqref{v1pp}, with $n=2k$ and $s=t/\sqrt{2}$, gives
% Using \eqref{} gives
% where
% \begin{align}
%  L_{2k-1} z_1'' &= 2\left(-2(2k-1)k\frac{H_{2k-2}(t/\sqrt{2}) }{H_{2k}(0)}\left(1-\frac{H_{2k}(t/\sqrt{2})}{H_{2k}(0)}\right)+4k^2\left(\frac{H_{2k-1}(t/\sqrt{2})}{H_{2k}(0)}\right)^2\right),\eqlab{v1ppNew}
% \end{align}
recall the definition of second order linear differential operator $L_{2k-1}$ in \eqref{Lmexpr}. 
% Write rhs as a sum using \eqref{prop3}.
% \begin{lemma}
%  We can write \eqref{v1ppNew} as
%  \begin{align}
%   L_{2k-1} z_1'' &= ... +\sum_{j=1}^{2k-1} c_{jk} H_{2(2k-1-j)}(t/\sqrt{2}),\eqlab{v1ppNewNew}
%  \end{align}
% where
% \begin{align*}
% c_{jk}&=...
% \end{align*}
% \end{lemma}

\textbf{Step (c)}.  We then use \lemmaref{LmHmHl} to solve the linear, inhomogeneous equation \eqref{v1pp} for the algebraic solution $z_1''$ with $z_1''(0)=0$, once we have written the right hand side of \eqref{v1pp} as a finite sum of Hermite polynomials. For this we use \eqref{productRuleHnm}:
\begin{lemma}\lemmalab{z1z3p}
The following holds true for any $k\in \mathbb N$:
\begin{align*}
 z_1' z_3' = \frac{\sqrt{2} k}{H_{2k}(0)^2}\sum_{j=0}^{2k-1} \begin{pmatrix}
                                                              2k-1\\
                                                              j
                                                             \end{pmatrix}\begin{pmatrix}
                                                              2k\\
                                                              j
                                                             \end{pmatrix}2^j j! H_{4k-1-2j}(t/\sqrt{2})-\frac{\sqrt{2}k}{H_{2k}(0)}H_{2k-1}(t/\sqrt{2}).
\end{align*}
\end{lemma}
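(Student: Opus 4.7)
The proof is a direct, essentially mechanical computation: substitute the closed-form expressions for $z_1'$ and $z_3'$ from \eqref{zprime} into the product $z_1'z_3'$ and expand the resulting product of Hermite polynomials via \eqref{productRuleHnm} from \appref{Hermite}. There is no obstacle here beyond bookkeeping, so the plan is short.

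First, I would read off from \eqref{zprime} that
\begin{align*}
z_1'(t) &= -\frac{\sqrt{2}\,k}{H_{2k}(0)} H_{2k-1}(t/\sqrt{2}),\\
z_3'(t) &= 1-\frac{1}{H_{2k}(0)} H_{2k}(t/\sqrt{2}),
\end{align*}
and distribute the product to obtain
\begin{align*}
z_1'(t)z_3'(t) = \frac{\sqrt{2}\,k}{H_{2k}(0)^2}\,H_{2k-1}(t/\sqrt{2})\,H_{2k}(t/\sqrt{2}) - \frac{\sqrt{2}\,k}{H_{2k}(0)}\,H_{2k-1}(t/\sqrt{2}).
\end{align*}
The second term already has the form required by the claim, so only the product $H_{2k-1}(t/\sqrt 2)H_{2k}(t/\sqrt 2)$ needs to be linearised.

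Next, I would apply the Hermite product identity \eqref{productRuleHnm} in \appref{Hermite}, namely
\begin{align*}
H_n(x)H_m(x) = \sum_{j=0}^{\min(n,m)}\binom{n}{j}\binom{m}{j} 2^j j!\, H_{n+m-2j}(x),
\end{align*}
with $n=2k-1$ and $m=2k$, so that $\min(n,m)=2k-1$ and $n+m-2j=4k-1-2j$. Substituting back yields exactly
\begin{align*}
z_1' z_3' = \frac{\sqrt{2}\,k}{H_{2k}(0)^2}\sum_{j=0}^{2k-1}\binom{2k-1}{j}\binom{2k}{j} 2^j j!\, H_{4k-1-2j}(t/\sqrt{2}) - \frac{\sqrt{2}\,k}{H_{2k}(0)}\,H_{2k-1}(t/\sqrt{2}),
\end{align*}
which is the stated formula. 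The only thing worth double-checking is the cross-term: since $2k-1<2k$ the upper limit of the sum is indeed $2k-1$, and the binomial coefficients are both well-defined in this range, so there is no parity subtlety. No further work is needed.
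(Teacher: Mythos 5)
Your proof is correct and is precisely the calculation the paper leaves implicit (its proof reads only ``Calculation''): substituting the components of $z'$ from \eqref{zprime} and linearising $H_{2k-1}H_{2k}$ via \eqref{productRuleHnm} with upper limit $\min(2k-1,2k)=2k-1$ gives exactly the stated identity. No issues.
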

\begin{proof}
 Calculation.
\end{proof}
Consequently, we have
\begin{lemma}
 The following holds true for any $k\in \mathbb N$:
 \begin{align}
  z_1''(t) &= -2 \frac{d}{dt}\bigg(\frac{\sqrt{2}k}{H_{2k}(0)^2}\sum_{j=0}^{2k-1}  \frac{1}{2k-1-2j}\begin{pmatrix}
                                                              2k-1\\
                                                              j
                                                             \end{pmatrix}\begin{pmatrix}
                                                              2k\\
                                                              j
                                                             \end{pmatrix}2^j j! H_{4k-1-2j}(t/\sqrt{2})\nonumber\\
                                                             &\quad \quad  \quad +\frac{\sqrt{2}k}{H_{2k}(0)}H_{2k-1}(t/\sqrt{2})\bigg).\eqlab{z1ppExpr}\\
                                                             z_3''(t) &= -4 \bigg(\frac{\sqrt{2}k}{H_{2k}(0)^2}\sum_{j=0}^{2k-1}  \frac{1}{2k-1-2j}\begin{pmatrix}
                                                              2k-1\\
                                                              j
                                                             \end{pmatrix}\begin{pmatrix}
                                                              2k\\
                                                              j
                                                             \end{pmatrix}2^j j! H_{4k-1-2j}(t/\sqrt{2})\nonumber\\
                                                             &\quad \quad \quad  +\frac{\sqrt{2}k}{H_{2k}(0)}H_{2k-1}(t/\sqrt{2})\bigg).\eqlab{z3ppExpr}
 \end{align}
\end{lemma}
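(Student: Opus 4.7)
The two formulas can be proved by solving the inhomogeneous Weber equation \eqref{v1pp} for the algebraic $z_1''$ and then integrating the third row of \eqref{zpp} once to recover $z_3''$.  The workhorse identity, immediate from the definition $L_m q = \ddot q - t\dot q + m q$, is the commutation rule $L_{m-1}(\dot Z) = \frac{d}{dt}(L_m Z)$, which follows by writing out both sides.  The strategy is to use this rule to peel off a time derivative and reduce a problem about $L_{2k-1}$ on the right-hand side of \eqref{v1pp} to a purely algebraic inversion of $L_{2k}$ acting on Hermite polynomials.

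I would start with the ansatz $z_1''=-2\dot W$ for an unknown $W$.  The commutation rule with $m=2k$ turns \eqref{v1pp} into $\frac{d}{dt}L_{2k}W = -\frac{d}{dt}(z_1'z_3')$, so, choosing the integration constant to be zero, it suffices to solve $L_{2k}W = -z_1'z_3'$.  Combining \lemmaref{z1z3p} with \lemmaref{LmHmHl} makes this purely algebraic: the right-hand side is a finite sum of odd-degree Hermite polynomials $H_{4k-1-2j}(t/\sqrt{2})$ (for $j=0,\ldots,2k-1$) and $H_{2k-1}(t/\sqrt{2})$; the corresponding eigenvalues of $L_{2k}$ are $2k-(4k-1-2j)=-(2k-1-2j)$ and $2k-(2k-1)=1$, all nonzero, so $L_{2k}$ inverts term by term.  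The two minus signs combine to place the factor $1/(2k-1-2j)$ inside the sum, and differentiating $-2W$ then gives \eqref{z1ppExpr}.  No resonance arises because $\ker L_{2k}=\mathrm{span}\,H_{2k}(\cdot/\sqrt{2})$ consists of even-degree polynomials while every term on the right-hand side is of odd degree.

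For the second formula I would read from the third component of \eqref{zpp}, together with the third row $(2,0,0)$ of $A(t)$ in \eqref{vvar} and the vanishing of the third component of the inhomogeneity, that $\dot z_3''=2z_1''=-4\dot W$.  Integrating from $0$ to $t$ gives $z_3''(t) = z_3''(0) - 4(W(t)-W(0))$.  The value $z_3''(0)$ vanishes because, for $n=2k$ even, $e_u=(0,0,1)^T$ points along the $z_3$-axis, so $\Sigma\subset\{u=0\}$ together with \eqref{zStarInit} forces $z_*(v,\alpha)(0)_3\equiv 0$ and hence every $v$-derivative of this third component at $t=0$ vanishes.  Finally $W(0)=0$ because every Hermite polynomial appearing in $W$ has odd degree and $H_{2m+1}(0)=0$, producing exactly \eqref{z3ppExpr}.

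\textbf{Expected obstacle.}  The delicate point is uniqueness: since $H_{2k-1}(\cdot/\sqrt{2})$ itself belongs to $\ker L_{2k-1}$, the algebraic solution of \eqref{v1pp} is determined only up to an additive multiple of $H_{2k-1}(t/\sqrt{2})$, equivalently $W$ is determined only modulo $\ker L_{2k}=\mathrm{span}\,H_{2k}(\cdot/\sqrt{2})$.  One therefore has to argue that this residual one-parameter freedom does not affect the quantity $\partial^3 D/\partial v^3(0,0)$ in \eqref{Dv3}: adding $c\,H_{2k-1}(\cdot/\sqrt{2})$ to $z_1''$ adds a constant multiple of $H_{2k}(\cdot/\sqrt{2})-H_{2k}(0)$ to $z_3''$, and the resulting contribution to \eqref{Dv3} is an integral of a triple product of Hermite polynomials against $e^{-t^2/2}$ which I expect to cancel via the product formula \eqref{productRuleHnm} and orthogonality \eqref{HOrtho} in \appref{Hermite}.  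Modulo this invariance check, the particular normalisation displayed in \eqref{z1ppExpr} is simply the one that arises from direct term-by-term inversion of $L_{2k}$ with no kernel addition, so no further choice has to be justified.
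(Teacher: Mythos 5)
Your construction of the two formulas is essentially the computation the paper intends: reduce \eqref{v1pp} to a term-by-term inversion of the Weber operator acting on the Hermite expansion of $z_1'z_3'$ from \lemmaref{z1z3p}, using the eigenvalue relation of \lemmaref{LmHmHl}, and then integrate $\dot z_3''=2z_1''$ with $z_3''(0)=0$. The commutation identity $L_{m-1}\dot Z=\tfrac{d}{dt}(L_mZ)$ is a clean way to organise this, and it reproduces the coefficients of \eqref{z1ppExpr} correctly, including the placement of the factor $1/(2k-1-2j)$ and the absence of resonance.

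The gap is in your treatment of the kernel ambiguity, and the escape route you propose actually fails. Adding $c\,H_{2k-1}(\cdot/\sqrt2)$ to $z_1''$ adds a multiple of $H_{2k}(t/\sqrt2)-H_{2k}(0)=-H_{2k}(0)\,z_3'(t)$ to $z_3''$, so the induced change in \eqref{Dv3} is proportional to $\int_0^\infty e^{-t^2/2}H_{2k+1}(t/\sqrt2)\,z_3'(t)^2\,dt$. The integrand is an \emph{odd} function of $t$, so orthogonality \eqref{HOrtho} kills the integral over all of $\mathbb R$ but not over the half-line: for $k=1$ one has $z_3'(t)=t^2$ and $\int_0^\infty e^{-t^2/2}H_3(t/\sqrt2)\,t^4\,dt=48\sqrt2\neq 0$. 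Hence $\partial^3D/\partial v^3(0,0)$ \emph{does} depend on which algebraic solution of \eqref{v1pp} is taken, and the normalisation must be justified rather than declared free. The correct way to pin it down is through the initial condition: by \lemmaref{hallo} and \eqref{Wscap}, for $n=2k$ one has $z_*(v,\alpha)(0)=(h_{cs}(v,\alpha),v,0)$ in the original coordinates, so \emph{both} $z_2''(0)=0$ and $z_3''(0)=0$ (you only use the latter). From the first equation of \eqref{zpp}, $z_2''=\tfrac1k\left(\dot z_1''-tz_1''-z_1'z_3'\right)$, and since $z_1'(0)=0$ by \eqref{zprime} and \eqref{prop3}, the condition $z_2''(0)=0$ is equivalent to $\dot z_1''(0)=0$. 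Your candidate $z_1''=-2\dot W$ is an even function of $t$ (because $W$ is a sum of odd-degree Hermite polynomials), so it satisfies this, whereas adding $c\,H_{2k-1}(t/\sqrt2)$ contributes $c\sqrt2(2k-1)H_{2k-2}(0)\neq 0$ to $\dot z_1''(0)$ and is therefore excluded. With this one extra observation the argument is complete; note also that the requirement should be $\dot z_1''(0)=0$, not the condition $z_1''(0)=0$ stated in the surrounding text of the paper (indeed, for $k=1$ the formula gives $z_1''=-6t^2+12$, so $z_1''(0)=12$).
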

\begin{proof}
 The expression in \eqref{z1ppExpr} follows from a simple calculation using \lemmaref{LmHmHl}, \lemmaref{z1z3p} and \eqref{prop2}. \eqref{z3ppExpr} is then obtained by integrating $\dot z_3''=2z_1''$, recall \eqref{zpp} and using $z_3''(0)=0$. %The latter is a consequence of the quadratic tangency at the bifurcation.
\end{proof}

\textbf{Step (d)}.  We then have.
\begin{lemma}
The following holds for any $k\in \mathbb N$:
\begin{align*}
 \frac{\partial^3 D}{\partial v^3}(0,0) &= \frac{6k}{H_{2k}(0)^4}\sum_{j=0}^{2k-1}\frac{1}{2k-1-2j}\begin{pmatrix}
                                                              2k-1\\
                                                              j
                                                             \end{pmatrix}\begin{pmatrix}
                                                              2k\\
                                                              j
                                                             \end{pmatrix}2^j j! \\
                                                        & \times \int_{-\infty}^\infty e^{-t^2/2} H_{2k+1}(t/\sqrt{2}) H_{2k}(t/\sqrt{2})  H_{4k-1-2j}(t/\sqrt{2})dt.
 \end{align*}
\end{lemma}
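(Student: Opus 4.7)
My plan is to obtain the claimed identity by direct substitution of the expressions \eqref{zprime} for $z_3'$ and \eqref{z3ppExpr} for $z_3''$ into the representation \eqref{Dv3} of $\frac{\partial^3 D}{\partial v^3}(0,0)$, followed by careful bookkeeping of which terms survive after applying the orthogonality relation for Hermite polynomials.

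First I would exploit parity to symmetrize the integration domain. Since $H_{2k+1}(t/\sqrt{2})$ is odd, while $z_3'(t) = 1 - H_{2k}(t/\sqrt{2})/H_{2k}(0)$ is even and $z_3''(t)$ is a linear combination of $H_{2k-1}(t/\sqrt{2})$ and $H_{4k-1-2j}(t/\sqrt{2})$ for $j=0,\ldots,2k-1$ (all of odd degree, hence odd), the integrand $e^{-t^2/2} H_{2k+1}(t/\sqrt{2}) z_3'(t) z_3''(t)$ is even in $t$, so that $\int_0^\infty$ in \eqref{Dv3} may be replaced by $\tfrac12 \int_{-\infty}^\infty$. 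Substituting the form of $z_3'$ splits the resulting integral into a ``constant piece'' proportional to $\int_{-\infty}^\infty e^{-t^2/2} H_{2k+1}(t/\sqrt{2}) z_3''(t)\,dt$ and an ``$H_{2k}$-piece'' proportional to $\int_{-\infty}^\infty e^{-t^2/2} H_{2k+1}(t/\sqrt{2}) H_{2k}(t/\sqrt{2}) z_3''(t)\,dt$.

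Next I would substitute the expression \eqref{z3ppExpr} for $z_3''$ into both pieces. The constant piece produces only two-factor integrals, $\int e^{-t^2/2} H_{2k+1}(t/\sqrt{2}) H_{4k-1-2j}(t/\sqrt{2})\,dt$ and $\int e^{-t^2/2} H_{2k+1}(t/\sqrt{2}) H_{2k-1}(t/\sqrt{2})\,dt$. By the orthogonality of Hermite polynomials with respect to $e^{-t^2/2}$ on $\mathbb{R}$, the first vanishes except for $j = k-1$ (where $4k-1-2j = 2k+1$), while the second vanishes identically. The ``$H_{2k}$-piece'' produces exactly the family of triple-product integrals $\int e^{-t^2/2} H_{2k+1}(t/\sqrt{2}) H_{2k}(t/\sqrt{2}) H_{4k-1-2j}(t/\sqrt{2})\,dt$ appearing in the claimed formula, together with one additional ``unwanted'' triple product $\int e^{-t^2/2} H_{2k+1}(t/\sqrt{2}) H_{2k}(t/\sqrt{2}) H_{2k-1}(t/\sqrt{2})\,dt$.

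The one delicate step is to verify that the surviving $j=k-1$ contribution from the constant piece exactly cancels the unwanted triple product from the $H_{2k}$-piece. I would do this by applying the product rule \eqref{productRuleHnm} to rewrite $H_{2k}(t/\sqrt{2}) H_{2k-1}(t/\sqrt{2})$ as a sum of Hermite polynomials; orthogonality against $H_{2k+1}(t/\sqrt{2})$ then picks out exactly the index $r=k-1$ of the expansion, leading to a closed-form value that, up to sign, matches the $j=k-1$ contribution from the constant piece. After this cancellation, only the desired sum of triple products survives. Tracking the constants, $\tfrac{3}{2\sqrt{2}H_{2k}(0)} \cdot \tfrac{-1}{H_{2k}(0)} \cdot \tfrac{-4\sqrt{2}k}{H_{2k}(0)^2} = \tfrac{6k}{H_{2k}(0)^4}$, produces the prefactor in the statement and the lemma follows. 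The main obstacle is precisely this bookkeeping of cancelling the stray ``$H_{2k-1}$ from $z_3''$'' term against the stray ``$j=k-1$ from the constant piece'' term; everything else is mechanical substitution plus orthogonality.
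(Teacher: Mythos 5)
Your proposal is correct and follows essentially the same route as the paper's (very terse) proof: substitute \eqref{zprime} and \eqref{z3ppExpr} into \eqref{Dv3}, use the evenness of the integrand to pass to $\tfrac12\int_{-\infty}^{\infty}$, and apply Hermite orthogonality. The one step the paper leaves implicit — the exact cancellation of the $j=k-1$ survivor from the constant piece against the stray $H_{2k+1}H_{2k}H_{2k-1}$ triple product, both equal in magnitude to $\tfrac{6k}{H_{2k}(0)^3}\sqrt{2\pi}\,2^{3k}\tfrac{(2k+1)!(2k)!(2k-1)!}{(k-1)!\,k!\,(k+1)!}$ — is correctly identified and handled in your write-up.
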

\begin{proof}
 We simply insert the expressions for $z_3'$ and $z_3''$ in \eqref{zprime} and \eqref{z3ppExpr}, respectively, into \eqref{Dv3}. We then use \eqref{HOrtho} and the fact that the integrand is an even function of $t$ to simplify the expression. 
\end{proof}
The expression \eqref{Dv3Final} for $\frac{\partial^3 D}{\partial v^3}(0,0)$ in \lemmaref{main1} then follows from \eqref{intHnHmHl} and \eqref{prop3}. 

To show \eqref{ckprop} we simply expand the binomial coefficients in the expression for $c_{kj}$ and obtain
\begin{align*}
\frac{c_{k(k-l)}}{\vert c_{k(k+l-1)}\vert} = \frac{(2k+2l) (2k+2l-1)\cdots (2k+4-2l)(2k+3-2l)}{(k+l)^2(k+l-1)^2\cdots (k+3-l)^2(k+2-l)^2},
\end{align*}
where the 
numerator and denominator both consist of $2(2l-1)$ factors. We simplify half of these factors by dividing up
\begin{align*}
 \frac{c_{k(k-l)}}{\vert c_{k(k+l-1)}\vert} = 2^{2l-1}\frac{(2k+2l-1)(2k+2l-3)\cdots (2k+5-2l)(2k+3-2l)}{(k+l)(k+l-1)\cdots (k+3-l)(k+2-l)}.
\end{align*}
We can write the last fraction as a product
\begin{align*}
\left(2-1/(k+l)\right)\left(2-3/(k+l-1)\right)\cdots \left(2-1/(k+2-l)\right),
\end{align*}
where each factor is $>1$ for every $l=1,\ldots,k$. This shows \eqref{ckprop} and
we have therefore completed the proof of \lemmaref{main1}. 
\end{proof}
% This means that we can solve \eqref{v1ppNewNew} for an algebraic solution:
% \begin{lemma}
%  The system \eqref{vpp} has the following algebraic solution:
%  \begin{align*}
%   z_1'' &=...\\
%   z_2'' &= ...\\
%   z_3'' &=...
%  \end{align*}
% \end{lemma}

% This gives the following new explicit formulation of the relevant integral
% \begin{align*}
%  \frac{\partial^3 D}{\partial v^3}(0,0) = ...\sum.
% \end{align*}
% It is easy, using \appref{appHerPol}, to show that the first two terms cancel. Therefore 
% \begin{align*}
%  \frac{\partial^3 D}{\partial v^3}(0,0) = \sum,
% \end{align*}
% which is a sum of positive terms, and therefore the derivative is positive as desired. The equivalence to the pitchfork bifurcation then follows from singularity theory. 
% New integral formulation. 

\section{Secondary canards: a complete picture}\seclab{blowup}
In \figref{blowup1} we present a sketch of the compactified version of \eqref{fnnfeps20} using the Poincar\'e compactification induced by \eqref{blowup}. The diagram is therefore identical to \figref{blowup0}, but with $r=0$ (and therefore $\epsilon=0$). Recall also that the three-dimensional hemisphere $S^3_{\bar \epsilon\ge 0}+ = \{(\bar x,\bar y,\bar z,\bar \epsilon)\in S^3\vert \bar \epsilon\ge 0\}$, is ``flattened out''  by projection onto the $(\bar x,\bar y,\bar z)$-space, so that the sketched two-dimensional-sphere $(\bar x,\bar y,\bar z)\in S^2$ corresponds to the ``equator'' $\bar \epsilon=0$ of $S_{\bar \epsilon\ge 0}^3$. On the other hand, everything inside is $\bar \epsilon>0$. In the following, we let $\sigma$ act on $S^3_{\bar \epsilon\ge 0}$ as follows $\sigma:\,(\bar x,\bar y,\bar z,\bar \epsilon)\mapsto (\bar x,-\bar y,-\bar z,\bar \epsilon)$. This action is consistent with \eqref{sigmafnnf}. The red and blue curves on the equator sphere $\bar \epsilon=0$ correspond to  the intersection with the critical manifold: $\bar x = \bar z^2$, which away from $\bar x=\bar z=0$ has gained hyperbolicity, recall \figref{blowup0}. Applying center manifold theory to these points gives rise to the local center manifolds $W^{cs}(\mu)$ and $W^{cu}(\mu)$ also illustrated as shaded surfaces extending into $\bar \epsilon>0$. (Recall, that these manifolds are (a) the ones obtained by restricting the $3D$ manifolds $M_{r}$ and $M_{a}$ to the sphere $r=0$ and therefore (b) the `extensions' of the critical manifolds $C_r$ and $C_a$, respectively, onto the blowup sphere, recall \figref{blowup0}. ) The manifolds $W^{cs}(\mu)$ and $W^{cu}(\mu)$  contain the strong and weak canards (orange and purple dotted lines, respectively), being heteroclinic orbits, within this framework, connecting partially hyperbolic points $\sigma p_{s}$ and $\sigma p_w$, given by
\begin{align*}
 (\bar x,\bar y,\bar z,\bar \epsilon)=(-1,-2,-1,0),\quad  (\bar x,\bar y,\bar z,\bar \epsilon)=(-1,-2/\mu,-1,0),
\end{align*}
with $p_s$ and $p_w$, respectively, on the equator sphere with $\bar \epsilon=0$. 
Another simple calculation in the `$\bar z=1$' chart shows that the points $q_{\textnormal{out}}$ and $q_{\textnormal{in}}=\sigma q_{\textnormal{out}}$ are hyperbolic attracting and repelling nodes, respectively. They correspond to the intersection of the nonhyperbolic critical fiber of the folded node $p$ (in \figref{foldedC} this fiber coincides with the $z$-axis) with the blowup sphere. On the other hand, by working in the chart `$\bar y=1$' it follows that the points $q_\pm:\,\bar x=\bar z=\bar \epsilon=0, \bar y=\pm 1$ are fully nonhyperbolic. Notice also $q_+=\sigma q_-$.

In the following, we write $a \prec b$ to mean that $a<b$ while $b-a$ is `sufficiently small'. We define $a\succ b$ similarly to mean that $b \prec a$. Finally, $a\sim b$ will mean that $\vert b-a\vert$ is `sufficiently small'.

Recall that for \eqref{fnnfeps20}, $\gamma$ in \eqref{gammahere} is the `weak canard' written in the $\bar \epsilon=1$ chart. This special orbit divides the center manifolds into unique and nonunique subsets. To see this, notice that the local center manifold for $\bar z>0$ and $\bar \epsilon\sim 0$ is unique around the strong canard all up to the weak canard since these points coincide with the stable set of $p_s$, see \figref{blowup1} and \cite[Fig. 9]{wechselberger_existence_2005}. We collect this result -- using the $x$-variables, recall \eqref{rectify} -- as follows:
\begin{lemma}\lemmalab{uniquenessWLOC}
 The local center manifold $W_{loc}^{cs}(\mu)$ is unique on the side $x_2\le 0$ as the stable set of $p_s$ but nonunique for $x_2>0$. Indeed, every point on the nonunique side of $W_{loc}^{cs}(\mu)$ with $\bar \epsilon>0$ is forward asymptotic to the hyperbolic and attracting node $q_{\textnormal{out}}$. 
 %  In terms of $x$
\end{lemma}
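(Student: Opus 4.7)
The strategy is to analyze $W^{cs}_{loc}(\mu)$ in the blowup chart near the partially hyperbolic point $p_s$, and to treat the two sides of the weak canard $\gamma$ separately.

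First I would work in the chart $\bar x=-1$ with coordinates $(r_1,y_1,z_1,\epsilon_1)$ and restrict to the invariant slice $r_1=0$, where the dynamics reduce to a three-dimensional system on the blowup sphere. Linearization at $p_s$ reveals, apart from a zero eigenvalue along the line of partially hyperbolic equilibria on the equator $\epsilon_1=0$ and a zero eigenvalue in the $\epsilon_1$-direction, a single strictly negative (strong stable) eigenvalue. The weak canard $\gamma$ asymptotes to $p_s$ as $t\to+\infty$ and, up to reparametrization, is the unique orbit on $r=0$ that enters $p_s$ along this strong stable direction from inside $\bar\epsilon>0$.

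For the uniqueness statement on $x_2\le 0$, I would identify this side of $W^{cs}_{loc}(\mu)$ with the local stable set of $p_s$ on $r=0$: the two-dimensional set of initial conditions whose forward orbit converges to some equilibrium on the line through $p_s$. This stable set is intrinsic to the dynamics (independent of the center manifold construction), so the portion of $W^{cs}_{loc}(\mu)$ that lies in it is forced to coincide with it, hence is unique. To fix the sign one computes the strong stable eigenvector at $p_s$ and traces it through the rectification \eqref{rectify}; the outcome is that orbits approaching $p_s$ with $\bar\epsilon\sim 0^{+}$ sit on the side $x_2\le 0$ of $\gamma$.

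On the side $x_2>0$ two statements remain. For the forward-asymptotic claim, one computes in the chart $\bar z=1$ that $q_{\textnormal{out}}$ is a hyperbolic attracting node on $r=0$. An orbit of $W^{cs}_{loc}(\mu)\cap\{x_2>0,\bar\epsilon>0\}$ cannot escape to $\bar\epsilon>0$ away from the equator (no attractor exists there on $r=0$) and must accumulate at one of the equilibria of the equator sphere in the relevant region; ruling out the line of equilibria (only the side $x_2\le 0$ is attracted, by the linearization computed above) and $p_s$ (whose only approach orbit in $\bar\epsilon>0$ is $\gamma$) leaves $q_{\textnormal{out}}$ as the only possibility. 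For the non-uniqueness claim, I would invoke the standard non-uniqueness of center manifold extensions past the strong stable foliation: one produces a one-parameter family of admissible $W^{cs}_{loc}(\mu)$ differing on $x_2>0$ by bump-function perturbations that preserve invariance and the tangencies to the prescribed center subspace at each equilibrium of the line, while leaving the (unique) stable-set side $x_2\le 0$ unchanged.

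The main obstacle is the sign identification in the second step: translating the strong stable eigenvector of $p_s$ from the chart-$\bar x=-1$ coordinates back to the rectified coordinate $x_2=y-2z$ requires careful bookkeeping, and reversing the sign would swap the unique and non-unique sides. The remaining ingredients, in particular the global step concluding convergence to $q_{\textnormal{out}}$, amount to combining the local linearization data with the phase portrait sketched in \figref{blowup1}.
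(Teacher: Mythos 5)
Your overall skeleton matches the paper's: the unique side of $W^{cs}_{loc}(\mu)$ is identified with the stable set of $p_s$, the sign is fixed by locating the landing point of $\gamma$ relative to the rectified coordinate $x_2=y-2z$, and the non-unique side is characterized by forward convergence to the hyperbolic attracting node $q_{\textnormal{out}}$. The paper does the sign bookkeeping more directly than you anticipate: in the chart `$\bar z=1$' the weak canard lands at $y_1=y/z=2$, so the unique portion is $y\le 2z$, i.e.\ $x_2\le 0$, with no eigenvector gymnastics needed.

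The genuine gap is in your global step. You argue that an orbit of $W^{cs}_{loc}(\mu)\cap\{x_2>0,\bar\epsilon>0\}$ must accumulate on one of: the line of equilibria, $p_s$, or $q_{\textnormal{out}}$, and you eliminate the first two. But this trichotomy omits the points $q_\pm$ at $\bar x=\bar z=\bar\epsilon=0$, $\bar y=\pm 1$, which are the endpoints of the lines of partially hyperbolic equilibria and are \emph{fully nonhyperbolic}, as the paper notes from the chart `$\bar y=1$'. On the non-unique side the reduced drift along the line carries orbits toward $q_+$, not toward $p_s$, so the passage near $q_+$ is exactly what must be controlled before one can conclude that orbits are ejected toward $q_{\textnormal{out}}$ rather than, say, having an $\omega$-limit set containing $q_+$ together with pieces of the equator. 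The paper's proof consists essentially of resolving this degeneracy by a secondary blowup of $q_\pm$ with the fold-type weights $\bar x=\rho^2\bar{\bar x}$, $\bar z=\rho\bar{\bar z}$, $\bar\epsilon=\rho^3\bar{\bar\epsilon}$ (see \eqref{blowupqpm} and \figref{blowup2}--\figref{blowup3}); without some such resolution your case analysis is not justified, and the appeal to ``no attractor exists there on $r=0$'' does not substitute for it. A secondary inaccuracy: since $W^{cs}(\mu)$ extends the \emph{repelling} branch $C_r$, the linearization at $p_s$ on $r=0$ carries a positive eigenvalue transverse to $W^{cs}$ in addition to the stable direction along which $\upsilon$ enters, so your description of the spectrum as two zeros plus a single negative eigenvalue is not correct, though this does not affect the identification of the unique side as a stable set.
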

\begin{proof}
Regarding the unique side of $W_{loc}^{cs}(\mu)$, we proceed as follows.
In terms of the coordinates $(x_1,y_1,\epsilon_1)$ specified by the chart `$\bar z=1$', the point $\sigma p_s$ is $(x_1,y_1,\epsilon_1)=(0,2\mu^{-1},0)$ whereas $\sigma p_w$ is $(x_1,y_1,\epsilon_1) = (0,2,0)$. The center manifold $W^{cs}_{loc}$ in \eqref{WCSLOC} is therefore only unique for $y_1\le 2$ which upon coordinate transformation becomes $y_2 \le 2z_2$, seeing that $z_2\gg 1$. The result then follows from the definition of $x_2$ in \eqref{rectify}.

On the other hand, to verify the statement about $x_2>0$, we blowup each $q_\pm$ to a sphere by setting 
\begin{align}
 \bar x = \rho^2 \bar{\bar x},\,\bar z=\rho\bar{\bar z},\,\bar \epsilon = \rho^3 \bar{\bar \epsilon},\eqlab{blowupqpm}
\end{align}
leaving $\bar y$ untouched,
where $\rho\ge 0$, $(\bar{\bar x},\bar{\bar z},\bar{\bar \epsilon})\in S^2$. Only $S^2_{\bar{\bar \epsilon}\ge 0}:= S^2\cap \{\bar{\bar \epsilon}\ge 0\}$ is relevant. Notice that these weights are the same as those used for blowing up the fold in $\mathbb R^3$, see \cite{szmolyan2004a}. The calculations are also essentially identical to those in \cite{szmolyan2004a}, so we skip the details and just present the resulting diagrams, see \figref{blowup2} and \figref{blowup3} for the blowup of $q_-$ and $q_+$, respectively. In these figures, the spheres $S^2_{\bar{\bar \epsilon}\ge 0}$, obtained from the blowup \eqref{blowupqpm}, are shown in green. The consequence of these blowups are then that each point on $W^{cs}_{loc}(\mu)$ with $x_2>0$, $\bar \epsilon>0$, is forward asymptotic to $q_{out}$. Seeing that $q_{out}$ is a hyperbolic and attracting node, this means that $W^{cs}_{loc}(\mu)$ is nonunique on this side of $\gamma$. 
% See \appref{uniqueCenterManifold} for further details.
\end{proof}

\noindent Using the symmetry, we obtain a similar result for $W^{cu}$. In particular, every point on the nonunique side of $W_{loc}^{cu}(\mu)$ with $\bar \epsilon>0$ is backwards asymptotic to $q_{\textnormal{in}}$. 
% By \eqref{}, it is easy to see that in terms of the $(x_1,x_2,x_3)$-variables, the subset of $W^{cs}$ for $x_3\gg 1$ which is unique, corresponds to $x_1>0$. 
%  However, we can gain hyperbolicity by blowing up :
% \begin{lemma}\lemmalab{qoutasympt}
%  
% \end{lemma}

\subsection{The transcritical bifurcation}
Now, consider the transcritical bifurcation near any odd integer $n=2k-1$. Then by \thmref{main1} item (\ref{nodd}) and \corref{symmetryLemma}, we have a symmetric secondary canard $\gamma^{sc}(\mu)$ for any $\mu \sim 2k-1$. For $\mu=2k-1$, $\gamma^{sc}(2k-1)=\gamma$. Furthermore
\begin{proposition}\proplab{gammasctranscrit}
The following holds for any $k\in \mathbb N$:
\begin{enumerate}
 \item \label{case1} For any $\mu\prec 2k-1$, $\gamma^{sc}(\mu)$ is backwards asymptotic to $q_{\textnormal{in}}=\sigma q_{\textnormal{out}}$ and forward asymptotic to $q_{\textnormal{out}}$. In this case, $\gamma^{sc}(\mu)$ is nonunique.
 \item \label{case2} For any $\mu\succ 2k-1$, $\gamma^{sc}(\mu)$ is backwards asymptotic to $\sigma p_{s}$ and forward asymptotic to $p_s$. In this case, $\gamma^{sc}(\mu)$ is unique as a heteroclinic connection.
\end{enumerate}
% \end{itemize}
\end{proposition}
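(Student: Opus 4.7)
The plan is to combine three ingredients: \corref{symmetryLemma}, which forces $\gamma^{sc}(\mu)$ to be $\sigma$-symmetric; the transcritical normal form \eqref{nftranscrit} together with the positivity of the conjugacy \eqref{conjugacy}, which locates the secondary canard within $\Sigma$; and \lemmaref{uniquenessWLOC}, which classifies orbits on $W^{cs}_{loc}(\mu)$ according to the sign of $x_2$. The backward-asymptotic statements in each of cases (\ref{case1}) and (\ref{case2}) then follow automatically from the corresponding forward statements by applying $\sigma$, so it suffices to analyse $\gamma^{sc}(\mu)(t)$ for $t\to +\infty$.

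First, by \corref{symmetryLemma}, $\gamma^{sc}(\mu)$ crosses $\Sigma$ at a point of the fix-point set of $\sigma$, which for $n=2k-1$ odd is the $v$-axis $\{x_2=x_3=0\}$. Write the crossing as $(v_*(\alpha),0,0)$ with $v_*(0)=0$. The nontrivial branch of \eqref{nftranscrit} is $\tilde v=-(-1)^k\tilde\alpha$, and pushing this back through the orientation-preserving conjugacy \eqref{conjugacy} yields
\[
v_*(\alpha)=(-1)^{k+1}(d_2/d_1)\,\alpha + O(\alpha^2).
\]
Next, expand $\gamma^{sc}(\mu)(t)=v_*(\alpha)\,z'(t)+O(v_*^2)$, where $z'(t)$ is the algebraic solution of the variational equation given by the first column of \eqref{Psi2k_1}; its second component is $z_2'(t)=-\tfrac{\sqrt{2}}{n H_{n-1}(0)}H_n(t/\sqrt{2})$. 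As $t\to +\infty$, $H_n(t/\sqrt 2)\to +\infty$, so $z_2'(t)$ has sign $-\operatorname{sgn}(H_{n-1}(0))$. Using $H_{2k-2}(0)=(-1)^{k-1}(2k-2)!/(k-1)!$ this equals $(-1)^k$, and multiplying by $v_*(\alpha)$ the two parity-dependent factors cancel:
\[
\operatorname{sgn}(x_2(t))\;\longrightarrow\;(-1)^{k+1}(-1)^k\operatorname{sgn}(\alpha)=-\operatorname{sgn}(\alpha)\quad\text{as }t\to +\infty.
\]

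Applying \lemmaref{uniquenessWLOC} now closes the argument. For $\mu\succ 2k-1$ (case~(\ref{case2})), $x_2(t)<0$ for large $t$, so $\gamma^{sc}(\mu)$ lies on the unique side of $W^{cs}_{loc}(\mu)$ and is forward asymptotic to $p_s$; by $\sigma$-symmetry it is backward asymptotic to $\sigma p_s$, and the connection is unique because it coincides with the (well-defined) stable set of the partially hyperbolic point $p_s$ restricted to $W^{cs}_{loc}(\mu)$. For $\mu\prec 2k-1$ (case~(\ref{case1})), $x_2(t)>0$ for large $t$, placing $\gamma^{sc}(\mu)$ on the non-unique side of $W^{cs}_{loc}(\mu)$; \lemmaref{uniquenessWLOC} yields forward asymptoticity to the attracting node $q_{\textnormal{out}}$, and $\sigma$-symmetry gives backward asymptoticity to $q_{\textnormal{in}}=\sigma q_{\textnormal{out}}$. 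The non-uniqueness of $\gamma^{sc}(\mu)$ in this case is inherited directly from the non-uniqueness of $W^{cs}_{loc}(\mu)$ on the side $x_2>0$: different admissible choices of the center manifold produce different orbits all captured by the basin of $q_{\textnormal{out}}$.

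The main obstacle is the sign cancellation in the second paragraph. A priori, both the direction of the transcritical branch (the factor $(-1)^{k+1}$ coming from \eqref{nftranscrit}) and the sign of $H_{2k-2}(0)$ (the factor $(-1)^{k-1}$ from the variational solution) depend on the parity of $k$; only their combination produces the parity-independent dichotomy in the proposition. Tracking these orientations carefully through \eqref{conjugacy} and through the explicit form of $z'(t)$ in \eqref{Psi2k_1} is the delicate step. A minor secondary point is to verify that the leading-order linearization $x_2(t)\approx v_*(\alpha)z_2'(t)$ correctly identifies the side of $W^{cs}_{loc}(\mu)$ on which $\gamma^{sc}(\mu)$ lies globally; this is standard once one notes that the two sides of $W^{cs}_{loc}(\mu)$ are separated by invariant stable sets of distinct asymptotic character ($p_s$ versus $q_{\textnormal{out}}$), so that the orbit cannot cross from one side to the other after leaving a neighborhood of $\Sigma$.
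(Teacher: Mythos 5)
Your proposal is correct and follows essentially the same route as the paper: symmetry (\corref{symmetryLemma}) reduces the dichotomy to the two symmetric alternatives, the transcritical normal form \eqref{nftranscrit} with \eqref{conjugacy} fixes the sign of the crossing point on the $v$-axis, and the algebraic variational solution from \eqref{Psi2k_1} combined with \lemmaref{uniquenessWLOC} determines the side of $W^{cs}_{loc}(\mu)$ and hence the forward asymptotics. The only cosmetic difference is normalization — the paper rescales the column of $\Phi$ by $H_{2k-2}(0)$ so that its initial condition carries the same sign as $x_1(\mu)$, while you keep the unit vector and multiply by $v_*(\alpha)$ — and you write out the case $\mu\prec 2k-1$ that the paper omits as "similar".
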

\begin{proof}
Firstly, the fact that $\gamma^{sc}(\mu)$ is either (\ref{case1}): backwards asymptotic to $q_{\textnormal{in}}$ and forward asymptotic to $q_{\textnormal{out}}=\sigma q_{\textnormal{in}}$ or (\ref{case2}): backwards asymptotic to $\sigma p_{s}$ and forward asymptotic to $p_s$, is a consequence of $\gamma^{sc}(\mu)$ being symmetric, recall \corref{symmetryLemma}. Similarly, the uniqueness of $\gamma^{sc}(\mu)$ is a consequence of the uniqueness of the center manifolds on one side of $\gamma$ only, see discussion above and \lemmaref{uniquenessWLOC}. 
% The result is a consequence of symmetry property of the secondary canards, recall \corref{symmetryLemma}, and the rotation of the tangent spaces \cite[Proposition 2.5]{wec
To complete the proof, suppose that $\mu\succ 2k-1$. ($\mu\prec 2k-1$ is similar and therefore left out.) Therefore $\alpha \succ 0$ and by working with the normal form \eqref{nftranscrit}, recall also \eqref{conjugacy}, we realise that $\gamma^{sc}\subset W^{cs}\cap W^{cu}$ intersects $\Sigma$ along $x_2=0$. Let $(x_1(\mu),0,0)$ denote the intersection point. Then by \eqref{nftranscrit}
\begin{align}
\textnormal{sign}\,x_1 = \textnormal{sign}\,(-1)^{k+1}.\eqlab{signx1}
\end{align} 
We will now show that the $x_2$-component of $\gamma^{sc}(\mu)$ is negative for all $t$ sufficiently large.
% Next, \cite[Proposition 2.5]{wechselberger_existence_2005} says that $W^{cs}_0=W^{cs}\cap \Sigma$ twists $k-1$ times around $\gamma$ upon the forward flow.\footnote{To be more accurate \cite[Proposition 2.5]{wechselberger_existence_2005} says that $W^{cs}$ twists $2k-2$ times around the weak canard. Here one twists corresponds to a rotation by $\pi$ of the tangent spaces. Using the symmetry, only half of these rotations are relevant for us.} In combination, the sign of $x_1$ and the number of twists of $W^{cs}$ along $\gamma$, means that $\gamma^{sc}(\mu)$ has the same asymptotic properties for $t\rightarrow \pm \infty$ for any $k\in \mathbb N$. To show that these asymptotic properties, are as detailed in (\ref{case2}) we 
For this purpose, consider the first column of the state-transition matrix $\Phi$ in \eqref{Psi2k_1}$_{n=2k-1}$ and multiply this column by the nonzero number $H_{2k-2}(0)$. This gives the following solution
\begin{align}
 \begin{pmatrix}
  H_{2k-2}(t/\sqrt{2})\\
  -\frac{1}{2k-1}H_{2k-1}(t/\sqrt{2})\\
  \frac{\sqrt{2}}{2k-1}H_{2k-1}(t/\sqrt{2})
 \end{pmatrix},\eqlab{solution}
\end{align}
of the variational equations \eqref{veqns} with an initial condition 
\begin{align}
(H_{2k-2}(0),0,0)^T,\eqlab{initialCondition}
\end{align}
along $V$; recall that $U\oplus V$ is $T_{\gamma(0)}W^{cs}(\mu)$ for $\mu=2k-1$. Using \eqref{prop3} we realise that the first component of \eqref{initialCondition} has the same sign as \eqref{signx1}. Fix therefore $T>0$ large enough so that $H_{2k-1}(t)\ge 1$, say, for all $t\ge T$. Such $T$ exists since $H_{2k-1}(t)$ is polynomial with positive coefficient of the leading order term $t^{2k-1}$. Then specifically, the $x_2$-component of \eqref{solution} is negative for all $t\ge T$, and consequently for $\mu\succ 2k-1$, by regular perturbation theory, the $x_2$-component of the time $t\ge T$ forward flow of $\gamma^{sc}\cap \Sigma$ is also negative.  This completes the proof since by \lemmaref{uniquenessWLOC}, $\gamma^{sc}(\mu)$ then belongs to the unique side of $W_{loc}^{cs}(\mu)$ with $x_2<0$ being forward asymptotic to $p_s$. See also \figref{blowup1}.
% A further simple calculation using the solutions of the variational equations at $\mu=2k-1$, recall, then shows specifically that these asymptotic properties are as detailed in (\ref{case2}).
%  The uniquness of $\gamma^{sc}(\mu)$ for $\mu>2k+1$ follows from the uniqueness of the center manifold in a neighborhood of $p_s$. CHECH THIS AGAIN.
\end{proof}
\begin{remark}\remlab{secondarycanards1}
Here we recall some basic facts about canards from \cite{brons-krupa-wechselberger2006:mixed-mode-oscil,szmolyan_canards_2001,wechselberger_existence_2005}. Whereas the strong canard always persists as a true (`maximal') canard for any $0<\epsilon \ll 1$, connecting the Fenichel slow manifolds $S_{a,\epsilon}$ and $S_{r,\epsilon}$, the perturbation of the weak canard for $0<\epsilon\ll 1$ to a true (`maximal') canard is clearly more involved. In particular, there is no candidate weak canard on the critical manifold, but rather a funnel of trajectories tangent at $p$ to the weak eigenvector at the folded node. However, seeing that $\gamma^{sc}(\mu)$ on the blowup sphere is asymptotic to $p_s$ and $\sigma p_s$ for fixed $\mu\succ 2k-1$, see \propref{gammasctranscrit} (\ref{case2}), this secondary canard has the same asymptotic properties as $\upsilon$ and it therefore also perturbs into a true  (`maximal') canard connecting the Fenichel slow manifolds $S_{a,\epsilon}$ and $S_{r,\epsilon}$ for $0<\epsilon\ll 1$, see also \cite{wechselberger_existence_2005}. 

In fact, the secondary canards appearing for $\mu\succ 2k-1$ do not undergo additional bifurcations for $\mu>2k-1$. Therefore if $\mu$ satisfies $2k-1<\mu<2k+1$ for some $k$, then there exists $k$ secondary canards for all $0<\epsilon\ll 1$, see \cite[Proposition 4.1]{wechselberger_existence_2005}. These canards divide the Fenichel slow manifold into bands $o(1)$-close (with respect to $\epsilon\rightarrow 0$) to the strong canard with different rotational properties  \cite{brons-krupa-wechselberger2006:mixed-mode-oscil}. These bands provide an explanation for mixed-mode oscillations, see also \cite{desroches2012a}. 
\end{remark}

\subsection{The pitchfork bifurcation}
Next, we consider $n=2k$ and the pitchfork bifurcation. Then by \thmref{main1} item (\ref{neven}) there exists two secondary canards $\gamma^{sc}(\mu)$ and $\sigma\gamma^{sc}(\mu)$ for any $\mu\prec 2k$ (or $\alpha\prec 0$). For $\mu=2k$, $\gamma^{sc}(2k)=\gamma$.
\begin{proposition}\proplab{secondarcanards2k}
 The secondary canards $\gamma^{sc}(\mu)$ and $\sigma\gamma^{sc}(\mu)$ for $\mu\prec 2k$ are nonunique heteroclinic connections. One connects $\sigma p_s$ with $q_{\textnormal{out}}$ while the other one connects $q_{\textnormal{in}}=\sigma q_{\textnormal{out}}$ with $ p_s$. 
\end{proposition}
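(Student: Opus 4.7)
The plan is to combine \thmref{main1} item (\ref{neven}) with \lemmaref{uniquenessWLOC} and the time-reversible symmetry $\sigma$ to identify the $\alpha$- and $\omega$-limits of the two secondary canards. First, I would invoke \thmref{main1} item (\ref{neven}): for $\mu\prec 2k$, the pitchfork normal form $\tilde v(\tilde \alpha+\tilde v^2)=0$ has two nontrivial branches $\tilde v=\pm \sqrt{-\tilde \alpha}$, which by the conjugacy \eqref{conjugacy} (with $d_1,d_2>0$) correspond to intersection points $(0,\pm v_*(\mu),0)\in \Sigma$ with $v_*(\mu)>0$; here I use \eqref{Wscap} with $n=2k$, i.e.\ $e_v=(0,1,0)^T$, so the $v$-coordinate on $\Sigma$ coincides with $x_2$. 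Let $\gamma^{sc}_{\pm}(\mu)$ denote the two corresponding orbits. Because $v\mapsto D(v,\alpha)$ is odd by \propref{Dgeneral} item (\ref{Dodd}), the construction in the proof of \lemmaref{MelnikovDNew} (via the fix-point $z_*$) yields $\gamma^{sc}_{-}(t)=\sigma \gamma^{sc}_{+}(-t)$: the orbits are $\sigma$-related but, in contrast to the transcritical case (\corref{symmetryLemma}), neither is individually symmetric.

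Next, I would analyse $\gamma^{sc}_{+}(\mu)$ separately in forward and backward time. Its intersection with $\Sigma$ has $x_2=v_*(\mu)>0$. Since $\gamma^{sc}_{+}(\mu)\ne \gamma$ lies on the smooth two-dimensional surface $W^{cs}_{loc}(\mu)$, which is split by the invariant curve $\gamma$ into two flow-invariant components $\{x_2>0\}$ and $\{x_2<0\}$, the whole forward orbit must remain on the $\{x_2>0\}$ side. By \lemmaref{uniquenessWLOC}, this is precisely the nonunique side, and every such trajectory is forward asymptotic to $q_{\textnormal{out}}$. For the backward direction, $\gamma^{sc}_{+}(\mu)\subset W^{cu}_{loc}(\mu)$, and the $\sigma$-image of \lemmaref{uniquenessWLOC} states that $W^{cu}_{loc}(\mu)$ is unique on $x_2\ge 0$ (as the unstable set of $\sigma p_s$) and nonunique on $x_2<0$ (with every orbit backward asymptotic to $q_{\textnormal{in}}$). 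Since $x_2>0$ on $\gamma^{sc}_{+}(\mu)$ near $\Sigma$, the backward flow is asymptotic to $\sigma p_s$. Hence $\gamma^{sc}_{+}(\mu)$ realises the heteroclinic $\sigma p_s\to q_{\textnormal{out}}$. Applying $\sigma$ then shows that $\gamma^{sc}_{-}(\mu)$ connects $\sigma q_{\textnormal{out}}=q_{\textnormal{in}}$ with $\sigma(\sigma p_s)=p_s$, and the nonuniqueness assertion is immediate since each orbit sits on the nonunique side of one of the two center manifolds.

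The main obstacle I anticipate is the justification that the local information $x_2>0$ at $\Sigma$ propagates to the whole forward orbit of $\gamma^{sc}_{+}(\mu)$, i.e.\ that this orbit remains inside the $\{x_2>0\}$-side of $W^{cs}_{loc}(\mu)$ until it is absorbed by $q_{\textnormal{out}}$. Locally near $\Sigma$ this is automatic from flow-invariance of the connected components of $W^{cs}_{loc}(\mu)\setminus \gamma$, but globally it hinges on the absorption mechanism provided by the blowup \eqref{blowupqpm} of $q_\pm$ used to prove \lemmaref{uniquenessWLOC}, which guarantees that orbits on the nonunique side cannot leave a neighbourhood of $W^{cs}_{loc}(\mu)$ except by following $q_{\textnormal{out}}$'s basin. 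Once this is in place, no further delicate estimate of the type used in the proof of \propref{gammasctranscrit} (where the sign of $H_{2k-1}(t/\sqrt{2})$ had to be tracked because the transcritical intersection lay on $x_2=0$) is needed, since for the pitchfork the intersection $(0,\pm v_*(\mu),0)$ already lies off the symmetry plane $x_2=0$.
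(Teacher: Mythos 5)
Your overall strategy --- produce the two $\sigma$-related, non-symmetric intersections at $v=\pm v_*(\mu)$ from the pitchfork, feed them into \lemmaref{uniquenessWLOC} and its $\sigma$-image, and use the symmetry to transfer between forward and backward limits --- is the right one and is essentially what the paper's terse proof does via the figures. But there is a genuine error in the central step. You assert that $W^{cs}(\mu)\setminus\gamma$ has the two flow-invariant components $\{x_2>0\}$ and $\{x_2<0\}$, so that $x_2=v_*>0$ at $\Sigma$ forces the entire forward orbit onto the nonunique side of $W^{cs}_{loc}(\mu)$. The two components of $W^{cs}(\mu)\setminus\gamma$ are flow-invariant, but they are \emph{not} the coordinate half-spaces $\{x_2\gtrless 0\}$: the manifold twists around $\gamma$, and the $x_2$-deviation of the orbit started at $v e_v$ is to leading order $v\,H_{2k}(t/\sqrt{2})/H_{2k}(0)$ by \eqref{Psi2k}, which changes sign $k$ times on $t>0$ and has limiting sign $\textnormal{sign}(v)\,(-1)^{k}$ as $t\to\infty$, since $\textnormal{sign}\,H_{2k}(0)=(-1)^k$ by \eqref{prop3}. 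The dichotomy in \lemmaref{uniquenessWLOC} is phrased in terms of the sign of $x_2$ \emph{near the line of equilibria}, i.e.\ for $t$ large, not at $\Sigma$. Hence for $k$ odd your $\gamma^{sc}_+$ in fact lands on the unique side and is forward asymptotic to $p_s$, not $q_{\textnormal{out}}$, and your closing remark that no sign-tracking of the kind used in \propref{gammasctranscrit} is needed is exactly backwards: that computation is what decides which branch goes where.

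The proposition itself survives, because it does not specify which branch realises which connection, and the gap is repairable in two ways. Either redo the step as in the proof of \propref{gammasctranscrit}, replacing ``sign of $x_2$ at $\Sigma$'' by the asymptotic sign $\textnormal{sign}(v)(-1)^k$ read off from \eqref{Psi2k} and \eqref{prop3} (this also tells you which branch is which); or argue more softly that the two canards lie on \emph{opposite} sides of $\gamma$ inside $W^{cs}(\mu)$ because they sit at $v=\pm v_*$ with $v_*\neq 0$, that the two sides of $\gamma$ in $W^{cs}_{loc}(\mu)$ are precisely the unique part (stable set of $p_s$) and the nonunique part (absorbed by $q_{\textnormal{out}}$), so exactly one of the pair is forward asymptotic to $q_{\textnormal{out}}$, and then let the $\sigma$-relation determine the backward limits of both. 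As written, your argument proves the wrong labelling for half of the integers $k$.
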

 \begin{proof}
  Straightforward working from the diagrams in \figref{blowup1}, \figref{blowup2} and \figref{blowup3}. These canards are nonunique since they intersect the nonunique parts of the local center manifolds; recall \lemmaref{uniquenessWLOC} and that $\gamma^{sc}(\mu)$ is not symmetric in this case. 
 \end{proof}
 
 Together \propref{gammasctranscrit} and \propref{secondarcanards2k} provide a rigorous and geometric explanation of \cite[Fig. 17]{wechselberger_existence_2005}. In this figure, `$\rho=v$' and the `TPB's are  points beyond which $\gamma^{sc}(\mu)$ does not reach the fixed local version of $W^{cs}(\mu)$, see \eqref{WCSLOC}.

 \begin{remark}\remlab{remsecondarcanards2k}
 As in \remref{secondarycanards1}, we will now describe the implications of \propref{secondarcanards2k} for $0<\epsilon \ll 1$. Fix $\mu\prec 2k$ and suppose without loss of generality that $\gamma^{sc}(\mu)$ is the connection from $\sigma p_s$ to $q_{\textnormal{out}}$. For all $0<\epsilon\ll 1$, seeing that $W^{cs}(\mu)$ and $W^{cu}(\mu)$ are transverse along $\gamma^{sc}(\mu)$, this secondary canard produces, as for the transcritical bifurcation above, a connection between the extended manifolds $S_{a,\sqrt \epsilon}$ and $S_{r,\sqrt \epsilon}$. But since $\gamma^{sc}(\mu)$ for $\epsilon=0$ is asymptotic to $q_{\textnormal{out}}$ in forward time, the perturbed `canard' never reaches the Fenichel slow manifold $S_{r,\epsilon}$. Instead it follows, upon blowing down, the nonhyperbolic critical fiber as $\epsilon\rightarrow 0$. However, since $\gamma^{sc}(\mu)$ is close to the strong canard for all $t$ sufficiently negative, we can flow the perturbed version backwards and conclude that it does in fact originate from the Fenichel slow manifold $S_{a,\epsilon}$. Here it also divides the subset of $S_{a,\epsilon}$ between the secondary canard due to the bifurcation at $2k-1$ and the rest of the funnel into regions of separate rotational properties through the folded node, see also \cite[Proposition 2.5]{wechselberger_existence_2005}.
 \end{remark}

%   In \figref{}, we have computed the center manifold. $\gamma^{sc}(\mu)$ 
%  If we let $\gamma^{sc}(\mu)$ be the secondary canard

\begin{figure}[h!]
\begin{center}
{\includegraphics[width=.795\textwidth]{./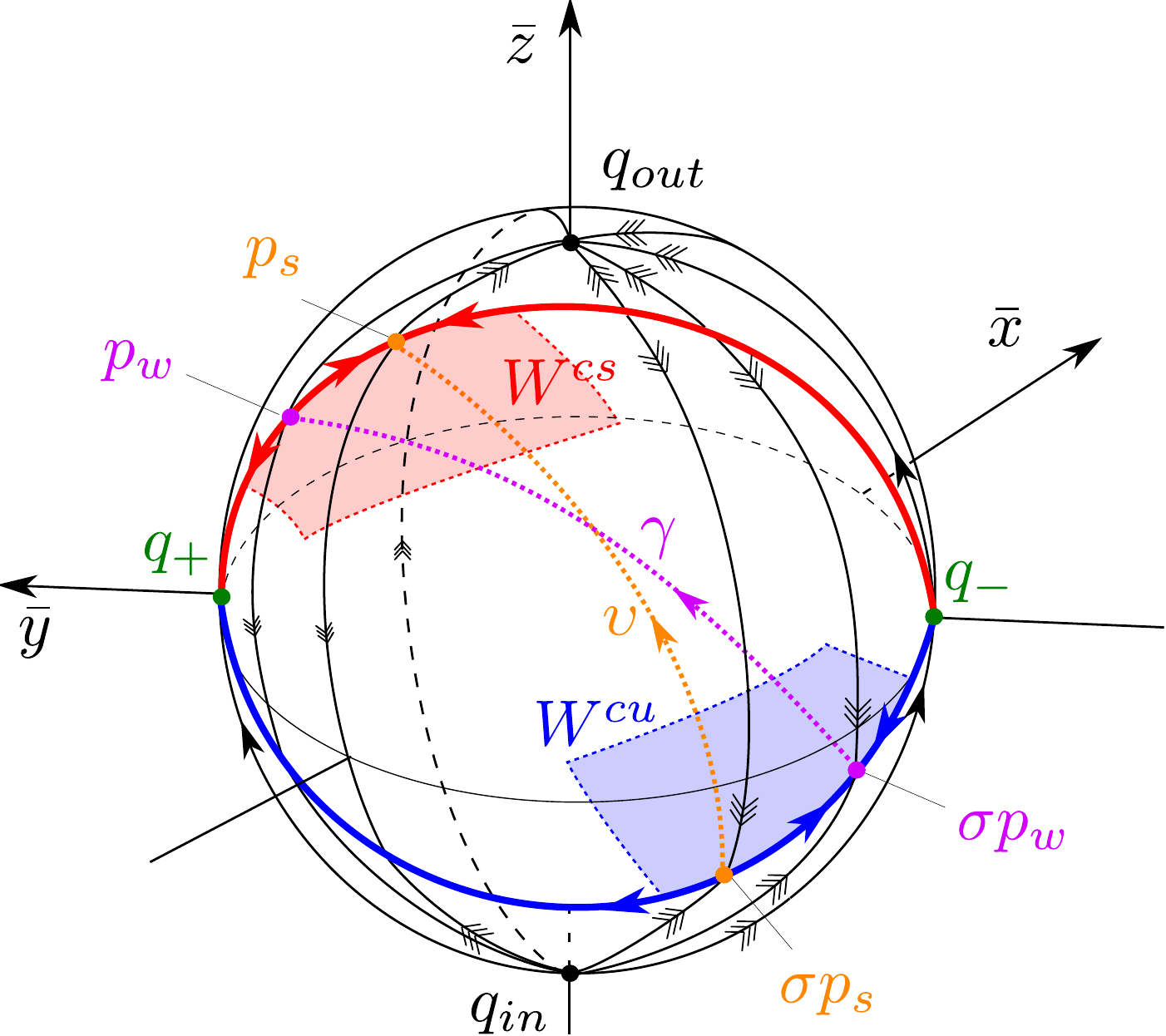}}
% \subfigure[]{\includegraphics[width=.495\textwidth]{layerReducedBlowup.pdf}}
% \input{posusts}
\end{center}
\caption{The global dynamics on the sphere $S^3$, representing $S^3_{\bar \epsilon\ge 0}:=S^3\cap \{\bar \epsilon\ge 0\}$ -- by projection -- as a solid `ball' in $(\bar x,\bar y,\bar z)$-space. Here the unit sphere, being the boundary of the ball, corresponds to $\bar \epsilon=0$, whereas everything inside of the ball corresponds to $\bar \epsilon>0$. Within this framework, the strong and weak canard, $\upsilon$ and $\gamma$, respectively, are symmetric heteroclinic connections of points on the sphere. These orbits belong to $\bar \epsilon>0$, i.e. inside the sphere, and are therefore indicated in orange and purple, recall also \figref{foldedC}, using dotted lines. Indicated are also the invariant manifolds $W^{cu}(\mu)$ and $W^{cs}(\mu)$ (suppressing the $\mu$-dependency in the figure), which are locally center manifolds of normally hyperbolic lines of equilibria (blue and red half-circles, respectively). These lines end in nonhyperbolic points, $q_-$ and $q_+$ in green which correspond to the intersection of the fold line $F$, see \figref{foldedC}, with the sphere obtained by blowing up the folded node $p$. The manifolds $W^{cu}(\mu)$ and $W^{cs}(\mu)$ intersect along $\gamma$, doing so tangentially for any $\mu\in \mathbb N$. This `bifurcation' produces secondary canards through transcritical and pitchfork bifurcations, see \thmref{main1}.  }
\figlab{blowup1}
\end{figure}

\begin{figure}[h!]
\begin{center}
{\includegraphics[width=.795\textwidth]{./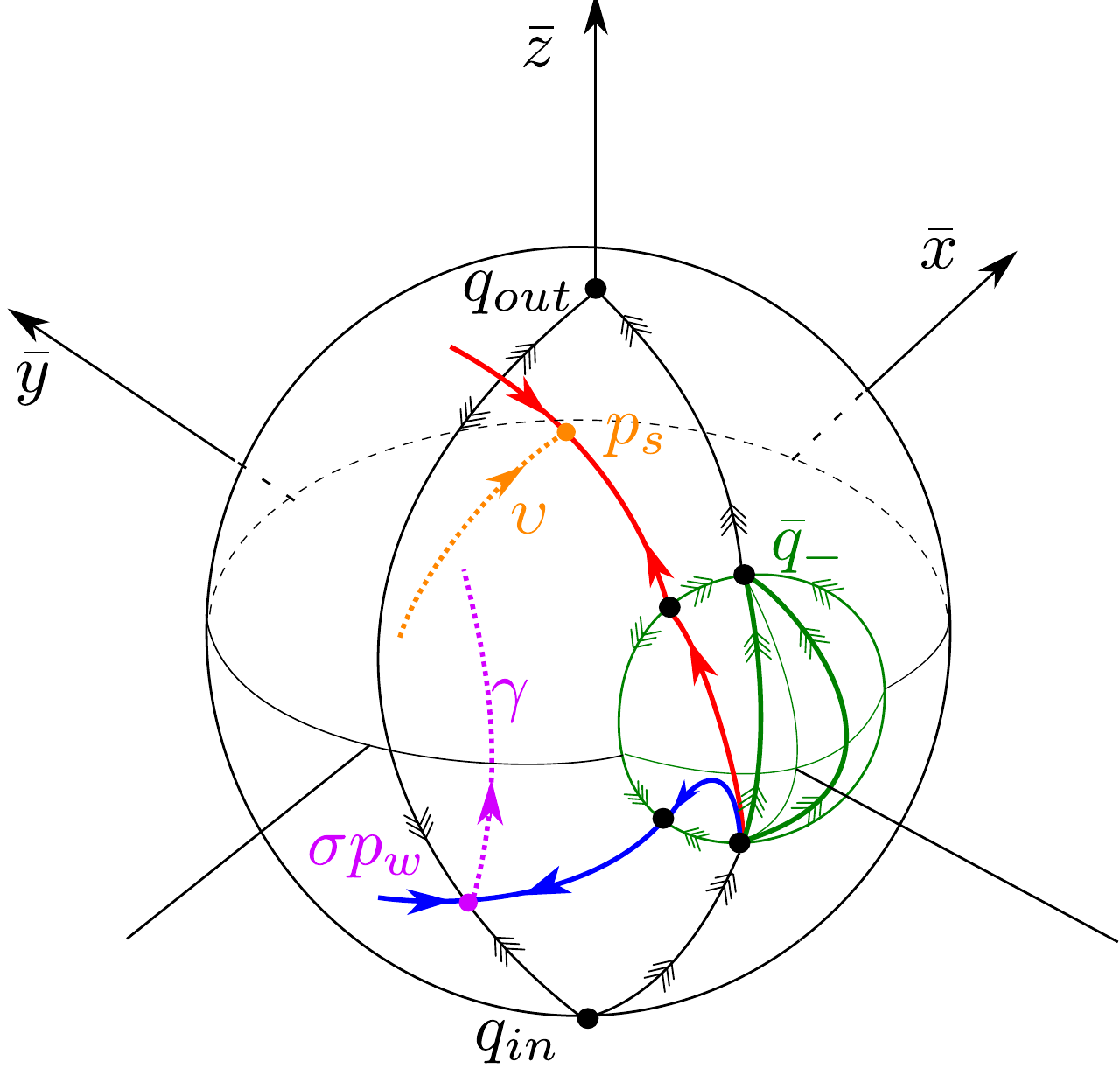}}
% \subfigure[]{\includegraphics[width=.495\textwidth]{layerReducedBlowup.pdf}}
% \input{posusts}
\end{center}
\caption{Illustration of the blowup of $q_-$, using the same viewpoint as in \figref{blowup1}. This blowup allows us to conclude that every point on the local manifold $W^{cu}$, close to the line of equilibria (in blue) and between $p_w$ and $q_-$, will be backwards asymptotic to $q_{\textnormal{in}}$.}
\figlab{blowup2}
\end{figure}

\begin{figure}[h!]
\begin{center}
{\includegraphics[width=.795\textwidth]{./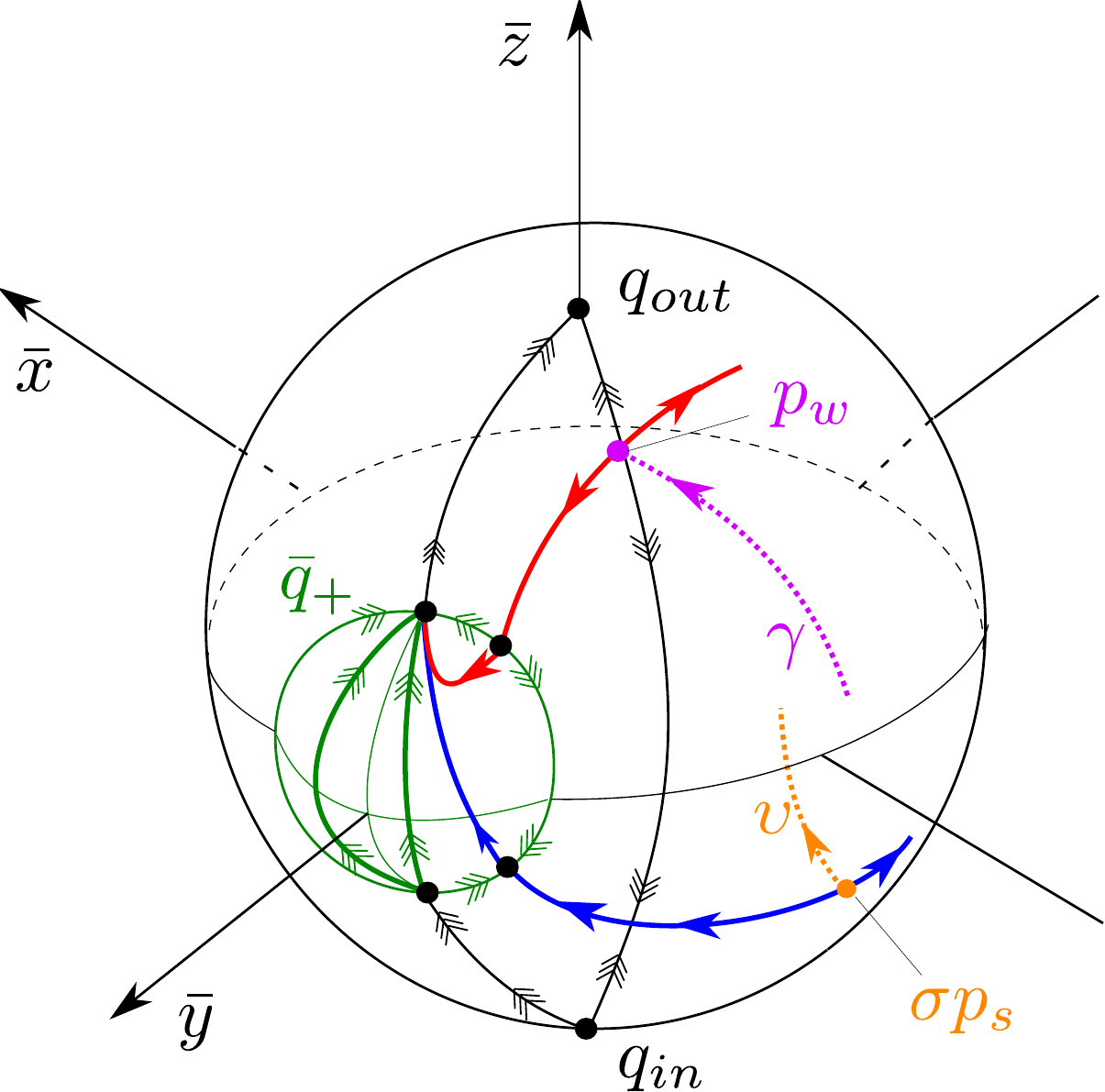}}
% \subfigure[]{\includegraphics[width=.495\textwidth]{layerReducedBlowup.pdf}}
% \input{posusts}
\end{center}
\caption{Illustration of the blowup of $q_+$, using the same viewpoint as in \figref{blowup1}, except now the positive $y$-axis is coming out of the diagram. This blowup allows us to conclude that every point on the local manifold $W^{cs}(\mu)$ close to the line of equilibria (in red) and between $p_w$ and $q_+$, will be forward asymptotic to $q_{\textnormal{out}}$.}
\figlab{blowup3}
\end{figure}
% Fix $n=2k$ and consider $\alpha<0$ but sufficiently small. Then there exists two solutions of $D(v,\alpha)=0$ of the form $v\sim \pm \sqrt{-\alpha}$. The corresponding solutions of \eqref{veqns} are symmetrically related. 

% For the transcritical bifurcation, 

% Describe the secondary canards using picture from before. 

% then we define a transverse intersection to $\gamma$ and $\gamma(0)=(

% Using \cite{falkerskan}, we also reduce the relevant calculations, by exploiting the time-reversible symmetry:
% \begin{align*}
% % \begin{align*}
%  (x,y,z,t)\mapsto (\sigma(x,y,z),-t),
% % \end{align*}
% \end{align*}
% where 
% \begin{align*}
%  \sigma:(x,y,z)\mapsto (x,-y,-z),
% \end{align*}
% of \eqref{fnnf}. 
\section{Other examples of time-reversible systems}\seclab{more}
In this section, we present a few other examples: The two-fold, the Falkner-Skan equation and finally the Nos\'e equations, of nonhyperbolic connection problems where our time-reversible version of the Melnikov theory in \cite{wechselberger2002a} can be applied to study bifurcations. Whereas our analysis of the two-fold is brief -- postponing all of the details to future work -- we do include complete, self-contained descriptions of the bifurcations of periodic orbits in the Falkner-Skan equation and the Nos\'e equations.
\subsection{The two-fold: Bifurcations of `canards'} Singularly perturbed systems  in $\mathbb R^3$  that limit to the piecewise smooth two-fold singularity, see \cite{jeffrey_two-fold_2009}, also possess orbits that are reminiscent of weak and strong canards in the singular limit $\epsilon\rightarrow 0$. In particular, upon blowup, the two-fold $p$ corresponds to a `$0/0$'-singularity of a reduced problem on a critical manifold, having attracting and repelling parts on either side of $p$. Upon desingularization, in much the same way as in \eqref{yzDS}, $p$ becomes a stable node with eigenvalues $\lambda_s<\lambda_w<0$ for a subset of parameters. The essential geometry is shown in \figref{VISliding}, see further description in the figure caption. Notice the geometry is essentially different from the folded node, insofar that the attracting and repelling manifolds for the two-fold only `meet up' at the point $p$, whereas for the folded node they align along the fold line. %In \cite{kristiansen2018a}, the following `normal form':
% \begin{align*}
% %  \begin{align}
% \dot x &=\beta^{-1} c(1+\phi(\epsilon^{-1} y))-(1-\phi(\epsilon^{-1} y)),\nonumber\\%\eqlab{xyzEps}\\
% \dot y&=b {z} (1+\phi(\epsilon^{-1} y)) -\beta  x(1-\phi(\epsilon^{-1} y)),\nonumber\\
% %  &+\frac12 \phi(y)(b{z} +g_+(x,0,z)+O(\epsilon)-\delta x -g_-(x,0,z)+O(\epsilon))\\
% %  &=&\frac12 (b{z}-\delta x +O(\epsilon +(x+z)^2))+\frac12 \phi(y)(b{z} +\delta x +O(\epsilon +(x+z)^2))\\
%  \dot z&=(1+\phi(\epsilon^{-1} y))+b^{-1} \gamma(1-\phi(\epsilon^{-1} y)),\nonumber
% %  \dot x &=&{\epsilon }\left(1+\delta \alpha+O(\epsilon+x+z)+\phi(y)(1-\delta \alpha+O(\epsilon+x+z)\right),\\
% %  \dot y&=& b{z} -\delta x +g_+(x,0,z)+g_-(x,0,z)+O(\epsilon)\\
% %  &+\phi(y)(b{z} +\delta x ++g_+(x,0,z)-g_-(x,0,z)+O(\epsilon))\\
% %  \dot z&=&{\epsilon }\left(1+\eta+O(\epsilon +x+z)+\phi(y)(1-\eta+O(\epsilon +x+z))\right).
% % \end{align}
% \end{align*}
% of the visible-invisible two-fold is analysed. Here $\beta$
Nevertheless, upon further blowup, \cite{krihog} showed, see also \cite{kristiansen2018a}, by working on a `normal form', that center manifold extensions of slow-like manifolds for the two-fold also intersect along a `strong canard' for all $0<\epsilon\ll 1$, as well as along a `weak' one provided that $\mu = \lambda_s/\lambda_w\notin \mathbb N$. The equations in the associated scaling chart for $\epsilon=0$ take the following form
\begin{equation}\eqlab{twofold}
\begin{aligned}
%  \begin{align}
\dot x &=\beta^{-1} c(1+\phi(y))-(1-\phi(y)),\\
\dot{y} &=b {z}(1+\phi(y)) -\beta x(1-\phi( y)),\\
 \dot z&=1+\phi(y)+b^{-1} \tilde \gamma (1-\phi( y)).
% \end{align}
\end{aligned}
\end{equation}
see \cite[Eq. (93)]{kristiansen2018a}. 
Here $\phi$ is any `regularization function' satisfying $\phi'>1$ and $\phi(y)\rightarrow \pm 1$ as $y\rightarrow \pm \infty$. Associated with the strong and weak eigenvectors of $p$, the system \eqref{twofold} has, under certain assumptions on the parameters of the system ($b,c,\tilde \gamma,\beta$ and $\phi$), two algebraic solutions $\upsilon$ and $\gamma$, respectively. These solutions each lie within sets of the form $\{y=\text{const}.\}$ and their projections onto the $(x,z)$-plane coincides with the strong and weak eigenspace, see further details in \cite{krihog,kristiansen2018a}. Moreover, $\upsilon$ and $\gamma$ are fixed by the time-reversible symmetry of \eqref{twofold} given by $\sigma=\textnormal{diag}\,(-1,1,-1)$ and correspond to `unbounded heteroclinic connections' upon the compactification provided by the blowup for $\epsilon=0$. Moreover, even though \eqref{twofold} does not fit our general framework in \secref{recipe}, the variational equations along $\gamma$ can also be reduced to the Weber equation, see \cite[Eq. (101)]{kristiansen2018a}. In particular, for each $\mu\in \mathbb N$, this equation has an algebraic solution resulting in a bifurcation scenario similar to folded node, where `secondary canards' (may) emerge. I expect that the details are very similar to the folded node above, see also the numerical exploration in \cite[Section 8]{krihog}. However, I also expect it to be slightly more involved due to the many parameters of the system. (In fact, it might be more advantageous to work with a different, further simplified, normal form, e.g. one derived from the `normal form' in \cite{jeffrey_two-fold_2009} based on the associated piecewise smooth Filippov system). I have therefore decided not pursue this problem further in the present manuscript. %In further details, the variational equations can be reduced to the Weber equation and consequently Hermite polynomials appear again. Given its similarity with the folded node, I have not pursued this problem any further. %and we therefore leave the analysis to future work. 

\begin{figure}[h!] 
\begin{center}
{\includegraphics[width=.95\textwidth]{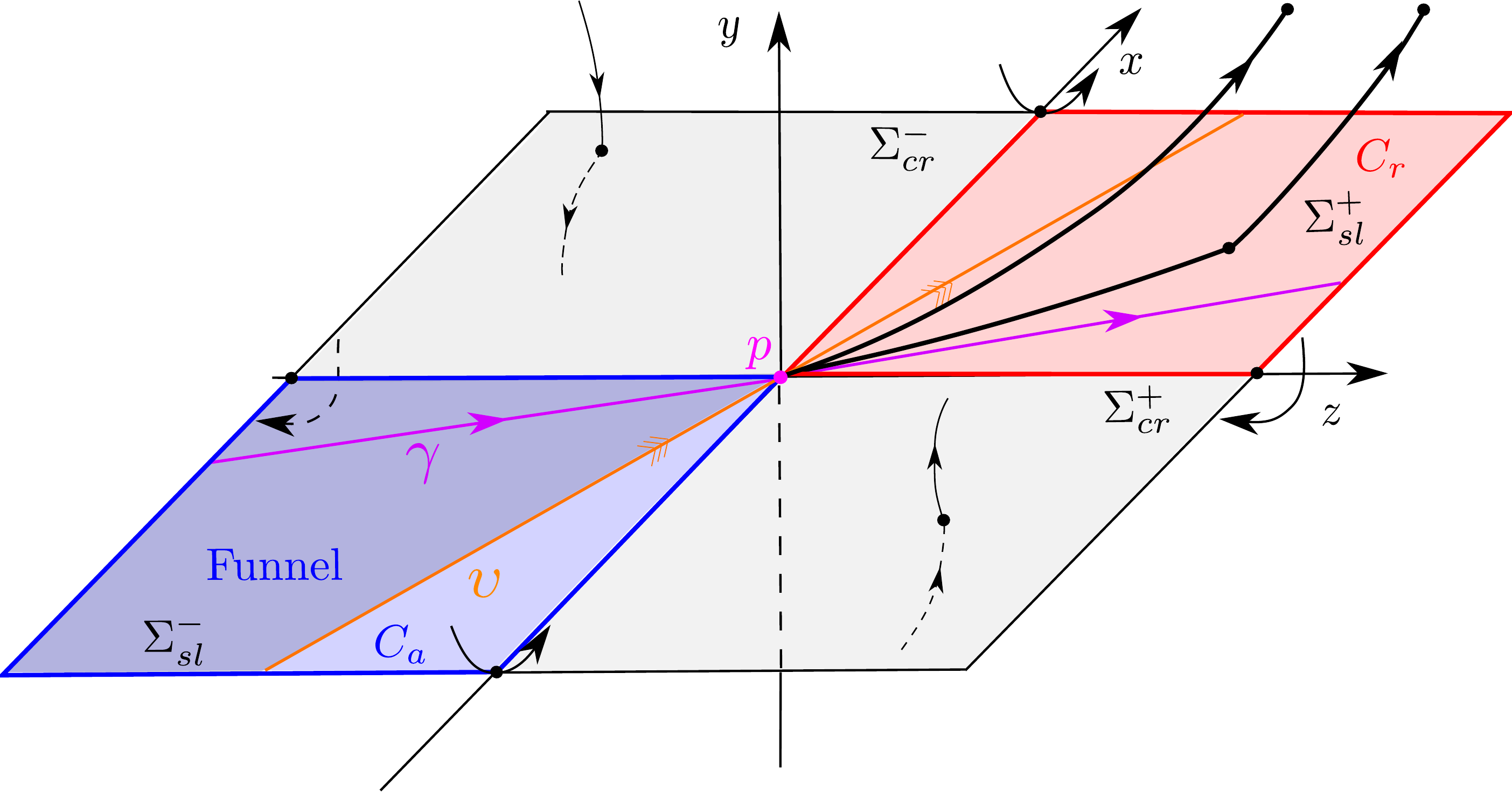}}
\end{center}
 \caption{ {Local geometry of the visible-invisible two-fold at $p=(0,0,0)$ (pink)}.  As a piecewise smooth system, the two-fold is the intersection of two fold lines on either side of a discontinuity set. In this (generic) `normal form' picture, the discontinuity set is $y=0$, while the $x$-axis is a visible fold line for the system defined for $y>0$ whereas the $y$-axis is an invisible fold-line for the system below $y<0$. The fold lines divide a neighborhood of $p$ on the discontinuity set into four quadrants: crossing downwards $\Sigma_{cr}^-$, crossing upwards $\Sigma_{cr}^+$, stable sliding $\Sigma_{sl}^-$ and unstable sliding $\Sigma_{sl}^+$. See \cite{jeffrey_two-fold_2009,Bernardo08} for further background on these piecewise smooth concepts. On the other hand, as a singular perturbed system, the system has, upon blowup of $y=\epsilon=0$, $\Sigma_{sl}^-$ as an attracting critical manifold $C_a$ (blue) and $\Sigma_{sl}^+$ as a repelling one $C_r$ (red). The point $p$ is a degenerate point (fully nonhyperbolic half-circle for the blowup system). However, the reduced problem on $C=C_a\cup C_r$ has a `$0/0$'-type of singularity where orbits, like canards, can pass from the attracting sheet to the repelling one. In fact, as for the folded node, one can apply desingularization so that $p$ becomes a stable node with eigenvalues $\lambda_s<\lambda_w<0$ for the reduced problem on $C_a$. The two orbits shown $\upsilon$ (orange) and $\gamma$ (purple) are `strong' and `weak' canards. Furthermore, bifurcations of $\gamma$ occur whenever $\mu=\lambda_s/\lambda_w\in \mathbb N$. }%Note how $\Lambda_1^-$ is a subset of $\{r_1=\rho\}$ with a small neighborhood of the weak canard removed. }
%  The quantities $\chi_{\pm}$ determining the eigenvectors satisfy $\chi_{\pm}\text{sign}(b)\gtrless 0$ for all values of $c$ and $\gamma$. The different diagrams illustrate the singular canards within $\Sigma_{sl}^-$ in the different parameter regions. For example in the region gray shaded region, corresponding to case (S) we see $b>0$ within the region $c+\gamma>0$ and $b<0$ within the region $c+\gamma<0$ the illustration of two true singular canards. For $b>0$ the true canard is positive while it is negative (dashed line) for $b<0$. Within the region where $c+\gamma >\sqrt{(c-\gamma)^2-4b\beta}$ corresponding to case (N) we see a diagram for $b<0$ to the right and a diagram for $b>0$ to the left. For $b>0$ the sliding region $\Sigma_{sl}$ is filled with positive singular canards (indicated by gray shaded regions). 
% }
\figlab{VISliding}
\end{figure}

\subsection{The Falkner-Skan equation: Bifurcations of unbounded periodic orbits}\seclab{FSsec} 
%The Falkner-Skan equation \eqref{falkerskan0} is a third order nonlinear differential equation. 
In \cite{swinnertondyer1995a} it was shown for the Falkner-Skan equation \eqref{falkerskan0} that periodic orbits bifurcate from each integer value of $\mu\in \mathbb N$. As noted in \cite{swinnerton-dyer2008a}, the proof is long, complicated and -- to a large extend -- not based upon dynamical systems theory. The aim of the following section, is therefore to give a simple proof using the Melnikov approach, in particular \thmref{Melnikov}, and the recipe in \secref{recipe}, which is based upon -- as is more standard in dynamical systems -- invariant manifolds. See also \cite{llibre2007a}, for a similar approach in this context. In this reference, however, periodic orbits are constructed through an analysis of a return mapping.

% We can write the 
First we write the equation \eqref{falkerskan0} as a first order system
% In this context $\alpha\in (0,2)$, but separately  other 
% which we write as the first order system
\begin{equation}\eqlab{falkerskan}
\begin{aligned}
 \dot x &=y,\\
 \dot y&=z,\\
 \dot z &=-xz-\mu(1-y^2),
\end{aligned}
\end{equation}
% for $\alpha>0$ is an equation 
% Special solutions:
which possesses two special solutions:
\begin{align*}
 \gamma:\,(x,y,z)&= (- t,- 1,0),\\
 \upsilon:\,(x,y,z) &= (t,1,0)
\end{align*}
and a time-reversible symmetry given by
 \begin{align*}
  \sigma=\textnormal{diag}\,(-1,1,-1).
 \end{align*}
 Both $\gamma$ and $\upsilon$ are symmetric orbits. It is easy to see, upon rectifying $\gamma$ to the $x_3$-axis, setting $(x,y,z)=(-x_3,1+x_1,x_2)$, that \eqref{falkerskan} satisfies the assumptions in \secref{recipe}, recall (H5)-(H7), respectively. In particular, 
 \begin{align*}
  \beta = 2\mu-1.
 \end{align*}
 \thmref{recipe} therefore applies and we can evaluate the relevant integrals at bifurcations in closed form following the recipe outlined in \remref{procedure}.
% However, to describe the bifurations of periodic orbits we continue to study this problem in further details. In particular, 
To describe the global dynamics relevant for the bifurcations of periodic orbits, and obtain the invariant manifolds $W^{cs}$ and $W^{cu}$, we will compactify the system. 
%There are different ways of doing this, but 
For our purposes I find it useful to just compactify $(x,z)$, leaving $y$ untouched, by setting 
 \begin{align}\eqlab{barW1New}
 \begin{split}
  x &= \frac{\bar x}{\bar w},\\
  z &= \frac{\bar z}{\bar w},
  \end{split}
 \end{align}
for $(\bar x,\bar z,\bar w)\in S^2$. To describe the dynamics near the equator defined by $\bar w=0$, we consider the directional chart `$\bar x=-1$' defined by
\begin{align}\eqlab{barXNeg1New}
\begin{split}
 z_1&:=-\frac{\bar z}{\bar x},\\
 w_1&:=-\frac{\bar w}{\bar x}.
 \end{split}
\end{align}
% Notice that we have a smooth change of coordinates between the 
The smooth change of coordinates between the `$\bar w=1$' chart, defined in \eqref{barW1New}, and the `$\bar x=-1$ chart, given by \eqref{barXNeg1New}, is determined by the following equations
\begin{align}\eqlab{coordinateNew}
\begin{split}
 w_1 &= -x^{-1},\\
 z_1 &= -z x^{-1},
 \end{split}
\end{align}
for $x<0$.
Using \eqref{coordinateNew} we obtain the following equations in the `$\bar x=-1$' chart:
\begin{align}\eqlab{barXNeg1NewEqns}
\begin{split}
\dot y &= z_1,\\
\dot z_1&= z_1+w_1^2 (yz_1-\mu(1-y^2)),\\
\dot w_1&=w_1^3 y,
\end{split}
\end{align}
upon multiplication of the right hand sides by $w_1$, to ensure that $w_1=0$ -- corresponding to $\bar w=0$ under the coordinate map -- is invariant. For this system, we notice that any point $(y,0,0)$ is a partially hyperbolic equilibrium of \eqref{barXNeg1NewEqns}, the linearization having $\lambda=1$ as a single nonzero eigenvalue. Therefore, by center manifold theory there exists a local repelling center manifold $W_{loc}^{cs}(\mu)$. A simple calculation, using the invariance of $\gamma$ and $\upsilon$, shows that it takes the following form 
\begin{align}
 W_{loc}^{cs}(\mu):\, z_1 = (1-y^2) w_1^2\left(\mu +w_1m_1(y,w_1)\right),\quad y\in I,\,w_1 \in [0,\delta],\eqlab{WCSLOCNew}
\end{align}
with $I$ a fixed sufficiently large interval and where $\delta>0$ is sufficiently small. Also, $m_1$ is a smooth function, also depending on $\mu$. In terms of $(x,y,z)$, $W_{loc}^{cs}(\mu)$ takes the following form
\begin{align*}
 W_{loc}^{cs}(\mu):\, z = -(1-y^2) x^{-1}\left(\mu -x^{-1} m_1(y,-x^{-1}))\right) ,\quad y\in I,\,
\end{align*}
valid for all $x$ sufficiently negative. 

Inserting \eqref{WCSLOCNew} into \eqref{barXNeg1NewEqns} gives the reduced problem on $W^{cs}_{loc}$:
\begin{align}\eqlab{yw1Eqns}
\begin{split}
 \dot y &= (1-y^2) \left(\mu +w_1m_1(y,w_1)\right),\\
 \dot w_1 &= w_1 y,
 \end{split}
\end{align}
upon desingularization through division of the right hand side by $w_1^2$. 
Notice that $\dot y>0$ for $y\in (-1,1)$ and all $w_1\ge 0$ sufficiently small. In particular, $(y,w_1)=(-1,0)$ and $(y,w_1)=(1,0)$ are saddles, with the orbit $y\in (-1,1)$, $w_1=0$ being a heteroclinic connection under the flow of \eqref{yw1Eqns}. For later reference, let $L$ be the invariant set defined by 
\begin{align}
z_1=w_1=0, \quad y\in [-1,1].\eqlab{LFS} 
\end{align} 
It becomes $(\bar x,\bar z,\bar w)=(-1,0,0)$, $y\in I$ on the cylinder.  %We denote it by $\overline L$ in terms of $y,(\bar x,\bar z,\bar w)\in S^2$. 

By applying the symmetry, we obtain a local manifold $W^{cu}_{loc}$ for all $x$ sufficiently large. Combining this information we obtain the diagram in \figref{FS}, see \cite[Fig. 1]{sparrow2002a} for a related figure.

The global manifolds $W^{cs}(\mu)$ and $W^{cu}(\mu)$ intersect along $\gamma$ and $\upsilon$. In particular, along $\gamma$ we have the following
 \begin{lemma}\lemmalab{FSlemma}
  The manifold $W^{cs}(\mu)$ and $W^{cu}(\mu)$ intersect transversally along $\gamma$ if and only if $2\mu \notin \mathbb N$. 
 \end{lemma}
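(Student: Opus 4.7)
The plan is to reduce \lemmaref{FSlemma} to the Weber criterion for existence of an algebraically growing solution of the variational equation along $\gamma$, which is the content of \thmref{recipe} item (2) and \lemmaref{eqvH4} item (\ref{3iv}).

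First I would carry out the rectification of $\gamma$ to the $x_3$-axis already indicated in the text, possibly composed with a time reversal and a linear coordinate change, so that the rectified Falkner--Skan field is quadratic, time-reversible with respect to $\sigma = \textnormal{diag}(1,-1,-1)$, and has $\gamma(t) = (0,0,t)$. Hypotheses (H5)--(H7) are then immediate. Hypothesis (H3) is supplied by the local center-stable manifold $W^{cs}_{\textnormal{loc}}(\mu)$ constructed geometrically in \eqref{WCSLOCNew} via the Poincar\'e compactification of $(x,z)$, and by its time-reversible image $W^{cu}_{\textnormal{loc}}(\mu)$.

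With the framework in force, the variational equation along $\gamma$ reduces to the Weber equation $L_\beta z_1 = 0$ with $\beta = -(a_2 b_1 + 1)$ computed from the rectified quadratic coefficients; a direct calculation yields $\beta = 2\mu - 1$, as quoted just before the lemma. By \lemmaref{LmHmHl}, this Weber equation admits an algebraically growing solution if and only if $\beta \in \mathbb N_0$, equivalently $2\mu \in \mathbb N$. By \lemmaref{eqvH4} item (\ref{3iv}), the existence of a second linearly independent solution of $\dot z = A(t) z$ in $C_{b,+} \cap C_{b,-}$ (in addition to $\dot \gamma$) is equivalent to (H4), i.e., tangential intersection of $W^{cs}(\mu)$ and $W^{cu}(\mu)$ along $\gamma$. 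Taking the contrapositive is the lemma.

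The main obstacle is essentially bookkeeping: verifying that the rectified vector field fits the canonical form \eqref{fGeneral} (in particular, that the scaling $\tilde x_3 = \sqrt{a_{13}}\, x_3$ from the proof of \thmref{recipe} is legitimate) and that the geometrically constructed $W^{cs}_{\textnormal{loc}}(\mu)$ of \eqref{WCSLOCNew} realises the abstract center-stable manifold of \secref{melnikov} at $\gamma(0)$. Both are routine because the rectifying change of variables is linear and $W^{cs}_{\textnormal{loc}}(\mu)$ is given by an explicit graph; in particular the abstract and geometric tangent spaces at $\gamma(0)$ coincide. No new Melnikov-style integral computation is required for this qualitative statement.
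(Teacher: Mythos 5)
Your overall strategy coincides with the paper's: reduce the transversality question to the existence of a second algebraically growing solution of the variational equation via \lemmaref{eqvH4} item (\ref{3iv}), and settle that by reducing the variational equation to a Weber equation and applying \lemmaref{LmHmHl}. The paper does exactly this, but it performs the reduction by hand: it writes the variational equation \eqref{varsFS} directly in the original $(x,y,z)$-coordinates, eliminates two components to obtain $L_{2\mu-1}z_3=0$, and then quotes \lemmaref{LmHmHl} together with the non-existence of algebraic solutions of the Weber equation for non-integer index.

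The step you dismiss as ``routine bookkeeping'' is where your write-up actually breaks. Any linear rectification compatible with (H5)--(H6) forces $x_1=c(y+1)$, $x_2\propto z$ and $x_3=-x+\delta z$; then $\dot x_1=\dot y=z$ is exactly linear, so the coefficient $a_{13}$ of $x_1x_3$ in the first component vanishes, while the quadratic term $-xz$ lands as an $x_2x_3$-term in the \emph{second} component. The rectified Falkner--Skan field therefore does not fit \eqref{fGeneral} (the normalization $\tilde x_3=\sqrt{a_{13}}\,x_3$ is impossible and $b_{23}\neq 0$), so \thmref{recipe} item (2) cannot be invoked as a black box. The conclusion nevertheless survives: the linear part $A(t)$ carries the $t$-entry in the $(2,2)$ position rather than the $(1,1)$ position of \eqref{AGen}, and eliminating $z_1$ yields a Weber equation for $z_2$ with the same index $\beta=-(a_2b_1+1)=2\mu-1$; equivalently, one simply computes the variational equation directly, as the paper does. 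So the gap is real but is repaired by a two-line computation; replace the appeal to the canonical form by that computation. (The paper's surrounding text makes the same loose claim that \thmref{recipe} ``applies'', but its actual proof of \lemmaref{FSlemma} sidesteps it.)
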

 \begin{proof}
 We use \lemmaref{eqvH4} item (\ref{3iv}). Consider therefore the variational equations about $\gamma$:
\begin{align}\eqlab{varsFS}
\begin{split}
 \dot z_1 &= z_2,\\
 \dot z_2 &=z_3,\\
 \dot z_3 &=tz_3-2\mu z_2,
 \end{split}
\end{align}
which upon eliminating $z_1$ and $z_2$, can be written as a Weber equation
\begin{align}
L_{2\mu-1}z_3 = 0,\eqlab{weberFS}
\end{align}
recall \eqref{Lmexpr}. The result then follows from \lemmaref{LmHmHl}, see also proof of \thmref{recipe}.
% Therefore, see e.g. \cite{szmolyan_canards_2001}, 
% It then follows, that for $2\mu \notin \mathbb N$ then $\dot \gamma = (-1,0,0)$ is the only (up to multiplication by a constant, obviously) solution of \eqref{varsFS} having polynomial growth. Consequently, the intersection of $W^{cs}(\mu)$ and $W^{cu}(\mu)$ along $\gamma$ is transverse in this case. On the other hand, if $n=2\mu\in \mathbb N$ then
% \eqref{weberFS} -- and therefore also \eqref{varsFS} -- has another polynomial solution given by 
% \begin{align*}
%  z_3(t) = H_{n-1}(t/\sqrt{2}).
% \end{align*}
In particular, for $n=2\mu$, we obtain the following algebraic solution of \eqref{varsFS}:
\begin{align}
 z = \begin{pmatrix}
      \frac{1}{2n(n+1)}H_{n+1}(t/\sqrt{2})\\
      \frac{1}{\sqrt{2} n}H_n(t/\sqrt{2})\\
      H_{n-1}(t/\sqrt{2})
     \end{pmatrix}.\eqlab{zpFShere}
\end{align}
% and hence $W^{cs}(\mu)$ and $W^{cu}(\mu)$ do not intersect transversally along $\gamma$ in this case.
% 
% 
% 
% 
%  has another polynomial solution
% \begin{align*}
%  z &= \begin{pmatrix}
%              \left(H_{n+1}(t/\sqrt{2})-H_{n+1}(0)\right)/ (2n(n+1)H_{n-1}(0)) \\
%               H_n(t/\sqrt{2})/(\sqrt{2} n H_{n-1}(t/\sqrt{2}))\\
%               H_{n-1}(t/\sqrt{2})/H_{n-1}(0)
%              \end{pmatrix},\quad n=\textnormal{odd},\\
%              z& = \begin{pmatrix}
%               H_{n+1}(t/\sqrt{2})/(\sqrt{2}(n+1)H_n(0))\\
%              H_n(t/\sqrt{2})/H_n(0)\\
%               \sqrt{2} nH_{n-1}(t/\sqrt{2})/H_n(0)
%              \end{pmatrix},\quad n=\textnormal{even},
% \end{align*}
 \end{proof}

 Next, fix any $n\in \mathbb N$ and define $\alpha$ by $$\mu = n/2+\alpha. $$ Then we can define a local Melnikov function $D(v,\alpha)$, the roots of which correspond to intersections of $W^{cs}(\mu)$ and $W^{cu}(\mu)$ near $\gamma$. Using \thmref{Melnikov}, and proceeding as in the proof of \thmref{main1}, we obtain the following.
 \begin{proposition}\proplab{FS}
 Let $k\in \mathbb N$ be so that
%  Consider any $n\in \mathbb N$ and let $k$ be so that 
\begin{align*}
 n = \begin{cases}
      2k-1 & n=\textnormal{odd}\\
      2k & n=\textnormal{even}
     \end{cases}.
\end{align*}
% and set $\mu=n+\alpha$.
% \begin{align}D(v,\alpha)=0,\eqlab{bifeqn}\end{align} be the bifurcation equation (to be defined formally below in \eqref{MelnikovD} locally near $(v,\alpha)=(0,0)$) where each solution $(v,\alpha)$ corresponds to an intersection of $W^{cs}(\mu)$ and $W^{cu}(\mu)$. In particular, $D(0,\alpha)=0$ for all $\alpha$ due to the existence of the connection $\gamma$. 
Then 
\begin{enumerate}
 \item \label{nodd2} For $n=\textnormal{odd}$, $D(v,\alpha)=0$ is locally equivalent with a pitchfork bifurcation:
 \begin{align}
  \tilde v (\tilde \alpha +\tilde v^2) = 0.
 \end{align}
\item \label{neven2} For $n=\textnormal{even}$, $D(v,\alpha)=0$ is locally equivalent with the transcritical bifurcation:
\begin{align}
    \tilde v(\tilde \alpha+(-1)^{k+1}\tilde v)=0.\eqlab{transcritical2}
 \end{align}
\end{enumerate}
In each case, the local conjugacy $\phi:(v,\alpha)\mapsto (\tilde v,\tilde \alpha)$ satisfies $\phi(0,0)=(0,0)$ and
\begin{align}
 D\phi(0,0) = \textnormal{diag}\,(d_1(n),d_2(n)) \quad \mbox{with $d_i(n)>0$ for every $n$.}\nonumber
\end{align}
%  Let 
%  \begin{align}
%   g(z,\alpha)
%  \end{align}
%   Consider \eqref{falkerskan}
 \end{proposition}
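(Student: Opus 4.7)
The plan is to mirror the proof of \thmref{main1} step by step, following the recipe summarised in \remref{procedure}. The starting point is \thmref{Melnikov}, which produces the integral representation of $D(v,\alpha)$ in terms of the adjoint solution $\psi_*$ (normalised by $\psi_*(0)=e_w$) and the solution $z_*(v,\alpha)$ of the analogue of \eqref{zg} on $W^{cs}_0(\alpha)\cap\Sigma$. Because $\gamma$ exists for every $\mu$, the invariance $z_*(0,\alpha)=0$ forces $D(0,\alpha)=0$ for all $\alpha$, so the proof reduces to computing a handful of low-order $v$-derivatives of $D$ at the origin and then invoking singularity theory \cite{golubitsky1988a} to obtain the normal forms together with the positivity of the diagonal of $D\phi(0,0)$.

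The first substantive step is to read off $\sigma_v$ and $\sigma_w$ directly from the algebraic solution \eqref{zpFShere} together with $\dot\gamma(0)=(-1,0,0)$ and $\sigma=\textnormal{diag}(-1,1,-1)$. Evaluating \eqref{zpFShere} at $t=0$ and using that $H_k(0)=0$ iff $k$ is odd, the two cases split cleanly: for $n$ odd, the components $z_1(0),z_3(0)$ are nonzero while $z_2(0)=0$, so $V$ is the $z_3$-axis, $W$ the $z_2$-axis, and $(\sigma_v,\sigma_w)=(-1,1)$; for $n$ even only $z_2(0)$ is nonzero, giving $V$ the $z_2$-axis, $W$ the $z_3$-axis, and $(\sigma_v,\sigma_w)=(1,-1)$. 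Applying \lemmaref{MelnikovDNew} this produces the decisive symmetry property, exactly as in \propref{Dgeneral}: for $n$ odd, $D(v,\alpha)=h_{cs}(-v,\alpha)-h_{cs}(v,\alpha)$ is odd in $v$, so all even $v$-derivatives of $D$ vanish at the origin; for $n$ even, $D(v,\alpha)=-2h_{cs}(v,\alpha)$ carries no such constraint. Note that this is the opposite pairing to the folded node, which explains the swap of pitchfork and transcritical between \thmref{main1} and \propref{FS}.

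To compute the remaining derivatives at $(v,\alpha)=(0,0)$, I would first determine $\psi_*$ by solving the adjoint of \eqref{varsFS}, which reduces to a Weber equation of the same index $n-1$ and yields $\psi_*(t)=e^{-t^2/2}(\cdots)$ with entries proportional to $H_{n-1}(t/\sqrt{2})$ normalised via $\psi_*(0)=e_w$. Then $\frac{\partial^2 D}{\partial v\partial\alpha}(0,0)$ is a single weighted integral of $H_{n-1}(t/\sqrt{2})^2 e^{-t^2/2}$, positive by \eqref{HOrtho}. For $n$ even, $\frac{\partial^2 D}{\partial v^2}(0,0)$ is obtained by pairing the quadratic nonlinearity of \eqref{falkerskan} (the only cross-terms are $-xz$ and $\mu y^2$) with $\psi_*$ and with $(\partial z_*/\partial v)^2$ read from \eqref{zpFShere}; the resulting Gaussian--Hermite products reduce by \eqref{productRuleHnm} and are evaluated in closed form via \eqref{intHnHmHl}, producing the factor $(-1)^{k+1}$ in \eqref{transcritical2}. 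For $n$ odd this second-order coefficient vanishes automatically and the bifurcation is controlled by $\frac{\partial^3 D}{\partial v^3}(0,0)$, which I would obtain through the four-step procedure (a)--(d) of \remref{procedure}: differentiate the analogue of \eqref{zg} twice in $v$ at the origin to get the higher-order variational equation for $z''$; reduce its first component to an inhomogeneous Weber equation $L_{n-1}z_1''=\ldots$ whose right-hand side is a finite sum of products of Hermite polynomials; solve for the algebraic $z_1''$ via \lemmaref{LmHmHl}, integrate for $z_2''$ and $z_3''$; and substitute into the integral formula, evaluating with \eqref{intHnHmHl}.

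The main obstacle is the final nonvanishing verification in the pitchfork case: the recipe yields $\frac{\partial^3 D}{\partial v^3}(0,0)$ as an explicit finite sum of Hermite triple-product integrals with coefficients of mixed sign, and one must show it does not vanish for \emph{every} odd $n$. I expect the dominance argument of \lemmaref{main1} (the ratio estimate \eqref{ckprop}, which isolates a strictly positive head of the sum and dominates the negative tail) to adapt with only numerical adjustments, because the quadratic nonlinearity of \eqref{falkerskan} is structurally simpler than that of \eqref{fnnf} -- only two cross-terms rather than three -- so the resulting combinatorial sum should be no worse, and likely simpler.
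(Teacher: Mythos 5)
Your proposal follows essentially the same route as the paper's Appendix C: identify $V$, $W$ and hence $(\sigma_v,\sigma_w)$ from the algebraic solution \eqref{zpFShere} evaluated at $t=0$, use \lemmaref{MelnikovDNew} to get the parity of $D$ in $v$ (odd for $n$ odd, unconstrained for $n$ even), and then evaluate the low-order derivatives via $\psi_*$ and the Weber/Hermite machinery of \remref{procedure}; your $(\sigma_v,\sigma_w)$ and the resulting swap of pitchfork and transcritical relative to the folded node agree with the paper. Two small inaccuracies: since the nonlinearity sits only in the third equation, the mixed partial $\frac{\partial^2 D}{\partial v\partial\alpha}(0,0)$ pairs $\psi_{*,3}\propto e^{-t^2/2}H_n(t/\sqrt{2})$ with $z_2'\propto H_n(t/\sqrt{2})$, so the integrand involves $H_n^2$, not $H_{n-1}^2$; and for $n$ even this quantity comes out \emph{negative} (the paper obtains the normal form \eqref{transcritical2} only after replacing $D$ by $-D$), so your blanket positivity claim needs that sign flip to recover $(-1)^{k+1}$. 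Finally, your acknowledged gap -- verifying $\frac{\partial^3 D}{\partial v^3}(0,0)\neq 0$ for every odd $n$ via a dominance estimate analogous to \eqref{ckprop} -- is left at exactly the same level of detail in the paper itself, which carries out only the $n$ even case and declares the odd case "lengthier, but similar".
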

\begin{proof}
% \section{Proof of \thmref{FS}}\applab{FSproof}
See \appref{FS}.
 \end{proof}

%  \begin{remark}
  The bifurcations of $W^{cs}(\mu)$ and $W^{cu}(\mu)$, described in the previous result, produce new transverse intersection for $\mu \sim n/2$ and every $n\in \mathbb N$. Notice, however, that they will not always produce periodic orbits. Instead they may simply diverge (by following the invariant green curves in \figref{FS} defined by $(\bar x,\bar z)=0$). To give rise to periodic orbits, the intersections have to be symmetric (which rules out the pitchfork bifurcation, whose symmetrically related solutions diverge either as $t\rightarrow \pm \infty$ along the green curves in \figref{FS}) and they have to be on the `right' side so that they follow $L$ and $\upsilon$. 
  This is qualitatively very similar to the bifurcation of canards, recall \propref{gammasctranscrit} and \propref{secondarcanards2k}. However, whereas for canards, the `interesting' orbits, the true canards, appeared on unique sides of the invariant manifolds, we will see that for the Falkner-Skan equation and the Nos\'e equations, described below, the `interesting' periodic orbits now appear on the nonunique sides of the manifolds. To obtain closed orbits we will have to fix copies of the center manifolds. We do so by using the fix-point sets of the symmetries.   
%  \end{remark}

% As a corollary of this result, 
In fact, we obtain a new proof of the following result on the bifurcation of periodic orbits from infinity, see \cite{swinnertondyer1995a}, now based upon bifurcation theory and invariant manifolds. 
\begin{theorem}\thmlab{pp}
Let $\Gamma$ be the (singular) heteroclinic cycle obtained from concatenating (a) $\gamma$, (b) the `segment' $L:\,(\bar x,\bar z,\bar w)=(-1,0,0)$, $y\in I$, recall \eqref{LFS} in the `$\bar x=-1$' chart, (c) $\upsilon$, and finally (d) $\sigma L$, i.e. the symmetrically related version of the segment $L$ defined in (b). Then symmetric periodic orbits bifurcate from $\Gamma$ for each $\mu\in \mathbb N$. 

In further, details let $\mu=k$ (so that $n=2k$). Then symmetric periodic orbits only exist (`locally' to $\Gamma$) for $\mu \succ k$.
\end{theorem}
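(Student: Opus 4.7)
The plan is to apply Proposition~\ref{proposition:FS}~(\ref{neven2}) to each $\mu\in\mathbb N$. With $n=2\mu$ (even) and $\mu=k+\alpha$, this gives the transcritical normal form $\tilde v(\tilde \alpha+(-1)^{k+1}\tilde v)=0$, so, in addition to the trivial branch $\tilde v=0$ (which parametrizes $\gamma$), there is a nontrivial branch $\tilde v=(-1)^k\tilde \alpha$ of intersection points of $W^{cs}(\mu)$ and $W^{cu}(\mu)$ with $\Sigma$, for all sufficiently small $\alpha$.

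First I would check, as in Corollary~\ref{corollary:symmetryLemma}, that in this transcritical situation $V$ lies inside the fix-point set of $\sigma$ while $W$ is anti-invariant, so every point on the nontrivial branch lies in $\Sigma\cap\textnormal{Fix}(\sigma)$ and therefore parametrizes a symmetric orbit. Each such orbit is, on the compactified system, a heteroclinic connection between two symmetrically related partially hyperbolic points on the Poincar\'e equator $\bar w=0$.

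The substantive step is to show that, for small $\alpha$, this symmetric connection closes up to a bounded periodic orbit in $\mathbb R^3$ precisely when $\alpha>0$ (i.e.\ $\mu\succ k$), and otherwise diverges along one of the green invariant curves of \figref{FS}. Following the template of Proposition~\ref{proposition:gammasctranscrit}, I would combine the sign of $v$ on the nontrivial branch, namely $\textnormal{sign}((-1)^k\alpha)$, with the sign, for large $t$, of the relevant component of the algebraic variational solution~\eqref{zpFShere}. Propagated along the flow, these jointly determine the side of the locally nonunique center manifold $W^{cs}_{\textnormal{loc}}(\mu)$ on which the perturbed orbit arrives near the equator segment $L$. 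The reduced problem~\eqref{yw1Eqns}, whose restriction to $W^{cs}_{\textnormal{loc}}(\mu)$ is a heteroclinic connection from $(y,w_1)=(-1,0)$ to $(1,0)$, then picks out precisely the side on which forward trajectories shadow $L$, continue along $\upsilon$, and return via $\sigma L$; the sign count will show that this ``closing'' side is $\alpha>0$.

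For the final step I would invoke transversality of $W^{cs}(\mu)\cap W^{cu}(\mu)$ along the bifurcating orbit --- guaranteed by the nonvanishing second-derivative condition inherent in the transcritical normal form --- together with regular perturbation theory on each smooth segment of the singular cycle $\Gamma$, to produce an actual symmetric periodic orbit in $\mathbb R^3$ that limits to $\Gamma$ as $\mu\downarrow k$. The main obstacle I expect is the sign-propagation argument above: unlike the folded node treated in Section~\ref{sec:blowup}, there is no separate blowup of the nonhyperbolic corners of $L$ that cleanly decouples the local nonuniqueness from the global flow, so the unique/nonunique classification of $W^{cs}_{\textnormal{loc}}(\mu)$ along $L$ has to be extracted directly from the reduced flow~\eqref{yw1Eqns} and matched with the dominant Hermite-polynomial factor at large $t$ coming from~\eqref{zpFShere}.
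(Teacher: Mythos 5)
Your overall strategy is the one the paper uses: invoke the transcritical case of \propref{FS} at $n=2k$, observe that for $n$ even the bifurcating branch lies in $V\subset\textnormal{Fix}(\sigma)$ so the new intersections are symmetric orbits, and then decide on which side of $\alpha=0$ the symmetric orbit shadows $L$ by comparing $\textnormal{sign}\,v=\textnormal{sign}((-1)^k\alpha)$ with the sign of $H_{2k}(0)$ and the large-$t$ positivity of the second component of \eqref{zpFShere}. That sign-propagation step, which you flag as the main obstacle, is in fact unproblematic and is carried out in the paper exactly as you describe (the corners of $L$ are hyperbolic saddles of the desingularized reduced flow \eqref{yw1Eqns}, so no additional blowup is needed there).

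The genuine gap is in the closing-up step, which you dispatch with ``transversality plus regular perturbation theory on each smooth segment of $\Gamma$.'' That only produces an orbit shadowing $\Gamma$; it does not show the orbit is closed. A symmetric orbit is periodic iff it meets $\textnormal{Fix}(\sigma)=\{x=z=0\}$ at a \emph{second} point, and after following $L$ and $\upsilon$ the forward flow of $(0,y_0(\mu),0)$ returns to $\{x=0\}$ near $(0,1,0)$ with $y\sim 1$ but with no a priori reason to have $z=0$ there. Moreover the objects $W^{cs}(\mu)$, $W^{cu}(\mu)$ whose intersection you are perturbing are nonunique center manifolds, so ``the bifurcating intersection'' is not even well defined globally until a copy is fixed. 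The paper resolves both issues at once: it \emph{defines} $W^{cs}(\mu)$ as the backward saturation of the strip $I=\{(0,y,0):y\sim 1\}\subset\textnormal{Fix}(\sigma)$ (and sets $W^{cu}(\mu)=\sigma W^{cs}(\mu)$, noting $\sigma I=I$). With this choice, any orbit of $W^{cs}(\mu)$ passes through $I$ in forward time, so for $\mu\succ k$ the symmetric intersection $\gamma^{sc}(\mu)$ hits $\textnormal{Fix}(\sigma)$ a second time at some $(0,y_1(\mu),0)$, $y_1(\mu)\rightarrow 1$, and is therefore periodic. Without this (or an equivalent shooting/return-map argument on $\textnormal{Fix}(\sigma)$), your construction stops short of a closed orbit. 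The rest of your outline, including the divergence along the $(\bar x,\bar z)=(0,\pm1)$ curves for $\mu\prec k$, matches the paper.
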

\begin{proof}
The manifolds $W^{cs}$ and $W^{cu}=\sigma W^{cs}$ are (again) nonunique. We select unique copies as follows: Consider the strip $I$ defined by $(0,y,0)$ with $y\sim 1$. Notice that $\sigma I=I$. We then select a unique copy of $W^{cs}(\mu)$ on the $y\ge -1$ side of $\gamma$ by flowing this strip backwards (where $W^{cs}(\mu)$ becomes attracting). Obviously, we let $W^{cu}(\mu)$ be the symmetrically related version of this fixed manifold.

Now, the transcritical bifurcation \eqref{transcritical2} produce a secondary intersection $\gamma^{sc}(\mu)$ of $W^{cs}(\mu)$ and $W^{cu}(\mu)=\sigma W^{cs}(\mu)$ for all $\mu \sim k$ so that $\gamma^{sc}(k)=\gamma$. In particular, we first note -- following \eqref{VFSspace} in \appref{FS} -- that $\gamma^{sc}(\mu)$ intersects $\Sigma$ along the $y$-axis. Let $(0,y_0(\mu),0)$ denote this intersection point where $y_0(\mu)\sim -1$. Consider $\mu \succ k$ so that $\alpha\succ 0$. Then by \eqref{transcritical2} we have
\begin{align}
 \text{sign}(y_0(\mu)+1) = \text{sign}(-1)^k.\eqlab{signy}
\end{align}
Consider now the solution \eqref{zpFShere}$_{n=2k}$ of the variational equations \eqref{varsFS}, repeated here for convenience
\begin{align}
% \begin{align}
 z = \begin{pmatrix}
      \frac{1}{4k(2k+1)}H_{2k+1}(t/\sqrt{2})\\
      \frac{1}{2\sqrt{2} k}H_{2k}(t/\sqrt{2})\\
      H_{2k-1}(t/\sqrt{2})
     \end{pmatrix},\eqlab{zpFShere2}
% \end{align*} 
\end{align}
with initial condition
\begin{align}
  \left(0,\frac{1}{2\sqrt{2} k} H_{2k}(0),0\right)^T.\eqlab{zpFShere20}
\end{align}
By \eqref{prop3} in \appref{Hermite}, we realise that the sign of the second component of \eqref{zpFShere2} coincides with the sign of \eqref{signy}. But then, since the second component of \eqref{zpFShere2} is positive for all sufficiently large $t$, we conclude that $\gamma^{sc}(\mu)$ for $\mu \succ k$ follows $L$ for $t$ large enough. 
% intersect transversally along the $y$-axis (corresponding to the $V$-space) in a at a value $y\ne -1$ for all $\alpha\ne 0$ sufficiently small. Consider $\alpha\prec 0$. Then the intersection point
% In particular, since the $v$-axis aligns with the $y$-axis, this intersection occurs -- by \eqref{transcritical2} -- at $y\succ -1$ if and only if
%  \begin{align*}
%  \begin{cases}
%   \alpha \prec 0 & k=\textnormal{odd},\\
%   \alpha \succ 0 &k=\textnormal{even}.
%  \end{cases}
%  \end{align*}
 %In this case, we can therefore guide, working from the diagram in \figref{FS}, $\gamma^{sc}(\mu)$ .
 Subsequently, by following $\upsilon$, see \figref{FS}, we realise that $\gamma^{sc}(\mu)$ returns to $x=0$. Since we have fixed the manifolds to intersect $x=0$ in the strip $I$, this intersection is of the form $(0,y_1(\mu),0)$ with $y_1(\mu)\rightarrow 1$ as $\mu\rightarrow k^+$. But then upon applying the time-reversible symmetry, we obtain a closed orbit. The periodic orbit intersects the fix-point set of $\sigma$, defined by $(0,y,0)$ twice, once near $y\sim -1$ at $(0,y_0(\mu),0)$ and once near $y\sim 1$ at $(0,y_1(\mu),0)$. 
 
 In the remaining cases ($\mu\prec k$ and the $n=\textnormal{odd}$), the `secondary intersection' diverge as either $t\rightarrow \pm \infty$ by following the green curves in \figref{FS} defined by $(\bar x,\bar z)=(0,\pm 1)$.
%obtain a closed orbit by applying the time-r
\end{proof}
\begin{remark}
 Notice that the periodic orbits appearing for $\mu\succ k$ will rotate (or twist) around $\gamma$, the number of rotations, as for the folded node, being determined by $k$. See \figref{FSpo} for examples, and the figure caption for further description. As for the folded node, and the twisting of the secondary canards around the weak one, these rotations around $\gamma$ can be explained by the solutions \eqref{zpFS} of variational equations, see \cite{wechselberger_existence_2005} for further details. %SHOW EXAMPLES!
 \end{remark}

% Working from the diagram in \figref{FS}, it is also possible to show that the pitchfork bifurcation does not produce any periodic orbits (they diverge to either $y\rightarrow \pm \infty$ following the green invariant curves in \figref{FS} ).
 
 \begin{figure}[h!]
\begin{center}
{\includegraphics[width=.795\textwidth]{./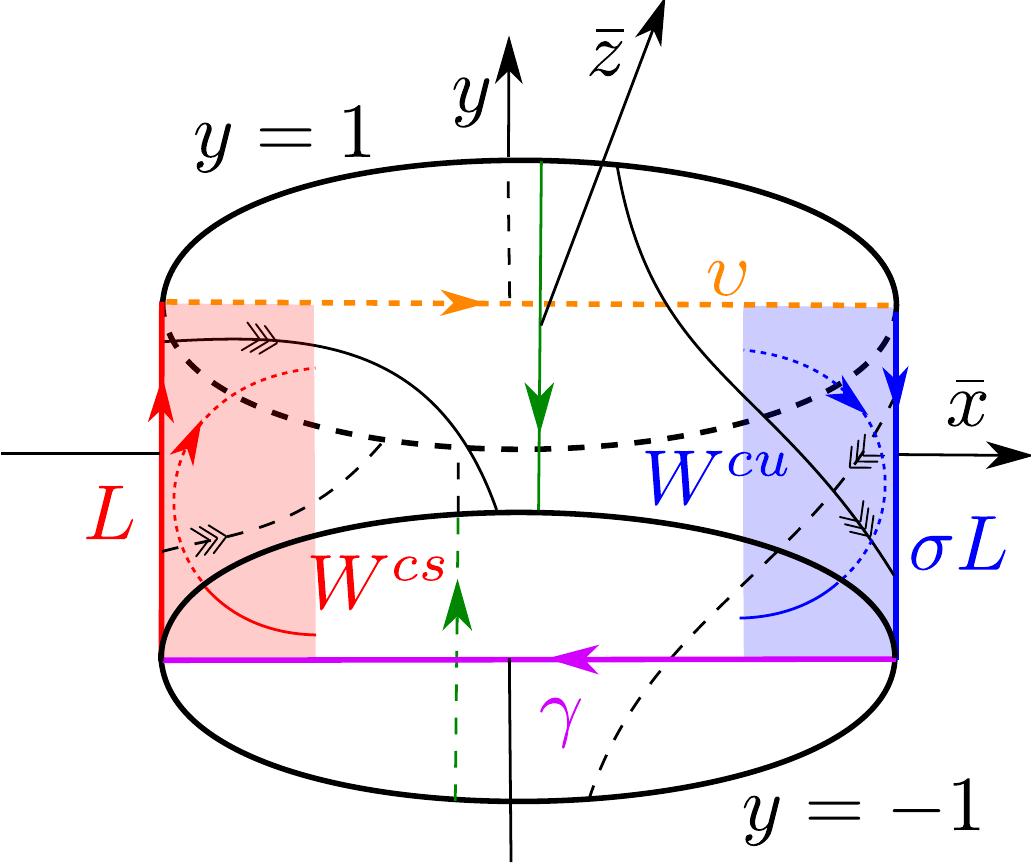}}
% \subfigure[]{\includegraphics[width=.495\textwidth]{layerReducedBlowup.pdf}}
% \input{posusts}
\end{center}
\caption{Illustration of the compactification of \eqref{falkerskan}. Our viewpoint is from the negative $y$-axis, seeing the disc at $y=-1$, containing $\gamma$ (purple), from below. Notice that the circles at $y=\pm 1$ are not invariant; they are just emphasized for illustrative purposes. We find invariant manifolds $W^{cs}(\mu)$ (red) and $W^{cu}(\mu)=\sigma W^{cs}(\mu)$ (blue) by application of center manifold theory to the partially hyperbolic lines $L$ and $\sigma L$. Together with $\gamma$ and $\upsilon$ (orange and dotted since it is on the disc at $y=1$), these lines produce to a (singular) cycle. We obtain a new proof of the bifurcation of periodic orbits by using our time-reversible version of the Melnikov theory to perturb away from this cycle. }
\figlab{FS}
\end{figure}

 \begin{figure}[h!]
\begin{center}
\subfigure[]{\includegraphics[width=.495\textwidth]{./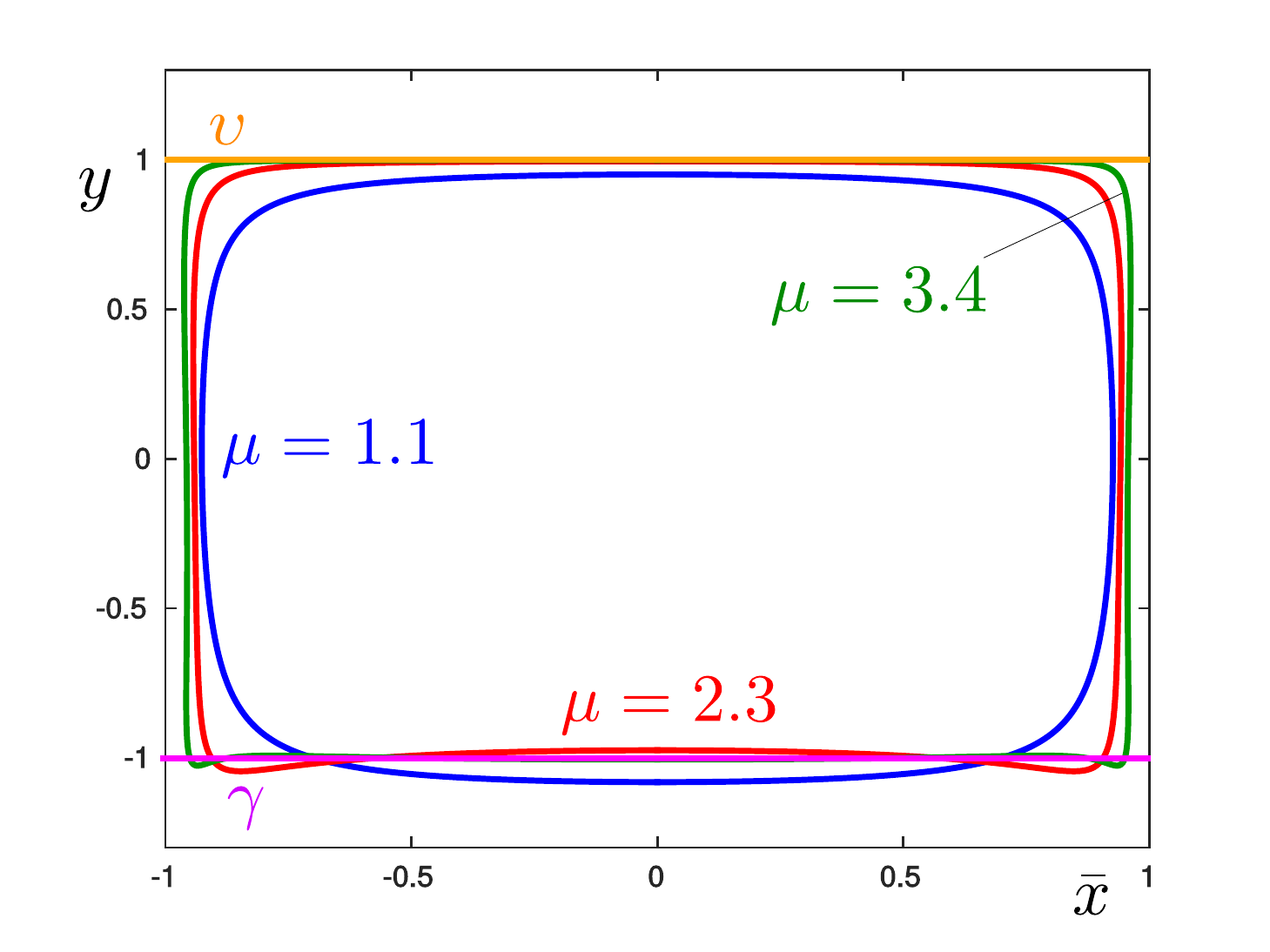}}
\subfigure[]{\includegraphics[width=.495\textwidth]{./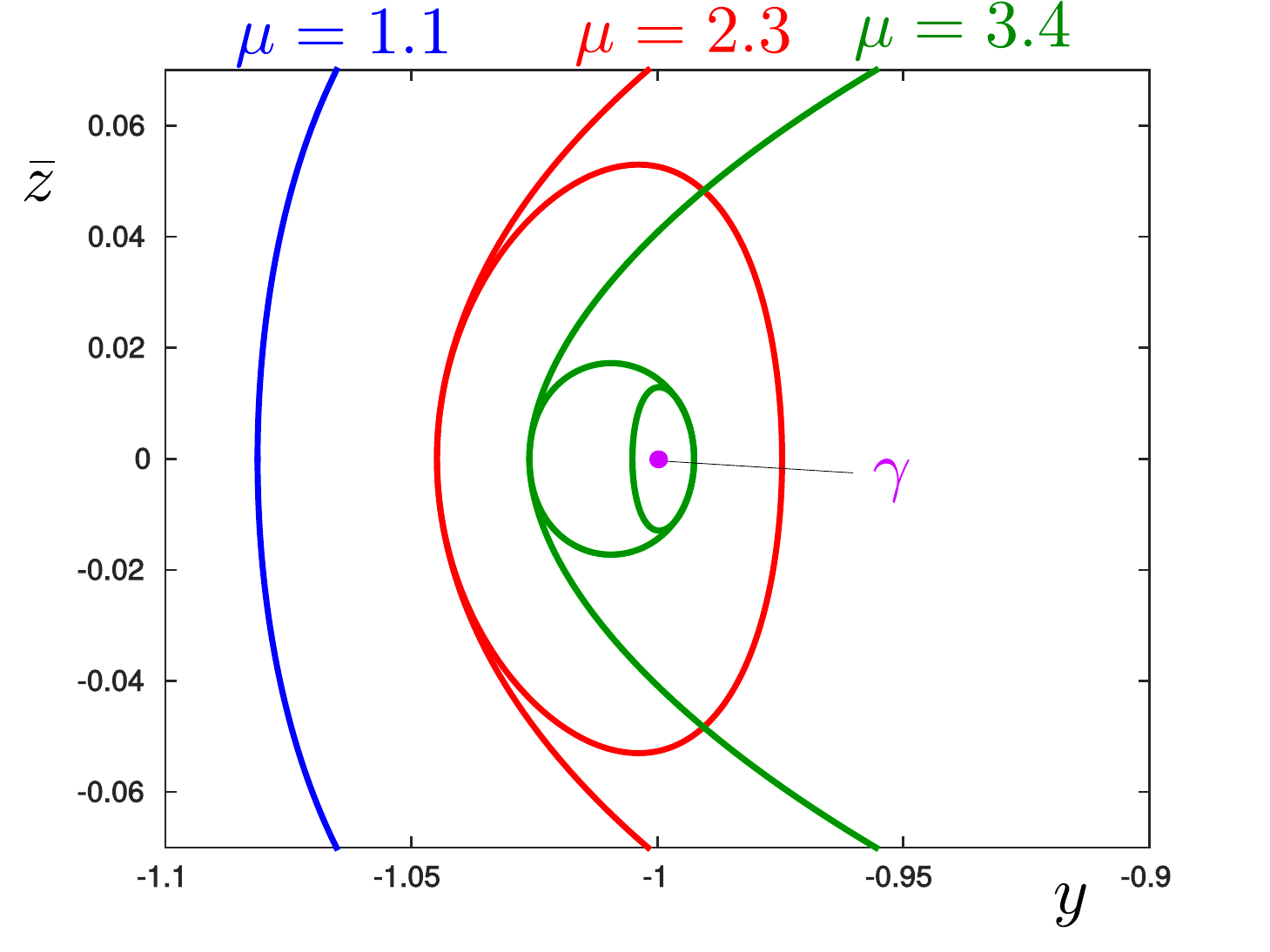}}
% \subfigure[]{\includegraphics[width=.495\textwidth]{layerReducedBlowup.pdf}}
% \input{posusts}
\end{center}
\caption{In (a): Three periodic orbits of the (compactified) Falkner-Skan equation, projected onto the $(\bar x,y)$-plane, for three different values of $\mu$ ($\mu=1.1$ in blue, $\mu=2.3$ in red, $\mu=3.4$ in green). These orbits are determined by appropriate backward integration from the set $I$, described in the proof of \thmref{pp}. In (b): The periodic orbits in (a) are now projected onto the $(y,\bar z)$-plane with a zoom near $\gamma$, appearing as a point $(-1,0)$ in this projection. Notice that the periodic orbits twist (one twist defined as one a $360^\circ$ complete rotation) around $\gamma$. For $\mu=1.1$ (blue) there is $1/2$ a twist, for $\mu=2.3$ (red) there are $3/2$ twists, and finally for $\mu=3.4$ (green) there are $5/2$ twists.  }
\figlab{FSpo}
\end{figure}
% Finally, we notice that the bifurcation described in \thmref{pp} `smells' a bit like a canard explosion. 
% Notice, in particular, that there is a whole family of co-existing singular cycles. 
\subsection{The Nos\'e equation: Bifurcations of unbounded periodic orbits}
% We now present the final example, 
% NEED TO DO THIS ANALYSIS AGAIN! DO WE REALLY HAVE FAMILY OF CYCLES?
% Another example, is 

% where our time-reversible version of the Melnikov theory in \cite{wechselberger2002a} can be applied to study bifurcations.
The system \eqref{nose} has two time-reversible symmetries given by
\begin{align*}
 \sigma^x:=\text{diag}\,(1,-1,-1)
\end{align*}
and 
\begin{align*}
 \sigma^y:=\text{diag}\,(-1,1,-1),
\end{align*}
as well as one symmetry given by $\sigma^z:(x,y,z)\mapsto (-1,-1,1)$, the superscripts $x$, $y$ and $z$ indicating the coordinates that are fixed by the transformation. Notice that $\sigma^z = \sigma^x\sigma^y$ and the group of symmetries of \eqref{nose} is therefore the group, consisting of $4$ elements, generated by $\sigma^x$ and $\sigma^z$. It is isomorphic to $\mathbb Z_2 \rtimes \mathbb Z_2$.

Furthermore, periodic solutions bifurcate from each integer value of $\mu^{-1} \in \mathbb N$, see \cite{swinnerton-dyer2008a}. For $\mu<1$, it is known that periodic solutions only bifurcate at these values. In \cite{swinnertondyer1995a} they also show that for $\mu>1$, (different) periodic solutions bifurcate for every $\mu\in \mathbb N$, but they do not prove whether periodic solutions bifurcate from other values of $\mu$, see remark following  \cite[Theorem 2]{swinnerton-dyer2008a}. In the following, we will prove this using our time-reversible version of the Melnikov theory:
\begin{theorem}\thmlab{Nose}
 For $\mu>1$ periodic solutions only bifurcate from `infinity' for $\mu\in \mathbb N$. In particular, periodic orbits only emerge for $\mu\prec n$ for each integer $n$.
\end{theorem}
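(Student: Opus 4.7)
The plan is to mimic the strategy of \secref{FSsec}: compactify \eqref{nose} so that a heteroclinic cycle $\Gamma$ appears at infinity, identify an algebraic heteroclinic $\gamma\subset \Gamma$ connecting a symmetrically related pair of partially hyperbolic points, rectify $\gamma$ to the $x_3$-axis, and then apply \thmref{recipe} together with the recipe in \remref{procedure} both to locate the bifurcations and to evaluate the necessary Melnikov integrals in closed form.

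First I would compactify two of the three coordinates. Since periodic solutions of \eqref{nose} bifurcating from `infinity' must have $y,z$ unbounded while $x$ oscillates near $\pm 1$, the natural choice is $y=\bar y/\bar w$, $z=\bar z/\bar w$ with $(\bar y,\bar z,\bar w)\in S^2$, leaving $x$ untouched, followed by a time rescaling by a factor of $\bar w$ to gain normal hyperbolicity at $\bar w=0$. In an appropriate directional chart at the equator $\bar w=0$, simple computation should produce a line $L$ of partially hyperbolic equilibria on one side and its image $\sigma L$ on the other, for a suitable involutive time-reversible symmetry $\sigma\in\{\sigma^x,\sigma^y,\sigma^z\}$. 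Center manifold theory then produces two-dimensional invariant manifolds $W^{cs}(\mu)$ and $W^{cu}(\mu)=\sigma W^{cs}(\mu)$, as in \figref{FS}.

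Second, I would locate an explicit algebraic heteroclinic $\gamma$ joining an endpoint of $\sigma L$ to one of $L$, of the same flavor as $\gamma=(-t,-1,0)$ in \eqref{falkerskan}. A short ansatz (constant $x_0$, polynomial $y_0(t), z_0(t)$) should yield a candidate fixed by the chosen $\sigma$. After rectifying $\gamma$ to the $x_3$-axis, the resulting vector field is still quadratic (since \eqref{nose} is quadratic), and a linear change of basis brings $\sigma$ to $\text{diag}(1,-1,-1)$; assumptions (H5)--(H7) of \secref{recipe} then hold and \thmref{recipe} applies. I would next extract the coefficient $\beta=-(a_2 b_1+1)$ of \eqref{fGeneral} as an explicit affine function of $\mu$ and verify that, in the regime $\mu>1$, one has $\beta\in\mathbb N_0$ precisely when $\mu\in\mathbb N$. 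By \thmref{recipe}(2), this step \emph{rules out} tangential intersections of $W^{cs}(\mu)$ and $W^{cu}(\mu)$ along $\gamma$ for any non-integer $\mu>1$, and hence rules out bifurcations of canards/periodic orbits at non-integer values, which is the main content of \thmref{Nose}.

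Third, to promote these tangential intersections into bifurcations of \emph{symmetric} closed orbits, I would repeat the argument of \thmref{pp} verbatim in this setting: fix unique copies of $W^{cs}(\mu)$ and $W^{cu}(\mu)$ by flowing a small strip contained in the fix-point set of $\sigma$ backwards (where the relevant manifold is attracting), apply the transcritical/pitchfork normal form provided by \thmref{recipe}(3), and then use the sign of the algebraic solution of the variational equations along $\gamma$ (specifically the sign of the analogue of $H_{2k}(0)$ in \eqref{zpFShere20}) to decide which side of each integer $\mu=n$ produces a bona fide closed orbit $\gamma^{sc}(\mu)$ following $\gamma$, $L$, $\upsilon$, $\sigma L$, as opposed to secondary intersections that diverge along noncompact directions on the equator. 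This one-sided sign calculation should yield the refined conclusion that symmetric periodic orbits only emerge for $\mu\prec n$, as stated.

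The hard part will be the first two steps: choosing the compactification and pinpointing the algebraic $\gamma$ with the correct coefficient $\beta$. The three involutive symmetries $\sigma^x$, $\sigma^y$, $\sigma^z$ of \eqref{nose} give three candidate fix-point sets, and only the one compatible with the chosen compactification will admit a symmetric algebraic heteroclinic; picking the wrong symmetry could produce a $\gamma$ that does not give rise to the right bifurcation sequence. A related subtlety is distinguishing from the $\mu<1$ regime in \cite{swinnerton-dyer2008a}, where bifurcations occur at $\mu^{-1}\in\mathbb N$: these should be visible only after a different compactification (or a different scaling chart) that selects a different $\gamma$ with a different $\beta$, so one must check that, in the compactification appropriate for $\mu>1$, no additional tangencies are hidden in the other charts needed to cover the full equator.
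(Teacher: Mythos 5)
There is a genuine gap in your second step. You propose to compute $\beta$ as an affine function of $\mu$ and to ``verify that, in the regime $\mu>1$, one has $\beta\in\mathbb N_0$ precisely when $\mu\in\mathbb N$,'' thereby ruling out all tangencies of $W^{cs}(\mu)$ and $W^{cu}(\mu)$ along $\gamma$ at non-integer $\mu$. This verification fails: for the Nos\'e equations (after the scaling by $\kappa=\sqrt{1-\mu^{-1}}$ and rectifying $\gamma=(1,t,-t)$) one finds $\beta=2(\mu-1)$, so $\beta\in\mathbb N_0$ exactly when $\mu\in\{1,\tfrac32,2,\tfrac52,3,\dots\}$, i.e.\ at \emph{every} half-integer $\mu\ge 1$, not only at integers. (The same phenomenon already occurs for the Falkner--Skan equation, where $\beta=2\mu-1$ and tangencies occur at all $\mu=n/2$.) Consequently the main burden of the theorem cannot be discharged by transversality alone: one must analyze the bifurcation at the non-integer tangencies $\mu=k+\tfrac12$ and show that it is a pitchfork whose two bifurcating branches are merely \emph{symmetrically related} (not symmetric), hence one is forward-asymptotic to $q_{\text{out}}$ and the other backward-asymptotic to $q_{\text{in}}$, and neither closes up into a periodic orbit. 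Your third step only addresses the one-sidedness at integer $\mu$, because your second step has (incorrectly) already dismissed the non-integer tangencies.

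Two smaller points. First, the singular cycle for the Nos\'e system is $\gamma$, $L$, $\sigma^z\gamma$, $\sigma^z L$ — it uses the second algebraic connection $\sigma^z\gamma=(-1,-t,-t)$ and the symmetry $\sigma^z$ to close up, and $\upsilon=(0,0,-t)$ is \emph{not} a segment of the cycle; your list ``$\gamma$, $L$, $\upsilon$, $\sigma L$'' is a carry-over from the Falkner--Skan picture that does not transfer. The interplay of the two time-reversible symmetries $\sigma^x$ (which fixes $\gamma$ and governs the Melnikov function) and $\sigma^z$ (which maps $\gamma$ to $\sigma^z\gamma$ and is needed to close the orbit) is an essential structural feature you would have to spell out. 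Second, a preliminary rescaling of $(x,y)$ by $\kappa$ is needed before the algebraic solutions take the clean form $(\pm1,\pm t,-t)$; without it the quadratic normal form of \thmref{recipe} is not immediately available.
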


% To describe the bifurcation of the periodic orbits, we proceed as above. 
We will prove this theorem in the remainder of this section. For this purpose, it will be convenient to scale \eqref{nose} in the following way: Let 
\begin{align*}
 \kappa:=\sqrt{1-\mu^{-1}},
\end{align*}
and define $\tilde x$ and $\tilde y$ by
\begin{align*}
 x&=\kappa \tilde x,\\
 y&=\kappa \tilde y.
\end{align*}
Then 
 \begin{equation}
\begin{aligned}
 \dot x &= -y-xz,\\
 \dot y &=x,\\
 \dot z &=-\mu +(\mu-1)x^2,
\end{aligned}\eqlab{NoseHere}
\end{equation}
upon dropping the tildes again. For this system, there exists three special solutions of \eqref{nose}
\begin{align*}
 \upsilon:\,(x,y,z)=(0,0,-t),
\end{align*}
as well as
\begin{align*}
 \gamma:&\,\,(x,y,z)=(1, t,-t),\\
 \sigma^z \gamma:&\,\, (x,y,z) =(-1,-t,-t).
\end{align*}
% and 
% \begin{align*}
%  \vartheta:\,(x,y,z),
% \end{align*}
% for all $\mu$?? 
We introduce the Poincar\'e sphere $(\bar x,\bar y,\bar z,\bar w)\in S^3$ by setting
\begin{align*}
 x &= \frac{\bar x}{\bar w},\\
 y&=\frac{\bar y}{\bar w},\\
 z&=\frac{\bar z}{\bar w}.
\end{align*}
By working in the charts defined by `$\bar z=-1$', $\bar y=\pm 1$, and applying the symmetries defined by $\sigma^x$, $\sigma^y$ and $\sigma^z$ we obtain the diagram in \figref{Nose}. Here we adopt the same visualization technique (by projection) used above: The outer sphere corresponds to the equator sphere defined by $\bar w=0$, whereas everything inside is $\bar w>0$. Notice, in particular, that the invariant manifolds $W_{loc}^{cs}(\mu)$ and $W_{loc}^{cu}(\mu)$ are obtained as local center manifolds in the charts `$\bar z=\mp 1$', respectively. The reduced flow on these local manifolds, can be desingularized along $\bar w=0$ to produce the dynamics indicated in the figure. The associated global manifolds $W^{cs}(\mu)$ and $W^{cu}(\mu)$ contain $\gamma$, $\sigma^z\gamma$ and $\upsilon$. These solution curves are shown in purple and orange. (Notice that these lines are actually not straight lines in the projection used in \figref{Nose}. The figure is therefore `artistic'.) We leave out the simple details. In particular, using thick lines, we visualize a (singular) heteroclinic cycle $\Gamma$. It consists of (a) $\gamma$, (b) a segment $L$ (through the fully nonhyperbolic point $q_+$ at $(\bar x,\bar y,\bar z,\bar w)=(0,-1,0,0)$) connecting `the end of' $\gamma$ with `the beginning of' $\sigma^z \gamma$, (c) $\sigma^z \gamma$ and finally $\sigma^z L$. The periodic orbits of \thmref{Nose} will appear as bifurcations from this cycle through bifurcations of intersections of $W^{cs}(\mu)$ and $W^{cu}(\mu)$. But notice the following: 

\begin{figure}[h!]
\begin{center}
{\includegraphics[width=.795\textwidth]{./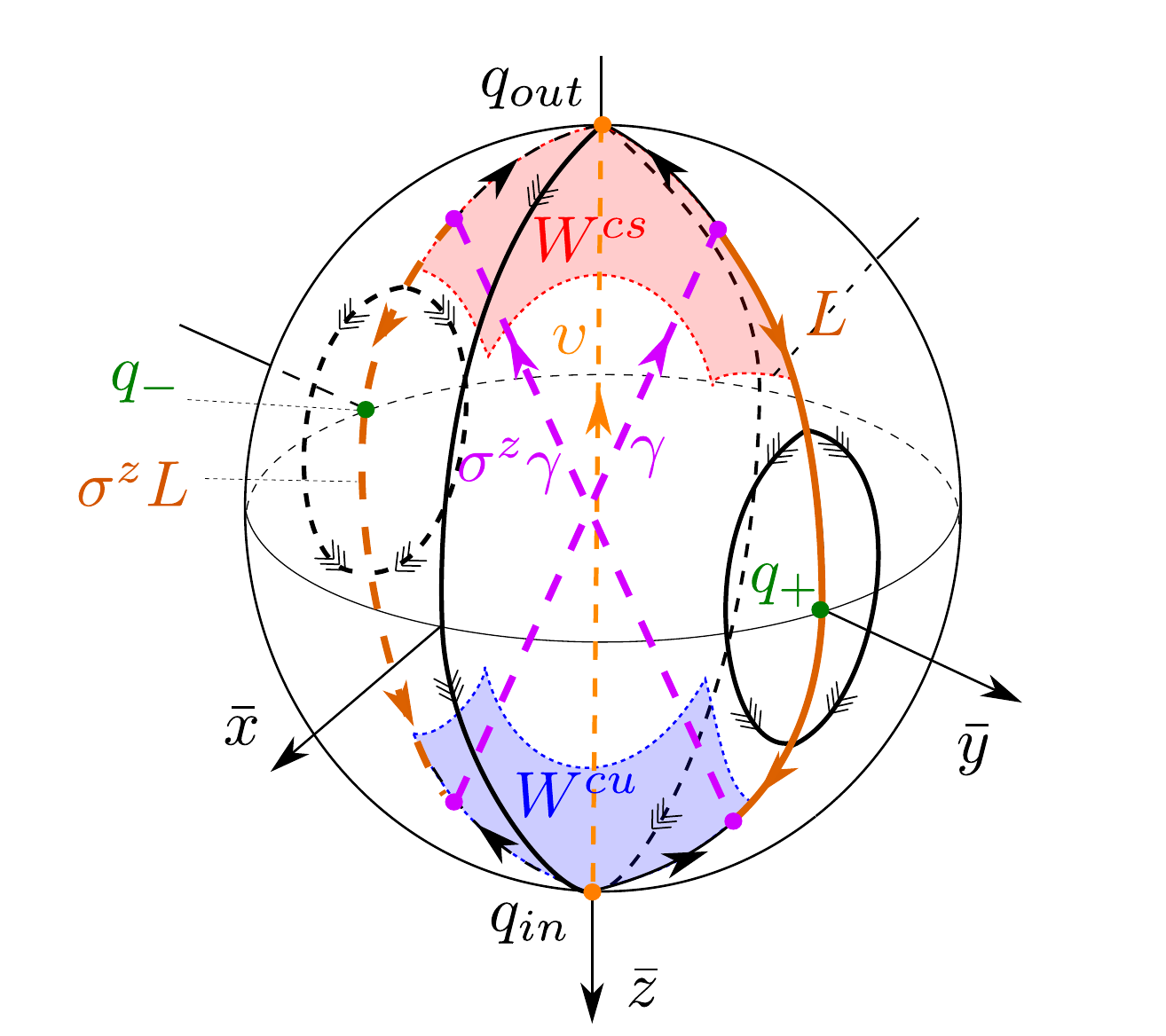}}
% \subfigure[]{\includegraphics[width=.495\textwidth]{layerReducedBlowup.pdf}}
% \input{posusts}
\end{center}
\caption{Poincar\'e compactification of the Nos\'e equations for $\mu>1$. There exists three special solutions $\nu$, $\gamma$ and $\sigma^z \gamma$, connecting partially hyperbolic points at infinity. In the figure, we indicate the invariant manifolds $W^{cs}(\mu)$ and $W^{cu}(\mu)$, obtained as center manifolds of these partially hyperbolic points. These points include the sets $L$ and $\sigma^z L$, shown in orange, which, together with the special solutions $\gamma$ and $\sigma^z \gamma$ make up a closed (singular) cycle. Our main result shows that periodic orbits bifurcate from these cycles for $\mu=k$ for every $k\ge 2$ integer only. }
\figlab{Nose}
\end{figure}

Firstly, `new' intersection of $W^{cs}(\mu)$ and $W^{cu}(\mu)$ may not produce periodic orbits. Similar to the bifurcation of secondary canards, these intersections may just converge to the points $q_{\text{in}}$: $(\bar x,\bar y,\bar z,\bar w)=(0,0,1,0)$,  $q_{\text{out}}$: $(\bar x,\bar y,\bar z,\bar w)=(0,0,-1,0)$ as $t\rightarrow \mp \infty$. Indeed, $q_{\text{in}}$ and $q_{\text{out}}$ are hyperbolic, being a source and a sink, for the desingularized slow flow on $W^{cs}_{loc}$ and $W^{cu}_{loc}$, respectively. Notice that these manifolds are unique on the corresponding side of $\gamma$ as stable and unstable sets of $q_{\text{out}}$ and $q_{\text{in}}$, respectively. Consequently, to produce periodic orbits, additional intersections of $W^{cu}(\mu)$ and $W^{cs}(\mu)$ have to be on the `nonunique side' of $\gamma$. %We will clarify this further in the following. 

Secondly, there are other (singular) heteroclinic cycles. For example: (a) $\gamma$, (b) a piece of $L$ before (c) jumping off on a `fast' connecting (shown in black near the green point $q_+$), (d) follow a separate piece on $L$, (e) $\sigma^z \gamma$, etc. However, these cycles do not produce periodic orbits since they are not symmetric. 
The cycle $\Gamma$ is the only symmetric cycle. 

To prove \thmref{Nose}, we first realise, upon rectifying $\gamma$ to the $x_3$-axis, setting $(x,y,z)=(1+x_1,x_3,-x_3+x_2)$, that the system \eqref{NoseHere} satisfies the assumptions of \thmref{recipe}. In particular,
\begin{align*}
 \beta=2(\mu-1).
\end{align*}
Consequently:
\begin{lemma}
 The manifolds $W^{cs}(\mu)$ and $W^{cu}(\mu)$ intersect transversally along $\gamma$ if and only if $2(\mu-1) \notin \mathbb N$.
\end{lemma}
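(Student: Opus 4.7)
The plan is to invoke \thmref{recipe}(2) directly: the preceding text has already established that the rectified form of \eqref{NoseHere} around $\gamma$ fits the framework (H5)--(H7) with $\beta=2(\mu-1)$. By \thmref{recipe}(2), (H4) is satisfied if and only if $\beta\in\mathbb N_0$; and by \lemmaref{eqvH4} item (\ref{3ii}), (H4) is precisely the statement that $W^{cs}(\mu)$ and $W^{cu}(\mu)$ intersect tangentially along $\gamma$. Hence transversality is equivalent to $\beta\notin\mathbb N_0$, and since we are in the regime $\mu>1$, so that $\beta>0$, this is the same as $2(\mu-1)\notin\mathbb N$, as claimed.

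For a more explicit derivation, one can repeat the Weber reduction from the proof of \thmref{recipe}. Reading off the coefficients of the rectified vector field gives $a_2=-1$, $b_1=2\mu-1$, and $c_1=1$ (with no $x_2x_3$ coupling) in the notation of \eqref{fGeneral}, so the variational matrix \eqref{AGen} along $\gamma$ specializes to
\begin{equation*}
A(t)=\begin{pmatrix} t & -1 & 0 \\ 2\mu-1 & 0 & 0 \\ 1 & 0 & 0 \end{pmatrix}.
\end{equation*}
Differentiating the first equation $\dot z_1=tz_1-z_2$ of $\dot z=A(t)z$ once in $t$ and substituting $\dot z_2=(2\mu-1)z_1$ from the second equation yields, after rearrangement, the Weber equation $L_{2(\mu-1)}z_1=0$ for the first component.

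By \lemmaref{LmHmHl}, this Weber equation admits the algebraic solution $H_{2(\mu-1)}(t/\sqrt 2)$ precisely when $2(\mu-1)\in\mathbb N_0$. Substituting back into the second and third rows of $\dot z=A(t)z$ and integrating produces a second linearly independent algebraic solution of the full variational equation alongside $\dot\gamma(t)=(0,0,1)^T$; conversely, when $\beta\notin\mathbb N_0$ no algebraic solution of $L_\beta z_1=0$ exists (see e.g.\ \cite{AbramowitzStegun1964}). By \lemmaref{eqvH4} item (\ref{3iv}), the existence of two independent algebraic solutions along $\gamma$ is equivalent to (H4) and therefore to tangential intersection, which completes the proof.

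The main obstacle is purely bookkeeping: confirming that the rectifying change of variables sketched in the text indeed normalizes $\sigma^x$ to $\textnormal{diag}(1,-1,-1)$ as required by (H6), and that no additional rescaling of $x_3$ or $t$ is needed (the coefficient of $x_1x_3$ in the first component of the rectified field comes out to $1$, so \thmref{recipe}(1) applies verbatim). Once these routine checks are in place, the argument runs in exact parallel to \lemmaref{FSlemma} for the Falkner-Skan equation.
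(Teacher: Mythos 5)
Your proposal is correct and follows essentially the same route as the paper: the paper's proof likewise cites \thmref{recipe} with $\beta=2(\mu-1)$ and then, "for completeness," reduces the variational equation along $\gamma$ to the Weber equation $L_{2(\mu-1)}z_1=0$ and concludes via \lemmaref{LmHmHl} and \lemmaref{eqvH4}. The only cosmetic difference is that you perform the reduction in the rectified $(x_1,x_2,x_3)$ coordinates (correctly reading off $a_2=-1$, $b_1=2\mu-1$, $c_1=1$), whereas the paper writes the variational equations directly in the original translated variables; both yield the same Weber operator.
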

\begin{proof}
Follows from \thmref{recipe}, but for completeness notice the following:
 The variational equations about $\gamma$ takes the following form:
 \begin{equation}\eqlab{NoseVar}
 \begin{aligned}
  \dot z_1 &= t z_1 - z_2-z_3,\\
  \dot z_2 &=z_1,\\
  \dot z_3 &=2(\mu-1) z_1,
 \end{aligned}
 \end{equation}
which upon eliminating $z_2$ and $z_3$, can be written as a Weber equation:
\begin{align}
L_{2(\mu-1)}z_1=0,\eqlab{NoseWeber}
%  \ddot z_1 = t\dot z_1 -2(\mu-1) z_1
\end{align}
recall \eqref{Lmexpr}. The result therefore follows from \lemmaref{LmHmHl} and \lemmaref{eqvH4}. Notice, in particular, that for $n:=2(\mu-1)\in \mathbb N$ 
% we have the following algebraic solution of \eqref{NoseWeber}
% \begin{align*}
%  z_1 = H_n(t/\sqrt{2}).
% \end{align*}
 we obtain, using \eqref{prop2} in \appref{Hermite}, the following algebraic solution of \eqref{NoseVar}
\begin{align}
 z = \begin{pmatrix}
      H_n(t/\sqrt{2})\\
      \frac{1}{\sqrt{2} (n+1)} H_{n+1}(t/\sqrt{2})\\
      \frac{n}{\sqrt{2} (n+1)} H_{n+1}(t/\sqrt{2})
     \end{pmatrix}.\eqlab{NoseWeber2}
\end{align}
\end{proof}
Completely analogously to \secref{FSsec} for the Falkner-Skan equation, we fix any $n\in \mathbb N$ and define $\alpha$ by 
\begin{align}
 \mu = \frac{n}{2}+1+\alpha,\eqlab{alphaNose}
\end{align}
and let $D(v,\alpha)$ denote the resulting Melnikov function, the roots of which correspond to intersections of $W^{cs}(\mu)$ and $W^{cu}(\mu)$ near $\gamma$. Following \thmref{recipe} we can therefore evaluate the appropriate Melnikov integrals in closed form by using the recipe in \remref{procedure}. In this way, we obtain the following result.
\begin{proposition}\proplab{NoseBif}
 Let $k\in \mathbb N$ be so that 
%   Let $k$ be so that
%  Consider any $n\in \mathbb N$ and let $k$ be so that 
\begin{align*}
 n = \begin{cases}
      2k-1 & n=\textnormal{odd}\\
      2k & n=\textnormal{even}
     \end{cases}.
     \end{align*}
Then 
\begin{enumerate}
 \item \label{nodd3} For $n=\textnormal{odd}$, $D(v,\alpha)=0$ is locally equivalent with a pitchfork bifurcation:
 \begin{align}
  \tilde v (\tilde \alpha +\tilde v^2) = 0.
 \end{align}
\item \label{neven3} For $n=\textnormal{even}$, $D(v,\alpha)=0$ is locally equivalent with the transcritical bifurcation:
\begin{align}
    \tilde v(\tilde \alpha+(-1)^{k}\tilde v)=0.\eqlab{transcriticalNose2}
 \end{align}
\end{enumerate}
In each case, the local conjugacy $\phi:(v,\alpha)\mapsto (\tilde v,\tilde \alpha)$ satisfies $\phi(0,0)=(0,0)$ and
\begin{align}
 D\phi(0,0) = \textnormal{diag}\,(d_1(n),d_2(n)) \quad \mbox{with $d_i(n)>0$ for every $n$.}\nonumber
\end{align}
\end{proposition}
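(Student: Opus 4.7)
The plan is to verify that the Nos\'e system \eqref{eq:NoseHere} falls into the framework of \thmref{recipe} and then apply its conclusions, following the recipe in \remref{procedure}. The rectification $(x,y,z) = (1+x_1,x_3,-x_3+x_2)$ brings $\gamma$ to the $x_3$-axis; a straightforward calculation shows (H5), (H6) with $\sigma = \sigma^x = \textnormal{diag}(1,-1,-1)$, and (H7) all hold, while the form \eqref{eq:fGeneral} can be read off with $a_2,b_1$ and hence $\beta = -(a_2 b_1 + 1) = 2(\mu-1)$ in agreement with the parameterization \eqref{eq:alphaNose}. Assumption (H3) follows from the local center-manifold analysis sketched around \figref{Nose} in the chart $\bar z = -1$. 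Thus for each $n \in \mathbb N_0$, $\alpha = 0$ puts us exactly at the bifurcation value covered by \thmref{recipe}(3), so the identification of $\sigma_v, \sigma_w$ and the odd/even dichotomy for the normal form is automatic once the relevant partial derivatives of $D$ are nonzero.

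Next I would construct the explicit data for the Melnikov integral. The algebraic solution of the variational equation along $\gamma$ is already recorded in \eqref{eq:NoseWeber2}; the adjoint solution $\psi_*$ takes the general form \eqref{eq:psiStarGen} specialized to the Nos\'e coefficients, i.e.\ a scalar multiple of $e^{-t^2/2}(H_{n+1}(t/\sqrt 2), a_2^{-1}\text{-scaled } H_n(t/\sqrt 2), 0)^T$. For $n$ even, $\partial^2 D/\partial v\partial \alpha(0,0)$ reduces, as in \eqref{eq:Dvalpha}, to a positive constant times $\int_0^\infty e^{-t^2/2} H_n(t/\sqrt 2)^2 dt$, which is strictly positive by \eqref{eq:HOrtho}. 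The quantity $\partial^2 D/\partial v^2(0,0)$ comes directly from the quadratic nonlinearity $g$ in \eqref{eq:NoseHere}; using \eqref{eq:productRuleHnm} together with \eqref{eq:prop2} to reduce the integrand to a finite sum of Hermite polynomials, exactly one term survives the orthogonality relation \eqref{eq:HOrtho}, and its sign should turn out to be $(-1)^k$ (this sign is what distinguishes the Nos\'e statement \eqref{eq:transcriticalNose2} from its Falkner-Skan analogue \eqref{eq:transcritical2}).

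For $n$ odd, item (\ref{Dodd}) of \propref{Dgeneral} (adapted to the Nos\'e data) forces $D(v,\alpha)$ to be odd in $v$, so $\partial^2 D/\partial v^2(0,0) = 0$ automatically and the pitchfork scenario requires the third-order coefficient. I would then follow steps (a)--(d) of \remref{procedure}: differentiate \eqref{eq:zg} twice in $v$ at $(0,0)$, reduce the resulting equation for $z_1''$ to an inhomogeneous Weber equation of the form \eqref{eq:z1pp}, expand the right-hand side via \eqref{eq:productRuleHnm} into a finite sum of Hermite polynomials of even degree (none of which is $H_n$ for $n$ odd, so \lemmaref{LmHmHl} applies), solve for $z_1''$ and integrate to obtain $z_2'', z_3''$. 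Substitution into $\partial^3 D/\partial v^3(0,0)$ produces a finite sum of triple Hermite integrals evaluated in closed form via \eqref{eq:intHnHmHl}.

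The main obstacle is the same as in \lemmaref{main1}: showing that the resulting finite sum is nonzero for every odd $n = 2k-1$. My expectation is that the sum splits, just as in the folded-node calculation, into a block of positive terms and a block of negative terms, and that a ratio estimate in the spirit of \eqref{eq:ckprop} establishes dominance of the positive block. The coefficients in the Nos\'e inhomogeneity are different from those in \eqref{eq:v1pp} (in particular the $z_2'$-vs-$z_3'$ balance is shifted by the factor $(\mu-1)$ appearing in the third component of \eqref{eq:NoseHere}), so the explicit constants $c_{kj}$ will differ; nevertheless the structural mechanism that produced the bound \eqref{eq:Dv3Bound} should carry over, yielding strict positivity of $\partial^3 D/\partial v^3(0,0)$. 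Combining these nonvanishing statements with singularity theory \cite{golubitsky1988a} (in the form already invoked at the end of the proof of \thmref{main1}) delivers both normal forms and the positivity of the conjugacy derivatives $d_1(n), d_2(n)$ in \propref{NoseBif}.
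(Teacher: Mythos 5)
Your proposal is correct and follows essentially the same route as the paper's proof in \appref{Nose}: rectify $\gamma$, verify (H5)--(H7) so that \thmref{recipe} applies with $\beta=2(\mu-1)$, read off $\sigma_v,\sigma_w$ from the state-transition matrix and $\psi_*$, and evaluate $\frac{\partial^2D}{\partial v\partial\alpha}$, $\frac{\partial^2D}{\partial v^2}$ (even $n$) and $\frac{\partial^3D}{\partial v^3}$ (odd $n$) in closed form via the Hermite identities of \appref{Hermite}, finishing with singularity theory. The one step you leave as an expectation -- that the odd-$n$ third-order sum splits into a positive and a negative block with the positive block dominating via a ratio estimate in the spirit of \eqref{ckprop} -- is precisely how the paper closes the argument (with explicit coefficients $c_{kj}$, $d_{kj}$ and the bound $\bigl|c_{k(k-1-l)}/c_{k(k+l)}\bigr|>1$), so no new idea is missing.
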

\begin{proof}
 See \appref{Nose}.
\end{proof}

To complete the proof of \thmref{Nose}, we first realise that when $n=2k-1=\textnormal{odd}$ -- in which case $\mu\notin \mathbb N$ -- then the roots of $D$ produce additional symmetrically related solutions, coexisting for $\mu \prec \frac{n}{2}+1=k+\frac12$. Working from the diagram in \figref{Nose}, we conclude, see also \propref{secondarcanards2k}, that one of these solutions is asymptotic to $q_{\text{out}}$ whereas the other one is backwards asymptotic to $q_{\text{in}}$. Hence no periodic bifurcate from $n=\textnormal{odd}$. On the other hand, for $n=2k=\textnormal{even}$, then the transcritical bifurcation produce for $\mu\sim k+1$ a `secondary' intersection $\gamma^{sc}(\mu)$, with $\gamma^{sc}(k)=\gamma$, of $W^{cs}(\mu)$ and $W^{cu}(\mu)$. Since each $\gamma^{sc}(\mu)$ is symmetric, it will be asymptotic to $q_{\text{out}}$ and $q_{\text{in}}$ for $t\rightarrow \pm \infty$ on one side of $\alpha =0$. On the other side, however, it will follow $L$, recall \figref{Nose} for $t$ large enough. To distinguish the two cases, we proceed as in the proofs of \propref{gammasctranscrit} and \thmref{pp}. In particular, we first note -- following \eqref{VNosespace} in \appref{Nose} -- that $\gamma^{sc}(\mu)$ intersects $\Sigma$ along the $x$-axis. Denote the intersection point by $(x(\mu),0,0)$ and suppose that $\mu\prec k+1$. Then $\alpha \prec 0$ and by \eqref{transcriticalNose2}, 
\begin{align}
 \text{sign}\,(x(\mu)-1) = \text{sign}\,(-1)^{k}.\eqlab{signXmu}
\end{align}
Consider the solution \eqref{NoseWeber2} of \eqref{NoseVar}, repeated here for convenience
\begin{align}
 z = \begin{pmatrix}
      H_{2k}(t/\sqrt{2})\\
      \frac{1}{\sqrt{2} (2k+1)} H_{2k+1}(t/\sqrt{2})\\
      \frac{n}{\sqrt{2} (2k+1)} H_{2k+1}(t/\sqrt{2})
     \end{pmatrix},\eqlab{NoseWeber3}
\end{align}
with initial condition 
\begin{align}
 (H_{2k}(0),0,0)^T.\eqlab{NoseWeber30}
\end{align}
By \eqref{prop3} in \appref{Hermite}, we realise that the sign of the first component in \eqref{NoseWeber30} coincides with the sign of \eqref{signXmu}. But then, since the second component of \eqref{NoseWeber3} is positive for all $t$ sufficiently large, we conclude that $\gamma^{sc}(\mu)$ for $\mu\prec k+1$ follows $L$ for $t$ large enough. To construct periodic orbits, we fix $W^{cs}(\mu)$ by flowing the points near $L$, of the form $(0,y,0)$ for $y$ large enough,  backwards. This fixes a copy of $W^{cs}(\mu)$. In this way, $\gamma^{sc}(\mu)$ intersects $z=0$ for the first time in forward time in a point $(0,y(\mu),0)$, where $y(\mu)\rightarrow \infty$ as $\mu\rightarrow k+1^-$. Since $\gamma^{sc}(\mu)$ is symmetric with respect to the time-reversible symmetry $\sigma^x$, the first intersection in backwards time is at the point $(0,-y(\mu),0)$. But then upon applying the symmetry $\sigma^z$, we obtain a closed orbit that approaches the singular heteroclinic cycle $\Gamma$ as $\mu\rightarrow k+1^-$. The periodic orbit intersects $z=0$ four times: Once near $\gamma \cap \Sigma$ at $(x,y,z)=(x(\mu),0,0)$, once at  $(0,y(\mu),0)$, then near $\sigma^z\gamma\cap \Sigma$ at $(x,y,z)=(-x(\mu),0,0)$, and then finally at $(0,-y(\mu),0)$. 
% Consider \eqref{NosePhiEven} and multiply the first column by $H_{n}(0)$. This produces the following solution
%  \begin{align*}
%  z= \begin{pmatrix}
%               {H_{n}(t/\sqrt{2})}\\
%              \frac{H_{n+1}(t/\sqrt{2})}{\sqrt{2}(n+1)} \\
%              \frac{nH_{n+1}(t/\sqrt{2})}{\sqrt{2}(n+1)}
%              \end{pmatrix}
%              \end{align*}
%              of the variational equations \eqref{varNose}.
% consider \eqref{zpNoseEven}. 

Periodic orbits therefore only bifurcate from infinity for $\mu>1$ when $\mu\in \mathbb N$, appearing for $\mu\prec n$ for each integer $n\ge 2$. 

\begin{remark}
 For $\mu<1$, $\kappa \in i \mathbb R$ and therefore only $\upsilon$ exists. In fact, $\upsilon$ bifurcates in a pitchfork-like bifurcation at $\mu=1$ in such a way that $q_{\text{out}}$ becomes a saddle for the reduced flow on $W^{cs}_{loc}$. By following \thmref{recipe}, and reducing the variational equations of \eqref{nose} along $\upsilon$ to a Weber equation, it is again straightforward to show that $W^{cs}(\mu)$ and $W^{cu}(\mu)$ intersect transversally along $\upsilon$ if and only if $\mu^{-1}\notin \mathbb N$.   % Also, from the symmetry of the problem, the Melnikov function $D(v,\alpha)$, satisfying $D(0,\alpha)=0$ for all $\alpha$, is odd in $v$ and we therefore expect pitchfork bifurcations. The two additional transverse intersections, produced by this bifurcation, give rise to ??%two symmetric (with respect to either $\sigma^x$ or $\sigma^y$) periodic orbits. Given its similarity with the problems above, we shall not include further details.  
%  \begin{figure}[h!]
% \begin{center}
% {\includegraphics[width=.795\textwidth]{./compactFS.pdf}}
% % \subfigure[]{\includegraphics[width=.495\textwidth]{layerReducedBlowup.pdf}}
% % \input{posusts}
% \end{center}
% \caption{Nos\'e for $\mu<1$.}
% \figlab{Nose2}
% \end{figure}
\end{remark}
\section{Conclusion}\seclab{conclusion}
In this paper, we have applied a time-reversible version of the Melnikov theory for nonhyperbolic unbounded connection problems in \cite{wechselberger2002a} to the bifurcations of canards in the folded node normal form. In particular, we proved -- for the first time -- the existence of a pitchfork bifurcation for $\mu=\text{even}$. Our time-reversible setting also allowed for a new description of the `secondary canards' emerging from the bifurcations at $\mu\in \mathbb N$, see \secref{blowup}. The connection to the Weber equation as well as properties of the Hermite polynomials were essential to our proof of \thmref{main1}. But the results in \secref{recipe}, specifically see \thmref{recipe} and \remref{procedure}, highlight that the Weber equation is `synonymous' with quadratic, time-reversible systems satisfying (H4) and (H2) with the unbounded symmetric orbit $\gamma$ linear in $t$ and independent of $\alpha$. This is also expected, as noted by \cite{swinnerton-dyer2008a}, since the Weber equation is the `simplest' non-autonomous equation with a non-trivial time-reversible symmetry. 
I believe that it is possible to obtain closed-form expressions for the Melnikov integrals in \cite{mitry2017a} related to the bifurcations of faux canards for the folded saddle singularity. Although these problems do not fit our general setting in \secref{recipe}, the Weber equation also appears naturally for these problems.

% For the systems in 
% The general setting in \secref{recipe}, see also \thmref{recipe}, showed that our approach, summarised in \remref{procedure}, works for a general class of systems. 

In \secref{more}, we also applied our approach to the Falkner-Skan equation and the Nos\'e equations. In particular, we provided a new proof of the emergence of periodic orbits, bifurcating from heteroclinic cycles at infinity, in these systems using more standard methods of dynamical systems theory. In particular, we showed that for the Nos\'e equations periodic orbits only bifurcate from $\mu\in \mathbb N$, a result that had escaped \cite{swinnerton-dyer2008a}. In future work, it would be natural to use the geometric framework provided by this theory to study the emergence of chaos in these two systems. 

% The problems considered in this paper are all unbounded connection problems of (upon compactification) nonhyperbolic points. Also, every variational equation along these heteroclinics could -- effectively -- be reduced to the Weber equation. In terms of the computations, this reduction was crucial. In particular, in our approach to the problem we relied heavily on the properties of the Hermite polynomials. However, it is no surprise that the Weber equation and the Hermite polynomials appear in these problems, since, 

\section*{Acknowledgement}
I would like to thank Martin Wechselberger for his encouragement and for providing valuable feedback on an earlier version of this manuscript. 
% and in \cite{wechselberger_existence_2005} it is shown that for every $\mu\in \mathbb N$ the 
\appendix
\section{Properties on the Hermite polynomials}\applab{Hermite}
% The Hermitte polynomials are defined by...

% In this manuscript, we will use the following identities:
The following properties of the ``physicist'' Hermite polynomials:
\begin{align*}
 H_n(x) = \left(2x-\frac{d}{dx}\right)^n \cdot 1,
\end{align*}
is standard, see e.g. \cite{trifonov2011a}. 
\begin{lemma}
For every $n\in \mathbb N$
 \begin{align}
  H_{n+1}(t) &= 2s H_{n}(s)-H_{n}'(s),\eqlab{prop1}\\
  H_{n}'(s) &=2n H_{n-1}(s),\eqlab{prop2}\\
  H_n(0) &=\left\{\begin{array}{cc}
                  0 & \text{$n=$ odd}\\
                  (-2)^{n/2} (n-1)!!& \text{$n=$ even}
                  \end{array}\eqlab{prop3}
\right.
 \end{align}
 and
 \begin{align}
  \int_{-\infty}^\infty e^{-t^2/2} H_{n}(t/\sqrt{2})H_{m}(t/\sqrt{2}) =                                                                               \sqrt{2\pi} 2^n n!\delta_{nm},\eqlab{HOrtho}
 \end{align}
where $\delta_{nm}$ is the Kronecker delta.

Furthermore, for every $n$, $m\in \mathbb N$:
\begin{align}
 H_n(s) H_m(s) = \sum_{j=0}^{\text{min}(n,m)} \begin{pmatrix}
                                               m\\
                                               j
                                              \end{pmatrix}\begin{pmatrix}
                                               n\\
                                               j
                                              \end{pmatrix}2^j j! H_{n+m-2j}(s),\eqlab{productRuleHnm}
%                                               \int_{-\infty}^\infty e^{-t^2/2} H_n(t/\sqrt{2}) H_m(t/\sqrt{2}) dt = \sqrt{2\pi} 2^{n} n!\delta_{nm}.
\end{align}
Finally, for every $(n,m,l)\in \mathbb N^3$ that satisfies the triangle property and for which $s=(n+m+l)/2\in \mathbb N$:
\begin{align}
 \int_{-\infty}^\infty e^{-t^2/2} H_n(t/\sqrt{2}) H_m(t/\sqrt{2})H_l(t/\sqrt{2}) dt = \sqrt{2\pi} 2^{s} \frac{n!m!l!}{(s-n)!(s-m)!(s-l)!}.\eqlab{intHnHmHl}
\end{align}
If $(n,m,l)$ does not satisfy the triangle inequality or if $s\notin \mathbb N$, then the integral in \eqref{intHnHmHl} is $0$.
\end{lemma}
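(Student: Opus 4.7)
The strategy is to reduce every claim to the generating function
\[
G(x,t) := \sum_{n\ge 0} H_n(x) \frac{t^n}{n!} = e^{2xt - t^2},
\]
which follows from the stated operator definition $H_n = (2x - d/dx)^n \cdot 1$ by noting that $e^{-t\,d/dx}\,1 = 1$ and $e^{2xt}$ commutes with translation in $x$, so a Baker--Campbell--Hausdorff computation for the two operators $2xt$ and $-t\,d/dx$ (whose commutator is the scalar $2t^2$) gives $e^{t(2x - d/dx)} = e^{-t^2}\,e^{2xt}\,e^{-t\,d/dx}$, and evaluating on $1$ yields $G(x,t)$. Once $G$ is in hand, \eqref{eq:prop1} and \eqref{eq:prop2} drop out from $\partial_t G = (2x - 2t)G$ and $\partial_x G = 2tG$, respectively, by comparing coefficients of $t^n/n!$. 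The values in \eqref{eq:prop3} come from $G(0,t) = e^{-t^2} = \sum_{k\ge 0}(-1)^k t^{2k}/k!$, which gives $H_{2k+1}(0) = 0$ and $H_{2k}(0) = (-1)^k(2k)!/k! = (-2)^k(2k-1)!!$ after rewriting $(2k)!/k! = 2^k(2k-1)!!$.

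For the orthogonality \eqref{eq:HOrtho}, I would compute $\int_{-\infty}^\infty e^{-x^2} G(x,s)G(x,t)\,dx$ in two ways: directly, by completing the square, the integral equals $\sqrt{\pi}\,e^{2st}$; and termwise, it equals $\sum_{n,m} \frac{s^n t^m}{n!m!}\int_{-\infty}^\infty e^{-x^2} H_n(x)H_m(x)\,dx$. Comparing coefficients of $s^nt^m$ gives the standard $\int e^{-x^2}H_n H_m = \sqrt\pi\, 2^n n!\,\delta_{nm}$, and the change of variables $x = t/\sqrt 2$ produces the $\sqrt{2\pi}$ prefactor in \eqref{eq:HOrtho}. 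For the linearization \eqref{eq:productRuleHnm}, I would use the factorization
\[
G(x,s)G(x,t) = e^{2x(s+t) - (s+t)^2}\,e^{2st} = G(x,s+t)\,e^{2st},
\]
then expand the right-hand side in $s,t$: $G(x,s+t) = \sum_k H_k(x)(s+t)^k/k!$, multiplied by $\sum_j (2st)^j/j!$, and read off the coefficient of $s^n t^m$, which after a short binomial rearrangement yields exactly the stated sum with upper index $\min(n,m)$.

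The triple integral \eqref{eq:intHnHmHl} then follows by substituting the product rule \eqref{eq:productRuleHnm} into $\int e^{-t^2/2} H_n(t/\sqrt 2)H_m(t/\sqrt 2) \cdot H_l(t/\sqrt 2)\,dt$ and applying \eqref{eq:HOrtho} term by term: only the $j$ satisfying $n+m-2j = l$ contributes, i.e.\ $j = s - l$ where $s = (n+m+l)/2$; this $j$ is a nonnegative integer at most $\min(n,m)$ exactly when $s\in \mathbb N$ and the triangle inequality holds, and otherwise the integral vanishes. The surviving term is $\binom{m}{s-l}\binom{n}{s-l}2^{s-l}(s-l)!\cdot \sqrt{2\pi}\,2^l l!$, and rewriting the binomials via $\binom{n}{s-l} = \frac{n!}{(s-l)!(s-m)!}$ and $\binom{m}{s-l} = \frac{m!}{(s-l)!(s-n)!}$ (valid because $n-(s-l) = s-m$ and $m-(s-l) = s-n$) collapses to the symmetric expression on the right of \eqref{eq:intHnHmHl}.

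The only mildly delicate step is the last simplification, where one must track the identifications $n-(s-l)=s-m$ and $m-(s-l)=s-n$ to make the symmetry in $(n,m,l)$ manifest; everything else is a routine generating-function bookkeeping exercise, and the substitution $x\mapsto t/\sqrt 2$ accounts uniformly for the Gaussian weight $e^{-t^2/2}$ in place of $e^{-x^2}$.
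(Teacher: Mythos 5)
Your proposal is correct, and all the computations check out: the generating function $G(x,t)=e^{2xt-t^2}$ does follow from the operator definition via the central-commutator form of Baker--Campbell--Hausdorff; the recurrences, the values at $0$, the orthogonality, the linearization formula, and the triple integral all follow exactly as you describe, and the final bookkeeping $n-(s-l)=s-m$, $m-(s-l)=s-n$ does collapse the surviving term to the symmetric expression $\sqrt{2\pi}\,2^{s}\,n!m!l!/((s-n)!(s-m)!(s-l)!)$. The comparison with the paper is short: the paper offers no proof at all, declaring these facts standard and citing a reference, so your self-contained generating-function derivation is strictly more than what the manuscript provides; in particular your observation that the whole lemma reduces to the single identity $G(x,s)G(x,t)=G(x,s+t)e^{2st}$ (orthogonality from integrating it against $e^{-x^2}$, the product rule from reading off coefficients, the triple integral by combining the two) is an efficient way to package the material. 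One small wording slip: multiplication by $e^{2xt}$ does \emph{not} commute with the translation operator $e^{-t\,d/dx}$ --- their failure to commute is precisely what produces the factor $e^{-t^2}$ --- but since the BCH identity you actually write down, $e^{t(2x-d/dx)}=e^{-t^2}e^{2xt}e^{-t\,d/dx}$, is correct and you only need $e^{-t\,d/dx}1=1$ when evaluating on the constant function, this does not affect the argument. You should also state explicitly that the vanishing claim at the end follows because, when $s\notin\mathbb N$ or the triangle inequality fails, no index $j$ in the linearization sum satisfies $n+m-2j=l$ with $0\le j\le\min(n,m)$, so every term dies by orthogonality; you clearly know this, but it deserves a sentence in the written version.
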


\section{Comparison with \cite{mitry2017a}}\applab{compare}
In \cite[App. A, p. 595]{mitry2017a} the third order Melnikov integral \eqref{Dv3Final} is evaluated numerical for $k=1,\ldots,10$. However, we cannot compare the results directly since this reference considers $\mu\in (0,1)$; in this case $\upsilon$ is the weak canard while $\gamma$ is the strong one. Nevertheless, these two cases are obviously equivalent and we can map the $\mu \in(0,1)$ system into the $\mu>1$ system, considered in the present paper, through the following transformation:
\begin{align*}
% \begin{pmatrix}
( x,
 y,
 z,
 t,
 \mu)\mapsto \begin{cases}
\tilde x= \frac{1}{\mu}x,\\
\tilde y= \sqrt{\mu} y,\\
\tilde z=\frac{1}{\sqrt{\mu}} z,\\
\tilde t=\sqrt{\mu}t,\\
\tilde \mu =\mu^{-1},
\end{cases}
\end{align*}
upon dropping the tildes. Specifically, this transformation maps $\gamma$ and $\upsilon$ for $\mu\in (0,1)$ into $\upsilon$ and $\gamma$ for $\mu^{-1}>1$, respectively. But \cite{mitry2017a} also rectifies $\gamma$ (which is $\upsilon$ for $\mu \in (0,1)$) in a slightly different way. A simple computation shows that $(\tilde x,\tilde y,\tilde z)$ in \cite[Eq. (15)]{mitry2017a} is related to $(x_1,x_2,x_3)$ in \eqref{rectify} as follows:
\begin{equation}\eqlab{ddd}
\begin{aligned}
 \tilde x &=\frac{1}{\mu} x_1,\\
 \tilde y &=\sqrt{\mu} (x_2+x_3),\\
 \tilde z &=-\frac{1}{2\sqrt{\mu}}x_2,
\end{aligned}
\end{equation}
upon also replacing $\mu$ by $\mu^{-1}$. Let $D_{MW}(v,\mu^{-1})$ be the Melnikov function in \cite[Proposition 29]{mitry2017a} for $\rho=v$ and $r=0$. Then from \eqref{ddd} and \cite[Eq. (89)]{mitry2017a} it follows that:
\begin{align*}
 D(v,\alpha) = (n+\alpha)D_{MW}\left(-\frac{1}{2\sqrt{n+\alpha}} v,\frac{1}{n+\alpha}\right),
\end{align*}
where $D$ is the Melnikov function used in the present paper. 
Hence, 
\begin{align}
\frac{\partial^3 D}{\partial v^3}(0,0) &= -\frac{1}{8\sqrt{n}}\frac{\partial^3 D_{MW}}{\partial v^3}(0,0).\eqlab{finalEquations}
\end{align}
In \tabref{tbl1}, we compare each side of this equation, using our analytical expression \eqref{Dv3Final} on the left hand side, whereas on the right hand side we use the numerical values in the table on \cite[p. 595]{mitry2017a}. See further explanation in the table caption. We conclude that the results are in agreement (and attribute the tiny differences, indicated in red, to round off errors).  
   \begin{table}[h]
    \renewcommand\arraystretch{2}
% \resizebox{\textwidth}{!}{
\begin{tabular}{|c|c|c|c|}%{|l | *{5}{>{\centering}p{2cm}|}c|}
%{|m{4cm}|m{4cm}|m{4cm}|m{4cm}|m{4cm}|m{4cm}}
\hline
            $n=2k$ & $\frac{\partial^3 D_{MW}}{\partial v^3}(0,0)$ & $-\frac{1}{8\sqrt{n}}\frac{\partial^3 D_{MW}}{\partial v^3}(0,0)$ & $\frac{\partial^3 D}{\partial v^3}(0,0)$ \\
    \hline
     $2$ & $-4.0837336724863...\times 10^3$ & $360.9544714...$ & $360.9544714...$\\
     \hline
     $4$ & $-9.1263550336787...\times 10^5 $ & $57039.71895...$ & $57039.7189$\textcolor{red}{$6$}$...$\\
     \hline
     $6$ & $-1.2403985652051...\times 10^8 $ & $6.329882421...\times 10^6$ & $6.32988242$\textcolor{red}{$0$}$...\times 10^6$\\
     \hline
     $8$ & $-1.3867566218372... \times 10^{10}$ & $6.128656321... \times 10^8$ & $6.12865632$\textcolor{red}{$18$}$... \times 10^8$\\
     \hline
     $10$ & $-1.3996176586682...\times 10^{12}$ & $5.532474570...\times 10^{10}$ & $5.5324745$\textcolor{red}{$68$}$...\times 10^{10}$\\
     \hline
     $12$ & $-1.3282386742790... \times 10^{14}$ & $4.792868474...\times 10^{12}$ & $4.792868474...\times 10^{12}$\\
     \hline
        $14$ & $-1.2108610331032... \times 10^{16}$ & $4.045202792...\times 10^{14}$ & $4.045202792...\times 10^{14}$\\
     \hline
     $16$ & $-1.0738223745005... \times 10^{18}$ & $3.355694922...\times 10^{16}$ & $3.35569492$\textcolor{red}{$0$}$...\times 10^{16}$\\
     \hline
     $18$ & $-9.3381989535112... \times 10^{19}$ & $2.751293251...\times 10^{18}$ & $2.751293251...\times 10^{18}$\\
     \hline
     $20$ & $-8.0059501510523 ... \times 10^{21}$ & $2.237731095...\times 10^{20}$ & $2.237731095...\times 10^{20}$\\
     \hline
% L. blowup & & & &
%     & \multicolumn{2}{c|}{High information-density} 
%         & \multicolumn{}{c|}{Low information-density} 
%             &   Total                                       \\
%     \hline
% 1\textsuperscript{st} party cookies
%     & 29    &   17  &   27  &   20  &   93                  \\
%     \hline  
\end{tabular}
\caption{Comparison of our closed-form Melnikov integral \eqref{Dv3Final} with the values in \cite{mitry2017a}. The first and second column show all of the even values considered in \cite{mitry2017a}. The third column shows the values of the right hand side of \eqref{finalEquations}, using the values in the first two columns, whereas the final column uses the expression in \eqref{Dv3Final}. In red we indicate the slight deviations between the last two columns. We attribute these tiny differences to round off errors.  }
\tablab{tbl1}
    \end{table}

\section{The Falkner-Skan equation: Proof of \propref{FS}}\applab{FS}
Let $\tilde z$ be defined as $(x,y,z)=\gamma(t)+\tilde z$. Then we have 
\begin{align}
 \dot z_1 &= z_2,\nonumber\\
 \dot z_2 &=z_3,\nonumber\\
 \dot z_3 &=tz_3-n z_2+g(z,\alpha),\nonumber
\end{align}
where
\begin{align*}
 g(z,\alpha):= -2\alpha z_2-z_1z_3+\left(\frac{n}{2}+\alpha\right)z_2^2,
\end{align*}
upon dropping the tildes. Then by \eqref{zpFShere}, we obtain the following regarding the state transition matrix:
\begin{align}
 \Phi(t,0) &= \begin{pmatrix}
             1 & * & \frac{1}{2n(n+1)H_{n-1}(0)}\left(H_{n+1}(t/\sqrt{2})-H_{n+1}(0)\right)\\
              0 & *&             \frac{1}{\sqrt{2} n H_{n-1}(0)}H_n(t/\sqrt{2})\\
              0 & *&             \frac{1}{H_{n-1}(0)}H_{n-1}(t/\sqrt{2})
             \end{pmatrix},\quad n=\textnormal{odd},\nonumber\\
             \Phi(t,0)& = \begin{pmatrix}
              1 & \frac{1}{\sqrt{2}(n+1)H_n(0)}H_{n+1}(t/\sqrt{2}) &*\\
             0 & \frac{1}{H_n(0)} H_n(t/\sqrt{2})& *\\
              0 & \frac{\sqrt{2} n}{H_n(0)}H_{n-1}(t/\sqrt{2})&*
             \end{pmatrix},\quad n=\textnormal{even}. \eqlab{PhiEven2}
\end{align}
Consequently,
\begin{align}
 V& = \textnormal{span}\, e_v,\quad \left\{\begin{array}{cc}
             e_v = (0,0,1)^T  & n=\textnormal{odd}\\
            e_v = (0,1,0)^T & n=\textnormal{even}
            \end{array}\right., \eqlab{VFSspace}\\
W&= \textnormal{span}\, e_w,\quad \left\{\begin{array}{cc}
             e_w=(0,1,0)^T & n=\textnormal{odd}\\
             e_w=  (0,0,1)^T & n=\textnormal{even}
            \end{array}\right.,\nonumber
\end{align}
recall (H4) and \eqref{WDefinition}. Also $U=\textnormal{span}(1,0,0)^T$ for all $n\in \mathbb N$. Therefore by \eqref{MelnikovDNew}:
\begin{align*}
\sigma_v &= \begin{cases} 
%              \left\{\begin{array}{cc }
         -1 &n=\textnormal{odd}\\
         1 &n=\textnormal{even}
%         \end{array}\right.
           \end{cases},\\
\sigma_w &= \begin{cases} 
%              \left\{\begin{array}{cc }
         1 &n=\textnormal{odd}\\
         -1 &n=\textnormal{even}
%         \end{array}\right.
           \end{cases}
\end{align*}
% and
and hence
\begin{align}
 D(v,\alpha) = \left\{\begin{array}{cc }
 h_{cs}(-v,\alpha)-h_{cs}(v,\alpha)&n=\textnormal{odd}\\
         -2h_{cs}(v,\alpha) &n=\textnormal{even}
                \end{array}\right..\nonumber
\end{align}%In particular,
% Let $z_*(v,\alpha)(\cdot)$ be the solution with $z_*(v,\alpha)(0)=ve_v+h_{cs}(v,\alpha)e_w$. Then $z_*(v,\alpha)\in C_{b,+}$ and
% Furthermore, let 
% \begin{align*}
%  \psi(t) 
% \end{align*}
% and 
Consequently, for $n=\textnormal{odd}$, $v\mapsto D(v,\alpha)$ is an odd function for every $\alpha$. On the other hand, for $n=\textnormal{even}$ roots of $D(\cdot,\alpha)$ correspond to symmetric solutions, being fixed with respect to the symmetry $\sigma$. Furthermore, 
using
\begin{align*}
 \psi_*(t)& = \begin{pmatrix}
              0\\
              \frac{1}{H_{n-1}(0)}e^{-t^2/2} H_{n-1}(t/\sqrt{2})\\
              -\frac{1}{\sqrt{2} H_{n-1}(0))}e^{-t^2/2} H_n(t/\sqrt{2})
             \end{pmatrix},n=\textnormal{odd}\\
             \psi_*(t) &= \begin{pmatrix}
              0\\
              -\frac{\sqrt{2}}{H_n(0)} e^{-t^2/2} H_{n-1}(t/\sqrt{2})\\
             \frac{1}{H_n(0)} e^{-t^2/2} H_n(t/\sqrt{2}
             \end{pmatrix},n=\textnormal{even}.
\end{align*}
which follows from a simple calculation, 
we obtain
\begin{align}
 D(v,\alpha) &= 2\int_0^\infty e^{-t^2/2} \times\nonumber \\
 &\left\{\begin{array}{cc} 
                                        \frac{1}{2\sqrt{2} n H_{n-1}(0)}H_n(t/\sqrt{2})  \left(g(z_*(-v,\alpha)(t),\alpha)-g(z_*(v,\alpha)(t),\alpha)\right) & n=\textnormal{odd}\\
                                        \frac{1}{H_{n}(0)}H_{n}(t/\sqrt{2})  g(z_*(v,\alpha)(t),\alpha)& n=\textnormal{even}
                      \end{array}\right. dt.\eqlab{Dformula2}
\end{align}
% \begin{enumerate}
% \item \label{Dalpha} $D(0,\alpha)=0$ for all $\alpha$ and any $n$. 
%  \item \label{Dodd} For $n=\textnormal{odd}$, $v\mapsto D(v,\alpha)$ is an odd function for every $\alpha$. 
% \end{enumerate}
% \end{proposition}
 We now focus on $n=\textnormal{even}$, which is easier,  and prove the transcritical case. The details of $n=\textnormal{odd}$ and the pitchfork are lengthier, but similar to the details of the proof of \thmref{main1} item (\ref{neven}), see also \remref{procedure}, and therefore left out. 
 
 Let therefore $n=2k$, such that $\mu=k+\alpha$, and write $z':=\frac{\partial}{\partial v}z_*(0,0)$. Following \eqref{PhiEven2}, we have
 \begin{align}
  z' =  \begin{pmatrix}
               \frac{1}{\sqrt{2}(n+1)H_n(0)}H_{n+1}(t/\sqrt{2})\\
             \frac{1}{H_n(0)}H_n(t/\sqrt{2})\\
              \frac{\sqrt{2} n}{H_n(0)} H_{n-1}(t/\sqrt{2})
             \end{pmatrix},\eqlab{zpFS}
 \end{align}

%  Then upon differentiation of $\tilde g(v,\alpha):=g(z_*(v,\alpha),\alpha)$ with respect to $\alpha$ and $v$ we obtain 
%  \begin{align*}
%   \tilde g''_{v\alpha}(0,0) = -2z_2',
%  \end{align*}
% with $z_2'$ being the second component of the partial derivative of $z_*(v,\alpha)$ with respect to $v$ at $v=\alpha=0$, given by \eqref{PhiEven2} as
% \begin{align*}
%  z_2'(t) = \frac{H_{2k}(t/\sqrt{2})}{H_{2k}(0)}.
% \end{align*}
% Therefore 
Then upon differentiating \eqref{Dformula2}$_{n=\textnormal{even}}$  with respect to $\alpha$ and $v$ we have 
 \begin{align*}
  \frac{\partial^2 D}{\partial v\partial \alpha}(0,0)&= \frac{2}{H_{2k}(0)} \int_0^\infty e^{-t^2/2} H_{2k}(t/\sqrt{2})(-2z_2') dt\\
  &= -\frac{2}{H_{2k}(0)^2} \int_{-\infty}^\infty e^{-t^2/2} {H_{2k}(t/\sqrt{2})}^2 dt\\
  &= -\frac{2\sqrt{2\pi}(2k)!}{(2k-1)!!^2}=-\frac{2\sqrt{2\pi}(2k)!!}{(2k-1)!!},
 \end{align*}
using \eqref{PhiEven2} as well as \eqref{prop3} and \eqref{HOrtho} in \appref{Hermite}. Similarly, by differentiating \eqref{Dformula2}$_{n=\textnormal{even}}$ twice with respect to $v$ we have 
\begin{align*}
 \frac{\partial^2 D}{\partial v^2}(0,0) &=\frac{2}{H_{2k}(0)} \int_0^\infty e^{-t^2/2} H_{2k}(t/\sqrt{2})(-2z_1'z_3'+2k(z_2')^2) dt\\
 &:=I_1+I_2,
\end{align*}
where
\begin{align*}
 I_1 &= -\frac{4k}{(2k+1)H_{2k}(0)^3} \int_{-\infty}^\infty e^{-t^2/2} H_{2k}(t/\sqrt{2}) H_{2k+1}(t/\sqrt{2})H_{2k-1}(t/\sqrt{2})dt,\\
 I_2&=\frac{2k}{H_{2k}(0)^3} \int_{-\infty}^\infty e^{-t^2/2} H_{2k}(t/\sqrt{2})^3dt,
\end{align*}
using \eqref{zpFS}.
To compute these integrals we use \eqref{intHnHmHl} in \appref{Hermite} and obtain the following expression
\begin{align*}
 I_2 &= \frac{2k}{H_{2k}(0)^3}\sqrt{2\pi} 2^{3k} \frac{(2k)!^3}{k!^3} =(-1)^k\frac{2k\sqrt{2\pi} (2k)!^3}{(2k-1)!!^3 k!^3}=(-1)^k \frac{2k\sqrt{2\pi} (2k)!!^3}{k!^3},% \frac{(-1)^k \sqrt{2\pi} k^4 2^{3k+4}}{(4k)!^3},
\end{align*}
using \eqref{prop3}, and, after some simple calculations,
\begin{align*}
 I_1 & = -\frac{1}{k+1}I_2.
\end{align*}
Consequently, 
\begin{align*}
 \frac{\partial^2 D}{\partial v^2}(0,0) = \frac{k}{k+1} I_2 = (-1)^k \frac{2k^2\sqrt{2\pi} (2k)!!^3}{(k+1)k!^3}%(-1)^k\frac{2k^2\sqrt{2\pi} (2k)!^3}{(k+1)(2k-1)!!^3 k!^3}. %\frac{(-1)^k (4k-3) \sqrt{2\pi} k^3 2^{3k+2}}{(4k)!^3}.
\end{align*}
By singularity theory \cite{golubitsky1988a} this proves the transcritical bifurcation and the local equivalence (upon replacing $D$ by $-D$) with the normal form \eqref{transcritical2}. %(Notice, however, that we, for simplicity, only provide all the details for $n=\text{even}$.)

\section{The Nos\'e equations: Proof of \propref{NoseBif}}\applab{Nose}

 Let $(x,y,z)=\gamma(t)+\tilde z$. Then
 \begin{align*}
  \dot z_1 &= t z_1 - z_2-z_3+g_1(z,\alpha),\\
  \dot z_2 &=z_1,\\
  \dot z_3 &=n z_1+g_3(z,\alpha),
 \end{align*}
where
\begin{equation}\eqlab{Noseg}
\begin{aligned}
 g_1(z,\alpha)&:=-z_1z_3,\\
 g_3(z,\alpha)&:=2\alpha z_1+\left(\frac12 n+\alpha\right) z_1^2,
\end{aligned}
\end{equation}
upon dropping the tildes. By \eqref{NoseWeber2}, we can compute the following relevant quantities:
\begin{align}
 \Phi(t,0) &= \begin{pmatrix}
              \frac{\sqrt{2} (n+1) }{\sqrt{1+n^2} H_{n+1}(0)}H_n(t/\sqrt{2}) & 0 &*\\
              \frac{1}{\sqrt{1+n^2} H_{n+1}(0)}H_{n+1}(t/\sqrt{2}) & \frac{1}{\sqrt{2}} &* \\
              \frac{n }{\sqrt{1+n^2} H_{n+1}(0)}H_{n+1}(t/\sqrt{2}) &-\frac{1}{\sqrt{2}} & *
             \end{pmatrix}V_n,\quad n=\textnormal{odd},\eqlab{NosePhiOdd}\\
             \Phi(t,0)& = \begin{pmatrix}
              \frac{1}{H_n(0)}H_{n}(t/\sqrt{2}) & 0&*\\
             \frac{1}{\sqrt{2}(n+1)H_n(0)} H_{n+1}(t/\sqrt{2})& \frac{1}{\sqrt{2}} & *\\
             \frac{n}{\sqrt{2}(n+1)H_n(0)} H_{n+1}(t/\sqrt{2}) & -\frac{1}{\sqrt{2}} &*
             \end{pmatrix}V_n,\quad n=\textnormal{even}, \eqlab{NosePhiEven}
\end{align}
for some (unspecified) constant matrix $V_n$ (to ensure that $\Phi(0,0)=\textnormal{id}$), which will not be important in the following, 
and consequently
\begin{align}
 V& = \textnormal{span}\, e_v,\quad \left\{\begin{array}{cc}
             e_v = \left(0,\frac{1}{\sqrt{1+n^2}},\frac{n}{\sqrt{1+n^2}}\right)^T  & n=\textnormal{odd}\\
            e_v = (1,0,0)^T & n=\textnormal{even}
            \end{array}\right., \eqlab{VNosespace}\\
W&= \textnormal{span}\, e_w,\quad \left\{\begin{array}{cc}
             e_w=(1,0,0)^T & n=\textnormal{odd}\\
             e_w=  (0,1,1)^T & n=\textnormal{even}
            \end{array}\right.,\nonumber
\end{align}
recall (H4) and \eqref{WDefinition}. Also $U=\textnormal{span}(0,1,-1)^T$ for all $n\in \mathbb N$. Therefore by \eqref{MelnikovDNew}:
\begin{align*}
\sigma_v &= \begin{cases} 
%              \left\{\begin{array}{cc }
         -1 &n=\textnormal{odd}\\
         1 &n=\textnormal{even}
%         \end{array}\right.
           \end{cases},\\
\sigma_w &= \begin{cases} 
%              \left\{\begin{array}{cc }
         1 &n=\textnormal{odd}\\
         -1 &n=\textnormal{even}
%         \end{array}\right.
           \end{cases}
\end{align*}
with respect to the symmetry $\sigma^x$,
% and
and hence
\begin{align}
 D(v,\alpha) = \left\{\begin{array}{cc }
 h_{cs}(-v,\alpha)-h_{cs}(v,\alpha)&n=\textnormal{odd}\\
         -2h_{cs}(v,\alpha) &n=\textnormal{even}
                \end{array}\right..\nonumber
\end{align}%In particular,
% Let $z_*(v,\alpha)(\cdot)$ be the solution with $z_*(v,\alpha)(0)=ve_v+h_{cs}(v,\alpha)e_w$. Then $z_*(v,\alpha)\in C_{b,+}$ and
% Furthermore, let 
% \begin{align*}
%  \psi(t) 
% \end{align*}
% and f
Consequently, for $n=\textnormal{odd}$, $v\mapsto D(v,\alpha)$ is an odd function for every $\alpha$. On the other hand, for $n=\textnormal{even}$ roots of $D(\cdot,\alpha)$ correspond to symmetric solutions, being fixed with respect to the symmetry $\sigma^x$. Furthermore, 
using
\begin{align*}
 \psi_*(t)& = \begin{pmatrix}
              \frac{1}{H_{n+1}(0)}e^{-t^2/2} {H_{n+1}(t/\sqrt{2})}\\
              -\frac{\sqrt{2}}{{H_{n+1}(0)}} e^{-t^2/2} {H_n(t/\sqrt{2})}\\
              -\frac{\sqrt{2}}{H_{n+1}(0)} e^{-t^2/2} {H_n(t/\sqrt{2})}
             \end{pmatrix},n=\textnormal{odd}\\
             \psi_*(t) &= \begin{pmatrix}
              -\frac{1}{{\sqrt{2} H_{n}(0)}}e^{-t^2/2} {H_{n+1}(t/\sqrt{2})}\\
              \frac{1}{{H_{n}(0)}}e^{-t^2/2} {H_n(t/\sqrt{2})}\\
              \frac{1}{H_{n}(0)}e^{-t^2/2} {H_n(t/\sqrt{2})}
             \end{pmatrix},n=\textnormal{even}.
\end{align*}
which follows from a simple calculation,
we obtain $D(v,\alpha)=D_1(v,\alpha)+D_3(v,\alpha)$ where
\begin{align}
 D_1(v,\alpha) &= \int_0^\infty e^{-t^2/2} \times\nonumber \\
 &\left\{\begin{array}{cc} 
                                        \frac{1}{H_{n+1}(0)}H_{n+1}(t/\sqrt{2}) \left(g_1(z_*(v,\alpha)(t),\alpha)-g_1(z_*(-v,\alpha)(t),\alpha)\right) & n=\textnormal{odd}\\
                                         -\frac{1}{\sqrt{2} H_{n}(0)} H_{n+1}(t/\sqrt{2}) g_1(z_*(v,\alpha)(t),\alpha)& n=\textnormal{even}\\
                                        \end{array}\right. dt,\eqlab{NoseD1}\\
 D_3(v,\alpha) &= \int_0^\infty e^{-t^2/2} \times\nonumber \\
 &\left\{\begin{array}{cc} 
                                        -\frac{\sqrt{2} }{H_{n+1}(0)}H_{n}(t/\sqrt{2})  \left(g_3(z_*(v,\alpha)(t),\alpha)-g_3(z_*(-v,\alpha)(t),\alpha)\right) & n=\textnormal{odd}\\
                                         \frac{1}{H_{n}(0)}H_n(t/\sqrt{2})  g_3(z_*(v,\alpha)(t),\alpha)& n=\textnormal{even}\\
                                        \end{array}\right. dt.\eqlab{NoseD3}
                                        \end{align}
As described in \thmref{recipe}, we are able to evaluate these integrals by following the procedure in \remref{procedure}:
 \begin{lemma} 
 Let $k\in \mathbb N$ be so that 
%   Let $k$ be so that
%  Consider any $n\in \mathbb N$ and let $k$ be so that 
\begin{align*}
 n = \begin{cases}
      2k-1 & n=\textnormal{odd}\\
      2k & n=\textnormal{even}
     \end{cases}.
     \end{align*}
Then 
\begin{enumerate}
\item For $n=\textnormal{odd}$, the following holds 
\begin{align*}
 \frac{\partial^2 D}{\partial v\partial \alpha}(0,0) &= -\frac{2\sqrt{2\pi} (2k-1)(2k)!!}{\sqrt{1+(2k-1)^2}}.
\end{align*}
Furthermore, let 
% and setting 
\begin{align*}
 c_{kj} &:= \frac{(2(2k-1)-2j)!}{j!(j+1)!(2k-1-j)!^4}\frac{2k-1}{2k-1-2j}\times \\
 &\left(5-\frac{2}{2k-j}\right)\left(2k\left(1+\frac{j+1}{2k-j}\right)+1+j\right),\\
 d_{kj} &:=\frac{(2(2k-1)-2j)!}{j!(j+1)!(2k-1-j)!^4}\frac{2k(j+1)}{2k-j},
\end{align*}
for $j=0,\ldots,2k-1$. Then
\begin{align*}
 D'''_{vvv}(0,0)&= -\frac{3(2k)!!^4 \sqrt{2\pi} (2k-1)}{2k(1+(2k-1)^2)^{3/2}}\sum_{j=0}^{2k-1} \left(c_{kj}+d_{kj}\right)
\end{align*}
\item For $n=\textnormal{even}$, then 
\begin{align*}
 \frac{\partial^2 D}{\partial v\partial \alpha}(0,0) &=2k\sqrt{2\pi} (2k-1)!!,
\end{align*}
and
\begin{align*}
 D''_{vv}(0,0)&=\frac{8(-1)^k k^4 \sqrt{2\pi} (2k-1)!!^3}{(4k)!^3} \left(1+16k^2(2k+1)\right).
\end{align*}
\end{enumerate}
\end{lemma}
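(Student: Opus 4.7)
The plan is to follow the recipe described in \remref{procedure}, applied to the integral representations $D=D_1+D_3$ given in \eqref{NoseD1}--\eqref{NoseD3}, exactly as was done for the folded node in the proof of \lemmaref{main1}. First, I would extract $z'(t):=\partial z_\ast/\partial v(0,0)$ from the columns of the state-transition matrix \eqref{NosePhiOdd} (respectively \eqref{NosePhiEven}) by reading off the column corresponding to the basis vector $e_v$ in \eqref{VNosespace}; this gives $z'(t)$ as an explicit linear combination of $H_n(t/\sqrt 2)$ and $H_{n+1}(t/\sqrt 2)$ whose initial value lies along $V$. Note that one has to account for the unspecified matrix $V_n$ by using the fact that $z'(0)=e_v$, which is enough to fix $z'(t)$ uniquely.

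Next, to obtain $\partial^2 D/\partial v\partial \alpha(0,0)$ in both parity cases, I would differentiate \eqref{NoseD1}--\eqref{NoseD3} once with respect to $v$ and once with respect to $\alpha$, evaluated at $(0,0)$. Using $g_1$, $g_3$ from \eqref{Noseg} and $z_\ast(0,\alpha)\equiv 0$, only the $2\alpha z_1$ term in $g_3$ survives, so the mixed partial reduces (up to an explicit prefactor depending on $\sigma_v$, $\sigma_w$) to an integral of the form $\int_0^\infty e^{-t^2/2}H_n(t/\sqrt 2)z_1'(t)\,dt$. Since $z_1'$ is a scalar multiple of $H_n(t/\sqrt 2)$ in both parity cases, evenness of the integrand and the orthogonality formula \eqref{HOrtho} together with \eqref{prop3} immediately produces the claimed constants $-2\sqrt{2\pi}(2k-1)(2k)!!/\sqrt{1+(2k-1)^2}$ and $2k\sqrt{2\pi}(2k-1)!!$.

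The remaining two pieces, $\partial^2 D/\partial v^2(0,0)$ in the even case and $\partial^3 D/\partial v^3(0,0)$ in the odd case, require the higher-order variational information $z''(t):=\partial^2 z_\ast/\partial v^2(0,0)$. Following Steps (a)--(c) of \remref{procedure}, I would differentiate the system for $z_\ast$ twice with respect to $v$ to obtain $\dot z''=A(t)z''+(\text{inhomogeneity quadratic in }z')$, then eliminate $z_2''$ and $z_3''$ to reduce the $z_1''$-equation to an inhomogeneous Weber equation $L_{n-1}z_1''=\text{RHS}$. The right-hand side is a sum of products of Hermite polynomials coming from $z_1'z_3'$ and $(z_1')^2$; I would collapse these products to a single sum of Hermite polynomials via \eqref{productRuleHnm}, then invert $L_{n-1}$ on each mode using \lemmaref{LmHmHl}, using that the resulting indices never hit $n-1$ (this is ensured by a parity argument identical to the folded-node case). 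Integrating $\dot z_3''=nz_1''+\tfrac12 n z_1'{}^2$ then gives a closed-form expression for $z_3''$ and, together with $z_2''$ obtained similarly, for $z''$ as a finite sum of Hermite polynomials.

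In Step (d), I insert $z'$ and $z''$ back into the second- or third-order partial derivatives of $D_1+D_3$; each resulting term is an integral of the form $\int_{-\infty}^\infty e^{-t^2/2}H_a(t/\sqrt 2)H_b(t/\sqrt 2)H_c(t/\sqrt 2)\,dt$ that can be evaluated in closed form via \eqref{intHnHmHl}. The main obstacle will be the bookkeeping in the odd case: tracking how the two groups of contributions from $g_1$ and $g_3$ combine, identifying the index $j=0,\dots,2k-1$ with the Hermite mode $4k-3-2j$ (or similar), and simplifying the resulting double sum into the two families $c_{kj}$ and $d_{kj}$ announced in the lemma. I expect no conceptual novelty beyond \lemmaref{main1}, but the algebraic simplification of the combinatorial factors (factorials, double factorials, and the factor $(5-2/(2k-j))$ reflecting contributions from both $H_{n-1}$ and $H_{n+1}$ via $z_3'$) will be the most delicate part; it is handled by applying \eqref{prop3} systematically to evaluate the Hermite polynomials at $t=0$ and by using the identity $H_{n}'(0)=2n H_{n-1}(0)$ from \eqref{prop2} to simplify the coefficients arising from the integration of $\dot z_3''$.
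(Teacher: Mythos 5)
Your proposal is correct in outline and is essentially the paper's own proof: the paper likewise differentiates \eqref{NoseD1}--\eqref{NoseD3}, reads $z'$ off the state-transition matrices \eqref{NosePhiOdd} and \eqref{NosePhiEven}, characterizes $z''$ through the second-order variational equation reduced to an inhomogeneous Weber equation, and evaluates everything with \eqref{HOrtho}, \eqref{productRuleHnm} and \eqref{intHnHmHl}, explicitly deferring the bookkeeping to the folded-node computation of \lemmaref{main1}. Two small slips, apparently inherited from pattern-matching the folded-node case, should be fixed before the constants come out right: for the Nos\'e system the relevant Weber operator is $L_{n}$ (since $\beta=2(\mu-1)=n$ at $\alpha=0$, see \eqref{NoseWeber}), not $L_{n-1}$, so the resonant mode to be excluded from the right-hand side is $H_n(t/\sqrt{2})$; and the inhomogeneity in the $z_3''$-equation is $n(z_1')^2$ rather than $\tfrac12 n (z_1')^2$, because the coefficient $\tfrac n2$ of $z_1^2$ in $g_3$ is doubled by the second $v$-derivative of $z_1^2$.
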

\begin{proof}
We simply differentiate the expressions \eqref{NoseD1} and \eqref{NoseD3} and use \eqref{Noseg},
\begin{align}
 z'&=\begin{pmatrix}
              \frac{\sqrt{2} (n+1) }{\sqrt{1+n^2} H_{n+1}(0)}H_n(t/\sqrt{2}) \\
              \frac{1}{\sqrt{1+n^2} H_{n+1}(0)}H_{n+1}(t/\sqrt{2}) \\
              \frac{n }{\sqrt{1+n^2} H_{n+1}(0)}H_{n+1}(t/\sqrt{2})
             \end{pmatrix},\quad n=\textnormal{odd},\nonumber\\
   z'&=\begin{pmatrix}
              \frac{1}{H_n(0)}H_{n}(t/\sqrt{2}) \\
             \frac{1}{\sqrt{2}(n+1)H_n(0)}H_{n+1}(t/\sqrt{2}) \\
             \frac{n}{\sqrt{2}(n+1)H_n(0)}H_{n+1}(t/\sqrt{2})
             \end{pmatrix},\quad n=\textnormal{even},\eqlab{zpNoseEven}          
\end{align}
by \eqref{NosePhiEven} and \eqref{NosePhiOdd}, where  $z':=\frac{\partial z_*}{\partial v}(0,0)$. Following \remref{procedure}, see \textbf{step (a)}, we then characterize $z{''}=\frac{\partial^2 z_*}{\partial v^2}(0,0)$ using the higher variational equations:
\begin{align*}
 \dot z_1'' &=tz_1''-z_2''-z_3''-2z_1'z_3',\\
 \dot z_2''&=z_1'',\\
 \dot z_3''&=nz_1''+n(z_1')^2.
\end{align*}
By the remaining \textbf{steps b,c,d} we obtain the results. The details are identical to \lemmaref{main1} and therefore left out. 
\end{proof}
For $n=\textnormal{odd}$, notice that whereas all $d_{kj}>0$, the sign of $c_{kj}$ -- due to the factor $2k-1-2j$ in the denominator -- changes from $j=k-1$ to $j=k$, in such a way that
\begin{align*}
 c_{kj}\begin{cases}
        <0 & \text{for all}\quad j\ge k\\
        >0 & \text{for all}\quad j\le k-1
       \end{cases}.
\end{align*}
However, a simple calculation shows that
\begin{align*}
 \left| \frac{c_{k(k-1-l)}}{c_{k(k+l)}}\right|=\left| \frac{5(k+l)+3}{5(k-l)-2}\times \frac{k+l+1}{k-l}\times \frac{(2k+2l)\cdots (2k-2l-1)}{(k+l+1)^2\cdots (k+1-l)^2}\right|>1\times 1\times 1,
\end{align*}
for all $l=0,\ldots,k-1$. But then $$\sum_{j=0}^{2k-1} c_{kj} = \sum_{l=0}^k\vert c_{k(k+l)} \vert\left( \left| \frac{c_{k(k-1-l)}}{c_{k(k+l)}}\right|-1\right)>0,$$ and hence  $D'''_{vvv}(0,0)<0$ for all $k$. 
% 
% Next, we consider $n=\textnormal{even}$. Then for $n=2k$ we obtain 
% \begin{align*}
%  \frac{\partial^2 D}{\partial v\partial \alpha}(0,0)& = \\
%  D''_{vv}(0,0)&=
% \end{align*}
 By singularity theory \cite{golubitsky1988a}, these expressions therefore complete the proof of \propref{NoseBif}. 

\bibliography{refs}
\bibliographystyle{plain}
\end{document}